\documentclass[11pt]{amsart}

\usepackage{graphicx} 
\usepackage{url,tabularx,array,geometry}
\usepackage{amssymb}
\usepackage{fancyhdr}
\usepackage{wrapfig}
\usepackage{epstopdf}
\usepackage{amsmath}
\usepackage[labelfont=bf]{caption}
\usepackage[hidelinks]{hyperref}
\usepackage{amsthm}
\usepackage{bbm}
\usepackage[T1]{fontenc}
\usepackage{yfonts}
\usepackage{breqn}
\usepackage{harpoon}
\usepackage[all]{xy}
\usepackage{tikz}
\usepackage{tikz-cd}
\usepackage{hyperref}
\usepackage{comment}
\usepackage{mathtools}
\usepackage{quiver}
\usepackage{multicol}
\usepackage{float}
\usepackage{blkarray}

\theoremstyle{plain}
\newtheorem{theorem}{Theorem}
\newtheorem{conjecture}{Conjecture}

\newtheorem{theoremN}{Theorem}[section]
\newtheorem{propositionN}[theoremN]{Proposition}
\newtheorem{corollaryN}[theoremN]{Corollary}
\newtheorem{lemmaN}[theoremN]{Lemma}
\newtheorem{conjectureN}[theoremN]{Conjecture}
\theoremstyle{definition}
\newtheorem{definitionN}[theoremN]{Definition}
\theoremstyle{remark}

\newtheorem*{note}{Note}

\newtheorem*{example}{Example}

\usepackage[maxbibnames = 50]{biblatex}
\renewbibmacro{in:}{}
\DeclareFieldFormat{pages}{#1}
\title[Geometric Approach to Links-Quivers Correspondence]{A Geometric Approach to the Links-Quivers Correspondence II: Rational Links}

\author{Jonathan A. Higgins}
\thanks{The
author was partially supported by NSF grant DMS-2405402.
}

\begin{document}

\address{Jonathan Higgins, Department of Mathematics, University of Illinois Urbana-Champaign, 1409 W Green St, Urbana, IL 61801, USA}
\email{jh110@illinois.edu}

\begin{abstract}
   Originally proposed in \cite{KRSS17,K19}, the Links-Quivers Correspondence predicts that the generating function for the symmetric (or antisymmetric) colored HOMFLY-PT polynomials for links can be put in a ``quiver form,'' so that the generating function is expressed in terms of a quadratic form and two linear forms. This was originally proved for rational links by Sto\v si\'c and Wedrich \cite{SW21}, but here we give a direct geometric description of the linear and quadratic forms in terms of the first and second configuration spaces of the 3-punctured plane.
\end{abstract}

\keywords {colored HOMFLY-PT polynomial, rational links, Links-Quivers, configuration spaces}

\maketitle

\tableofcontents

\section{Introduction}
\label{introsec}


The colored HOMFLY-PT polynomials of a link $L$ are an infinite family of invariants known to generalize many other link polynomials. Consequently, computing them is generally tedious and difficult, but the Links-Quivers Correspondence of Kucharski, Reineke Sto\v si\'c, and Sułkowski \cite{KRSS17,K19} predicts that the generating function for the symmetric-colored HOMFLY-PT polynomials (or the antisymmetric ones) can be expressed in terms of a finite amount of data. As the name suggests, given any link $L$, the conjecture predicts that we can associate a symmetric quiver $Q_L$ with $L$ such that the generating function of the colored HOMFLY-PT polynomials for $L$, which we call $P(L)$, can be expressed in terms of a generating function for $Q_L$. Since its conception, there has been extensive work on studying the structure of these quivers \cite{CKNPSS25,EKL20,JKLN21,KLNS26} and proving the conjecture for families of knots \cite{SW21,SW21ii}. See also \cite{EKL23,JGKM23,KRSS20,PSS18,SS25} for further developments and \cite{EGGKPSS22,K20} for a proposed extension to knot complements. In \cite{SW21}, Sto\v si\'c and Wedrich proved the conjecture for rational links; in this paper, we provide a new geometric interpretation of this result. We will focus on the antisymmetric-colored HOMFLY-PT polynomials, but our results will also extend to the symmetric-colored ones. Following \cite{JHpI26}, where we did so for rational tangles, we will now determine the quiver for rational links by using intersection models of Lagrangians in the first and second configuration spaces of the punctured plane. 

Originally formulated by Hoste, Ocneanu, Millett, Freyd, Lickorish, and Yetter in \cite{HOMFLY85} and Przytcycki and Traczyk in \cite{PT87}, the (uncolored) HOMFLY-PT polynomial was defined as a two-variable generalization of the Jones and Alexander polynomials. In his seminal paper \cite{J87}, Jones described the HOMFLY-PT polynomial of a link as the Ocneanu trace of the image in the Hecke algebra of a braid whose closure yields the link. The colored HOMFLY-PT polynomials, which are indexed by the Young diagrams (in their fullest generality), may then be defined carefully by cabling the braid representation of the link, inserting a ``projector'' associated with the Young diagram, and taking the trace.

In \cite{RT90}, Reshetikhin and Turaev showed that we get invariants of tangles by coloring their strands with irreducible representations of quantum groups associated with simple Lie algebras. In particular, the Type-A invariants are obtained by using irreducible representations of $U_q(\mathfrak{sl}_N)$. For our purposes, given a fixed $N\in \mathbb{N}$, we will think of the $j$-colored $\mathfrak{sl}_N$ polynomial of a link $L$ as the one we get from coloring all strands of $L$ with the miniscule representation $\Lambda^j V_N$, where $V_N$ is the vector representation for $U_q(\mathfrak{sl}_N)$ and $0\leq j\leq N$. If we denote this polynomial invariant for $L$ as $P_{L,N}^{\bigwedge^j}(q)$, it is known that they stabilize for $N\gg 0$ in the sense that 
\[
P_{L,N}^{\bigwedge^j}(q)=P_L^{\bigwedge^j}(q,a)\bigg|_{a\mapsto q^N}
\]
results in no cancellations for large $N$, where $P_L^{\bigwedge^j}(q,a)$ is what we will call the $j$-colored HOMFLY-PT polynomial. In terms of the previous paragraph, this is the colored HOMFLY-PT polynomial indexed by the single column Young diagram with $j$ boxes.

Given a symmetric quiver $Q$ with $m$ vertices, the generating function for its cohomological Hall algebra is given by
\begin{equation}
P_Q(x_1,...,x_m) = \sum_{\textbf{d}=(d_1,..,d_m)\in\mathbb{N}^m} (-q)^{-\langle \textbf{d},\textbf{d}\rangle_Q}\prod_{i=1}^m \frac{x_1^{d_1}...x_m^{d_m}}{(1-q^{-2})...(1-q^{-2d_i})},
\end{equation}
where $Q$ also denotes its adjacency matrix, $\langle \textbf{d},\textbf{e}\rangle_Q = \sum_{i,j}(\delta_{ij}- Q_{ij})d_ie_j$ is its Euler form, and $x_1,...,x_m$ are formal variables. The Links-Quivers Correspondence relates generating functions of this form with the generating functions of colored HOMFLY-PT polynomials for links. If $L$ is a link with $|L|$ components, we may define the generating function
\begin{equation}
    \widehat{P}(L)=\sum_{j\geq 0}\left(\prod_{i=1}^j(1-q^{2i})\right)^{|L|-2}P^{\bigwedge^j}_L(q,a)x^j
\end{equation}
which is a rescaled version of the standard colored HOMFLY-PT generating function,
\begin{equation}
    P(L)=\sum_{j\geq 0}P^{\bigwedge^j}_L(q,a)x^j,
\end{equation}
which is what we will be working with mostly in this paper. However, the Links-Quivers Correspondence is best stated in terms of $\widehat{P}(L)$.

\begin{conjecture}[Links-Quivers Correspondence]
\label{LQC1intro}
    For any link $L$, there is a symmetric quiver $Q_L$ such that 
   \[
   \widehat{P}(L)=P_{Q_L}(\overline{x})\bigg|_{x_i\mapsto (-1)^{Q_{ii}-S_i}q^{S_i-1}a^{A_i}x},
   \]
   where $S$ and $A$ are vectors of length given by the number of vertices in $Q_L$ with adjacency matrix $Q$.
\end{conjecture}

In this paper, we will think of the adjacency matrix $Q$ as a quadratic form and the vectors $S$ and $A$ as linear forms on a free $\mathbb{Z}$-module equipped with a preferred basis. For a reduced fraction $u/v\in\mathbb{Q}^{\geq 1}$, we get a rational knot $K_{u/v}$ if $u$ is odd and a 2-component rational link $L_{u/v}$ if $u$ is even; associated with these links are two types of Lagrangians in the 3-punctured plane whose intersections provide the preferred bases for these free $\mathbb{Z}$-modules. For a rational knot $K_{u/v}$, we will use $\mathcal{D}(K_{u/v})$ to denote the picture in the punctured plane consisting of these two Lagrangians, and $\mathcal{D}(L_{u/v})$ is defined similarly. See Figure \ref{Dfigintro}. Now, we state the main theorems for this paper.

\begin{theorem}
\label{knotthmintro}
    For a rational knot $K_{u/v}$, we may express $P(K_{u/v})$ as 
    \begin{equation}
    \label{PKuv}
        P(K_{u/v})=\sum_{\textbf{d}=(d_1,...,d_u)\in \mathbb{N}^u}(-q)^{S\cdot \textbf{d}}a^{A\cdot \textbf{d}}q^{\textbf{d} \cdot Q\cdot \textbf{d}^t} {d_1+...+d_u\brack d_1,...,d_u},
    \end{equation}
    where $S$ and $A$ are computed by winding numbers of loops based at the $u$ Lagrangian intersections in $\mathcal{D}(K_{u/v})$ about the three punctures. The quadratic form $Q$ may be computed similarly, but by passing to the second configuration space of the 3-punctured plane, where we also need to consider winding numbers of loops around the diagonal $\Delta$.
\end{theorem}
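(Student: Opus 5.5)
The plan is to reduce the statement to the tangle result of \cite{JHpI26} and then account for the closure geometrically. Write $K_{u/v}$ as the numerator (or denominator) closure of the rational tangle $T_{u/v}$. In \cite{JHpI26} the $\bigwedge^j$-colored invariant of $T_{u/v}$ was put in quiver form: it is a sum over the intersection points of a Lagrangian $\mathcal{L}_{u/v}$ in the 3-punctured plane with a fixed standard Lagrangian, with linear and quadratic forms given by winding numbers in the first and second configuration spaces. Closing the tangle means pairing the tangle's $\mathfrak{sl}_N$-web basis expansion with the closure cap, which evaluates each basis web to a closed $\bigwedge$-colored unknot or theta-type web. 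In the picture $\mathcal{D}(K_{u/v})$ this corresponds to replacing the standard Lagrangian by the closure Lagrangian. The first step is to check that the intersection points of $\mathcal{L}_{u/v}$ with the closure Lagrangian are exactly $u$ in number, one for each vertex of the quiver. I would do this by continued-fraction induction, using the fact that the minimal intersection count of the arc of slope $u/v$ with the closing arc is the numerator $u$.

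Next, for each intersection point $p_i$ I would take the degree contribution from \cite{JHpI26} and add the closure correction. The closure factor is a quantum dimension, $\begin{bmatrix}N\\ j\end{bmatrix}$, of the relevant colored unknot. After the substitution $a=q^N$ and the usual $q$-binomial expansion, this factor combines with the tangle-side sums into the multinomial ${d_1+\dots+d_u\brack d_1,\dots,d_u}$. The component $d_i$ records how many of the $j$ strands are routed through $p_i$, so that $j=\sum d_i$.

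The linear forms come next. The $q$- and $a$-degree shifts attached to a single strand at $p_i$ are read off as winding numbers of a loop, based at $p_i$, formed by going out along $\mathcal{L}_{u/v}$ and back along the closure Lagrangian. The winding around the two "tangle-end" punctures gives the $q$-grading, and the winding around the third puncture gives the $a$-grading. This yields $S_i$ and $A_i$. I would verify these formulas inductively under the twist moves $T\mapsto T+1$ and $T\mapsto 1/(1/T+1)$: each move acts on $\mathcal{D}$ by a half-twist diffeomorphism of the punctured plane, and the grading changes from the Reidemeister/skein relations must match the changes in winding numbers.

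The quadratic form is the main obstacle. The pairwise interaction between strands at $p_i$ and $p_k$ (including $i=k$) should be computed in $\mathrm{Conf}_2$ of the 3-punctured plane. One lifts the pair of loops to a loop based at $(p_i,p_k)$ and records its winding around the puncture divisors and around the diagonal $\Delta$. The diagonal term accounts for the $q^{d_i(d_i-1)}$-type self-interaction that appears when normalizing the multinomial. The difficulty is showing that the crossing and twist contributions from the tangle formula of \cite{JHpI26}, together with the closure pairing, are exactly quadratic in $\mathbf{d}$. For this I would again use induction on the continued fraction, and check that each half-twist changes both the $\Delta$-winding and the algebraic quadratic term by the same amount, namely $\pm d_id_k$ or its diagonal analogue. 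The base case is $K_{1/1}$, the unknot, with $u=1$; there the formula reduces to the known generating function $\sum_j \begin{bmatrix}N\\ j\end{bmatrix}x^j$ written in quiver form.
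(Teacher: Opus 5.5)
Your overall strategy (start from the tangle quiver form of \cite{JHpI26} and account for the closure) is the paper's. However, the closure step as you describe it does not produce the stated formula.

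The closure of a basis web is not a $k$-independent quantum dimension. By Lemma~\ref{WSclosureformulas}, $\text{Cl}(TUP[j,k])\sim(-q)^kq^{2k^2}{j\brack k}_+\frac{(a^2q^{2-2j-2k};q^2)_k}{(q^2;q^2)_k}$, and similarly for $TRI$. Moreover, ${N\brack j}$ is the \emph{unreduced} unknot, while $P(K_{u/v})$ is reduced (the unknot gives $1$). For $u=1$ the right-hand side of the theorem is $\sum_d(-q)^{Sd}a^{Ad}q^{Qd^2}x^d$, and $\sum_j{N\brack j}x^j$ admits no such one-node form (it needs two nodes and $(q^2;q^2)_{d_i}$ denominators). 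So your base case contradicts the statement.

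The mechanism you are missing runs as follows. Write $K_{u/v}=\text{Cl}(T\tau_{(u-v)/v})$ with $\tau_{(u-v)/v}$ of orientation $UP$ or $RI$, so that exactly one of $u-v$, $v$ is even. Use Proposition~\ref{indpartprop} to halve the corresponding set of intersection points; its factor $(q^2;q^2)_{K\cdot\mathbf{d}}$ then cancels the Pochhammer denominator of the closure formula. Finally, expand the remaining $a^2$-Pochhammer with Lemma~\ref{algidentity}, which splits every index of the halved block $\mathfrak{Y}$ into two. This is where the $u=2\cdot\frac{u-v}{2}+v$ (resp.\ $2\cdot\frac{v}{2}+(u-v)$) nodes and the multinomial come from. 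It is also why the paper replaces $\alpha_{u/v}$ by the band $\overline{\alpha_{u/v}}$ and the axes by the arc $\beta$ from $Y$ to $X^+$: each point of $\mathfrak{Y}$ corresponds to two points $\kappa_i,\kappa_i'$, not to one ``routed strand.'' Note also that $A_i=2\Psi_{X^+}([\gamma_i^K])$ and $S_i=\Psi_{\{X^\pm,Y\}}([\gamma_i^K])$ up to shifts. So the $a$-grading is read off from the tangle endpoint $X^+$, not from $Y$, and the $q$-grading involves all three punctures.

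The second, more serious gap is the quadratic form. Your plan to check that each half-twist changes the $\Delta$-winding and ``the algebraic quadratic term'' by the same amount presupposes an entrywise identity. But the only algebraic recursion is the twist rules for tangles, and the closure is applied once. So the inductive part is already \cite{JHpI26}, and everything hinges on the single closure step, where the entrywise identity is false. The geometric matrix $Q^K_{u/v}$ differs from the matrix $Q_{u/v}$ that Lemma~\ref{WSclosureformula} produces from the tangle data, already for the figure-eight knot (Section~\ref{K52exsec}), because the quadratic form in a quiver form is far from unique.

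What must be proved is $Q^K_{u/v}\sim Q_{u/v}$ via Theorem~\ref{permthm}. That means a sequence of transpositions $Q_{ab}\leftrightarrow Q_{cd}$ of off-diagonal entries differing by $1$, with $\Lambda_a\Lambda_b=\Lambda_c\Lambda_d$ and $Q_{ai}+Q_{bi}=Q_{ci}+Q_{di}-\delta_{ci}-\delta_{di}$ for all $i$ at every step. The paper proceeds in three stages:
\begin{enumerate}
\item It matches the linear forms, the diagonal, and the blocks coming from the doubled set $\mathfrak{Y}$ (Lemmas~\ref{knotslinforms} and~\ref{Qplusblockknots}).
\item It permutes entries inside the other tangle block (Lemma~\ref{tQmswap}).
\item It permutes entries in the mixed blocks (Lemma~\ref{Qpmperm}).
\end{enumerate}
The row condition holds only if the transpositions are performed in a specific lexicographic-type order, so choosing that order is an essential part of the proof.

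A smaller omission: the bare winding-number formulas always give $S_i-Q_{ii}-2A_i=0$. So the overall shifts $\mu_1,\mu_2,\mu_3$ of Theorem~\ref{Knotsthm} must also be determined, and your proposal does not address them.
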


The theorem for 2-component links is similar, but the colored HOMFLY-PT invariants are no longer Laurent polynomials.

\begin{theorem}
\label{linkthmintro}
    For a 2-component rational link $L_{u/v}$, we may express $P(L_{u/v})$ as 
    \begin{equation}
    \label{PLuv}
   P(L_{u/v})=\sum_{\textbf{d}=(d_1,...,d_{2u})\in \mathbb{N}^{2u}} \frac{(-q)^{S\cdot \textbf{d}}a^{A\cdot \textbf{d}}q^{\textbf{d}\cdot Q \cdot\textbf{d}^T}}{\prod_{i=1}^{2u}(q^2;q^2)_{d_i}}x^{d_1+...+d_{2u}},
\end{equation}
    where $S$ and $A$ are computed by winding numbers of loops based at the $2u$ Lagrangian intersections in $\mathcal{D}(L_{u/v})$ about the three punctures. The quadratic form $Q$ may be computed similarly, but by passing to the second configuration space of the 3-punctured plane, where we also need to consider winding numbers of loops around the diagonal $\Delta$.
\end{theorem}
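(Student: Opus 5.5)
I will deduce the 2-component case from the rational tangle results of \cite{JHpI26} by closing up the tangle. A rational link $L_{u/v}$ with $u$ even is the numerator closure of a rational tangle $T_{u/v}$. In \cite{JHpI26} the colored invariant of a rational tangle is written as a sum over intersection points of two Lagrangians in the configuration spaces of the punctured plane, in the MOY/web basis of the relevant $\operatorname{Hom}$-space. Each intersection point contributes a monomial in $q$ and $a$, and a quadratic $q$-power coming from pairs of points, i.e.\ from the second configuration space.

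The first step is to take the closure: compose the tangle invariant with the closing cap/cup web and evaluate. For two components, the strands through the closure carry independent colors $j_1, j_2$, which sum over the two web basis elements (the splitting of $\Lambda^j$ through the closure). This is why the intersection points double from $u$ to $2u$. Concretely, $\mathcal{D}(L_{u/v})$ should consist of the tangle Lagrangian together with the closing Lagrangian, which meets it in two points for each tangle generator. I would then check that evaluating the closed MOY graphs produces the denominators $\prod (q^2;q^2)_{d_i}$, which come from unknot factors and quantum dimension ratios. This is also where the non-Laurent behaviour appears: dividing a multinomial by $(q^2;q^2)_{d_1+\dots+d_{2u}}$ leaves a product of $1/(q^2;q^2)_{d_i}$.

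The second step is to identify the linear forms. For each intersection point, the $q$- and $a$-degrees from the tangle part (known from \cite{JHpI26}) plus the closure contribution must match a winding number. Choose a loop based at the intersection that runs along one Lagrangian to a fixed base point and back along the other. Its winding numbers about the three punctures, with weights assigned to each puncture, give $S_i$ and $A_i$. I would verify this by induction on the continued fraction of $u/v$, i.e.\ by adding twists. A twist acts on the Lagrangian by a half-twist mapping class and on the invariant by $R$-matrix factors, and both change the winding numbers and the degrees by the same local amount.

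The third step, which I expect to be the main obstacle, is the quadratic form $Q$. The diagonal entries $Q_{ii}$ and off-diagonal entries $Q_{ij}$ should come from loops in $\mathrm{Conf}_2$ based at the pair of points $(x_i, x_j)$, measuring winding about the punctures and about $\Delta$. The difficulty is that the cross terms between the two closure copies are not local to the tangle. I would first verify the formula for the Hopf link and $L_{2/1}$-type base cases directly. I would then show that a twist move changes $Q_{ij}$ by the linking of the corresponding pair of loops with $\Delta$. This uses that the half-twist acts on $H_1(\mathrm{Conf}_2)$ by a computable matrix, and it reduces the quadratic-form bookkeeping to the same inductive step as for $S$ and $A$.
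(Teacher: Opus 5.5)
Your proposal has a genuine gap at the quadratic form, and it also gets the mechanism for the doubling of generators wrong.

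\textbf{Doubling and denominators.} Both components of $L_{u/v}$ carry the same color $\Lambda^j$, and $P(L_{u/v})$ is a series in the single variable $x$. So there are no independent colors $j_1,j_2$ to sum over. Since $|L|=2$, there is also no rescaling by $(q^2;q^2)_j$, so the denominators cannot come from dividing a multinomial by $(q^2;q^2)_{\sum d_i}$.

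The route that works is to write $L_{u/v}=\text{Cl}(T\tau_{(u-v)/v})$, with $\tau_{(u-v)/v}$ of orientation $UP$ or $RI$. This tangle has $u$ generators, whereas $\tau_{u/v}$ has $u+v$, so ``two points per generator of $\tau_{u/v}$'' gives the wrong count. Substitute the Sto\v si\'c--Wedrich formulas of Lemma \ref{WSclosureformulas} into the tangle's quiver form. In the $UP$ case, the tangle's multinomials times ${j\brack k}_+\frac{(a^2q^{2-2j-2k};q^2)_k}{(q^2;q^2)_k}$ become $\frac{(a^2q^{2-2j-2k};q^2)_k}{\prod_{i\le u-v}(q^2;q^2)_{d_i}}\cdot\frac{(q^{2+2k};q^2)_{j-k}}{\prod_{i\le v}(q^2;q^2)_{d_{u-v+i}}}$. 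Applying Lemma \ref{algidentity} to each factor splits every $d_i$ as $\alpha_i+\beta_i$. This single step produces both the $2u$ indices and the factors $1/(q^2;q^2)_{d_i}$, and it yields an explicit matrix $Q_{u/v}$ known to work (Lemma \ref{WSclosureformulaL}).

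Geometrically, the check of $S$, $A$ and $Q_{ii}$ is then one step, not an induction. You pair each tangle point with two points of $\overline{\alpha_{u/v}}\cap\beta'$ via the untwisting isotopy and compare $\gamma^L_k$ with $\gamma^T_k$. The agreement holds only up to overall shifts (the $\mu_i$).

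\textbf{Step 3 is the serious problem.} Your induction on twists has nothing to act on:
\begin{itemize}
\item The twist rules act on the tangle's web-basis expansion, while the closure is a single nonlocal step applied at the end.
\item Twisting before closing alternates between knots ($u$ generators, multinomial coefficient) and links ($2u$ generators, $1/\prod(q^2;q^2)_{d_i}$), so there is no link-to-link inductive step.
\item A twist splits generators (under $TUP$ an inactive point becomes an active and an inactive one), so ``the change in $Q_{ij}$'' is not defined on a fixed index set.
\end{itemize}
More fundamentally, the geometric matrix $Q^L_{u/v}$ is \emph{not} entrywise equal to the matrix the closure computation produces; see the underlined entries in the $L_{8/3}$ example. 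Any argument aiming at entrywise agreement is therefore aiming at a false statement.

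The missing idea is quiver equivalence. For $UP$, one shows that $Q^L_{u/v}$:
\begin{itemize}
\item agrees with $Q_{u/v}$ on the top-left $2\times 2$ blocks;
\item agrees on the bottom-right blocks after the tangle matrix is first modified by the swaps of Lemma \ref{tQmswap} and the indices are reordered;
\item is obtained on the mixed blocks by transpositions $Q_{i'j''}\leftrightarrow Q_{i''j'}$ with $\xi_j\prec\xi_i$.
\end{itemize}
Each swap must be justified by Theorem \ref{permthm} from \cite{JKLN21}. That requires $\Lambda_a\Lambda_b=\Lambda_c\Lambda_d$, $Q_{ab}=Q_{cd}-1$, and $Q_{ai}+Q_{bi}=Q_{ci}+Q_{di}-\delta_{ci}-\delta_{di}$ for all $i$. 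The last condition holds only if the swaps are done in a specific order, because earlier swaps cancel the extra $\delta$-terms that later ones would otherwise produce. Without this, or some other proof that the generating function is invariant under these changes, your plan cannot conclude that the geometrically defined $Q$ yields $P(L_{u/v})$.
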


In \cite{SW21}, Sto\v si\'c and Wedrich proved the Links-Quivers Correspondence for rational links by showing that $P(K_{u/v})$ and $P(L_{u/v})$ may be written as in (\ref{PKuv}) and (\ref{PLuv}), which were called the ``quiver forms'' for the generating functions. Our contribution here is to see how the quiver forms relate to the geometry of the links, thus providing a ``geometrically preferred'' quiver.

The statement of Theorems \ref{knotthmintro} and \ref{linkthmintro} will be formalized and proved in Sections \ref{knotssec} and \ref{linkssec}, respectively. The proofs will heavily rely on \cite{JHpI26}, the prequel to this paper, which built upon the work of Wedrich \cite{W16} for rational tangles. In particular, following the procedure of Sto\v si\'c and Wedrich in \cite{SW21}, we will think of rational links as closures of rational tangles; then, the $j$-colored HOMFLY-PT polynomial of the closure of a rational tangle $\tau_{u/v}$ is computed by taking the closure of the skein module evaluation $\langle\tau_{u/v}\rangle_j$ for the tangle in an appropriately defined ``colored HOMFLY-PT'' skein module. In particular, this can be thought of as the colored $\mathfrak{sl}_N$ skein module for $N\gg 0$, where the dependence on $N$ is removed by setting $a=q^N$. See \cite{JHpI26} for further details.

\begin{figure}
    \begin{tikzpicture}
        \node at (0,0) {\includegraphics[height=3.5cm]{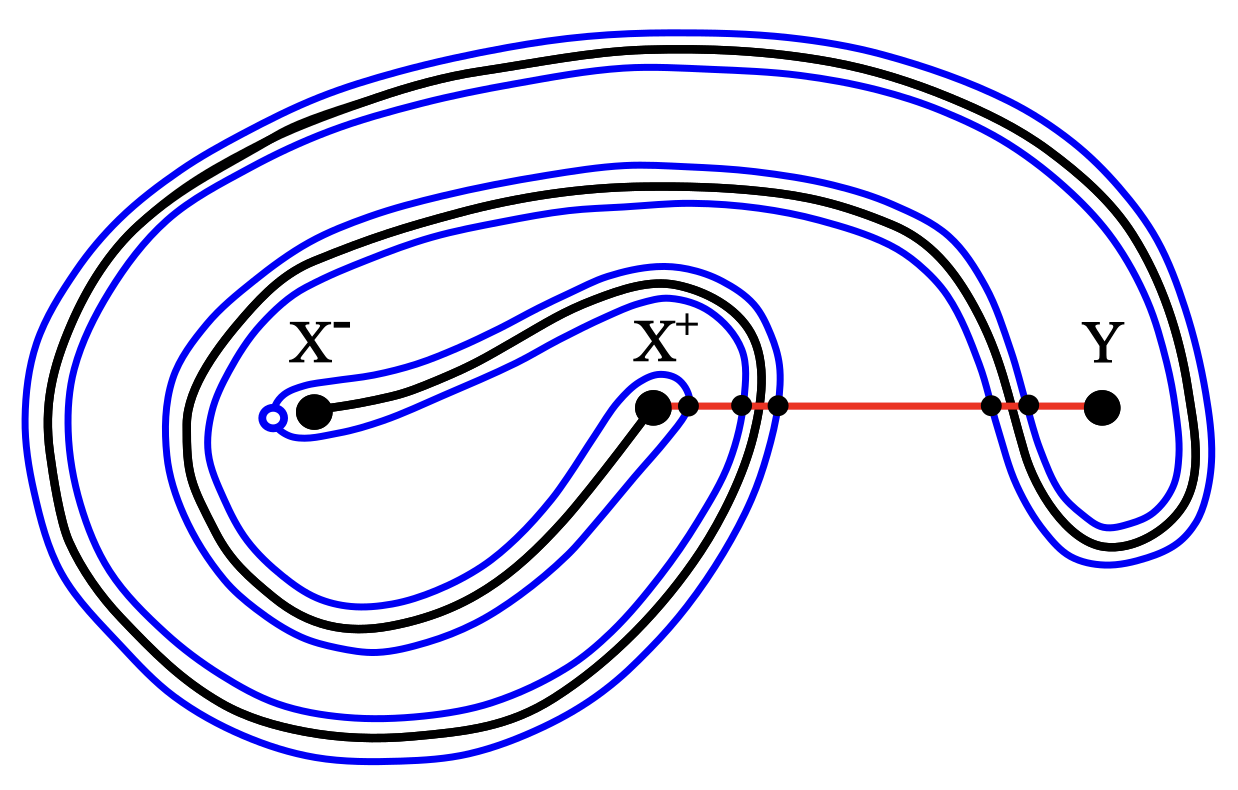}};
        \node at (7,0) {\includegraphics[height=4cm]{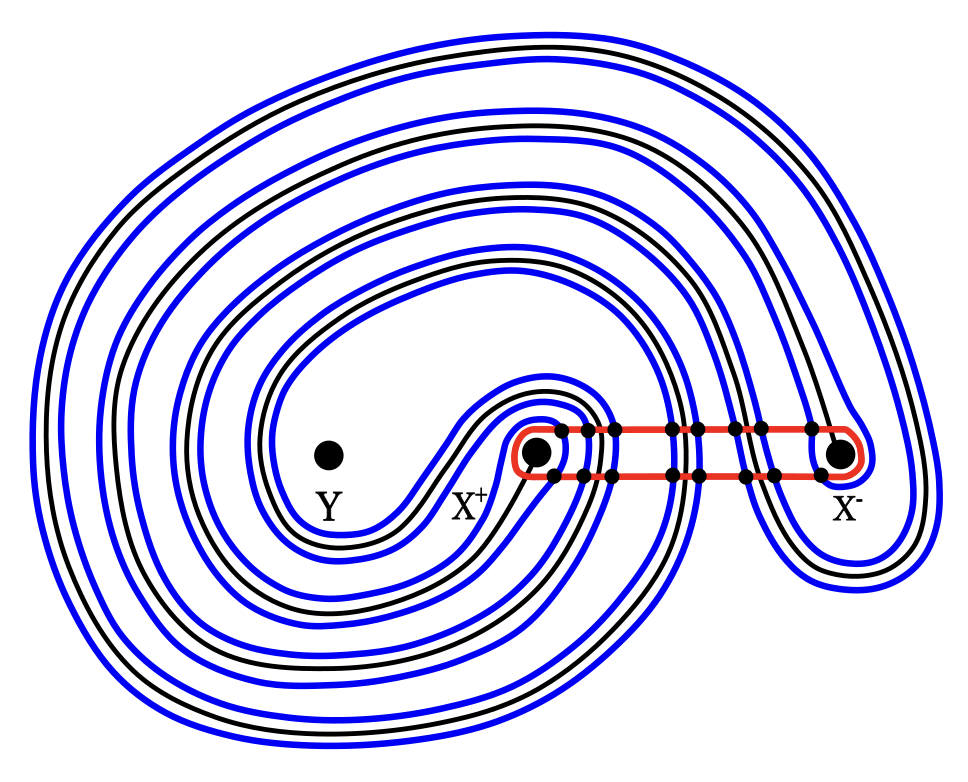}};
    \end{tikzpicture}
    \caption{Left: $\mathcal{D}(K_{5/2})$. Right: $\mathcal{D}(L_{8/3}).$}
    \label{Dfigintro}
\end{figure}

Especially important to contemporary work in the study of knot invariants is categorification. After Khovanov's original categorification of the Jones polynomial in \cite{Kh00} (see also \cite{BN02}), mathematicians started seeking categorifications of other quantum invariants. Khovanov's original result was extended to tangles by Bar-Natan in \cite{BN05} and to the (uncolored) $\mathfrak{sl}_N$ invariants by Khovanov and Rozansky in \cite{KR08a}. Around the same time, Khovanov and Rozansky categorified the HOMFLY-PT polynomial in \cite{Kh07} and \cite{KR08b}, and this was then extended to colored HOMFLY-PT homology in \cite{MSV11} by Makaay, Sto\v si\'c, and Vaz, and by Webster and Williamson in \cite{WW17}. An excellent exposition to knot polynomials and categorification can be found in \cite{R21} by Rasmussen. 

Colored HOMFLY-PT homology is triply graded, so if we let $C\mathcal{H}_{i,j,k}^{\bigwedge^r}$ denote the portion of the $r$-colored HOMFLY-PT chain complex, defined up to homotopy, in $(q,a,t)$-grading $(i,j,k)$, where $t$ is the homological grading, then the Poincar\'e polynomial is
\begin{equation}
\label{Poincarepoly}
    \mathcal{P}_L^{\bigwedge^r}(q,a,t)=\sum_{i,j,k}q^ia^jt^k \text{dim}(C\mathcal{H}_{i,j,k}^{\bigwedge^r}).
\end{equation}
Naturally, setting $t=-1$ corresponds to taking the graded Euler characteristic, which gives the $r$-colored HOMFLY-PT polynomial $P_L^{\bigwedge^r}(q,a)$. An extension to Conjecture \ref{LQC1intro} for Poincar\'e polynomials is also given in \cite{K19}, which predicts that the colored HOMFLY-PT homology of $L$ can be represented by a chain complex whose homological gradings are determined by the diagonal of $Q_L$. If
\begin{equation}
    \mathcal{P}(L)=\sum_{r\geq 0} \mathcal{P}_L^{\bigwedge^r}(q,a,t)x^j
\end{equation}
is the generating function for the Poincar\'e polynomials and
\begin{equation}
    \widehat{\mathcal{P}}(L)= \sum_{r\geq 0} \left(\prod_{i=1}^r (1-q^{2i})\right)^{|L|-2}\mathcal{P}_L^{\bigwedge^r}(q,a,t)x^j
\end{equation}
is the rescaled version, then we may express the extension of the Links-Quivers Correspondence to Poincar\'e polynomials by Conjecture \ref{Poincarepolyconj} below, which is a reformulation of Conjecture 4.4 in \cite{K19}. In particular, we are applying a change of variables to the quadruply graded Poincar\'e polynomials in the the conjecture from \cite{K19} to give an appropriate form for the antisymmetric-colored HOMFLY-PT homology.

\begin{conjecture}
\label{Poincarepolyconj}
    Given a link $L$ with the exponential growth property, the colored HOMFLY-PT homologies may be represented by chain complexes such that
    \begin{equation}
        \widehat{\mathcal{P}}(L)=\sum_{\textbf{d}=(d_1,...,d_m)\in \mathbb{N}^m} \frac{q^{S\cdot \textbf{d}+\textbf{d}\cdot Q \cdot\textbf{d}^T}a^{A\cdot \textbf{d}}t^{T\cdot\textbf{d}}}{\prod_{i=1}^m(q^2;q^2)_{d_i}}x^{d_1+...+d_m}
    \end{equation}
    For some $m$, where $T=(t_1,...,t_m)\in\mathbb{Z}^m$ such that $t_i=-Q_{ii}$.
\end{conjecture}

In \cite{SW21}, Sto\v si\'c and Wedrich showed that the uncolored HOMFLY-PT homology of rational knots can be represented by a chain complex whose homological gradings are given by the diagonal entries of $-Q$. Naturally, being able to address the conjecture requires an understanding of the whole colored HOMFLY-PT chain complexes, involving the differentials. We will not investigate this here, but we will address the conjecture in terms of our geometric framework when possible. 



\bigskip
\noindent
\textbf{Acknowledgments} The author would like to thank his PhD advisor, Jacob Rasmussen, for his helpful comments and support while writing this paper. He would also like to thank Piotr Kucharski, Marko Sto\v si\'c, Piotr Su\l{}kowski, and Paul Wedrich for helpful comments on an earlier draft.

\bigskip
\noindent
\textbf{Structure of Paper} Section \ref{bgrd} will contain some background information necessary for this paper. This will include some further details on quantum algebra, rational tangles and links, colored HOMFLY-PT invariants, and the Links-Quivers Correspondence. Section \ref{LQCTsec} will be devoted to providing a quick review of the necessary constructions and results from \cite{JHpI26}, the prequel to this paper. In sections \ref{knotssec} and \ref{linkssec}, we will extend the results of \cite{JHpI26} to knots and links, respectively, describing in detail the geometric setup in the process. Examples will be provided at the end of sections \ref{knotssec} and \ref{linkssec} to demonstrate the theory just presented. We will conclude with section \ref{sympolysec}, where we will say a few words about the symmetric-colored invariants.

\section{Background}
\label{bgrd}



\subsection{Quantum Algebra}

We begin by introducing some basic definitions and notation from quantum algebra that will be needed later.

\begin{definitionN}
    The \textit{Pochhammer symbols}, in their most general form, may be defined as $(x;y)_i=\prod_{j=0}^{i-1}(1-xy^j)$.

\end{definitionN}

The most common type of Pochhammer symbols that we will use are when $x=y=q^2$, or
\[
(q^2;q^2)_i=\prod_{j=1}^i(1-q^{2i}).
\]
We will, however, encounter other Pochhammer symbols as well, including
\[
(a^2q^{2-2j-2k};q^2)_k=\prod_{i=1}^k(1-a^2q^{2i-2j-2k})
\]

We can use Pochhammer symbols to define quantum multinomials.

\begin{definitionN}
    Suppose that $d_1,...,d_m\in\mathbb{Z}^{\geq0}$ are such that $d_1+...+d_m=d$. Then, we may define the \textit{quantum multinomial} as the polynomial
    \begin{equation*}
    {d\brack d_1,...,d_m}=\frac{(q^2;q^2)_d}{(q^2;q^2)_{d_1}...(q^2;q^2)_{d_m}}\in \mathbb{Z}[q^2].
\end{equation*}
\end{definitionN}

Naturally, the quantum binomials are the case where $m=2$, and we adopt the notation of \cite{SW21} to write
\begin{equation*}
    {j\brack d_1}_+={j\brack d_1,j-d_1}=\frac{(q^2;q^2)_j}{(q^2;q^2)_{d_1}(q^2;q^2)_{j-d_1}}.
\end{equation*}

Finally, we present the following lemma from \cite{K19}, which is a helpful tool for many computations in quantum algebra. 

\begin{lemmaN}
\label{algidentity}
    For $d_1,...,d_m\geq 0$, the following holds:
    \begin{equation*}
        \frac{(x^2;q^2)_{d_1+...+d_m}}{(q^2;q^2)_{d_1}...(q^2;q^2)_{d_m}}=\sum_{\substack{\alpha_1+\beta_1=d_1\\...\\\alpha_m+\beta_m=d_m}}\frac{(-x^2q^{-1})^{\alpha_1+...+\alpha_m}q^{\alpha_1^2+...+\alpha_m^2+2\sum_{i=1}^{m-1}\alpha_{i+1}(d_1+...+d_i)}}{(q^2;q^2)_{\alpha_1}...(q^2;q^2)_{\alpha_m}(q^2;q^2)_{\beta_1}...(q^2;q^2)_{\beta_m}}.
    \end{equation*}
\end{lemmaN}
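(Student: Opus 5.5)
Since the statement is a finite identity of formal expressions in $x$ and $q$, I will prove it by induction on $m$. The engine is the $q$-binomial theorem (Cauchy's identity) in the form
\[
\frac{(x^2;q^2)_n}{(q^2;q^2)_n}=\sum_{\alpha+\beta=n}\frac{(-x^2q^{-1})^{\alpha}q^{\alpha^2}}{(q^2;q^2)_\alpha(q^2;q^2)_\beta},
\]
which is exactly the case $m=1$ of Lemma \ref{algidentity}. First I will check this base case directly: $(x^2;q^2)_n=\sum_{\alpha}{n\brack \alpha}_+(-1)^\alpha x^{2\alpha}q^{\alpha(\alpha-1)}$, and $(-x^2q^{-1})^\alpha q^{\alpha^2}=(-1)^\alpha x^{2\alpha}q^{\alpha^2-\alpha}$. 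Dividing by $(q^2;q^2)_n$ then gives the displayed sum. The expansion of $(x^2;q^2)_n$ itself follows from the standard Pascal-type recursion ${n+1\brack \alpha}_+={n\brack \alpha}_++q^{2(n+1-\alpha)}{n\brack \alpha-1}_+$, applied together with $(x^2;q^2)_{n+1}=(x^2;q^2)_n(1-x^2q^{2n})$.

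For the inductive step, write $D=d_1+\dots+d_{m-1}$. I will use the splitting
\[
(x^2;q^2)_{D+d_m}=(x^2;q^2)_D\,\bigl((x^2q^{2D});q^2\bigr)_{d_m},
\]
which is immediate from the definition of the Pochhammer symbol. Dividing by $(q^2;q^2)_{d_1}\cdots(q^2;q^2)_{d_m}$ splits the left side into two factors:
\[
\frac{(x^2;q^2)_D}{(q^2;q^2)_{d_1}\cdots(q^2;q^2)_{d_{m-1}}}\qquad\text{and}\qquad\frac{(x^2q^{2D};q^2)_{d_m}}{(q^2;q^2)_{d_m}}.
\]
To the first factor I apply the induction hypothesis. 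To the second I apply the base case with $x^2$ replaced by $x^2q^{2D}$, which produces the terms $(-x^2q^{-1})^{\alpha_m}q^{\alpha_m^2}q^{2\alpha_m D}/\bigl((q^2;q^2)_{\alpha_m}(q^2;q^2)_{\beta_m}\bigr)$.

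Multiplying the two sums gives exactly the claimed sum. The only new cross term is $q^{2\alpha_m(d_1+\dots+d_{m-1})}$, and this is precisely the $i=m-1$ summand of the exponent $2\sum_{i=1}^{m-1}\alpha_{i+1}(d_1+\dots+d_i)$. The remaining summands come from the induction hypothesis.

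The only place needing care is the bookkeeping of the power of $q$ in the base case, that is, confirming the normalization $(-x^2q^{-1})^\alpha q^{\alpha^2}=(-1)^\alpha x^{2\alpha}q^{\alpha^2-\alpha}$ against the expansion of $(x^2;q^2)_n$. Once that matches, the induction is purely formal.
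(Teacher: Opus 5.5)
Your proof is correct and complete. The base case is the finite $q$-binomial theorem: the recursion ${n+1\brack \alpha}_+={n\brack \alpha}_++q^{2(n+1-\alpha)}{n\brack \alpha-1}_+$ and the normalization $(-x^2q^{-1})^\alpha q^{\alpha^2}=(-1)^\alpha x^{2\alpha}q^{\alpha(\alpha-1)}$ both check out. The factorization $(x^2;q^2)_{D+d_m}=(x^2;q^2)_D\,(x^2q^{2D};q^2)_{d_m}$ then produces exactly the new cross term $q^{2\alpha_m(d_1+\dots+d_{m-1})}$.

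The paper gives no proof of this lemma; it imports it from \cite{K19}. Your argument, which applies the $q$-binomial identity to each shifted Pochhammer factor, is the standard proof of this identity.
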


\subsection{Rational Tangles}


The \textit{(positive) rational tangles} are an infinite family of oriented tangles with two strands. They can be defined inductively by starting with the trivial tangle consisting of two untangled and upwards-oriented strands, denoted $\tau_{0/1}$, and then applying a finite sequence of top and right twists. See Figure \ref{trivialandtwists}. 

\begin{figure}
    \raisebox{0pt}{\includegraphics[height=3cm, angle=0]{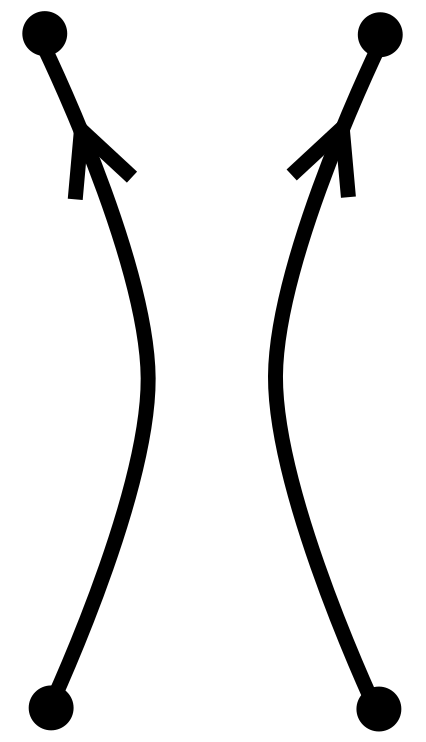}}\qquad \qquad \raisebox{0pt}{\includegraphics[height=3cm, angle=0]{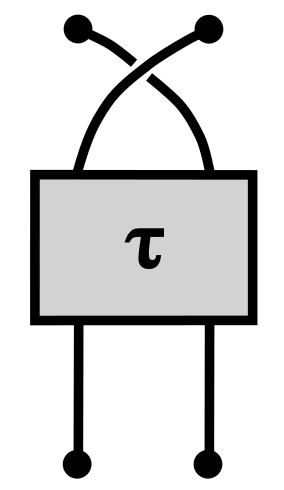}} \qquad \qquad \raisebox{0pt}{\includegraphics[height=3cm, angle=0]{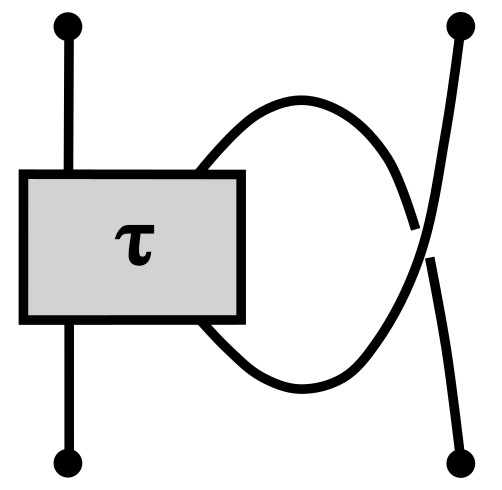}}
    \caption{Left: the trivial tangle, $\tau_{0/1}$. Center: top twist rule. Right: right twist rule.}
    \label{trivialandtwists}
\end{figure}

Associated to a positive rational tangle is a reduced fraction $u/v\geq 0$, which is known to be an isotopy invariant for the tangle. This fraction can be computed from the sequence of top and right twists applied to $\tau_{0/1}$. We will write $\tau_{u/v}$ for the rational tangle associated with $u/v$; then, we may write the effects of the twists on $u/v$ as follows:
\[
T\tau_{u/v}=\tau_{(u+v)/v}
\qquad\qquad 
R\tau_{u/v}=\tau_{u/(u+v)},
\]
where $T$ and $R$ denote the top and right twists, respectively. 

To construct a rational tangle $\tau_{u/v}$, we can always start by applying a top twist $T$ to $\tau_{0/1}$ since $R\tau_{0/1}=\tau_{0/1}$. If $u/v\geq 1$, then the final twist is a top twist, so we can write $T^{a_n}R^{a_{n-1}}...R^{a_2}T^{a_1}\tau_{0/1}=\tau_{u/v}$ with each $a_i>0$ and $n$ odd, where the rules for $T$ and $R$ give 
\[
\frac{u}{v}=a_n+\frac{1}{a_{n-1}+\frac{1}{a_{n-2}+...+\frac{1}{a_2+\frac{1}{a_1}}}}.
\]
Alternatively, the final twist is a right twist if $0<u/v<1$, so we have $R^{a_n}T^{a_{n-1}}...R^{a_2}T^{a_1}\tau_{0/1}=\tau_{u/v}$ in this case, so $n$ is even and
\[
\frac{u}{v}=\frac{1}{a_n+\frac{1}{a_{n-1}+...+\frac{1}{a_2+\frac{1}{a_1}}}}.
\]
Thus, given a non-negative rational number, we can express it in terms of a continued fraction expansion in one of these forms, which then determines the particular sequence of top and right twists needed to get $\tau_{u/v}$. In particular, the continued fraction expansion should have odd length if and only if $u/v\geq 1$, which can always be guaranteed.

Below are a couple of examples.
\[
\begin{tikzpicture}
    \node at (0,0) {$\tau_{3/1}=T^3\tau_{0/1}=$};
    \node at (2.8,0) {\includegraphics[height=2.5cm]{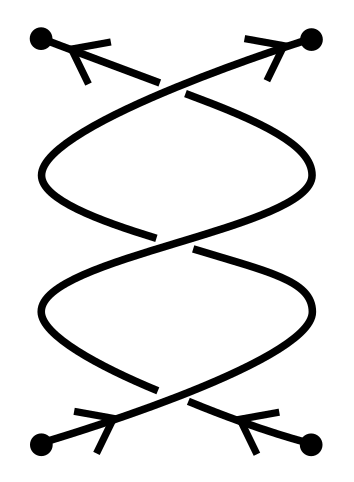}};
    \node at (7,0) {$\tau_{5/2}=T^2RT\tau_{0/1}=$};
    \node at (10,0) {\includegraphics[height=2.5cm]{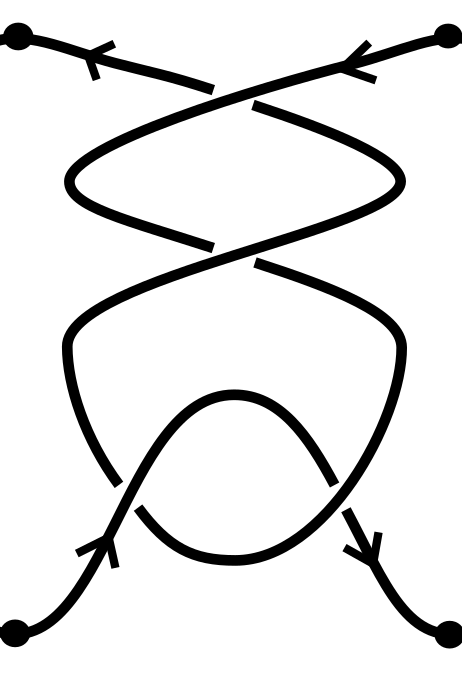}};
\end{tikzpicture}
\]
These examples demonstrate how different sequences of top and right twists can result in different orientations of the strands at the boundary. There are only three possible orientations,  and we will denote them by $UP, OP,$ and $RI$ to follow the conventions of Sto\v si\'c and Wedrich \cite{SW21}. The three possible orientations are shown below.

\[
\vcenter{\hbox{\includegraphics[height=3cm,angle=0]{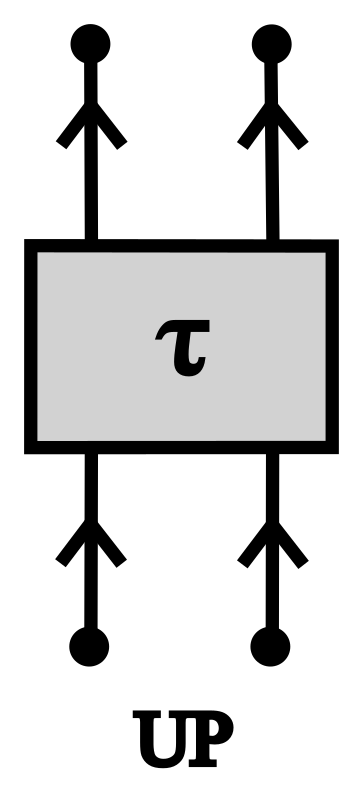}}} \qquad \qquad 
\vcenter{\hbox{\includegraphics[height=3cm,angle=0]{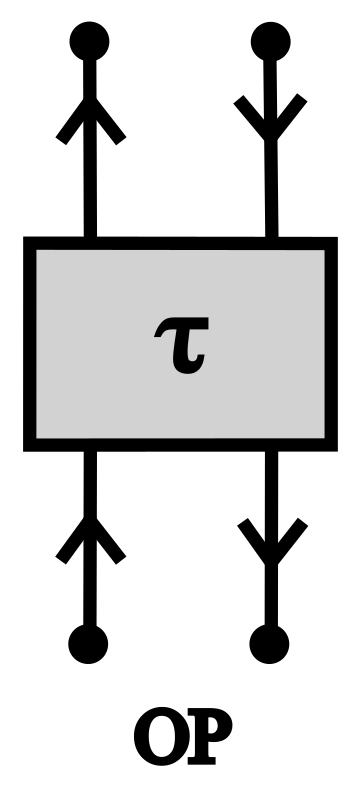}}} \qquad \qquad
\vcenter{\hbox{\includegraphics[height=3cm,angle=0]{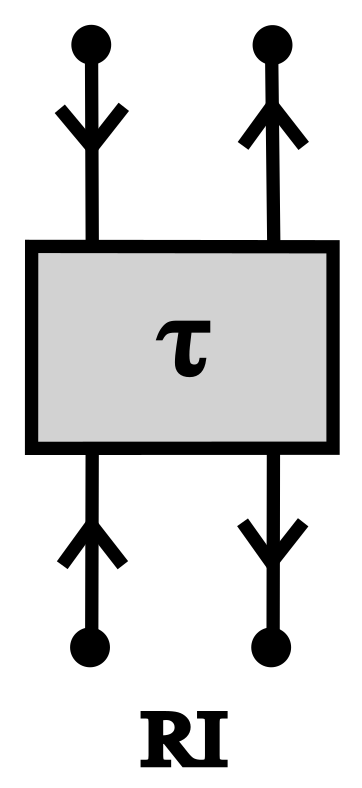}}}
\]
Thus, we see that $\tau_{3/1}$ has $UP$ orientation and $\tau_{5/2}$ has $OP$ orientation. For more details on rational tangles, see Section 2.2 of \cite{JHpI26}.

\subsection{Closures of Rational Tangles}

There are two closure operations on rational tangles that give the rational links. The first, known as the \textit{numerator closure}, is where the NW/SW and NE/SE points are glued together. This may also be called the \textit{N/S closure} because the tangle endpoints are being attached to each other vertically. The other closure operation is known as the \textit{denominator closure}, or the \textit{E/W closure}, where the NW/NE and SW/SE endpoints are glued together.

\begin{definitionN}
    A \textit{(positive) rational link} is a link obtained by taking a closure of a (positive) rational tangle.
\end{definitionN}

Naturally, orientations impose restrictions on which closure operations can be applied to particular rational tangles. In particular, $\tau_{u/v}$ must have $UP$ or $OP$ orientation in order take its numerator closure; similarly, $\tau_{u/v}$ must have $RI$ or $OP$ orientation to take its denominator closure. 

Furthermore, if we ignore orientations for a moment, it is easy to see that taking closures of rational tangles often results in nugatory crossings. If $u/v>1$, so that $\tau_{u/v}$'s final twist is a top twist, then taking its denominator closure will result in nugatory crossings which could be resolved by applying Reidemeister I moves. Similarly, if $u/v<1$, then taking the numerator closure results in nugatory crossings. Naturally, we wish to avoid creating these nugatory crossings; consequently, we only take the numerator closure when $u/v\geq 1$ (if the orientation allows it), and we only take the denominator closure when $u/v<1$ (orientation permitting).

This discussion illustrates how rational links may be given by closures of many different rational tangles. We will not say more about this now, but the curious reader can consult \cite{Ku96} to learn more. However, we will need the following lemma on rational links, which was proved in \cite{SW21}.

\begin{figure}
\begin{tikzpicture}
    \node at (-2.5,0) {\includegraphics[height=5cm]{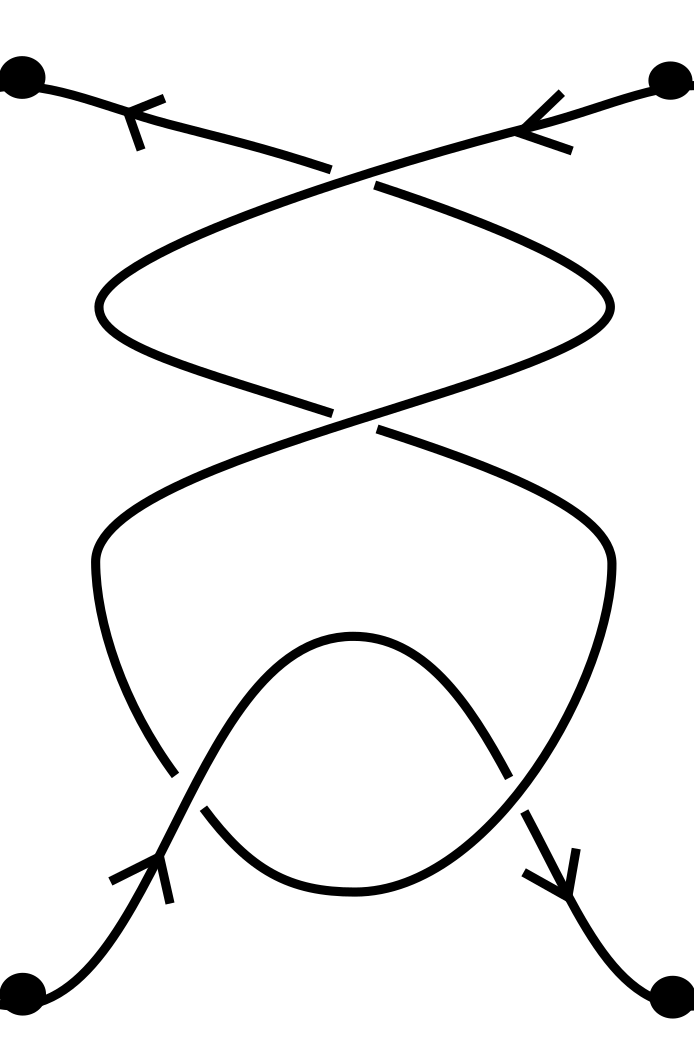}};
    \node at (0,0) {$\xrightarrow{\text{Cl}(-)}$};
    \node at (4,0) {\includegraphics[height=5cm]{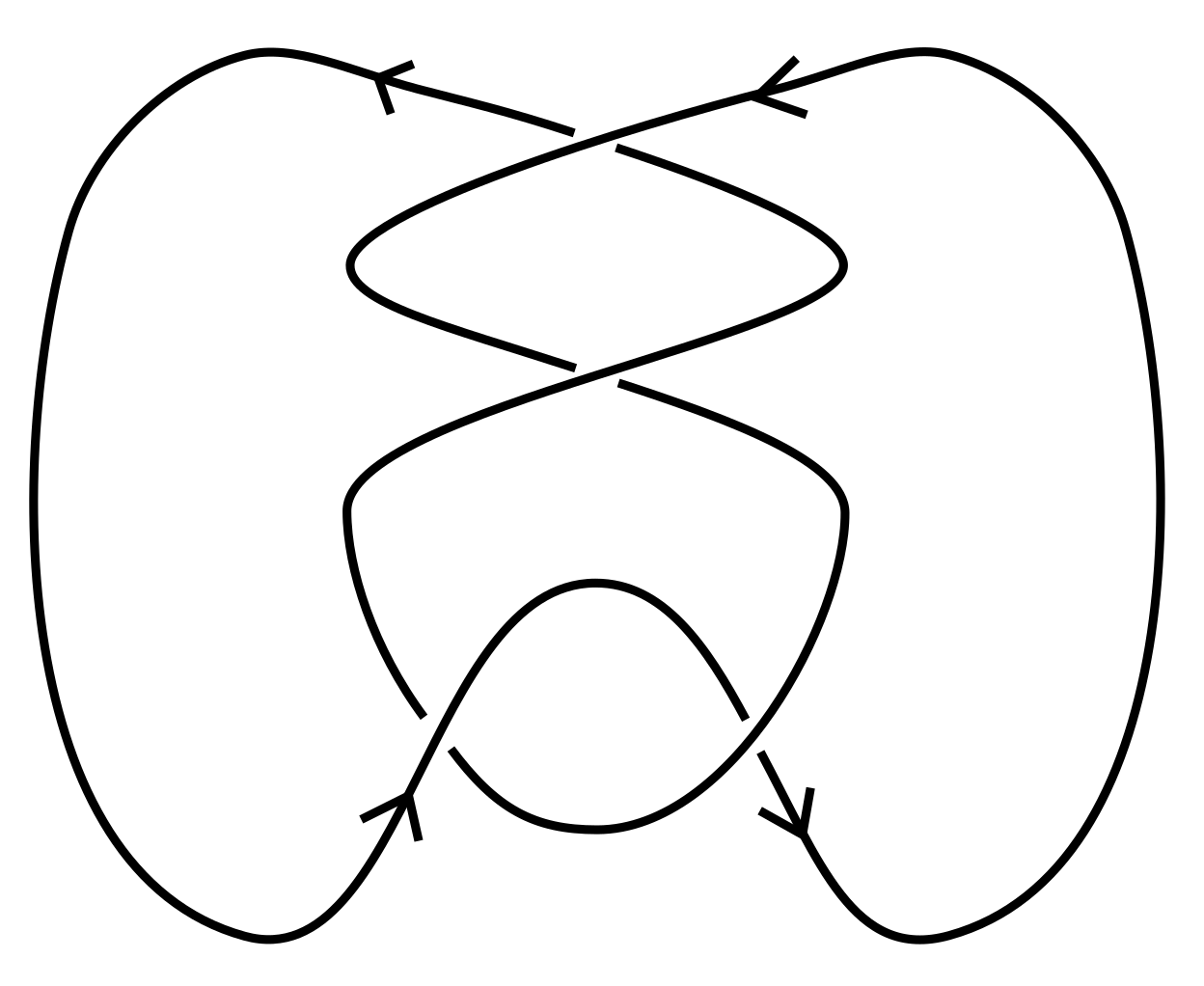}};
\end{tikzpicture}
    \caption{The figure-8 knot, $K_{5/2}$, taken as $\text{Cl}(\tau_{5/2})$}
    \label{K52closurefig}
\end{figure}

\begin{lemmaN}
\label{ratlinkslem}
    Any nontrivial positive rational link may be obtained by taking the numerator closure of a rational tangle $\tau_{u/v}$ with $u/v>1$ having $UP$ or $OP$ orientation. Furthermore, the closure is a knot if and only if $u$ is odd, and in this case, $\tau_{u/v}$ has $UP$ orientation if $v$ is odd and $OP$ orientation if $v$ is even. 
\end{lemmaN}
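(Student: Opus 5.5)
We would prove Lemma \ref{ratlinkslem} by combining the classical classification of rational links via continued fractions with a parity bookkeeping argument that tracks how the boundary orientation and the connectivity of the tangle strands change under top and right twists.

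\textbf{Step 1: existence of a numerator closure with $u/v>1$.} Every nontrivial rational link is, by definition, a closure of some $\tau_{u/v}$. Rotating a rational tangle by $90^\circ$ interchanges the numerator and denominator closures and sends $u/v$ to $v/u$. We may therefore assume the closure is a numerator closure. The discussion preceding the lemma shows that a numerator closure with $u/v<1$ produces nugatory crossings. Removing these by Reidemeister I moves amounts to deleting the final block of right twists. Iterating this reduction, we arrive at a numerator closure of some $\tau_{u/v}$ with $u/v\geq 1$, and nontriviality rules out $u/v=1$. Since the numerator closure is defined, the orientation of this tangle must be $UP$ or $OP$.

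\textbf{Step 2: connectivity and component count.} Label the four endpoints NW, NE, SW, SE. Each twist acts on $u/v$ by $T:(u,v)\mapsto(u+v,v)$ and $R:(u,v)\mapsto(u,u+v)$. We claim that the pairing of endpoints by strands depends only on $(u \bmod 2, v\bmod 2)$, and we prove this by induction on the twist sequence starting from $\tau_{0/1}$. The parity classes $(0,1)$, $(1,1)$ and $(1,0)$ are permuted by $T$ and $R$ in exactly the way the corresponding twist permutes the three possible endpoint pairings. One then checks the three base configurations directly:
\begin{itemize}
\item in $\tau_{0/1}$ the strands connect NW–SW and NE–SE, which is class $(0,1)$;
\item class $(1,1)$ gives NW–SE and NE–SW;
\item class $(1,0)$ gives NW–NE and SW–SE.
\end{itemize}
The numerator closure glues NW to SW and NE to SE. For the $(0,1)$ pairing this yields two components. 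For the other two pairings it yields a single component. Hence the closure is a knot if and only if $u$ is odd.

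\textbf{Step 3: orientation when $u$ is odd.} The orientation type $UP$, $OP$ or $RI$ is determined by the strand pairing together with the direction of flow along each strand. Twisting preserves orientation consistency, so we run the same induction on parity classes, now recording orientations. If $v$ is odd, the strands connect diagonally, each running from bottom to top, which forces $UP$. If $v$ is even, the strands cap off at the top and bottom, which forces opposite orientations at the top endpoints, namely $OP$. The main obstacle is making this orientation induction precise. For instance, one must verify that a top twist applied to a $UP$ tangle of class $(1,1)$ produces an $OP$ tangle of class $(1,0)$ compatibly with the chosen conventions. We would verify this directly from the twist pictures in Figure \ref{trivialandtwists}, and cross-check against the examples $\tau_{3/1}$ ($UP$) and $\tau_{5/2}$ ($OP$).
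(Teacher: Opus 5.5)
Your Step 2 is correct: the strand pairing depends only on $(u \bmod 2, v \bmod 2)$, so the numerator closure is a knot exactly when $u$ is odd. The problems are in Steps 1 and 3.

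In Step 1, a $90^\circ$ rotation does not send $\tau_{u/v}$ to $\tau_{v/u}$. It replaces every crossing by the opposite crossing, so it produces the \emph{negative} rational tangle of fraction $-v/u$. Getting a positive tangle of fraction $v/u$ requires an additional mirror, which changes the link. Your conversion of denominator closures into numerator closures of positive tangles is therefore not justified as written. It can be repaired as follows:
\begin{enumerate}
\item strip the nugatory top twists so that $u<v$;
\item view $D(\tau_{u/v})$ as the numerator closure of the rotated tangle;
\item add one right twist, which is nugatory for the numerator closure, to move the fraction from $-v/u$ to $v/(v-u)>1$;
\item apply Conway's classification of rational tangles to identify the result with $\tau_{v/(v-u)}$.
\end{enumerate}

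More seriously, you treat the orientation as a function of the parities of $u$ and $v$, and it is not. The tangles carry the orientation transported from $\tau_{0/1}$ along the twist word. The six states in Lemma \ref{twistdiagram} correspond to the six elements of $SL_2(\mathbb{F}_2)$ obtained by reducing the twist word mod $2$, whereas the parity class of $(u,v)$ records only one column. Concretely, the paper records that $\tau_{3/1}$ is $UP$ while $\tau_{5/3}$ is $RI$, although both have diagonal strands. Likewise $\tau_{5/2}$ is $OP$ while $\tau_{5/4}=TR^3T\tau_{0/1}$ is $RI$. Consequences for your Step 3:
\begin{enumerate}
\item The induction on parity classes you plan cannot work.
\item ``Each running from bottom to top'' is an unjustified assumption.
\item Your sample check is wrong in any case: $T$ sends class $(1,1)$ to $(0,1)$ and keeps $UP$; it is $R$ that takes $(UP,(1,1))$ to $(OP,(1,0))$.
\end{enumerate}

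The ``furthermore'' clause in fact needs no induction. Once the tangle is $UP$ or $OP$, a diagonal pairing rules out $OP$ (one strand would have both ends outgoing), and a horizontal pairing rules out $UP$.

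The real content of the lemma is thus the existence of a representative with $u/v>1$ whose transported orientation is $UP$ or $OP$. This is exactly what Step 1 leaves unproved. In the numerator case it holds, because right twists preserve the set $\{UP,OP\}$. After converting a denominator closure, however, you must still check that the orientation carried through the rotation and the extra right twist agrees with the one $\tau_{v/(v-u)}$ inherits from its own twist word. It does, as one can verify by comparing the mod-$2$ twist matrices, but ``since the numerator closure is defined'' does not give this.
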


This lemma conveniently restricts the types of tangles and closures we need to concern ourselves with when working with rational links.

\begin{definitionN}
    Given a rational tangle $\tau_{u/v}$ with $u/v>1$ and $UP$ or $OP$ orientation, let $\text{Cl}(\tau_{u/v})$ denote its numerator closure. If $\text{Cl}(\tau_{u/v})$ is a knot (so $u$ is odd), write $K_{u/v}$ for $\text{Cl}(\tau_{u/v})$; otherwise, $u$ is even and the closure is a 2-component link, so we write $L_{u/v}$ for $\text{Cl}(\tau_{u/v})$.
\end{definitionN}

Next, we will discuss the colored HOMFLY-PT polynomials for rational links and establish the conventions we will use for working with them.

\subsection{Colored HOMFLY-PT Polynomials of Rational Links}
\label{Cpolysec}

First, we need to briefly review the skein module evaluations of rational tangles. We will ultimately think of the colored HOMFLY-PT polynomials as the closures of skein module evaluations in an appropriately defined ``colored HOMFLY-PT skein module.'' For a rational tangle $\tau_{u/v}$, we write $\langle \tau_{u/v}\rangle_j$ for its $j$-colored HOMFLY-PT skein module evaluation, so, in particular, the colored HOMFLY-PT polynomial of $\text{Cl}(\tau_{u/v})$ will be computed by taking the closure of $\langle \tau_{u/v}\rangle_j$. This will ultimately evaluate to a Laurent polynomial for knots and a rational expression for links with multiple components.

The skein module evaluation $\langle \tau_{u/v}\rangle_j$ is a linear combination over $\mathbb{C}[q^{\pm1},a^{\pm 1}]$ of basis webs $X[j,k]$, where $X\in\{UP,OP,RI\}$ denotes the orientation of $\tau_{u/v}$. Figure \ref{basiswebs} shows how the webs are drawn for each $X$. We have $0\leq k \leq j$ and $0$-labeled edges are deleted. 

\begin{figure}
    \raisebox{0pt}{\includegraphics[height=2.5cm, angle=0]{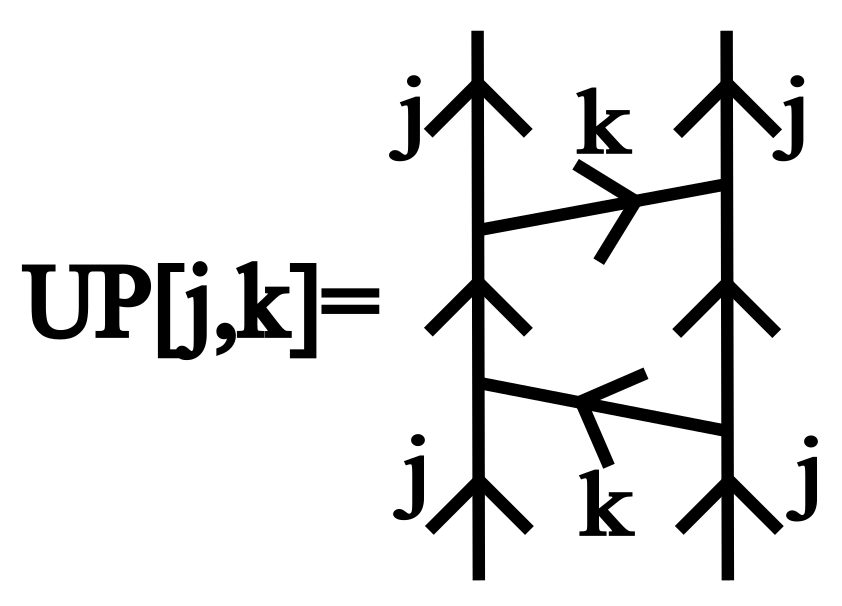}}\qquad \qquad \raisebox{0pt}{\includegraphics[height=2.5cm, angle=0]{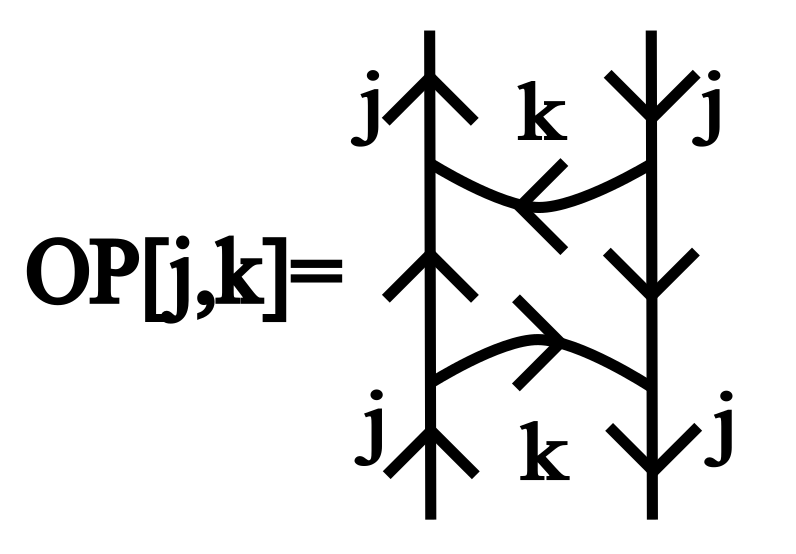}} \qquad \qquad \raisebox{0pt}{\includegraphics[height=2.5cm, angle=0]{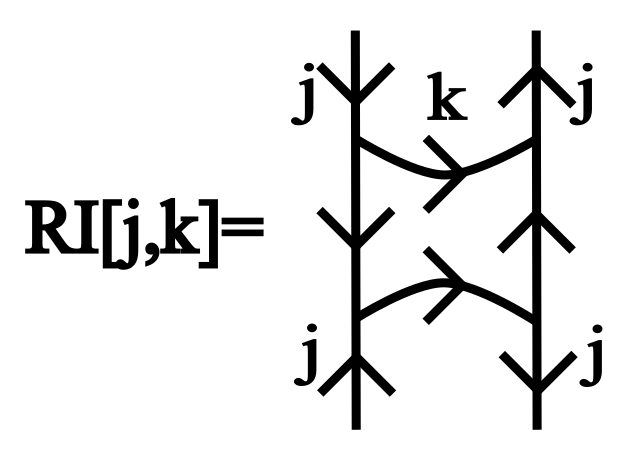}}
    \caption{The three types of basis webs}
    \label{basiswebs}
\end{figure}

As we discussed in \cite{JHpI26}, $\langle \tau_{u/v}\rangle_j$ can be computed inductively by applying the twist rules of Wedrich \cite{W16}. In particular, one can start with $\langle \tau_{0/1}\rangle _j= UP[j,0]$, and then apply the same sequence of top and right twists used to build $\tau_{u/v}$, applying the following rules at each step:
\begin{enumerate}
  \item $\text{TUP}[j,k]= \sum_{h=k}^j (-q)^hq^{k^2}{h\brack k}_+\text{UP}[j,h]$ \label{TUPrule}
\item  $\text{TOP}[j,k]= \sum_{h=k}^j (-q)^ha^kq^{k(k-2j)}{h\brack k}_+\text{RI}[j,h]$
\item $\text{TRI}[j,k]= \sum_{h=k}^j (-q)^ha^hq^{q^2-2hj}{h\brack k}_+\text{OP}[j,h]$
\item $\text{RUP}[j,k]= \sum_{h=0}^k (-q)^ha^hq^{k(2j-k)-2hj}{j-h\brack k-h}^-\text{OP}[j,h]$
\item $\text{ROP}[j,k]= \sum_{h=0}^k (-q)^ha^kq^{-k^2}{j-h\brack k-h}^-\text{UP}[j,h]$
\item $\text{RRI}[j,k]= \sum_{h=0}^k (-q)^hq^{-k(k-2j)}{j-h\brack k-h}^-\text{RI}[j,h].$
\end{enumerate}

In \cite{JHpI26}, we presented this information in a slightly different form. In particular, we were primarily interested in the Poincar\'e polynomials $\langle\langle -\rangle\rangle_j$ associated with rational tangles, which were linear combinations of the $X[j,k]$ over $\mathbb{C}[q^{\pm1},a^{\pm1},t^{\pm1}]$. However, $\langle\tau_{u/v}\rangle_j=\langle\langle\tau_{u/v}\rangle\rangle_j|_{t=-1}$, so the twist rules above follow from the ones in \cite{JHpI26} by substituting $t=-1$. Refer to \cite{JHpI26} for more details on skein module evaluations. 

There are many things one can say about colored HOMFLY-PT polynomials of links, but we will say here only what is necessary. Although they may be defined directly using representation theory, we will compute them by taking closures of skein module evaluations for rational tangles, as we already mentioned. In particular, if
\[
\langle \tau_{u/v}\rangle_j=\sum_{k=0}^j C_k\,X[j,k], \qquad\qquad C_k\in\mathbb{C}[q^{\pm1},a^{\pm1}],
\]
then taking the closure of $\langle \tau_{u/v}\rangle_j$ means
\[
\text{Cl}(\langle\tau_{u/v}\rangle_j) = \sum_{k=0}^j C_k \text{Cl}(X[j,k]),
\]
where $\text{Cl}(X[j,k])\in \mathbb{C}(q)[a^{\pm1}]$ is computed by using the skein module structure. Thus, in light of Lemma \ref{ratlinkslem}, in order to compute the $j$-colored HOMFLY-PT polynomial of a rational link (one or two components), it suffices to know $\langle\tau_{u/v}\rangle_j$, $\text{Cl}(UP[j,k])$, and $\text{Cl}(OP[j,k])$. In \cite{SW21}, the authors give formulas for these, but it turns out to be computationally helpful to consider formulas such as $\text{Cl}(TX[j,k])$ instead, which we can do since $u/v\geq1$. If $\tau_{u/v}$ has $UP$ orientation, undoing the final top twist gives $\tau_{(u-v)/v}$, also with $UP$ orientation. On the other hand, if $\tau_{u/v}$ has $OP$ orientation, then undoing the final top twist gives $\tau_{(u-v)/v}$ with $RI$ orientation. Then, we have the following closure formulas, proved in \cite{SW21}, where $\sim$ denotes ``up to framing shift'' (the formulas in \cite{SW21} are written differently, but are related to these by a framing shift of $\pm 1$), which corresponds to multiplying the $j$-colored HOMFLY-PT polynomials by $(-q)^{\mp j}a^{\mp j}q^{\pm j^2}$.

\begin{lemmaN}
\label{WSclosureformulas}
We have the following:
    \begin{align}
    &\text{Cl}(TUP[j,k])\sim (-q)^kq^{2k^2}{j\brack k}_+\frac{(a^2q^{2-2j-2k};q^2)_k}{(q^2;q^2)_k}\\
    &\text{Cl}(TRI[j,k])\sim (-q)^{k-j}a^{2(k-j)}q^{2(k-j)^2}{j \brack k}_+ \frac{(a^2q^{2-2j-2(j-k)};q^2)_{j-k}}{(q^2;q^2)_{j-k}}.
    \end{align}    
\end{lemmaN}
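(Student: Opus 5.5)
The plan is to reduce both closure formulas to the closure formulas of Stošić and Wedrich in \cite{SW21} for $\text{Cl}(UP[j,h])$ and $\text{Cl}(OP[j,h])$, using the top twist rules (\ref{TUPrule}) and (2). First expand $TUP[j,k]$ via rule (1) and $TRI[j,k]$ via rule (3), which is $\sum_{h=k}^j (-q)^h a^h q^{k^2-2hj}{h\brack k}_+ OP[j,h]$ once the evident typo $q^{q^2}$ is read as $q^{k^2}$. Closure is linear, so
\[
\text{Cl}(TUP[j,k])=\sum_{h=k}^j (-q)^hq^{k^2}{h\brack k}_+\text{Cl}(UP[j,h]),
\]
and similarly for $TRI$. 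Next I will substitute the known closures of the basis webs. The closure $\text{Cl}(UP[j,h])$ is a theta-type web evaluation: the numerator closure of $UP[j,h]$ consists of two $j$-colored loops joined by an $h$-colored rung. It can be evaluated with the standard MOY/colored skein relations (digon removal and square switch), giving a product of a quantum binomial and the $a$-dependent unknot factor $\prod (a q^{\cdot}-a^{-1}q^{-\cdot})/(q^{\cdot}-q^{-\cdot})$, normalized as in \cite{SW21}. I would quote these from \cite{SW21} rather than rederive them, and track the framing normalization carefully.

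The core step is to collapse the resulting single sum over $h$ into the stated closed form. For $TUP$, after inserting the formula for $\text{Cl}(UP[j,h])$, the summand becomes ${h\brack k}_+{j\brack h}_+$ times a factor in $a$ and $q$. The identity ${h\brack k}_+{j\brack h}_+={j\brack k}_+{j-k\brack h-k}_+$ pulls out ${j\brack k}_+$ and leaves a sum over $h-k$ from $0$ to $j-k$. That remaining sum should be a $q$-binomial-theorem sum, i.e.\ the $m=1$ or $m=2$ case of Lemma \ref{algidentity}, which assembles it into $(a^2q^{2-2j-2k};q^2)_k/(q^2;q^2)_k$ up to a monomial. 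I expect the Pochhammer length to be $k$ rather than $j-k$ because the unknot factor of the closure already carries a complementary Pochhammer piece that cancels. For $TRI$ I will run the same argument. The $OP$ closure is related to the $UP$ one by the symmetry $k\leftrightarrow j-k$, reflecting that $OP[j,h]$ closes to a web whose rung effectively carries color $j-h$. This explains why the answer is the $TUP$ formula with $k$ replaced by $j-k$, together with an extra factor $a^{2(k-j)}$ coming from the $a^h$ in rule (3).

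The main obstacle I expect is bookkeeping the monomial prefactors. The powers of $-q$, $a$ and $q^{2k^2}$ must match exactly, modulo the allowed framing shift $(-q)^{\mp j}a^{\mp j}q^{\pm j^2}$. So I would first check both sides at $k=0$ and $k=j$, and small $j$ such as $j=1,2$. This fixes the framing normalization and the correct specialization of Lemma \ref{algidentity} before carrying out the general $q$-binomial summation.
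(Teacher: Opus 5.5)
Your route differs from the paper's. The paper gives no derivation: it imports both formulas from \cite{SW21} and only notes that the versions there differ by a framing shift of $\pm1$.

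Expanding via the top twist rules, using linearity of $\text{Cl}$, and summing against the closures of the basis webs is a legitimate, more self-contained alternative. For $TUP$ it works as you describe. After ${h\brack k}_+{j\brack h}_+={j\brack k}_+{j-k\brack h-k}_+$ you are left with an alternating sum of ${j-k\brack h-k}_+$ against the $h$-dependent $a$-factor of $\text{Cl}(UP[j,h])$. At $a=q^N$ that factor is, up to monomials, the shifted unknot factor ${N-h\brack j}$.

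This is not a single instance of the $q$-binomial theorem, because $h$ sits inside the $a$-dependent Pochhammer symbol; it is a terminating $q$-Chu--Vandermonde sum. You can evaluate it with the $m=1$ case of Lemma \ref{algidentity} applied twice. First expand that Pochhammer symbol in powers of $a^2$ and interchange the sums. The inner sum over $h$ then collapses to a Pochhammer symbol of length $j-k$, which kills all but $k+1$ of the powers of $a^2$. Resumming gives $(a^2q^{2-2j-2k};q^2)_k/(q^2;q^2)_k$.

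So the length drops to $k$ because the alternating sum is a $(j-k)$-fold $q$-difference of a degree-$j$ polynomial; classically, $\sum_m(-1)^m\binom{j-k}{m}\binom{N-k-m}{j}=\binom{N-j}{k}$. It is not a cancellation against a complementary piece of the unknot factor.

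Your treatment of $TRI$ rests on a false premise. The closures of the $OP$ basis webs are not the $UP$ closures with $h\mapsto j-h$. $OP[j,j]$ is the pair of horizontal arcs and closes to a single unknot, with reduced value $1$. Its supposed partner $UP[j,0]$ is the identity web and closes to a two-component unlink.

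More generally, $\text{Cl}(OP[j,h])$ carries an $a$-dependent factor whose length $j-h$ varies with $h$; up to monomials and at $a=q^N$ it is ${j\brack h}{N-h\brack j-h}$. Every $\text{Cl}(UP[j,h])$ instead carries a factor of full length $j$. So the $TUP$ computation cannot be transported by symmetry, and you must evaluate $\sum_{h=k}^j(-q)^ha^hq^{k^2-2hj}{h\brack k}_+\text{Cl}(OP[j,h])$ directly. After extracting ${j\brack k}_+$ this is a different Vandermonde-type sum; classically, $\sum_m(-1)^m\binom{n}{m}\binom{B-m}{n-m}=\binom{B-n}{n}$ with $n=j-k$ and $B=N-k$. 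The $k\leftrightarrow j-k$ symmetry between the two stated formulas appears only in the final answers, not at the level of the basis-web closures.

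Your reading of the typo in rule (3) as $q^{k^2-2hj}$ is the one consistent with the stated $TRI$ formula.
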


Now, we have everything we need to define the colored HOMFLY-PT polynomials for rational links in this context.

\begin{definitionN}
    Given a rational tangle $\tau_{u/v}$ with $u/v>1$ and $UP$ or $OP$ orientation,the \textit{$j$-colored HOMFLY-PT polynomial} of its numerator closure, denoted $P_{u/v}^{\bigwedge^j}(q,a)$, is given by
    \[
    P_{u/v}^{\bigwedge^j}(q,a):=\text{Cl}(\langle T\tau_{(u-v)/v}\rangle_j).
    \]
\end{definitionN}

It should be emphasized that, although this is not the standard way of defining these invariants, this still agrees with the standard definition (up to some framing shift). We have written this as the definition to establish notation and because this is how we will compute them. Additionally, the $\bigwedge^j$ in $P_{u/v}^{\bigwedge^j}$ is used to denote that we are working with antisymmetric colored HOMFLY-PT polynomials, rather than the symmetric ones. 

Now, we need to introduce the notation we will use for the generating functions of these colored HOMFLY-PT invariants.

\begin{definitionN}
    Given a rational knot $K_{u/v}$, we will use $P(K_{u/v})$ to denote the generating function of its colored HOMFLY-PT polynomials:
    \[
    P(K_{u/v})=\sum_{j\geq 0} P_{u/v}^{\bigwedge^j}(q,a)x^j.
    \]
    Similarly, one defines $P(L_{u/v})$ for a 2-component rational link $L_{u/v}$.
\end{definitionN}

\subsection{The Links-Quivers Correspondence}
\label{LQCsec}

Given a link $L$, the Links-Quivers Correspondence predicts that one can find a symmetric quiver $Q_L$ such that the generating function of the colored HOMFLY-PT polynomials of $L$ (possibly rescaled by Pochhammer symbols) are related to a generating function associated with $Q_L$. As stated in the introduction, for any symmetric quiver $Q$, the generating function for its cohomological Hall algebra (the generating function we want), is of the form
\begin{equation}
    P_Q(x_1,...,x_m) = \sum_{\textbf{d}=(d_1,..,d_m)\in\mathbb{N}^m} (-q)^{-\langle \textbf{d},\textbf{d}\rangle_Q}\prod_{i=1}^m \frac{x_1^{d_1}...x_m^{d_m}}{(1-q^{-2})...(1-q^{-2d_i})},
\end{equation}
where $\langle \textbf{d},\textbf{e}\rangle_Q = \sum_{i,j}(\delta_{ij}- Q_{ij})d_ie_j$ and $x_1,...,x_m$ are formal variables.

In Section \ref{Cpolysec}, we gave one way to think about (antisymmetric) colored HOMFLY-PT polynomials for rational links, but as we alluded to, these invariants exist for arbitrary links. Let $P_L^{\bigwedge^j}(q,a)$ denote the $j$-colored HOMFLY-PT polynomial for a link $L$. If $L$ has $|L|$ components, recall the following generating function of (rescaled) colored HOMFLY-PT polynomials, now written in terms of Pochhammer symbols.

\begin{definitionN}
   We define $\widehat{P}(L)$ to be the generating function
    \begin{equation}
        \widehat{P}(L)=\sum_{j\geq0}(q^2;q^2)_j^{|L|-2}P_L^{\bigwedge^j}(q,a)x^j.
    \end{equation} 
\end{definitionN}

We can now state the Links-Quivers Correspondence Conjecture.

\begin{conjectureN}[Links-Quivers Correspondence \cite{K19}]
    Given a link $L$, there is a symmetric quiver $Q_L$ with adjacency matrix $Q$ such that
    \begin{equation}
    \widehat{P}(L)=P_{Q_L}(\overline{x})|_{x_i\mapsto (-1)^{Q_{ii}-S_i}q^{S_i-1}a^{A_i}x},
   \end{equation}
   Where $S$ and $A$ can be thought of as linear forms and $Q$ as a quadratic form on an appropriately defined free $\mathbb{Z}$-module of dimension given by the number of vertices in $Q$.
\end{conjectureN}

Importantly, the Links-Quivers Correspondence predicts that one can describe the infinitely many colored HOMFLY-PT polynomials for a link $L$ with a finite amount of data. In particular, one only needs two linear forms and a quadratic form. If one wants the Poincar\'e polynomials of the colored HOMFLY-PT chain complexes instead of just the colored HOMFLY-PT polynomials, the expectation is that one would only need one additional linear form (as predicted by the ``refined exponential growth'' conjecture of Gorsky, Gukov, and Sto\v si\'c \cite{GGS13,GS12}). 

In \cite{SW21}, Sto\v si\'c and Wedrich prove the Links-Quivers Correspondence for rational links by showing that $P(K_{u/v})$ and $P(L_{u/v})$ can be put into ``quiver form.'' . 

\begin{definitionN}
    For a rational knot $K_{u/v}$, $P(K_{u/v})$ is in \textit{quiver form} if it can be written as 
    \begin{equation}
    \label{quivformknot}
        P(K_{u/v})=\sum_{\textbf{d}=(d_1,...,d_u)\in\mathbb{N}^u}(-q)^{S\cdot \textbf{d}}a^{A\cdot \textbf{d}}q^{\textbf{d}\cdot Q\cdot \textbf{d}^t}{j \brack d_1,...,d_u}x^j,
    \end{equation}
    where $S$ and $A$ may be thought of as linear forms and $Q$ as a quadratic form on a free $\mathbb{Z}$-module of dimension $u$. Similarly, for a 2-component rational link $L_{u/v}$, $P(L_{u/v})$ is in \textit{quiver form} if it can be written as 
    \begin{equation}
    \label{quivformlink}
        P(L_{u/v})=\sum_{\textbf{d}=(d_1,...,d_{2u})\in \mathbb{N}^{2u}}(-q)^{S\cdot \textbf{d}}a^{A\cdot \textbf{d}}q^{\textbf{d}\cdot Q\cdot \textbf{d}^t}\frac{x^j}{\prod_{i=1}^{2u}(q^2;q^2)_{d_i}},
    \end{equation}
     where $S$ and $A$ may be thought of as linear forms and $Q$ as a quadratic form on a free $\mathbb{Z}$-module of dimension $2u$.
\end{definitionN}

Thus, one may state the main theorems of \cite{SW21} as given below.

\begin{theoremN}[\cite{SW21}]
    For $K_{u/v}$ a rational knot and $L_{u/v}$ a 2-component rational link, $P(K_{u/v})$ and $P(L_{u/v})$ may be expressed in quiver form.
\end{theoremN}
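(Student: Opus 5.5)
We build on the skein-theoretic description of $\langle\tau_{u/v}\rangle_j$ from \cite{W16,JHpI26}. Applying the twist rules (1)--(6) repeatedly along the continued fraction sequence $T^{a_{n-1}}R^{a_{n-2}}\cdots T^{a_1}\tau_{0/1}$ writes $\langle\tau_{(u-v)/v}\rangle_j$ as a sum over states. The state-sum should have the form
\[
\langle\tau_{(u-v)/v}\rangle_j=\sum_{k}\sum_{\textbf{d}} (-q)^{\ell_1(\textbf{d})}a^{\ell_2(\textbf{d})}q^{\textbf{d}\cdot M\cdot\textbf{d}^t}{k\brack d_1,\dots,d_r}\,X[j,k].
\]
Here $X\in\{UP,RI\}$, and $\ell_1,\ell_2,M$ are linear and quadratic forms. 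This is the ``quiver form'' for tangles, and it is the main result of \cite{JHpI26} (taken as given here).

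\emph{Step 1: absorb the last top twist.} Because the numerator closure is taken of $T\tau_{(u-v)/v}$, we use Lemma~\ref{WSclosureformulas} to evaluate each $\text{Cl}(TX[j,k])$. The case depends on orientation, as in Lemma~\ref{ratlinkslem}: $UP$ when $v$ is odd and $RI$ (after untwisting) when $v$ is even. Multiplying the tangle state-sum by the closure formula gives a double sum over $k$ and the tangle multi-index. This sum still contains the factor ${j\brack k}_+$ and the Pochhammer ratio $(a^2q^{2-2j-2k};q^2)_k/(q^2;q^2)_k$, or its $j-k$ analogue.

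\emph{Step 2: linearize the Pochhammer symbol.} We expand $(a^2q^{2-2j-2k};q^2)_k$ using Lemma~\ref{algidentity} with $m=1$, or alternatively with $m$ equal to the number of tangle summands, splitting each $d_i=\alpha_i+\beta_i$. The goal is to cancel the $q$-multinomial denominators. In doing so, the $q^{-2jk}$-type cross terms become quadratic forms in the new variables, since $j$ and $k$ are themselves sums of the $d_i$.

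\emph{Step 3: handle the two cases separately.}
\begin{itemize}
\item[(i)] \textbf{Knots.} Here the surviving denominators recombine into ${j\brack\cdots}$, the number of summation variables is $u$, and we recover (\ref{PKuv}).
\item[(ii)] \textbf{Links.} Here an extra $1/(q^2;q^2)$ factor survives. This is why the variables double: each tangle generator splits into an $\alpha$-part and a $\beta$-part, giving $2u$ variables and the denominators in (\ref{PLuv}).
\end{itemize}

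\emph{Step 4: read off the geometry.} Finally, we match the exponents with the geometry of $\mathcal{D}(K_{u/v})$ and $\mathcal{D}(L_{u/v})$. In \cite{JHpI26}, the tangle linear forms are winding numbers of loops from intersection points about the punctures, and the quadratic form comes from winding numbers in the second configuration space about punctures and the diagonal $\Delta$. The additional intersections introduced by the closure (the second Lagrangian, and the $\alpha/\beta$ doubling) should correspond to the extra terms from Lemma~\ref{WSclosureformulas} and Lemma~\ref{algidentity}. We would check this inductively: verify that $T$ and $R$ twists change winding numbers exactly as the exponents change, then verify the closure step once for each orientation type.

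\textbf{Main obstacle.} The hardest part is Step 4 for the quadratic form $Q$, specifically identifying the cross terms $2\alpha_{i+1}(d_1+\dots+d_i)$ from Lemma~\ref{algidentity} with diagonal winding numbers. These terms depend on an ordering of the intersection points. I would first fix the ordering along the Lagrangian and check small cases such as $K_{5/2}$ and $L_{8/3}$.
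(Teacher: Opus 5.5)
Your skeleton (tangle quiver form, then Lemma \ref{WSclosureformulas}, then Lemma \ref{algidentity}) is the one the paper sketches in the proofs of Lemmas \ref{WSclosureformula} and \ref{WSclosureformulaL}; the theorem itself is cited from \cite{SW21}. However, the knot case has a genuine gap. The uncompressed tangle form carries two multinomials, ${k\brack d^+}{j-k\brack d^-}$ with $k=|d^+|$, not one. After closing up (say in the $UP$ case), each term carries ${j\brack d^+,d^-}\,(a^2q^{2-2j-2k};q^2)_k/(q^2;q^2)_k$. Neither option in your Step 2 removes the denominator $(q^2;q^2)_k$ while keeping a multinomial with $j$ on top:
\begin{itemize}
\item with $m=1$ you get an extra constrained pair $\alpha+\beta=k$, and denominators $(q^2;q^2)_\alpha(q^2;q^2)_\beta$ that do not assemble into ${j\brack\cdots}$;
\item with $m=u-v$, the leftover $(q^2;q^2)_j/(q^2;q^2)_k$ forces you into the link-type shape, with $2u$ variables and no numerator $(q^2;q^2)_j$, which is not (\ref{PKuv}).
\end{itemize}
So your claim that ``the surviving denominators recombine and there are $u$ variables'' is asserted, not derived.

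The missing idea is the index compression of Proposition \ref{indpartprop}. Because $u$ is odd, exactly one of $u-v$ and $v$ is even. That even side is precisely the one whose total appears in the closure denominator: $k$ for $UP$, $j-k$ for $RI$. Pairing the intersections that wrap around $Y$ halves that side, at the cost of a factor $(q^2;q^2)_{K\cdot\textbf{d}}$. This factor is exactly $(q^2;q^2)_k$ (resp.\ $(q^2;q^2)_{j-k}$), so it cancels the closure denominator and leaves $(q^2;q^2)_j$ on top. Only then does Lemma \ref{algidentity}, applied to $(a^2q^{2-2j-2k};q^2)_k/\prod_{\mathfrak{Y}}(q^2;q^2)_{d_i}$, re-double the compressed variables. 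This gives $(u-v)+v=u$ variables and the multinomial ${j\brack d_1,\dots,d_u}$.

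For links, your count $2u$ is right, but the mechanism is incomplete. You must first rewrite $(q^2;q^2)_j/(q^2;q^2)_k=(q^{2+2k};q^2)_{j-k}$ and then apply Lemma \ref{algidentity} twice:
\begin{itemize}
\item once to $(a^2q^{2-2j-2k};q^2)_k/\prod_{\text{act}}(q^2;q^2)_{d_i}$;
\item once to $(q^{2+2k};q^2)_{j-k}/\prod_{\text{inact}}(q^2;q^2)_{d_i}$.
\end{itemize}
Expanding only $(a^2q^{2-2j-2k};q^2)_k$, as you propose, doubles only the active variables. Also, your parity rule in Step 1 is the knot rule. For links $v$ is odd and both orientations occur: $\tau_{7/1}$ is $UP$ and $\tau_{5/3}$ is $RI$.

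Finally, Step 4 is not needed for this statement, and as phrased it would fail. The geometric $Q$ does not agree entrywise with the algebraic one (compare $Q_{5/2}^K$ with $Q_{5/2}$). The paper obtains only $Q^K_{u/v}\sim Q_{u/v}$, via the permutation moves of Theorem \ref{permthm}.
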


In this paper, we will provide a geometric way to compute $S,A,$ and $Q$. One fact that we will need to prove our theorems is that the quadratic form $Q$ used in the quiver form is highly non-unique.

\begin{definitionN}
    Given a symmetric matrix $Q$ satisfying the role of the quadratic form in Equation (\ref{quivformknot}) or (\ref{quivformlink}), we say $Q\sim Q'$, for $Q'$ a symmetric matrix of the same size, if replacing $Q$ with $Q'$ gives the same generating function. In terms of the Links-Quivers Correspondence, this means
    \begin{equation}
     P_Q(x_1,...,x_u)\bigg|_{x_i\mapsto (-1)^{Q_{i,i}-S_i}q^{S_i-1}a^{A_i}x} = P_{Q'}(x_1,...,x_u)\bigg|_{x_i\mapsto (-1)^{Q'_{i,i}-S_i}q^{S_i-1}a^{A_i}x}.   
    \end{equation}
    Importantly, $S$ and $A$ are assumed to be fixed.
\end{definitionN}

In the proofs found later in this paper, we will need to permute certain entries in our quadratic forms, viewed as symmetric matrices.
The following theorem by Jankowski, Kucharski, Larragu\'ivel, Noshchenko, and Su\l{}kowski in \cite{JKLN21} provides some conditions for permuting entries of $Q$ in such a way that $Q'$, the result of these permutations, satisfies $Q\sim Q'$. Also, we will use the notation $\Lambda_i=(-1)^{Q_{i,i}-S_i}q^{S_i-1}a^{A_i}$ and $Q_0=\{1,...,|Q|\}$, where $|Q|$ denotes the number of rows (or columns) in $Q$.

\begin{theoremN}
\label{permthm}
    (Theorem 6, \cite{JKLN21}) If $Q$ and $Q'$ are two symmetric matrices of the same size along with the data $\Lambda_i=\Lambda_i'$ for all $i\in Q_0$, then if $Q$ and $Q'$ are related by a disjoint sequence of transpositions of non-diagonal elements
    \[
    Q_{ab}\leftrightarrow Q_{cd}, \qquad \qquad Q_{ba}\leftrightarrow Q_{dc}
    \]
    for pairwise different $a,b,c,d\in Q_0$ with
    \[
    \Lambda_a\Lambda_b=\Lambda_c\Lambda_d
    \]
    and 
    \[
    Q_{ab}=Q_{cd}-1, \qquad \qquad Q_{ai}+Q_{bi}=Q_{ci}+Q_{di}-\delta_{ci}-\delta_{di}, \qquad \forall i\in Q_0,
    \]
    or
    \[
    Q_{cd}=Q_{ab}-1, \qquad \qquad Q_{ci}+Q_{di}=Q_{ai}+Q_{bi}-\delta_{ai}-\delta_{bi}, \qquad \forall i\in Q_0,
    \]
    then $Q\sim Q'$.
\end{theoremN}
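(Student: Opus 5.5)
The plan is to reduce to a single transposition and then prove a finite $q$-series identity in the four coordinates $d_a,d_b,d_c,d_d$.

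\emph{Reduction to one transposition.} A disjoint sequence of transpositions is a composition of single swaps $Q_{ab}\leftrightarrow Q_{cd}$ (together with the symmetric swap $Q_{ba}\leftrightarrow Q_{dc}$). After one such swap, the hypotheses of Theorem \ref{permthm} for the remaining swaps still hold:
\begin{itemize}
\item the $\Lambda_i$ are untouched, since no diagonal entry moves;
\item the linear relations $Q_{ai}+Q_{bi}=Q_{ci}+Q_{di}-\delta_{ci}-\delta_{di}$ for the other quadruples involve only entries in rows and columns disjoint from those already permuted.
\end{itemize}
The second point needs to be checked carefully. If it holds, $\sim$ is transitive and it suffices to treat one transposition. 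By the symmetry of the two alternatives in the statement, it is enough to take the case $Q_{ab}=Q_{cd}-1$.

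\emph{Isolating the four coordinates.} Write $Q'=Q+E$, where $E$ is supported on the entries $(a,b),(b,a),(c,d),(d,c)$. Then
\[
\textbf{d}\cdot Q'\cdot \textbf{d}^t-\textbf{d}\cdot Q\cdot \textbf{d}^t=2(d_ad_b-d_cd_d)
\]
(up to the fixed sign determined by which alternative holds). Fix all coordinates $d_i$ with $i\notin\{a,b,c,d\}$ and compare the resulting partial sums over $(d_a,d_b,d_c,d_d)$. The cross terms with the other vertices are $2d_i(Q_{ai}d_a+Q_{bi}d_b+Q_{ci}d_c+Q_{di}d_d)$. The hypothesis $Q_{ai}+Q_{bi}=Q_{ci}+Q_{di}-\delta_{ci}-\delta_{di}$, together with $\Lambda_a\Lambda_b=\Lambda_c\Lambda_d$, says that a simultaneous shift
\[
(d_a,d_b,d_c,d_d)\mapsto (d_a+1,d_b+1,d_c-1,d_d-1)
\]
changes the external weight (cross terms times the monomials $x_i$) by a controlled, explicit factor. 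This suggests reparametrizing each quadruple by the invariants $n_1=d_a+d_c$ and $n_2=d_b+d_d$ and by how the total is split.

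\emph{The key identity.} After reparametrizing, what remains is a finite identity. For fixed totals, the sum
\[
\sum \frac{(-q)^{(\text{quadratic in } d_a,d_b,d_c,d_d)}\,\Lambda_a^{d_a}\Lambda_b^{d_b}\Lambda_c^{d_c}\Lambda_d^{d_d}}{(q^2;q^2)_{d_a}(q^2;q^2)_{d_b}(q^2;q^2)_{d_c}(q^2;q^2)_{d_d}}
\]
must be unchanged when the bilinear term $d_ad_b-d_cd_d$ is shifted as above. I would prove this by expanding both sides with Lemma \ref{algidentity}: collect each pair into a single Pochhammer-type factor, apply the $q$-Vandermonde and $q$-binomial theorem to the inner sums, and check that the two closed forms coincide. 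The relation $Q_{ab}=Q_{cd}-1$ is what should make the extra factor $q^{\pm 2}$ from the diagonal corrections $\delta_{ci},\delta_{di}$ cancel exactly.

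This identity is the main obstacle, because the bookkeeping of signs $(-1)^{Q_{ii}-S_i}$ and of the diagonal $\delta$-corrections is delicate. If a direct expansion gets unwieldy, I would instead show that both sides, viewed as power series in the external variable, satisfy the same first-order $q$-difference equation (obtained by shifting $d_a\mapsto d_a+1$) with the same constant term. Uniqueness of solutions of that equation then gives the equality.
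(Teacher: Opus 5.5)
\emph{Verdict.} The paper does not prove this statement; it quotes it from \cite{JKLN21}, where it is derived from the unlinking identity of Ekholm--Kucharski--Longhi. There, unlinking nodes $c,d$ of $Q$ and nodes $a,b$ of $Q'$ gives the same quiver: the new node's row $Q_{ci}+Q_{di}-\delta_{ci}-\delta_{di}$ is exactly the hypothesis, $Q_{ab}=Q_{cd}-1$ matches the decremented entries, and $\Lambda_a\Lambda_b=\Lambda_c\Lambda_d$ identifies the new variables $q^{-1}x_cx_d$ and $q^{-1}x_ax_b$. Your route, comparing sums over the orbits $\mathbf d+\mathbb Z\epsilon$ with $\epsilon=e_a+e_b-e_c-e_d$, is genuinely different and can be made to work. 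As written, however, it contains a false step and leaves the crux unproved.

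\emph{Reduction step.} Your claim that the hypotheses for the remaining swaps survive a swap is false once the index quadruples overlap, and they overlap in all the paper's applications. The relation for $(a,b,c,d)$ is imposed for all $i\in Q_0$, so it involves entries moved by other transpositions. This is exactly why, in Lemma~\ref{tQmswap} and Lemma~\ref{Qpmperm}, the order of swaps must be chosen: a later swap's condition holds only after earlier swaps have cancelled the $\delta_{i<|k|<j}$ terms. The statement has to be read as the paper uses it, with each transposition's hypotheses checked on the current matrix. Then the sequence case is just transitivity of $\sim$, since the diagonal, and hence every $\Lambda_i$, never changes.

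\emph{Key identity.} Nothing ``controlled'' survives externally, and no $q^{\pm 2}$ is left to cancel: all $Q$-dependence is constant on orbits. The row conditions say precisely $\epsilon^tQ=-(e_c+e_d)^t$, so $\epsilon^tQ\mathbf d^t=-(d_c+d_d)$ and $\epsilon^tQ\epsilon=2$. Hence
\[
(\mathbf d+k\epsilon)Q(\mathbf d+k\epsilon)^t-2(d_c-k)(d_d-k)=\mathbf dQ\mathbf d^t-2d_cd_d ,
\]
with the external cross terms included. Also $Q_{ab}=Q_{cd}-1$ gives $\mathbf dQ'\mathbf d^t-2d_ad_b=\mathbf dQ\mathbf d^t-2d_cd_d$ pointwise. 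Since $\Lambda^{\mathbf d}$, $|\mathbf d|$ and the external factors $(q^2;q^2)_{d_i}$ are orbit-invariant too, the theorem reduces to the $Q$-independent identity
\[
\sum_{\mathbf e\in O}\frac{q^{2e_ae_b}}{\prod_{i\in\{a,b,c,d\}}(q^2;q^2)_{e_i}}=\sum_{\mathbf e\in O}\frac{q^{2e_ce_d}}{\prod_{i\in\{a,b,c,d\}}(q^2;q^2)_{e_i}}
\]
for every $\epsilon$-orbit $O\subset\mathbb N^4$. No Vandermonde bookkeeping is needed. Summing over $n$ by Euler's formula and then over $m$ by the $q$-binomial theorem gives
\[
\sum_{m,n\geq 0}\frac{q^{2mn}y^mz^n}{(q^2;q^2)_m(q^2;q^2)_n}=\frac{(yz;q^2)_\infty}{(y;q^2)_\infty(z;q^2)_\infty}.
\]
So the two four-variable generating functions are $(y_ay_b;q^2)_\infty/\prod_i(y_i;q^2)_\infty$ and $(y_cy_d;q^2)_\infty/\prod_i(y_i;q^2)_\infty$. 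They coincide under $y_a=su$, $y_b=tv$, $y_c=sv$, $y_d=tu$. The monomial $s^{e_a+e_c}t^{e_b+e_d}u^{e_a+e_d}v^{e_b+e_c}$ determines the orbit of $\mathbf e$, so comparing coefficients gives the orbit identity. The paper's normalisation of $P_Q$ changes nothing, because $(-q)^{-n^2}/(q^{-2};q^{-2})_n=q^n/(q^2;q^2)_n$ and $q^{\sum_i d_i}$ is orbit-invariant. Your fallback via a first-order $q$-difference equation is unnecessary, and as stated it is not well posed.

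\emph{Comparison.} Completed this way, your argument is a self-contained proof whose only analytic input is the $q$-binomial theorem. It shows that the hypotheses are exactly what make $Q$ drop out along $\epsilon$-orbits. The unlinking proof is shorter once the unlinking theorem is available. It also explains the hypotheses as equality of two unlinked quivers, which ties the result to the linking/unlinking moves and the permutohedra of \cite{JKLN21}.
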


Thus, Theorem \ref{permthm} states some sufficient conditions for permuting pairs of off-diagonal entries in $Q$ while still yielding the same generating function.

\section{Links-Quivers for Rational Tangles}
\label{LQCTsec}

This paper builds on the work of \cite{JHpI26}, so now we review the constructions and results that will be needed for the remainder of the paper. Refer to \cite{JHpI26} for further details.

\subsection{Construction of Lagrangians}
\label{TangleLsection}

First, we describe the geometric construction used in \cite{JHpI26}. In order to compute the linear and quadratic forms for tangles, we studied loops defined along Lagrangians in the first and second configuration spaces of the 3-punctured plane, $M=\mathbb{C}\setminus\{X^+,X^-,Y\}$.

The first type of Lagrangian is an arc between the punctures $X^+$ and $X^-$. For the rational tangle $\tau_{u/v}$ we denoted this Lagrangian by $\alpha_{u/v}$. In \cite{JHpI26}, we gave a few different ways to construct $\alpha_{u/v}$, and here we will recall two of them. The first method is direct, whereas the second is inductive. Naturally, it is convenient to be able to construct $\alpha_{u/v}$ directly for any $u/v$, but it is also beneficial to understand the inductive method, as an important part of the proofs in this paper will involve undoing the final top twist and seeing how this affects the Lagrangians. 

Now, we present the first method in the form of a Lemma. This is essentially a reformulation of the beginning of Section 4.1 of \cite{W16} to fit our conventions.

\begin{lemmaN}
\label{bridgerecipe}
    Up to isotopy, $\alpha_{u/v}$ may be constructed as follows:
    Draw the intervals $[-2,-1]$ and $[1,2]$ on the real axis and partition them into $u$ parts of equal size. If $u\geq v$, starting at the points $\pm1$, label the ends of the partitions on both line segments by $0,1,...,2u-1\in \mathbb{Z}/(2u)\mathbb{Z}$ in a clockwise manner, as in Figure \ref{4_3_tangle} on the left. Starting at the 0-labeled point $\text{par}(u)\in\{\pm1\}$, the parity of $u$, draw the arc $\alpha_{u/v}$ by proceeding to the point labeled $v$ on the opposite side, then going to $2v$ on the same side you started (mod $2v$), and proceed until you hit $\{-2,-1,1,2\}$ again. The three punctures in the plane are the points at $x=\pm1, 2$ on the real axis. If $u<v$, reverse the roles of $u$ and $v$, label the partition points in a counter-clockwise manner, and proceed in a similar manner as in the previous case, but start at the 0-labeled point $-\text{par}(v)\in\{\pm1\}$. (Equivalently, reflect the picture for $\tau_{v/u}$ across the imaginary axis.) In this case, the three punctures in the plane are at $x=\pm1,-2$ on the real axis.
\end{lemmaN}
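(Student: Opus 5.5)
We need to prove that the bridge recipe produces, up to isotopy, the arc $\alpha_{u/v}$ defined inductively in \cite{JHpI26}: start from $\alpha_{0/1}$, the arc for the trivial tangle, and apply to it the mapping classes of $M$ corresponding to the top twist $T$ and the right twist $R$. The plan is induction on the length of the continued fraction expansion of $u/v$, i.e.\ on the number of twists in $T^{a_n}R^{a_{n-1}}\cdots T^{a_1}\tau_{0/1}$. Throughout, we identify the recipe curve with the standard ``rational curve'' picture of \cite{W16}. Concretely, the arc is the image in the pillowcase (the quotient of the torus by the hyperelliptic involution, with one corner sent to infinity) of a line of slope $v/u$.

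\textbf{Step 1.} Check that this identification is exactly the labeling recipe. Two points on the boundary segments are joined by a strand precisely when their labels differ by $v$ modulo $2u$. This is the standard combinatorics of a straight line of slope $v/u$ crossing the $u$ subdivided edges of the pillowcase: each passage around the pillowcase advances the label by $v$. Starting at $\text{par}(u)$ ensures that the curve begins and ends at the punctures $X^\pm$ rather than at $Y$. Two parity facts make this work. First, when $u$ is odd the line of slope $v/u$ through a corner ends at the diagonally opposite corner. Second, the endpoint punctures of the arc are determined by which side of the pillowcase the line starts on; compare Lemma \ref{ratlinkslem}, which sorts the orientations of $\tau_{u/v}$ by the parities of $u$ and $v$.

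\textbf{Step 2.} Reduce to the action of the twists on slopes. In the pillowcase model, the top twist $T$ acts as the half-twist exchanging the two punctures on one side, which lifts to the linear map $\begin{pmatrix}1&1\\0&1\end{pmatrix}$ on $H_1$ of the torus. The right twist $R$ acts similarly with the transposed matrix. These send the slope of $u/v$ to the slopes of $(u+v)/v$ and $u/(u+v)$ respectively, matching the rules $T\tau_{u/v}=\tau_{(u+v)/v}$ and $R\tau_{u/v}=\tau_{u/(u+v)}$. The base case is $\alpha_{0/1}$, the straight segment between $X^+$ and $X^-$. Since essential simple arcs in the four-punctured sphere are classified up to isotopy by their slope in $\mathbb{Q}\cup\{\infty\}$, the induction closes once we know the twists act by these matrices.

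\textbf{Step 3.} Handle the case $u<v$. Here the last twist applied is $R$, and conjugating by the reflection across the imaginary axis swaps the roles of $T$ and $R$. It also swaps $u$ with $v$, reverses clockwise and counterclockwise, and moves the third puncture from $2$ to $-2$. This gives the second half of the lemma from the first, including the starting point $-\text{par}(v)$.

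The main obstacle I expect is the bookkeeping in Steps 1 and 2. The conventions of \cite{JHpI26} must be matched so that $T$ really corresponds to the upper-triangular shear and not its inverse; this amounts to a sign check on a single twist, e.g.\ verifying $\alpha_{1/1}$ and $\alpha_{2/1}$ by hand. One must also check that the clockwise labeling together with the parity-dependent starting point places the arc's endpoints at $X^\pm$ in every parity case. I would settle these conventions by computing small examples directly ($u/v=1,2,3/2,5/2$) and then quote the classification of arcs by slope for the general case.
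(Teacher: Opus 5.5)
You take a genuinely different route from the paper, which gives no argument here. It presents the lemma as a reformulation of the start of \S4.1 of \cite{W16}, with conventions adjusted, so the agreement between the direct recipe and the inductive construction of Lemma \ref{twistfx} is simply imported from Wedrich.

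Your proposal instead proves the lemma from the pillowcase model, as follows. The left segment stands in for the edge from $-1$ to $\infty$, so $-2$ plays the corner at infinity. The labels are heights on the double cover, so the $k$-th crossing of the straight arc of direction $(u,v)$ with the vertical edges carries label $kv \bmod 2u$. The twists $T$ and $R$ are the half-twists along $[-1,1]$ and $[1,2]$, acting by the two elementary shears. The induction starts from $\alpha_{0/1}=[1,2]$, of direction $(0,1)$. Finally, the reflection swaps the two edge directions and so conjugates one shear into the other.

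What your approach buys is a self-contained explanation of three things: the recipe itself, why $|\alpha_{u/v}\cap l_A|=u$ and $|\alpha_{u/v}\cap l_I|=v$ (the axes correspond to the vertical edge containing $[-2,-1]$ and the horizontal edge $(2,\infty)$), and the ``undo the last twist'' move used throughout the paper. The citation buys brevity and avoids the handedness checks. In your version those checks must be done once for $T$ and once for $R$ against the clockwise labelling.

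Three side remarks need fixing.
\begin{enumerate}
\item The arc starting at a corner $c$ ends at $c+(u,v) \bmod 2$. So for $u$ odd and $v$ even it joins adjacent corners, not diagonally opposite ones; for example, $\alpha_{3/2}$ joins the left and middle punctures.
\item Starting at $\mathrm{par}(u)$ is what keeps the far endpoint off the corner at infinity, not off $Y$. Which two punctures get joined is forced by the slope, and it then agrees with Lemma \ref{twistdiagram} automatically.
\item The classification should be stated for arcs joining two of the three finite punctures. Each slope yields two arcs in the four-punctured sphere, one on each side of the slope-$v/u$ curve, and exactly one of them avoids $\infty$.
\end{enumerate}
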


Now that we know how to construct $\alpha_{u/v}$, we need to define the other types of Lagrangians. As Lemma \ref{bridgerecipe} says, we can think of the three punctures $X^+,X^-,$ and $Y$ as points on the real axis of $\mathbb{C}$, so they can be ordered from left to right. Let $l_A$ be a vertical Lagrangian (parallel to the imaginary axis) running between the left and middle punctures, and let $l_I$ be a vertical Lagrangian running between the middle and right punctures, such that $l_A$ and $l_I$ have the minimal number of intersections with $\alpha_{u/v}$. Then, one can check
\[
|\alpha_{u/v}\cap l_A|=u
\qquad\text{and}\qquad
|\alpha_{u/v}\cap l_I|=v.
\]
Next, we provide some names for these Lagrangians and their intersections with $\alpha_{u/v}.$

\begin{definitionN}
    We will call $l_A$ the \textit{active axis} and $l_I$ the \textit{inactive axis}. The points in $\alpha_{u/v}\cap l_A$ are called the \textit{active intersection points} and the points in $\alpha_{u/v}\cap l_I$ are called the \textit{inactive intersection points}.
\end{definitionN}

In \cite{JHpI26}, we needed to use the picture in the 3-punctured plane consisting of these Lagrangians, so we introduced the following notation.

\begin{definitionN}
    Given a rational tangle $\tau_{u/v}$, let $\mathcal{D}(\tau_{u/v})$ be the picture in $M=\mathbb{C}\setminus\{X^+,X^-,Y\}$ consisting of the Lagrangians $\alpha_{u/v},l_A,$ and $l_I$.
\end{definitionN}

See the figure below for an example. In this paper, we will use something similar for knots and links, but the Lagrangians will be different. 

\begin{figure}
   \begin{tikzpicture}
       \node at (0,0) {\includegraphics[height=3cm]{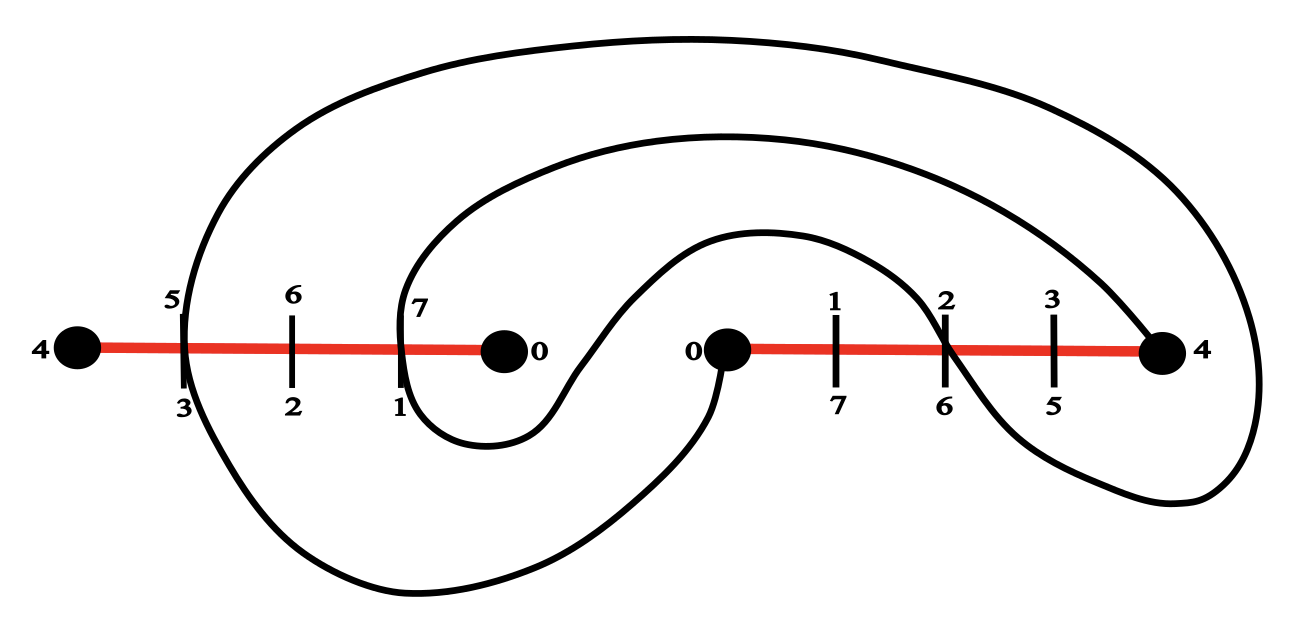}};
       \node at (3.8,0) {$\longrightarrow$};
       \node at (7,0) {\includegraphics[height=3cm]{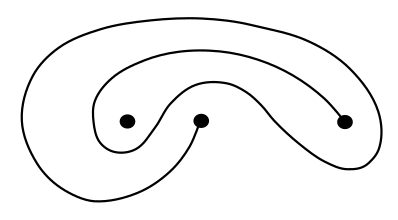}};
   \end{tikzpicture}
\caption{An application of Lemma \ref{bridgerecipe} to $\tau_{4/3}$}
\label{4_3_tangle}
\end{figure}
\[
 \begin{tikzpicture}
        \node at (0,0) {$\mathcal{D}(\tau_{4/3})=$};
        \node at (3,0) {\includegraphics[height=3cm]{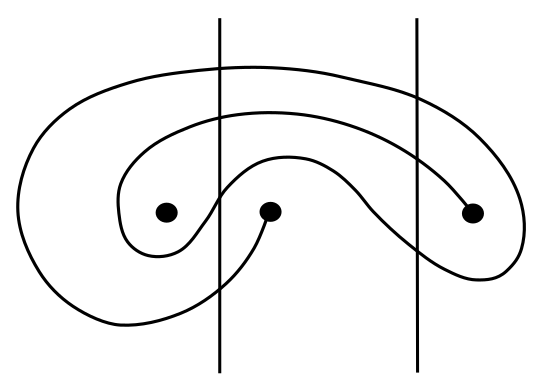}};
\end{tikzpicture}
\]

Next, we have the inductive procedure to obtain $\mathcal{D}(\tau_{u/v})$, also taken from \cite{W16}. In the figures below, the blue point is $Y$ and the red points are $X^+$ and $X^-$, whose exact positions are determined by the particular sequence of twists. Importantly, though, the point in the middle for $\tau_{0/1}$ is $X^-$ and the red point on the right is $X^+$.

\begin{lemmaN}[Lemma 4.1, \cite{W16}]
\label{twistfx}
$\mathcal{D}(\tau_{u/v})$ can be built inductively using $l_A$ and $l_I$ as follows. Starting with the diagram corresponding to $\tau_{0/1}$, shown below, apply top twists by bending $l_A$ towards $l_I$ and straightening it out as shown below on the right. Right twists are performed in a similar way by bending $l_I$ towards $l_A$ and then straightening.
\[
\begin{tikzpicture}
\node at (-5,0) {$\mathcal{D}(\tau_{0/1})=$};
\node at (-3,0) {\includegraphics[height=2cm]{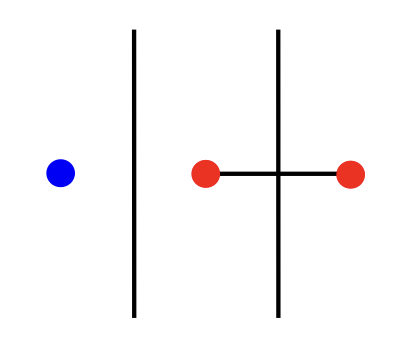}};
\node at (1,0) {$\mathcal{D}(T\tau_{1/1})=\mathcal{D}(\tau_{2/1})\colon$};
\node at (4,0) {\includegraphics[height=2cm]{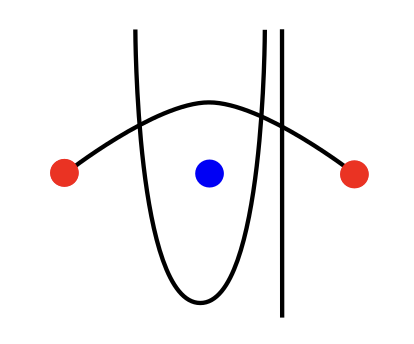}};
\node at (5.5,0) {$\longrightarrow$};
\node at (7,0) {\includegraphics[height=2cm]{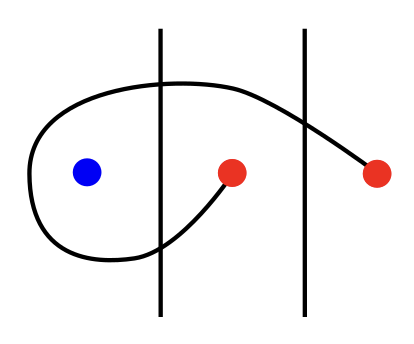}};
\end{tikzpicture}
\]
\end{lemmaN}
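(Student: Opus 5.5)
We argue by induction on the length of the twist sequence $T^{a_n}R^{a_{n-1}}\cdots T^{a_1}\tau_{0/1}$, working entirely in the 3-punctured plane $M=\mathbb{C}\setminus\{X^+,X^-,Y\}$ up to isotopy rel punctures. The idea is to read each twist as a mapping class of $M$. Let $\sigma_A$ be the half-twist exchanging the two punctures separated by $l_A$, and define $\sigma_I$ the same way for $l_I$. On the level of $\alpha_{u/v}$, a top twist $T$ should act by $\sigma_A^{\pm1}$ and a right twist $R$ by $\sigma_I^{\pm1}$. Instead of moving $\alpha$, we apply the inverse diffeomorphism to the axes, which leaves all intersection patterns unchanged. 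Under this inverse half-twist, the image of $l_A$ is exactly the arc obtained by ``bending $l_A$ towards $l_I$''. After straightening it by an isotopy of $M$, we recover the picture drawn in the statement, with the punctures relabelled. This relabelling is why the positions of $X^\pm$ depend on the twist sequence.

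The base case is $\mathcal{D}(\tau_{0/1})$. Here $\alpha_{0/1}$ is the straight arc joining $X^-$ and $X^+$, which by Lemma \ref{bridgerecipe} is disjoint from $l_A$ and meets $l_I$ once. This matches $|\alpha\cap l_A|=u=0$ and $|\alpha\cap l_I|=v=1$.

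For the inductive step, suppose $\mathcal{D}(\tau_{u/v})$ has been built. After applying the bending move once to $l_A$, the new active axis is isotopic to a band sum of the old $l_A$ and a parallel copy of $l_I$. Its geometric intersection with $\alpha_{u/v}$ is therefore at most $u+v$, while $l_I$ still meets $\alpha$ in $v$ points. This agrees with $T\tau_{u/v}=\tau_{(u+v)/v}$. The right twist is symmetric: bending $l_I$ gives the pair $(u,u+v)$, matching $R\tau_{u/v}=\tau_{u/(u+v)}$.

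To conclude, we compare with the direct recipe of Lemma \ref{bridgerecipe}. The two ingredients are:
\begin{enumerate}
\item the intersection counts are minimal, i.e.\ there are no bigons between $\alpha$ and the new axis;
\item an arc from $X^+$ to $X^-$ in $M$ is determined up to isotopy by its minimal intersection numbers with $l_A$ and $l_I$.
\end{enumerate}
For (2), pass to the double branched cover of the sphere over the four points $X^\pm$, $Y$, $\infty$, which is a torus. There the arcs become simple closed curves of slope $u/v$, and $l_A$, $l_I$ lift to curves representing the standard meridian and longitude, so the count $(u,v)$ pins down the slope. The bridge picture of Lemma \ref{bridgerecipe} is precisely the standard representative of slope $u/v$. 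For (1), I would check the bigon criterion directly on the straightened picture. Equivalently, in the torus cover, intersection numbers of linear curves are computed by determinants, and $\det$ changes from $u$ to $u+v$ under the elementary matrix.

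The main obstacle is the consistency of conventions. We must check that the sign of the half-twist (bending toward $l_I$ rather than away) and the resulting relabelling of $X^+,X^-,Y$ agree with the orientation of the top and right twists in Figure \ref{trivialandtwists}. The bending must give slope $(u+v)/v$ and not $(u-v)/v$, and we must also account for the reflection across the imaginary axis that Lemma \ref{bridgerecipe} uses when $u<v$. I would first settle this on the example $\tau_{2/1}$ shown in the statement and on $\tau_{5/2}$, by counting intersections by hand. Then I would argue that the sign is locally determined, so it persists through the induction.
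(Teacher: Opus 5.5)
The paper gives no argument for this lemma (it is imported from Wedrich's Lemma 4.1), so your proposal has to stand on its own. As written it has a genuine gap at the identification step (2).

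\textbf{The gap.} Minimal intersection numbers with $l_A$ and $l_I$ do not determine an arc from $X^-$ to $X^+$. Complex conjugation fixes $X^\pm$ and $Y$ (they lie on the real axis) and maps each vertical axis to itself. So $\alpha_{u/v}$ and its mirror image have the same counts $(u,v)$, yet for $u,v\geq 1$ they are not isotopic in $M$. Already for $\tau_{1/1}$, in state $(UP,X^-|Y|X^+)$, the arc from the left to the right puncture passing above $Y$ and the one passing below $Y$ both meet each axis exactly once. In your torus picture, the counts recover only $(|a|,|b|)$ for the lifted class $(a,b)$, i.e.\ the slope up to sign.

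The same issue sits inside (1): the lifted Dehn twist sends $(a,b)$ to $(a\pm b,b)$, and $|a\pm b|=u+v$ only when the twist sign matches the sign of the current slope. Your proposed check, counting intersections for $\tau_{2/1}$ and $\tau_{5/2}$, does detect whether the top and right bendings have compatible signs. An incompatible choice already gives counts $(1,0)$ instead of $(1,2)$ at $RT\tau_{0/1}=\tau_{1/2}$. But it cannot detect a global mirror mismatch between the bending direction drawn in the lemma and the clockwise convention of Lemma \ref{bridgerecipe}. In that situation every count in your induction would still agree, and the argument would certify a false identification.

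\textbf{How to close it.} Track a signed invariant instead of counts.
Record the lifted class $(a,b)\in H_1(T^2)$, normalized so that $|a|$ and $|b|$ are the minimal intersection numbers with $l_A$ and $l_I$. Show that the top and right bendings act by $(a,b)\mapsto(a+\epsilon b,b)$ and $(a,b)\mapsto(a,b+\epsilon a)$ with the same $\epsilon$; this is what your count check really settles. Orient so that $\epsilon=1$.

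Starting from $(0,1)$, the class then stays in the closed positive quadrant and equals $(u,v)$. Minimality (1) is automatic, because geometric intersection of linear curves on the torus is $|\det|$, so no bigon analysis is needed.

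The one remaining sign must be fixed by a geometric, not numerical, comparison with Lemma \ref{bridgerecipe}. For instance, check on which side of $Y$ the recipe's $\alpha_{1/1}$ passes versus the arc produced by the first bend. With slopes treated as signed quantities, your uniqueness claim (2) becomes correct.

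\textbf{Two smaller points.} First, which endpoint is $X^+$ and which is $X^-$ is likewise invisible to the counts. It must be carried along by the half-twists; this is the relabelling recorded in Lemma \ref{twistdiagram}. Second, the straightening is not an isotopy of $M$ rel punctures but the half-twist itself, a homeomorphism of $\mathbb{C}$ permuting the punctures. If it were an isotopy of $M$, the bent axis would already be isotopic to $l_A$ and the move would change nothing.
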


Given that $\tau_{4/3}=TR^2T\tau_{0/1}$, it is not difficult to check that Lemma \ref{twistfx} gives the same $\mathcal{D}(\tau_{4/3})$ that we get from Lemma \ref{bridgerecipe} after adding $l_A$ and $l_I$ (up to isotopy).

The computations in \cite{JHpI26}, as well as the ones in this paper, involve keeping track of the three punctures. Continuing with the $\tau_{4/3}$ example, the three punctures in $\mathcal{D}(\tau_{4/3})$ are $Y,X^-,$ and $X^+$ from left to right. We encode this information with the notation $Y|X^-|X^+$. In addition to knowing the order of the named punctures, we want to know the orientation of $\tau_{u/v}$. We will call the pair of data consisting of the orientation of $\tau_{u/v}$ and the ordering of the punctures the \textit{state} of $\tau_{u/v}$, and we write it as $(X, A|B|C)$, where $X$ is the orientation and $A,B,$ and $C$ are the puncture names. For example, the state for $\tau_{4/3}$ is $(UP, Y|X^-|X^+)$, which is the same state as $\tau_{0/1}$. The following lemma shows how the top and right twists affect states.

\begin{lemmaN}[Corollary 4.3, \cite{W16}]
\label{twistdiagram}
    The following diagram shows the effects of top and right twists on the orientations of the arcs and the configurations of the points $X^-,X^+$, and $Y$ (read left to right). The underlined state represents the state of the trivial tangle $\tau_{0/1}$.
  
\[\begin{tikzcd}[sep=small]
   & \underline{(UP,\,\,\,\, Y\vert X^-\vert X^+)} \arrow[rr,"T"]\arrow[ld] && (UP,\,\,\,\, X^-\vert Y\vert X^+) \arrow[ll]\arrow[rd,"R"] &\\
   (OP,\,\,\,\, Y\vert X^+\vert X^-) \arrow[ru,"R"]\arrow[rd]&&&& (OP,\,\,\,\,X^-\vert X^+\vert Y) \arrow[lu]\arrow[ld,"T"]\\
    & (RI,\,\,\,\, X^+\vert Y\vert X^-) \arrow[lu,"T"]\arrow[rr] && (RI,\,\,\,\, X^+\vert X^-\vert Y) \arrow[ll,"T"]\arrow[ru] &
\end{tikzcd}\]
\end{lemmaN}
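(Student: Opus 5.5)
The plan is to treat the two components of a state separately: first the orientation $X$, then the ordering $A|B|C$ of the punctures. I would show that each twist acts on states by an explicit rule, and then check that these rules produce exactly the six vertices and the arrows of the diagram. The check is an induction on the length of the twist word, starting from the underlined state $(UP, Y|X^-|X^+)$ of $\tau_{0/1}$.

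\textbf{Orientation.} I would read off the orientation change from the twist rules (1)--(6) of Section \ref{Cpolysec}: the basis webs on the right-hand side of each rule record the orientation of the twisted tangle. This gives
\[
T\colon UP\mapsto UP,\ OP\mapsto RI,\ RI\mapsto OP, \qquad R\colon UP\mapsto OP,\ OP\mapsto UP,\ RI\mapsto RI.
\]
If one prefers to avoid the skein relations, the same table follows directly from the pictures. A top twist crosses the two top endpoints, which preserves $UP$ and exchanges $OP$ and $RI$. A right twist crosses the two right endpoints, which preserves $RI$ and exchanges $UP$ and $OP$.

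\textbf{Puncture order.} I would use Lemma \ref{twistfx}. The active axis $l_A$ always separates the left and middle punctures, and a top twist bends $l_A$ toward $l_I$ and straightens it out again. Up to isotopy, this is the half-twist supported in a disc containing the left and middle punctures and meeting $l_A$. So after re-straightening, the names of the left and middle punctures are swapped and the right puncture is fixed. Symmetrically, a right twist bends $l_I$ and swaps the middle and right punctures. Combining this with the orientation table, $T$ sends $(X, A|B|C)$ to $(T(X), B|A|C)$ and $R$ sends it to $(R(X), A|C|B)$.

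\textbf{Checking the diagram.} Starting from $(UP, Y|X^-|X^+)$, I would apply these two rules repeatedly. For example, $T$ gives $(UP, X^-|Y|X^+)$, and $R$ applied to that gives $(OP, X^-|X^+|Y)$. Continuing this way, one checks that the six states shown form a single orbit, that every labelled arrow is correct, and that the unlabelled reverse arrows are the same twists read backwards. The reverse arrows are correct because both rules are involutions on states: applying the same transposition and the same orientation swap twice returns the original state. All six states are hexagon vertices, each has exactly one $T$-neighbour and one $R$-neighbour, and so the orbit is closed under $T$ and $R$. Induction on the twist word then completes the proof.

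\textbf{Main obstacle.} I expect the hard step to be justifying that ``bend $l_A$ and straighten'' really exchanges the labels of the left and middle punctures, rather than, say, moving the right puncture past them. This requires tracking in the picture of Lemma \ref{twistfx} which punctures lie in the disc swept out during the isotopy. I would settle it on the base case $\mathcal{D}(\tau_{2/1})$ drawn there, and then argue that the local model near $l_A$ is the same at every stage, since $l_A$ always lies between the left and middle punctures.
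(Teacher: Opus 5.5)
Your approach is correct and matches how this result is obtained. The paper gives no proof of its own; it cites Corollary 4.3 of \cite{W16}, which is read off from the bending description of Lemma \ref{twistfx} together with the orientation changes, exactly as you propose. Your rules $T\colon (X,A|B|C)\mapsto (T(X),B|A|C)$ and $R\colon (X,A|B|C)\mapsto (R(X),A|C|B)$ are right. Iterating them from $(UP,Y|X^-|X^+)$ produces the hexagon, with $T$- and $R$-edges alternating around it.

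However, one claim in your verification is false for the diagram as printed, and your own rules expose it.

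\begin{itemize}
\item The arrow from $(RI,X^+|X^-|Y)$ to $(RI,X^+|Y|X^-)$ is labelled $T$.
\item Your rules give $T(RI,X^+|X^-|Y)=(OP,X^-|X^+|Y)$ and $R(RI,X^+|X^-|Y)=(RI,X^+|Y|X^-)$, so this arrow should be $R$.
\item With the printed labels, both $RI$ vertices would have two $T$-edges and no $R$-edge. This contradicts your assertion that every vertex has exactly one neighbour of each kind.
\end{itemize}

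This is a typo in the statement. The paper's own redrawing of the hexagon, in the proof of the signature formula in Section \ref{cortermsec}, labels that arrow $R$ and counts it as the $RRI$ edge. So your argument proves the corrected diagram, and you should say so rather than claim that every printed label checks out.

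The step you flag as the main obstacle can be settled by a separation argument instead of a stage-by-stage local check.
\begin{itemize}
\item The straightening map carries $l_I$, which a top twist leaves untouched, to itself. So the puncture cut off by $l_I$ stays on the right.
\item It also carries the bent $l_A$ back to $l_A$. The bent $l_A$ is the image of $l_A$ under a half-twist about the left and middle punctures, so it cuts off the old middle puncture. That puncture therefore becomes the new left one.
\end{itemize}
The same reasoning with the roles of $l_A$ and $l_I$ exchanged handles $R$.
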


Next, we need to introduce notation for the Lagrangian intersections.

\begin{definitionN}
    Given a rational tangle $\tau_{u/v}$, let $\xi_1,...,\xi_u$ denote the active intersection points and $\xi_{u+1},...,\xi_{u+v}$ denote the inactive intersection points in $\mathcal{D}(\tau_{u/v})$. We use $\mathcal{G}_{u/v}^T$ to denote the set of intersection points, i.e. $\mathcal{G}_{u/v}^T=\{\xi_1,...,\xi_{u+v}\}$.
\end{definitionN}

Before proceeding to define the loops and homomorphisms used in \cite{JHpI26}, we provide an important ordering of the $\xi_i$'s.

\begin{definitionN}
\label{precorder}
    Let $\textbf{r}:[0,1]\to\alpha$ be the constant-speed parametrization of $\alpha$ with no critical points, such that $\mathbf{r}(0)=X^-$ and $\mathbf{r}(1)=X^+$. Given two intersection points $\xi_i$ and $\xi_j$, we say $\xi_i\prec \xi_j$ if $\xi_i=\mathbf{r}(t_0)$ and $\xi_j=\mathbf{r}(t_1)$ with $t_0<t_1$.
\end{definitionN}

This ordering on the $\xi_i$'s will be used later to help define orderings of the Lagrangian intersections for rational links.

\subsection{Loops and Homomorphisms}
\label{loophomsec}

There are two different types of loops in \cite{JHpI26}: one is defined in $M$ and the other in $\text{Conf}^2(M)$. However, both types are indexed by pairs of intersection points in $\mathcal{G}_{u/v}^T$. First, we will describe the loops in $M$. In the following discussion, it should be noted that when we refer to a path being unique, we mean that it is unique up to homotopy.

Given two intersections $\xi_i$ and $\xi_j$, there is a unique path from $\xi_i$ to $\xi_j$ along $\alpha$. Denote this path by $\eta_{i,j}$. If $\xi_i$ and $\xi_j$ are both active or inactive intersections, then there is a unique path from $\xi_j$ back to $\xi_i$ along the appropriate (in)active axis. Otherwise, $\xi_i$ and $\xi_j$ lie on different vertical Lagrangians. If $\xi_i\in l_A$ and $\xi_j\in l_I$, then we can determine a unique path from $\xi_j$ back to $\xi_i$ by first traveling along $l_I$ to the top intersection point, sliding along $\alpha$ to the top intersection point on $l_A$, and then traveling back down to $\xi_i$ along $l_A$. We can define the unique path from $\xi_j$ to $\xi_i$ in an analogous way if $\xi_j\in l_A$ and $\xi_i\in l_I$. Let $\bar\eta_{j,i}$ be the path from $\xi_j$ to $\xi_i$ just described, depending on whether $\xi_i$ and $\xi_j$ are each active or inactive.

\begin{definitionN}
    Given Lagrangian intersection points $\xi_i$ and $\xi_j$ in $\mathcal{D}(\tau_{u/v}),$ let $\gamma_{i,j}^T:[0,1]\to M$ be the loop $\gamma_{i,j}^T=\eta_{i,j}\cdot \bar\eta_{j,i}$.
\end{definitionN}

\begin{figure}
    \centering
    \includegraphics[height=4cm]{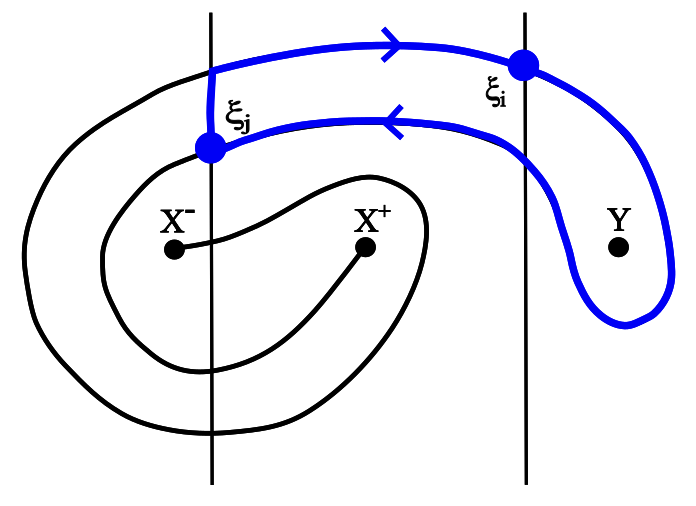}
    \caption{An example loop $\gamma_{i,j}^T$ for $\tau_{5/2}$}
    \label{tangleloopex}
\end{figure}

In contrast to \cite{JHpI26}, we include the $T$ superscript to indicate that this is a loop defined for a rational tangle. We will use $K$ and $L$ superscripts for the appropriate loops for knots and 2-component links later.

There is a maximum element in $\mathcal{G}_{u/v}^T$ with respect to $\prec$, given by the intersection point immediately before $X^+$. Many of the loops used in \cite{JHpI26} are of the form $\gamma_{i,j}^T$ where $\xi_j$ is this particular intersection, so we use the following notation.

\begin{definitionN}
    Given a rational tangle $\tau_{u/v}$, let $\xi_\omega$ denote the maximum element of $\mathcal{G}_{u/v}^T$ with respect to $\prec.$ Now, define $\gamma_i^T:=\gamma_{i,\omega}^T$.
\end{definitionN}

Next, we must consider the loops in $\text{Conf}^2(M)$. Recall that a loop $\tilde \gamma:[0,1]\to\text{Conf}^2(M)$ is given by a pair $(\sigma^1,\sigma^2)$, where $\sigma_1$ and $\sigma_2$ are both loops or paths that swap starting/ending points in $M$, such that $\sigma^1(t)$ and $\sigma^2(t)$ are distinct for all $t\in[0,1]$.

\begin{figure}
\begin{tikzpicture}
    \node at (0,0) {\includegraphics[height=4cm]{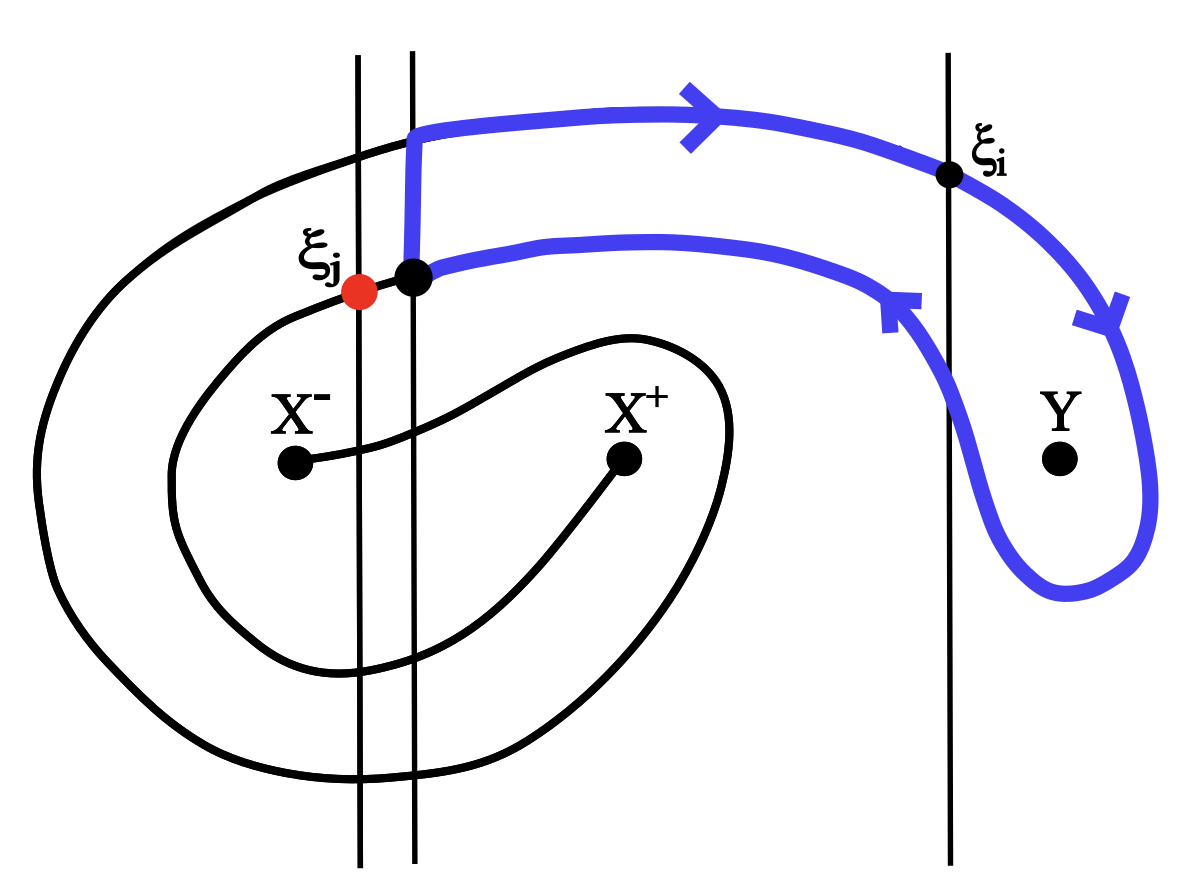}};
    \node at (8,0) {\includegraphics[height=4cm]{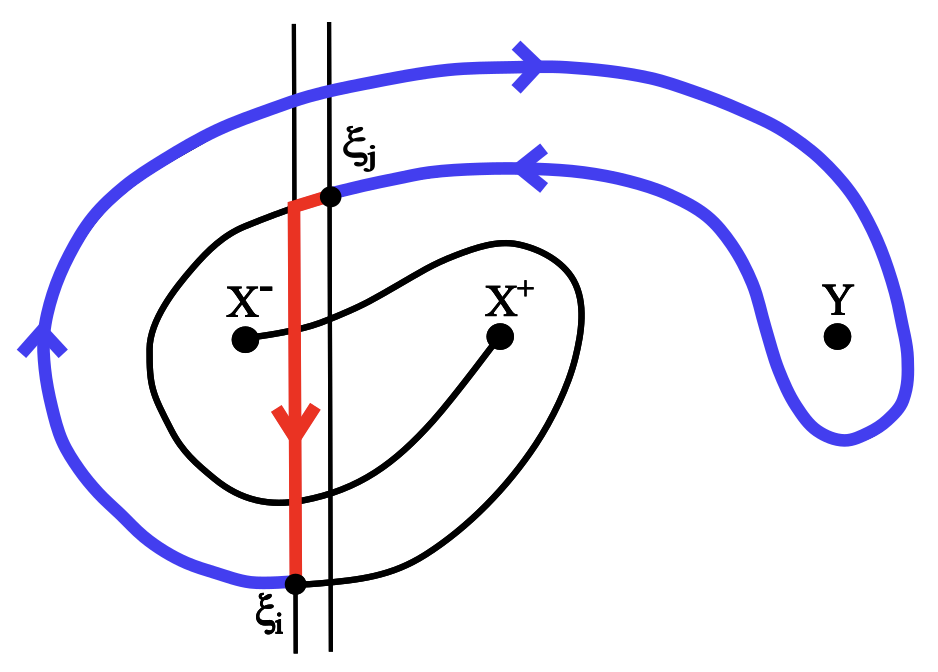}};
\end{tikzpicture}
    \caption{Two example loops $\tilde\gamma_{i,j}^T$ for $\tau_{5/2}$}
    \label{T52loops}
\end{figure}

Although the loops in $\text{Conf}^2(M)$ are similar to the ones in $M$, it takes considerably more work to define them carefully, so we refer the reader to the end of Section 3.1 in \cite{JHpI26} for a careful treatment of them. Here, we will only say a couple of facts about these loops. They are indexed by (ordered) pairs of Lagrangian intersections $\xi_i$ and $\xi_j$ such that $\tilde \gamma_{i,j}^T=(\sigma_{i,j}^1,\sigma_{i,j}^2):[0,1]\to\text{Conf}^2(M)$ associated with these intersection points satisfies $\gamma_{i,j}^T=\sigma_{i,j}^1+\sigma_{i,j}^2$ in $H_1(M)$. The basepoint for $\tilde \gamma_{i,j}^T$ is determined by taking parallel copies of the vertical Lagrangians and placing whichever of $\xi_i$ or $\xi_j$ is below the other (if both are active or inactive intersections) or whichever is active (if one is active and the other inactive) on the left-most vertical. Two examples are shown in Figure \ref{T52loops}, but a detailed explanation can be found in \cite{JHpI26}. 

We must also define the homomorphisms used in \cite{JHpI26}. These are the same homomorphisms we will need later in this paper. Let $V\in\{X^+,X^-,Y\}$. The first type of homomorphism we need is defined in terms of $\Psi_V^1:\pi_1(M)\to\mathbb{Z}$, which is given by the composition
\[
\Psi_V^1:\pi_1(M)\xrightarrow{\iota_*} \pi_1(\mathbb{C}\setminus V)\to \pi_1(\text{Conf}^2(\mathbb{C}))\cong \text{Br}_2\xrightarrow[\sim]{\text{ab}} \mathbb{Z},
\]
where the second map takes a loop $\gamma\subset \mathbb{C}\setminus V$ to $(\gamma, c_V)\subset \text{Conf}^2(\mathbb{C})$, where $c_V$ is the constant loop at $V$, and the other maps are the obvious ones (aside from the choice of the particular isomorphism $\pi_1(\text{Conf}^2(\mathbb{C}))\cong \text{Br}_2$). Then, for $W\subseteq \{X^+,X^-,Y\}$, we define $\Psi_W:\pi_1(M)\to\mathbb{Z}$ as
\[
\Psi_W:=\frac{1}{2}\sum_{V\in W}\Psi_V^1.
\]
It can easily be checked that dividing by $2$ still results in an integer. The other type of homomorphism is $\Phi:\pi_1(\text{Conf}^2(M))\to \mathbb{Z}$, which is given by the composition
\[
\Phi:\pi_1(\text{Conf}^2(M))\xrightarrow{\iota_*}\pi_1(\text{Conf}^2(\mathbb{C}))\cong \text{Br}_2\xrightarrow[\sim]{\text{ab}} \mathbb{Z}.
\]

For all the homomorphisms just described, the isomorphism $\pi_1(\text{Conf}^2(\mathbb{C}))\cong \text{Br}_2$ is defined by choosing a perspective for viewing the loops in $\text{Conf}^2(\mathbb{C})$. The convention in \cite{JHpI26} and in this paper is that the loop should be viewed by tilting the top of the page towards you such that the printed side of the page is facing down. In other words, you are viewing the loop from above, and when the loop moves to the left or right on the page, you see it move in the same direction from this perspective. See the end of Section 2.4 in \cite{JHpI26} for an example.

\subsection{Geometric Formulas for Rational Tangles}

Before we can state the main results we will need from \cite{JHpI26}, we need to recall a few more notational choices.

\begin{definitionN}
    Given a rational tangle $\tau_{u/v}$, let $P(\tau_{u/v})$ denote the generating function of the colored HOMFLY-PT skein module evaluations of $\tau_{u/v}$, i.e.
    \[
    P(\tau_{u/v})=\sum_{j\geq 0}\langle\tau_{u/v}\rangle_j.
    \]
\end{definitionN}

Additionally, the formulas below involve certain $\delta$ terms. For example, $\delta_{i,A}$ should be read as
\[
\delta_{i,A}=
\begin{cases}
    1, \qquad \xi_i \,\text{is an active intersection point}\\
    0, \qquad \text{otherwise}
\end{cases}
\]
and $\delta_{i,I}$ should be read similarly, but with ``inactive'' instead of ``active'' in the definition. Additionally, we used $Z$ to denote the middle of the three punctures, so $\delta_{Z,X^+}$ should be read as the Kronecker delta that gives $1$ if $Z=X^+$ and $0$ otherwise.

Now, we can state the first main result that we will need from \cite{JHpI26}.

\begin{propositionN}
\label{tangleHOMFLYpoly}
    We can express $P(\tau_{u/v})$ as
    \begin{multline}
    \label{HOMFLYthmeq}
        \sum_{\bf{d}=(d_1,...,d_{u+v})\in \mathbb{N}^{u+v}}(-q)^{S\cdot\bf{d}}a^{A\cdot \bf{d}}q^{\bf{d}\cdot Q\cdot \bf{d}^t}{d_1+...+d_u\brack d_1,...,d_u}{d_{u+1}+...+d_{u+v}\brack d_{u+1},...,d_{u+v}}\\ \times X[d_1+...+d_{u+v},d_1+...+d_u],
    \end{multline}
    where $S,A,$ and $Q$ are defined on the free $\mathbb{Z}$-module $\mathbb{Z}\mathcal{G}_{u/v}^T$ with basis given by $\mathcal{G}_{u/v}^T$ and are computed by the following formulas:
    \newline
    \[
    \begin{cases}
        S_i=\Psi_{\{X^\pm,Y\}}([\gamma_i])+\delta_{i,A}\delta_{\omega,I}-\delta_{i,I}\delta_{\omega,A}\\
        A_i=2\Psi_{X^+}([\gamma_i])+\delta_{Z,X^+}(\delta_{i,A}\delta_{\omega,I}-\delta_{i,I}\delta_{\omega,A})\\
        \begin{cases}
            Q_{ii}=(\Psi_{\{X^-,Y\}}-3\Psi_{X^+})([\gamma_i])+2\delta_{Z,X^+}(\delta_{i,I}\delta_{\omega,A}-\delta_{i,A}\delta_{\omega,I})\\
            Q_{ij}=Q_{ii}+(\Phi-2\Psi_{X^+})([\gamma_{j,i}])+\delta_{Z,X^+}(\delta_{i,A}\delta_{j,I}-\delta_{j,A}\delta_{i,I}).
        \end{cases}
    \end{cases}
    \]
\end{propositionN}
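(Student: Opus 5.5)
This result is quoted from \cite{JHpI26}, so strictly it may be cited. Still, here is how I would prove it directly: by induction on the length of the twist sequence $T^{a_n}R^{a_{n-1}}\cdots T^{a_1}\tau_{0/1}$ building $\tau_{u/v}$.

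\textbf{Base case.} For $\tau_{0/1}$ we have $\langle\tau_{0/1}\rangle_j=UP[j,0]$. In $\mathcal{D}(\tau_{0/1})$ there is a single inactive intersection and no active ones, so (\ref{HOMFLYthmeq}) reduces to a single sum over $d_1=j$. One checks that $\gamma_1^T$ is trivial, so $S=A=Q=0$, which matches.

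\textbf{Inductive step.} Assume (\ref{HOMFLYthmeq}) holds for $\tau_{u/v}$ and apply a top twist $T$; the right twist is symmetric. Substitute the twist rules (1)--(3) into the expansion. This replaces $X[j,k]$, with $k=d_1+\dots+d_u$, by a sum over $h\ge k$ with coefficient ${h\brack k}_+$. By Lemma \ref{twistfx}, the new active intersections of $\mathcal{D}(\tau_{(u+v)/v})$ are the old active ones together with one new point for each old inactive point, namely where $l_A$ is bent across $l_I$. The inactive points are unchanged.

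So write $h-k=e_{u+1}+\dots+e_{u+v}$, distributing the new active mass over copies of the inactive points. The combinatorial heart is then the identity
\[
{h\brack k}_+{j-k\brack d_{u+1},\dots,d_{u+v}}\cdot{k\brack d_1,\dots,d_u}
\]
equals a sum over splittings $d_{u+i}=e_{u+i}+d'_{u+i}$ of
\[
{h\brack d_1,\dots,d_u,e_{u+1},\dots}{j-h\brack d'_{u+1},\dots}\,q^{(\text{quadratic cross terms})}.
\]
This is a standard $q$-Vandermonde/multinomial splitting; Lemma \ref{algidentity} with $x=0$ is the prototype. The extra $q$-powers it produces, together with $(-q)^h$, $q^{k^2}$, $a^k q^{k(k-2j)}$, etc., from the twist rule, are absorbed into new linear and quadratic forms on $\mathbb{Z}\mathcal{G}^T_{(u+v)/v}$.

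\textbf{Geometric step.} Next, show that these new algebraic forms agree with the geometric formulas. This is where Lemma \ref{twistdiagram} enters: it tracks which puncture is middle ($Z$) and the orientation, and these determine the $\delta$ corrections.

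For each new intersection point $\xi$ created from an old inactive point $\xi_{u+i}$, the loop $\gamma^T$ at $\xi$ differs from the old $\gamma^T_{u+i}$ by a loop encircling the middle puncture once. The resulting change in $\Psi_V$ is $\pm\tfrac12\Psi^1_V$ of a simple loop around $V$, with $V$ the middle puncture.

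Similarly, in $\mathrm{Conf}^2(M)$ the loop $\tilde\gamma_{i,j}$ between a new point and an old point changes by a half-twist or full twist about the diagonal. This is recorded by $\Phi$ and produces the shift in $Q_{ij}$.

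The maximum point $\xi_\omega$ may change from inactive to active under the twist. This accounts for the terms $\delta_{i,A}\delta_{\omega,I}-\delta_{i,I}\delta_{\omega,A}$, and the factor $a^k$ or $a^h$ appears only when $X^+$ is the middle puncture, hence the factor $\delta_{Z,X^+}$.

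\textbf{Main obstacle.} I expect the hardest step to be matching the quadratic form. The $q$-multinomial splitting yields cross terms depending on an ordering of the intersection points, and this ordering must be identified with the ordering $\prec$ and with the braid-winding $\Phi([\gamma_{j,i}])$. I would first verify it on $\tau_{2/1}$ and $\tau_{5/2}$. I would then choose the ordering in Lemma \ref{algidentity} to be $\prec$, using Theorem \ref{permthm} to repair any mismatch of off-diagonal entries.
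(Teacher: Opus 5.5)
Your overall route is the natural one: induct on the twist word, expand with Wedrich's rules, and re-index by a multinomial $q$-Vandermonde splitting. That splitting identity is true. However, Lemma \ref{algidentity} at $x=0$ is not its prototype: at $x=0$ only the $\alpha=0$ terms survive and the lemma is trivial. The paper does not prove this statement; it imports it from \cite{JHpI26}, and twist-by-twist bookkeeping of the kind you propose is what Lemma \ref{linformtwists} records.

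\textbf{The geometric step is asserted, and the mechanism you give is backwards.} Matching the algebraic forms to the geometric formulas is the whole content of the proposition. Take $T$ applied to a $UP$ tangle.
\begin{itemize}
\item Algebraically, $(-q)^hq^{k^2}$ adds $1$ to $S$ on \emph{every} new active index. It adds $1$ to $Q$ (the whole block) only on the images of the old active points. The copies $e_{u+i}$ keep $A$ and $Q_{u+i,u+i}$.
\item Geometrically, it is the loops at the \emph{old} active points that pick up one turn around the middle puncture $Z$, giving $S_i+1$ and $Q_{ii}+1$.
\item The new intersections on the bent part of $l_A$ have loops parallel to $\gamma_{u+i}$. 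No $\Psi_V$ changes for them; their $+1$ in $S$ comes solely from $\delta_{i,A}\delta_{\omega,I}$.
\item That term is active even though $\xi_\omega$ stays inactive under $TUP$. So the $\delta$-corrections are not explained by $\xi_\omega$ changing type.
\end{itemize}
In your description only the new points change. Already $\tau_{2/1}\to\tau_{3/1}$ rules this out. Starting from $S=(2,1,0)$ and $\mathrm{diag}\,Q=(1,0,0)$, the twist rule gives $S=(3,2,1,0)$ and $\mathrm{diag}\,Q=(2,1,0,0)$ in the order $\prec$. This agrees with $S_i=n-i+1$ for $\tau_{n/1}$, as recalled in the torus-link example. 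So the old active points must shift.

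\textbf{The quadratic form is left open, and the proposed fallback does not close it.} The formula gives $Q_{ij}$ relative to $Q_{ii}$. So whenever $Q_{ii}$ shifts but the algebra leaves $Q_{ij}$ fixed, $(\Phi-2\Psi_{X^+})([\gamma_{j,i}])$ must shift by the opposite amount. An example is an old active point against a copy of an inactive point under $TUP$, which $q^{k^2}$ does not reach. In addition, the Vandermonde cross terms, taken in a fixed order on the inactive points, must be identified with the change of $\Phi$ on the $\mathrm{Conf}^2(M)$ loops between a copy and a remaining inactive point. A proof has to contain exactly this analysis of $\tilde\gamma_{i,j}$ under the bending of Lemma \ref{twistfx}, for each twist type and state.

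Appealing to Theorem \ref{permthm} ``to repair any mismatch'' is not a substitute, for two reasons. First, you would have to specify the swaps and verify the $\Lambda$- and row-sum conditions at every step. Second, the theorem concerns the one-variable quiver series, whereas (\ref{HOMFLYthmeq}) is graded by the webs $X[j,k]$. You would need, and have to justify, a version in which active and inactive vertices are specialized to different variables.
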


The other result we need from \cite{JHpI26} involves reducing the number of indices in the sum. In particular, we can partition $\mathcal{G}_{u/v}^T$ as $\mathcal{G}_{u/v}^T=\mathfrak{X}\sqcup \mathfrak{Y}\sqcup \mathfrak{Z}$ in a particular way such that we only need to compute $S,A,$ and $Q$ on $\mathfrak{Y}\sqcup\mathfrak{Z}$. We can think of this as restricting $S,A,$ and $Q$ to free $\mathbb{Z}$-submodule with basis given by $\mathfrak{Y}\sqcup\mathfrak{Z}$. Once we do this, we need to introduce a new linear form on this smaller $\mathbb{Z}$-module, denoted $K$, such that
\begin{equation}
\label{Kform}
    K_i=K(\xi_i)=\begin{cases}
    1, \qquad\qquad \xi_i\in\mathfrak{Y}\\
    0,\qquad\qquad \xi_i\in\mathfrak{Z}.
\end{cases}
\end{equation}

We partition the intersection points in terms of pairs connected by arcs that wrap around $Y$ as shown in Figure \ref{Ydisk}. If we consider all pairs of intersections connected in this way, we set $\mathfrak{X}$ to be the types of points labeled $\xi_i$ and $\mathfrak{Y}$ to be the types of points labeled $\xi_j$ in Figure \ref{Ydisk}. The set $\mathfrak{Z}$ consists of all the remaining intersection points. See \cite{JHpI26} for examples. Given this setup, we have the following proposition.

\begin{figure}
   
 \raisebox{0pt}{\includegraphics[height=3.5cm, angle=0]{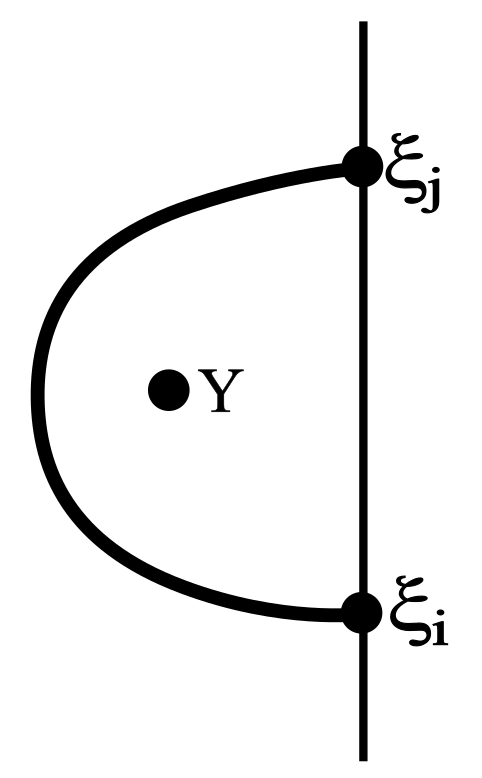}}\qquad \qquad \raisebox{0pt}{\includegraphics[height=3.5cm, angle=0]{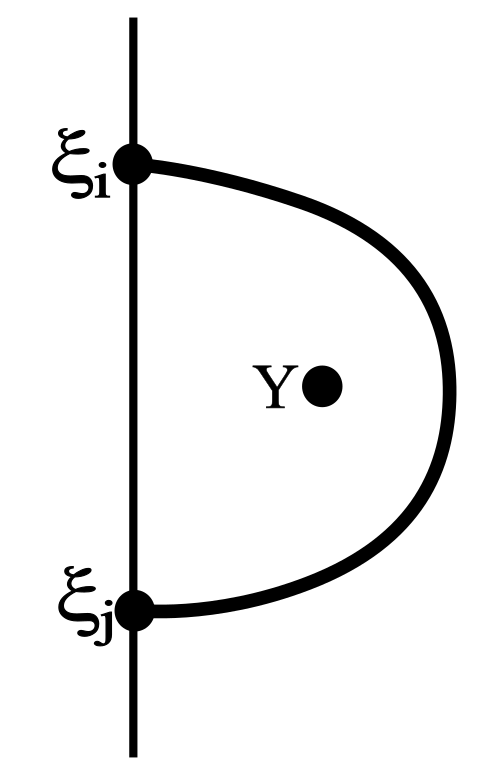}}
\caption{The two cases where consecutive $\xi_i,\xi_j\in \alpha\cap l$ (for $l\in\{l_
A,l_I\}$) wrap around $Y$.}
\label{Ydisk}
\end{figure}

\begin{propositionN}
\label{indpartprop}
    Given $\tau_{u/v}$ and $\mathcal{G}_{u/v}^T=\mathfrak{X}\sqcup\mathfrak{Y}\sqcup\mathfrak{Z}$ as described above, $P(\tau_{u/v})$ can be expressed as
    \begin{multline*}
        P(\tau_{u/v})=\sum_{\textbf{d}\in \mathbb{N}^{m+n}}(-q)^{S\cdot\textbf{d}}q^{\textbf{d}\cdot Q\cdot \textbf{d}^t}a^{A\cdot \textbf{d}} (q^2;q^2)_{K\cdot \textbf{d}}{d_1+...+d_m\brack d_1,...,d_m}{d_{m+1}+...+d_{m+n}\brack d_{m+1},...,d_{m+n}}\\ \times X[d_1+...+d_{m+n},d_1+...+d_m],
    \end{multline*}
    where $m\leq u$, $n\leq v$, $m+n=|\mathfrak{Y}\sqcup\mathfrak{Z}|$, $K$ is given by Equation (\ref{Kform}), and $S,A,$ and $Q$ are defined on the free $\mathbb{Z}$ module with basis given by $\mathfrak{Y}\sqcup \mathfrak{Z}$ 
 and are computed by the same formulas as in Proposition \ref{tangleHOMFLYpoly}, but restricted to $\mathfrak{Y}\sqcup\mathfrak{Z}$.
\end{propositionN}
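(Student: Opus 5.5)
The plan is to start from the full quiver-type expansion of Proposition \ref{tangleHOMFLYpoly}, indexed by all of $\mathcal{G}_{u/v}^T$, and to eliminate the summation indices attached to $\mathfrak{X}$ one pair at a time. Each $\xi_i\in\mathfrak{X}$ is matched with a $\xi_j\in\mathfrak{Y}$ as in Figure \ref{Ydisk}: the two points are consecutive on the same vertical Lagrangian $l\in\{l_A,l_I\}$, and the arc of $\alpha$ joining them wraps once around $Y$. This matching is a bijection $\mathfrak{X}\to\mathfrak{Y}$, so the multinomial coefficients in (\ref{HOMFLYthmeq}) can be regrouped pair by pair. Both points of a pair lie in the same block (active or inactive) of the multinomials, so the second argument of $X[\cdot,\cdot]$ depends only on $e=d_i+d_j$. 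This is what allows us to sum over $d_i$ with $d_i+d_j=e$ held fixed.

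The first step is geometric. For a matched pair, compare the loops $\gamma_i^T$ and $\gamma_j^T$ in $M$. Since $\eta_{i,\omega}$ and $\eta_{j,\omega}$ differ by the small arc around $Y$, and the return paths differ only by a short segment of $l$, we get $[\gamma_i]=[\gamma_j]\cdot y^{\pm1}$, where $y$ is a small loop around $Y$. Applying $\Psi_{\{X^\pm,Y\}}$, $\Psi_{X^+}$ and $\Psi_{\{X^-,Y\}}$ then gives:
\begin{itemize}
\item $A_i=A_j$;
\item $S_i=S_j\pm1$;
\item $Q_{ii}=Q_{jj}\pm1$.
\end{itemize}
The $\delta$-correction terms agree because $\xi_i$ and $\xi_j$ are both active or both inactive. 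I would then do the same in $\text{Conf}^2(M)$ for $\tilde\gamma^T_{k,i}$ versus $\tilde\gamma^T_{k,j}$, for every other point $\xi_k$, and for $\tilde\gamma^T_{i,j}$ itself. The aim is to show that $Q_{ik}=Q_{jk}+c$ with $c$ independent of $k$, and that $Q_{ij}$ takes a specific value relative to $Q_{jj}$. These are exactly the relations needed for the $d_i$-dependence to factor out of the rest of the summand.

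Once these relations hold, the summand restricted to the pair has the form
\[
(-q)^{S_jd_j+S_id_i}q^{Q_{ii}d_i^2+2Q_{ij}d_id_j+Q_{jj}d_j^2+2d_i\sum_{k}Q_{ik}d_k+\cdots}\frac{1}{(q^2;q^2)_{d_i}(q^2;q^2)_{d_j}},
\]
up to terms that depend only on $e$ and the other indices. After substituting $d_j=e-d_i$, the sum over $d_i$ should be recognized as the right-hand side of Lemma \ref{algidentity} with $m=1$, read backwards. That is, it is a $q$-binomial-theorem sum equal to $(x^2;q^2)_e/(q^2;q^2)_e$ for a specific $x^2$. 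I expect $x^2$ to be a monomial making $(x^2;q^2)_e$ equal to $(q^2;q^2)_e$ times a monomial, after absorbing the shift into $S_j$ and $Q_{jj}$. This produces the factor $(q^2;q^2)_{d_j}$, which is exactly what $(q^2;q^2)_{K\cdot\textbf{d}}$ records when $K_j=1$. With several pairs, each contributes its own factor $(q^2;q^2)_{d_j}$; the intended statement is that these combine into the single Pochhammer $(q^2;q^2)_{K\cdot\textbf{d}}$ together with the remaining multinomial. Checking this combination is part of the bookkeeping below.

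Renaming $e$ as $d_j$ and restricting $S,A,Q$ to $\mathfrak{Y}\sqcup\mathfrak{Z}$ then gives the claimed formula. In it, $m$ and $n$ count the surviving active and inactive points, so $m\le u$ and $n\le v$.

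I expect the main obstacle to be the $\text{Conf}^2(M)$ computation. One must show that $(\Phi-2\Psi_{X^+})([\tilde\gamma_{k,i}])-(\Phi-2\Psi_{X^+})([\tilde\gamma_{k,j}])$ is the same for every $\xi_k$. This includes the points of other pairs and the cases where $\xi_k$ lies on the other axis, where the base-point conventions for $\tilde\gamma$ change. I would handle this by isotoping the configuration-space loops so that they differ only by one strand going around $Y$, with the other strand fixed far from $Y$. Then $\Phi$ changes by a fixed amount, determined by whether $\xi_k$ lies above or below the pair on $l$, and a short case check should show that this amount is uniform. If it is not, I would fall back on Theorem \ref{permthm} to permute off-diagonal entries into a form where it is, which is a move the paper has already flagged as needed.
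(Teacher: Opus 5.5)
\textbf{There is a genuine gap at the step you defer to ``bookkeeping''.} The paper imports this proposition from \cite{JHpI26} without proof, so I am judging your argument on its own terms.

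Your plan makes each pair decouple ($Q_{ik}-Q_{jk}=c$ independent of $k$), so that each pair sum contributes a factor $(q^2;q^2)_{d_j}$ on top of the reduced multinomial. With several pairs this yields $\prod_{\xi_j\in\mathfrak{Y}}(q^2;q^2)_{d_j}$. That is not $(q^2;q^2)_{K\cdot\mathbf{d}}=(q^2;q^2)_{\sum_{\xi_j\in\mathfrak{Y}}d_j}$ once $|\mathfrak{Y}|\ge 2$, and no regrouping turns one into the other. So your argument only covers $|\mathfrak{Y}|\le 1$.

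The uniformity you aim for is also false. Take $\tau_{6/1}$. The $\tau_{n/1}$ data recalled in the torus-link example gives $S=(6,5,4,3,2,1,0)$, $Q_{ab}=6-\max(a,b)$ for $a,b\le 6$, and $Q_{a7}=0$. Comparing with the reduced data in the torus-knot example ($S=(5,3,1,0)$) identifies the pairs as $\{\xi_1,\xi_2\},\{\xi_3,\xi_4\},\{\xi_5,\xi_6\}$ with $\mathfrak{Y}=\{\xi_2,\xi_4,\xi_6\}$. Then $Q_{31}-Q_{41}=1$ while $Q_{35}-Q_{45}=0$.

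Geometrically, the pairs are nested semicircles around $Y$. The $\Phi$-values of the configuration-space loops pairing $\xi_k$ with $\xi_i$ and with $\xi_j$ differ by $1$ or $0$ according to whether $\xi_k$ lies \emph{between} $\xi_i$ and $\xi_j$ on $l$ (inside the semicircle) or not. This is the same dichotomy the paper uses for semicircles around $X^+$ in the proof of Lemma~\ref{tQmswap}. Your ``above or below'' case check misses the ``between'' case. Theorem~\ref{permthm} cannot rescue this: the statement fixes $Q$ as the restriction of the formulas of Proposition~\ref{tangleHOMFLYpoly}, and decoupled pairs would produce the wrong Pochhammer anyway.

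\textbf{The fix.} The non-uniformity is exactly the mechanism that produces a single Pochhammer. Write the pairs as $(\xi_{i_p},\xi_{j_p})$ with $\xi_{i_p}\in\mathfrak{X}$, ordered from the innermost semicircle outward, $p=1,\dots,r$, and set $e_p=d_{i_p}+d_{j_p}$. Prove:
(i) $S_{i_p}=S_{j_p}+1$, $A_{i_p}=A_{j_p}$, $Q_{i_pi_p}=Q_{j_pj_p}+1$, and $Q_{i_pj_p}=Q_{j_pj_p}$;
(ii) $Q_{i_pk}-Q_{j_pk}=1$ if $\xi_k$ belongs to a pair $p'<p$, and $Q_{i_pk}-Q_{j_pk}=0$ for every other $\xi_k$ outside the pair (outer pairs, $\mathfrak{Z}$, and the other axis).

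Applying (ii) to both points of pair $p'$ shows that the coefficient of $d_{i_p}d_{i_{p'}}$ vanishes after substituting $d_{j_p}=e_p-d_{i_p}$ for all $p$. The sum over the $\mathfrak{X}$-indices is then exactly the right-hand side of Lemma~\ref{algidentity} with $m=r$, $x^2=q^2$, $\alpha_p=d_{i_p}$ and $\beta_p=d_{j_p}$. The factor $(-q)^{\sum_p\alpha_p}q^{\sum_p\alpha_p^2}$ comes from (i), and the cross term $q^{2\sum_p\alpha_{p+1}(e_1+\cdots+e_p)}$ comes from (ii).

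Reading the lemma backwards gives $(q^2;q^2)_{e_1+\cdots+e_r}/\prod_p(q^2;q^2)_{e_p}$. This is the factor $(q^2;q^2)_{K\cdot\mathbf{d}}$ times the reduced multinomial, with all surviving data read off the $\mathfrak{Y}$- and $\mathfrak{Z}$-rows. If you prefer to keep pair-by-pair summation, go from the inside out with the index-dependent $x_p^2=q^{2+2(e_1+\cdots+e_{p-1})}$. The factors then telescope via $(q^{2+2s};q^2)_e=(q^2;q^2)_{s+e}/(q^2;q^2)_s$. Relations (i) and (ii) can be checked directly on the $\tau_{6/1}$ data above.
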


\section{Rational Knots}
\label{knotssec}

\subsection{Setting Up the Theorem for Knots}
\label{knotssetup}

We will now provide a method for computing the quiver for rational knots in a geometric way. The entirety of this section will, thus, be dedicated to proving Theorem \ref{knotthmintro} from Section \ref{introsec}. In the next section, we will consider Theorem \ref{linkthmintro} for two-component links; the geometric setup will be different there even though the formulas are essentially the same.

Recall from Section \ref{bgrd} that all rational knots may be obtained by taking the numerator closure of a rational tangle $\tau_{u/v}$ with $u$ odd and orientation $UP$ or $OP$. As Wedrich and Sto\v si\'c observe in \cite{SW21}, this can be realized by applying a top twist $T$ followed by the numerator closure of a rational tangle $\tau_{(u-v)/v}$ with $u-v$ even, $v$ odd, and orientation $UP$, or we can have $u-v$ odd, $v$ even, and orientation $RI$. This will factor strongly into the proof of this section's theorem. However, before we can state the theorem, we need to make our geometric set-up more precise.


Given a rational knot $K_{u/v}=\text{Cl}(\tau_{u/v})$, such as $K_{5/2}$ on the left in Figure \ref{fig8knotfig}, we can start with the arc Lagrangian $\alpha_{u/v}$ for $\tau_{u/v}$. In dealing with the knot rather than the tangle, we will want to modify the Lagrangians being used, but $\alpha_{u/v}$ will serve as a ``skeleton'' for the picture we want. In particular, one of the Lagrangians may be thought of as a band wrapped tightly around the arc $\alpha_{u/v}$. We will denote it by $\overline{\alpha_{u/v}}$ and draw it in blue for the remainder of the paper. The other sort of Lagrangian that we want will serve as a replacement for the vertical Lagrangians $l_A$ and $l_I$ used in $\mathcal{D}(\tau_{u/v})$. This new Lagrangian will be given by a horizontal arc between $Y$ and $X^+$ (think of these as strings with ends tied at the two points). We will denote this Lagrangian by $\beta$, and it will be drawn in red. Of course, these ``horizontal'' arcs and the band around $\alpha$ can be wiggled around via isotopies, and we will do precisely this throughout the proof of the theorem, but we can think of it this way (shown in Figure \ref{fig8knotfig}) as the ``standard'' picture.

\begin{definitionN}
    Given a rational knot $K_{u/v}$, let $\mathcal{D}(K_{u/v})$ be the picture in the 3-punctured plane $M$ consisting of $\alpha_{u/v}, \overline{\alpha_{u/v}},$ and $\beta$.
\end{definitionN}

Although $\alpha_{u/v}$ is not necessary here, we include it because it makes the rest of $\mathcal{D}(K_{u/v})$ easier to draw. Importantly, though, the linear and quadratic forms in the quiver form of the generating function $P(K_{u/v})$ that we want will be defined on a free $\mathbb{Z}$-module with preferred basis given by the Lagrangian intersections $\overline{\alpha_{u/v}}\cap \beta$ in $\mathcal{D}(K_{u/v})$. For the numerator closure of $\tau_{u/v}$, observe that $\overline{\alpha_{u/v}}\cap \beta$ consists of $u$ points.  

\begin{definitionN}
    Let $\kappa_1,...,\kappa_u$ denote the $u$ intersection points in $\overline{\alpha_{u/v}}\cap \beta$. Furthermore, let $\mathcal{G}_{u/v}^K$ be the set of these points in $\overline{\alpha_{u/v}}\cap\beta$, i.e. $\mathcal{G}_{u/v}^K=\{\kappa_1,...,\kappa_u\}$, and we take $\mathbb{Z}\mathcal{G}_{u/v}^K$ to be the free $\mathbb{Z}$-module with basis $\mathcal{G}_{u/v}^K$. 
\end{definitionN}

\begin{figure}
\raisebox{0pt}{\includegraphics[height=4cm, angle=0]{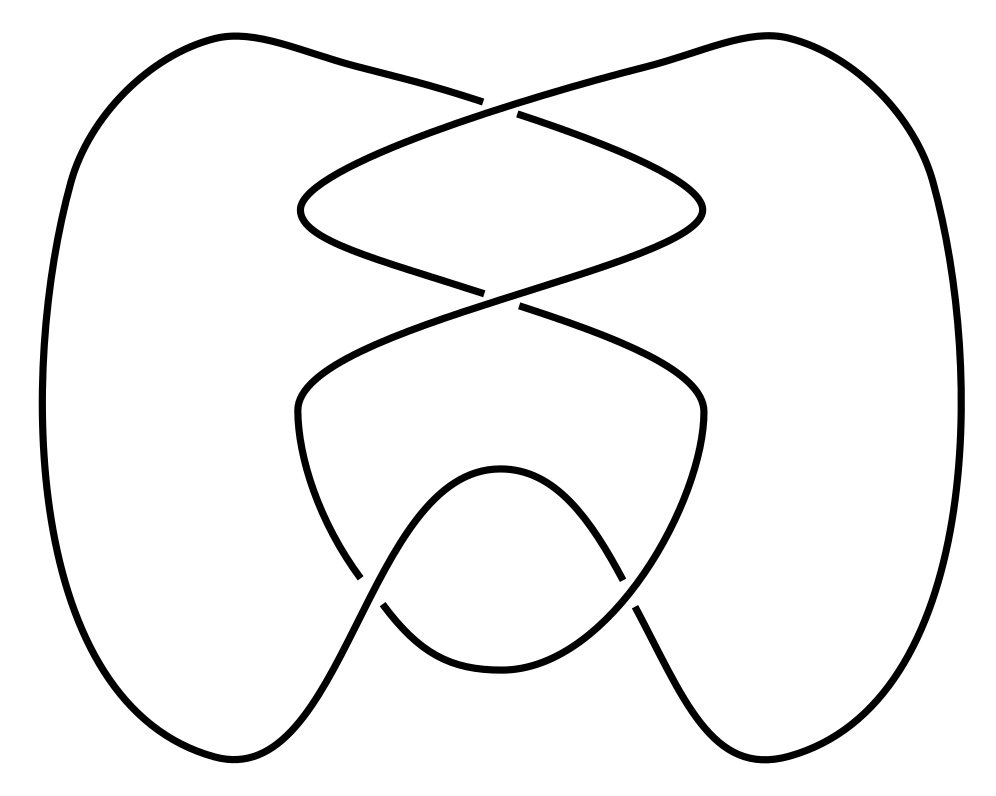}}\qquad \qquad \raisebox{0pt}{\includegraphics[height=5cm, angle=0]{fig8geopic.png}}
\caption{Left: The figure-8 knot drawn as $K_{5/2}=\text{Cl}(\tau_{5/2})$. Right: $\mathcal{D}(K_{5/2})$.}
\label{fig8knotfig}  
\end{figure}

Just as we did for rational tangles, we will define our loops used in the geometric formulas to be in $M=\mathbb{C}\setminus\{X^+,X^-,Y\}$ or $\text{Conf}^2(M)$, where the latter is only necessary for computing the off-diagonal entries of $Q$. Thus, if we write $\beta_1,...,\beta_j$ for parallel copies of $\beta$ in $M$, then we only need to consider intersections between
\[
\mathbb{L}_{u/v}^j:=\text{Sym}^j(\overline{\alpha_{u/v}})\setminus \Delta \subset \text{Conf}^j(M)
\]
and 
\[
\mathbb{L}^j:=\pi(\beta_1\times ...\times \beta_j) \subset \text{Conf}^j(M)
\]
for $j\in \{1,2\}$, where $\pi:M^j\setminus \Delta \to \text{Conf}^j(M)$ is the obvious map. Our loops will, once again, be defined in terms of intersections of these Lagrangians, and loops in $\text{Conf}^2(M)$ will be drawn in the plane by taking parallel copies $\beta_1$ and $\beta_2$ of $\beta$. Special care will be needed to keep them distinguished, so we will call the top one $\beta_1$ and the bottom one $\beta_2$.

\begin{definitionN}
    Let $\mathcal{D}^2(K_{u/v})$ be the configuration of Lagrangians in $\text{Conf}^2(M)$ consisting of  $\mathbb{L}_{u/v}^2$ and $\mathbb{L}^2$. This is modeled in $M$ by taking $\overline{\alpha_{u/v}}, \beta_1$, and $\beta_2$, where points in $\text{Conf}^2(M)$ are represented by pairs of distinct points on the Lagrangians and the loops are given by pairs of disjoint paths with basepoints $(x_1,x_2)$ such that $x_1\in \overline{\alpha_{u/v}}\cap \beta_1$ and $x_2\in \overline{\alpha_{u/v}}\cap \beta_2$.
\end{definitionN}

At last, we may define the loops that we will feature in our theorem. In fact, these will be easier to define than they were for tangles in \cite{JHpI26}, as we no longer have the active/inactive distinction. In the case of knots and links, we will use the same homomorphisms $\Psi_W$ (for $W\subset \{X^+,X^-,Y\}$) and $\Phi$ that were used for tangles (defined in Section \ref{loophomsec}); the only thing different is the loops. As in Section \ref{loophomsec}, ``unique'' should be read as ``unique up to homotopy'' in the following discussion.

For now, the precise ordering of these points does not matter, but we will label them more carefully later. Given two of these intersections $\kappa_i$ and $\kappa_j$, there are two distinct paths from $\kappa_i$ to $\kappa_j$ along the blue curve $\overline{\alpha_{u/v}}$; let $\eta_{i,j}$ be the path that does not traverse the portion of $\overline{\alpha_{u/v}}$ tightly wrapped around $X^-$.  There is a unique path from $\kappa_j$ back to $\kappa_i$ along $\beta$ which we will call $\bar \eta_{j,i}$. 

\begin{definitionN}
We define the loop $\gamma_{i,j}^K:[0,1]\to M$ by the concatenation of paths $\eta_{i,j}\cdot\bar\eta_{j,i}$, which is well-defined up to homotopy.   
\end{definitionN}

Furthermore, there is a unique intersection point coming from $\overline{\alpha_{u/v}}$ and $\beta$ near $X^+$, which is always the right-most intersection in the $UP$ case and the left-most intersection in the $RI$ case (so it is the left-most intersection point in the image on the right in Figure \ref{fig8knotfig}). We will label this intersection point $\kappa_\omega$. 

\begin{definitionN}
 Define $\gamma_i^K:[0,1]\to M$ to be the loop $\gamma^K_i:=\gamma^K_{i,\omega}$.   
\end{definitionN}

See Figure \ref{gammaiknotex} for an example of one of these loops in the case of the figure-8 knot, $K_{5/2}$.

\begin{figure}
    \centering
    \includegraphics[height=6cm, angle=0]{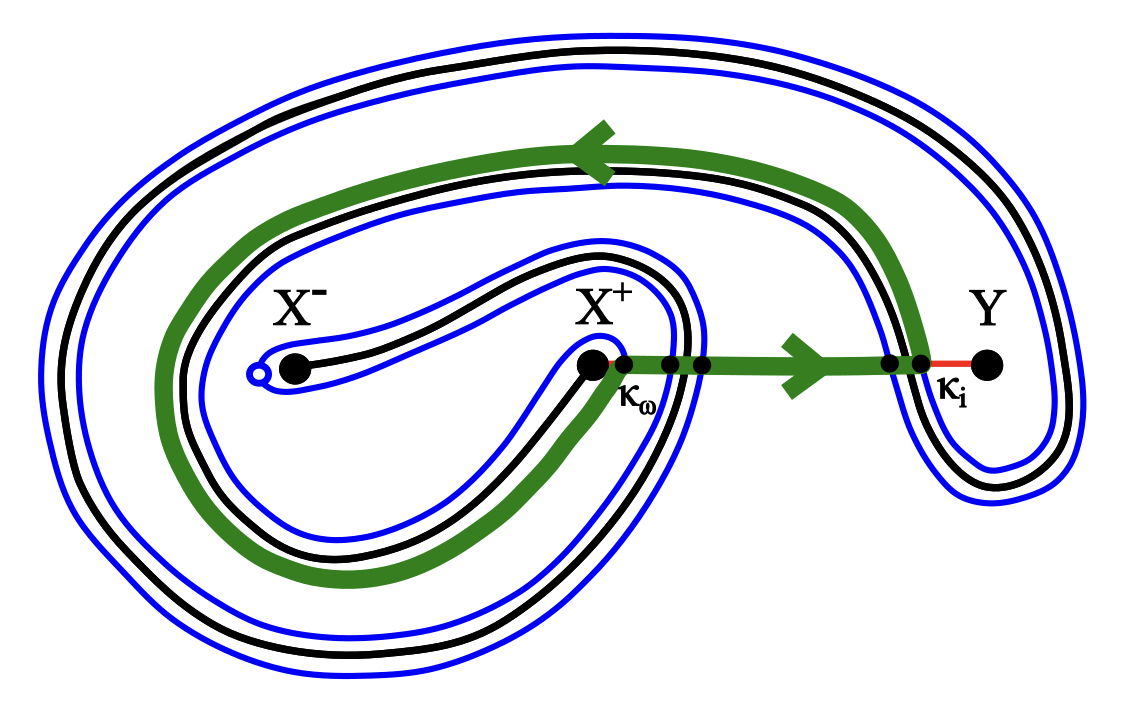}
    \caption{An example path $\gamma_i^K$ in $\mathcal{D}(K_{5/2})$}
    \label{gammaiknotex}
\end{figure}

Next, we will define our loops in $\text{Conf}^2(M)$. As indicated above, we will draw our picture in the plane by taking two parallel copies of $\beta$. To draw $\tilde{\gamma}_{i,j}^K=(\gamma_{i,j}^1,\gamma_{i,j}^2):[0,1]\to \text{Conf}^2(M)$ in $\mathcal{D}^2(K_{u/v})$, first consider which of $\kappa_i$ and $\kappa_j$ is further to the left in $\mathcal{D}(K_{u/v})$. This should be placed at the corresponding intersection on $\beta_1$ and the other on $\beta_2$. In other words, the basepoint for $\tilde{\gamma}_{i,j}^K$ always looks as shown below in a neighborhood of $\beta_1$ and $\beta_2$, where we have only drawn the parts of $\overline{\alpha_{u/v}}$ giving $\kappa_i$ and $\kappa_j$.
\[
\includegraphics[height=2cm]{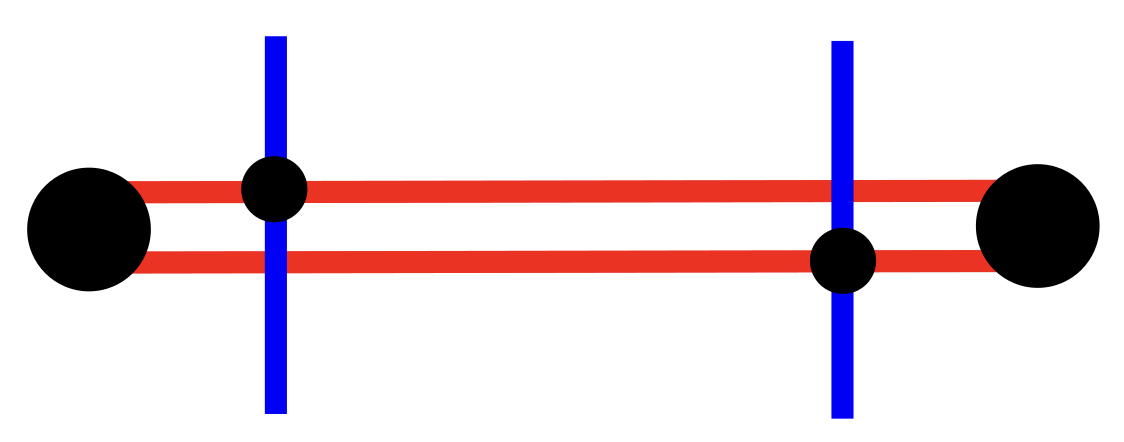}
\]

For the sake of our definition below, we will use $\beta_i$ to denote the horizontal on which the $\kappa_i$ point is placed, and similarly use $\beta_j$ for $\kappa_j$'s horizontal. The component path $\gamma_{i,j}^1$ is defined by first following $\overline{\alpha_{u/v}}$ (avoiding the end by $X_-$) until it returns to $\beta_i$ at the intersection for $\kappa_j$. If the path does not pass through the $\kappa_j$ point on $\beta_j$, then $\gamma_{i,j}^1$ concludes by following the path along $\beta_i$ back to $\kappa_i$ and $\gamma_{i,j}^2$ is the constant loop at $\kappa_j$; if it passes through the $\kappa_j$ point on $\beta_j$ first, $\gamma_{i,j}^1$ stops here and $\gamma_{i,j}^2$ moves in the same direction along $\overline{\alpha_{u/v}}$ from $\beta_j$ to $\beta_i$ and then back to $\kappa_i$ along $\beta_i$. See Figure \ref{wedge2paths} for examples of both cases. Note that we could also define these loops in terms of path concatenations after carefully defining additional notation.

\begin{note}
    We will occasionally abuse notation and write $\gamma_{i,j}^K$ for both $\gamma_{i,j}^K\subset M$ and $\tilde\gamma_{i,j}^K\subset \text{Conf}^2(M)$. It will be clear from context which of the loops we are referring to.
\end{note}



\begin{figure}
    \centering
    \includegraphics[height=6.5cm, angle=0]{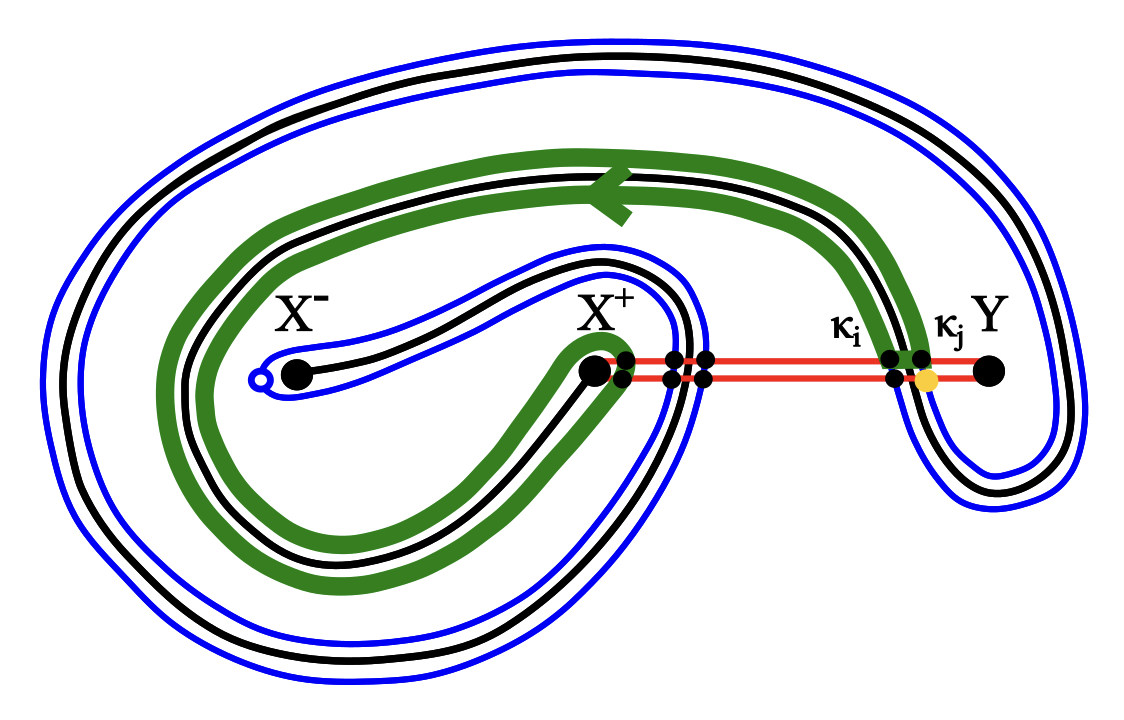}
    \qquad \qquad
    \includegraphics[height=6.5cm, angle=0]{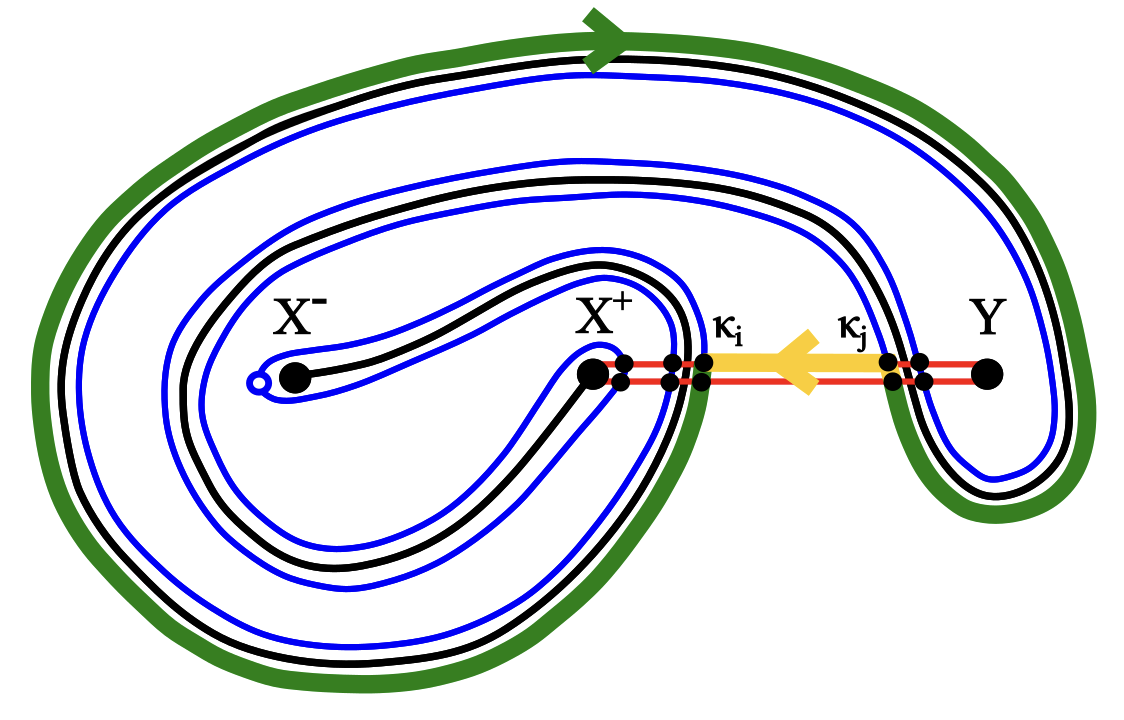}
    \caption{Examples of the two types of $\tilde{\gamma}_{i,j}$ paths in $\mathcal{D}^2(K_{5/2})$}
    \label{wedge2paths}
\end{figure}

Since all of our loops are defined to miss the portion of $\overline{\alpha_{u/v}}$ next to $X^-$, we will we use a small open circle in our figures to denote a hole or ``no cross zone'' for our loops. In fact we will include this hole in our diagrams $\mathcal{D}^j(K_{u/v})$. This can be seen, for example, in the past few figures for $K_{5/2}$. In light of this, there is no longer any choice for which paths our loops can take along $\overline{\alpha_{u/v}}$, so we could also define the loops in terms of this modified picture---adding the hole determines our loops up to homotopy, which is all we need for homomorphisms on fundamental groups.

Now that we have defined our loops, we can state the theorem we will prove for the remainder of this section. Notice that the formulas are the exact same as for the tangles, but with correction terms corresponding to overall shifts of $S,A,$ and $Q$, instead of the $\delta$ terms. We will notice the same phenomenon for links in the next section, but we will need to modify the Lagrangians once again.

\begin{theoremN}
\label{Knotsthm}
    For a rational knot $K_{u/v}$, $P(K_{u/v})$ can be written as
    \begin{equation}
        P(K_{u/v})=\sum_{\textbf{d}=(d_1,...,d_u)\in \mathbb{N}^u}(-q)^{S\cdot \textbf{d}}a^{A\cdot \textbf{d}}q^{\textbf{d} \cdot Q\cdot \textbf{d}^t} {d_1+...+d_u\brack d_1,...,d_u}
    \end{equation}
    where $S$, $A$, and $Q$ are defined on the free $\mathbb{Z}$-module $\mathbb{Z}\mathcal{G}_{u/v}^K$ and are computed by the following formulas:
    \[
    \begin{cases}
        S_i=\Psi_{\{X^\pm,Y\}}([\gamma_i^K])+\mu_1(K_{u/v})\\
        A_i=2\Psi_{X^+}([\gamma_i^K])+\mu_2(K_{u/v})\\
        \begin{cases}
            Q_{ii}=(\Psi_{\{X^-,Y\}}-3\Psi_{X^+})([\gamma_i^K])+\mu_3(K_{u/v})\\
            Q_{ij}=Q_{ii}+(\Phi-2\Psi_{X^+})([\gamma_{j,i}^K]),
        \end{cases}
    \end{cases}
    \]
    where the $\mu_i(K_{u/v})$ are integers that depend on the types of twists used to build the knot. They are defined in Section \ref{cortermsec}.
\end{theoremN}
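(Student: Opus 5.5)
The plan is to reduce the knot statement to the tangle result, Proposition \ref{indpartprop}, applied to $\tau_{(u-v)/v}$. Recall that $P_{u/v}^{\bigwedge^j}=\text{Cl}(\langle T\tau_{(u-v)/v}\rangle_j)$, where $\tau_{(u-v)/v}$ has either state type $UP$ (with $u-v$ even and $v$ odd) or type $RI$ (with $u-v$ odd and $v$ even). The argument proceeds in four steps.

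\textbf{Step 1: the algebraic reduction.} I will write $P(\tau_{(u-v)/v})$ in the reduced form of Proposition \ref{indpartprop}, summed over $\mathbf{d}\in\mathbb{N}^{m+n}$. I then replace each basis web $X[j,k]$ by its closure formula from Lemma \ref{WSclosureformulas}:
\begin{itemize}
\item in the $UP$ case, $\text{Cl}(TUP[j,k])$;
\item in the $RI$ case, $\text{Cl}(TRI[j,k])$.
\end{itemize}
Each closure contributes a quantum binomial ${j\brack k}_+$ and a factor $(a^2q^{2-2j-2k};q^2)_k/(q^2;q^2)_k$, or the analogous factor with $j-k$ in place of $k$. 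I expand this factor using Lemma \ref{algidentity}, with $x^2=a^2q^{2-2j-2k}$ and the partition of $k$ (respectively $j-k$) given by the active (respectively inactive) $d_i$.

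Two cancellations should then occur. First, the binomial combines with the two multinomials into a single multinomial ${j\brack \cdots}$. Second, the factor $(q^2;q^2)_{K\cdot\mathbf{d}}$ from the $\mathfrak{Y}$-points should cancel against the Pochhammer denominators produced by the lemma, as in \cite{SW21}. Each active index $d_i$ splits as $\alpha_i+\beta_i$, so the sum becomes one over roughly $2m+n$ indices. A count of $\mathfrak{X},\mathfrak{Y},\mathfrak{Z}$ should show that this number equals $u$. The algebraic output is therefore a quiver form with explicit $S',A',Q'$, given as the tangle data plus terms linear in the $\alpha$'s and a few constant shifts coming from the framing factor $(-q)^{\mp j}a^{\mp j}q^{\pm j^2}$. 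These constant shifts are what will become the $\mu_i$.

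\textbf{Step 2: the geometric dictionary.} Next I match the new indices with $\mathcal{G}_{u/v}^K$. The band $\overline{\alpha_{u/v}}$ around $\alpha_{u/v}=\alpha$ of $T\tau_{(u-v)/v}$ meets $\beta$ in points of two kinds:
\begin{itemize}
\item points that come in pairs near each active intersection of the undone tangle, one on each side of the band; these correspond to $\alpha_i$ and $\beta_i$;
\item single points corresponding to the surviving inactive points.
\end{itemize}
The points collapsed by the $\mathfrak{X}/\mathfrak{Y}$ reduction correspond to the band turning around $Y$. Using Lemma \ref{twistfx} and the state diagram Lemma \ref{twistdiagram}, I isotope $\beta$, the horizontal arc from $Y$ to $X^+$, so that it plays the role of the undone-twist axes. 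This lets me express each knot loop $\gamma_i^K$ as a tangle loop $\gamma_{i'}^T$ plus an explicit correction. The correction is a small loop around one puncture, coming from crossing to the other side of the band, and the hole at $X^-$ fixes which way the band is traversed. Applying $\Psi_W$ to these corrections should reproduce exactly the $\alpha$-linear terms from Step 1, together with the removal of the $\delta_{i,A}\delta_{\omega,I}$-type terms. Whatever constant remains is recorded as $\mu_1,\mu_2,\mu_3$, case by case in $UP$ versus $RI$.

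\textbf{Step 3: the off-diagonal entries.} For $Q_{ij}$ I make the same comparison in $\text{Conf}^2(M)$. I need to show that $(\Phi-2\Psi_{X^+})([\tilde\gamma_{j,i}^K])$ agrees with the tangle quantity plus the cross terms $2\alpha_{i+1}(d_1+\dots+d_i)$ from Lemma \ref{algidentity}. Those cross terms depend on an ordering of the indices, and that ordering must be matched with the left-to-right order of the $\kappa$'s along $\beta_1,\beta_2$. If the orderings disagree, I intend to fix the mismatch with Theorem \ref{permthm}: for the relevant quadruples I check $\Lambda_a\Lambda_b=\Lambda_c\Lambda_d$ and the row-sum conditions, and conclude that $Q\sim Q'$.

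\textbf{Step 4: the main obstacle.} I expect Step 3 to be the hardest part. The difficulty is controlling the braid-generator count $\Phi$ for the doubled loops when the band doubles each arc, and showing that the swap-type versus constant-type behaviour of $\tilde\gamma_{i,j}^K$ contributes exactly the needed $\pm1$'s. My first approach would be to handle the pairs $(\alpha_i,\beta_i)$ coming from the same active point by a direct local picture, since there $\Phi$ should contribute a single half twist. I would then treat pairs of points from distinct tangle intersections by reducing to the tangle loops $\tilde\gamma^T$ via the homotopy of Step 2, and finish with the permutation theorem to absorb any remaining discrepancies.
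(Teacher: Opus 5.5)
Your Steps 1 and 2 are essentially the paper's Lemmas \ref{WSclosureformula} and \ref{knotslinforms}, and your overall architecture is the paper's. The gaps are in the off-diagonal entries and in the correction terms. With the standard ordering (by $\prec$ on $\mathfrak{Y}$), the blocks built from the doubled points already agree exactly with the algebraic matrix (Lemma \ref{Qplusblockknots}). The real discrepancies lie elsewhere: in the block of undoubled points (inactive--inactive in the $UP$ case) and in the mixed blocks. Those blocks do not depend on the ordering chosen in Lemma \ref{algidentity}, so no choice of ordering there helps. Your plan to handle pairs from distinct tangle intersections by homotoping $\tilde\gamma^K$ onto tangle loops fails exactly in those blocks. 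Take a configuration $\kappa_i,\kappa_{i'},\kappa_j,\kappa_{j'}$ as in Lemma \ref{4x4blocklem}. The geometric matrix has its $(i,j')$ and $(i',j)$ entries interchanged relative to the algebraic one whenever $\xi_i\prec\xi_j$; in the mixed case (active $\xi_i$, inactive $\xi_j$) this happens whenever $\xi_j\prec\xi_i$. The substance of the proof is then twofold. First, you must identify exactly these transpositions, performing the inactive--inactive ones on $Q_{--}$ of $Q^T_{(u-v)/v}$ before applying $\text{Cl}(T-)$ (Lemmas \ref{tQmswap} and \ref{Qmmagree}). Second, you must find an order in which each transposition satisfies the hypotheses of Theorem \ref{permthm}. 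This is not a check on each quadruple. The condition $Q_{ak}+Q_{bk}=Q_{ck}+Q_{dk}-\delta_{ck}-\delta_{dk}$ must hold for every index $k$, and for an isolated transposition it fails by the term $\delta_{i<|k|<j}$ in (\ref{qpermeqndelta}). It holds only after earlier transpositions have been carried out in a specific lexicographic order along $\prec$; for the mixed blocks the order is $(j,k)\to(i,k)$, $(j,l)\to(i,l)$, as in Lemma \ref{Qpmperm}. That sequencing, not the local $\Phi$ counts you single out in Step 4, is where the work lies.

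The correction terms are also misidentified. You take the $\mu_i$ to be the constants left over from the closure step, i.e.\ framing constants depending only on $UP$ versus $RI$. But Lemma \ref{knotslinforms} shows the unshifted geometric formulas reproduce the closure data exactly. So your argument yields a formula with $S_i-Q_{ii}-2A_i=0$ for all $i$. A framing shift changes $(S,A,Q)$ by $(\mp1,\mp1,\pm1)$ and therefore leaves $S_i-Q_{ii}-2A_i$ unchanged. The $\mu_i$ in the statement instead give $S_i-Q_{ii}-2A_i=\sigma(K_{u/v})$; for example $\mu(K_{7/1})=(-6,0,0)$, giving $-6$. So your version differs from the stated one by more than framing. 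The stated $\mu_i$ are signed counts of the twist types used to build $\tau_{(u-v)/v}$. They measure the normalization difference between the twist rules underlying Proposition \ref{indpartprop} and the rescaled rules of \cite{SW21}. That difference accumulates at every twist and is invisible at the final $\text{Cl}(T-)$. To obtain the $\mu_i$ you need the twist-by-twist comparison of Lemmas \ref{linformtwists} and \ref{linformshifts}. You then need the signature formula $\sigma(K_{u/v})=1+\#RRI-\#TUP$, combined with the cancellation of the $TOP^-$/$ROP^+$ edges added to the state diagram of Lemma \ref{twistdiagram}.
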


The $\mu_i(K_{u/v})$ serve as correction terms for making sure that everything is scaled properly. By the work of Rasmussen \cite{R15}, we know that the (uncolored) HOMFLY-PT homology of $K_{u/v}$ should satisfy
\begin{equation}
\label{deltaeq1}
2\text{gr}_t-2\text{gr}_{a}-\text{gr}_q=\sigma(K_{u/v}),
\end{equation}
or, equivalently, the HOMFLY-PT homology of $K_{u/v}$ is concentrated in a $\delta$-grading determined by the signature of the knot. Furthermore, Sto\v si\'c and Wedrich used this fact in \cite{SW21} to show that the homological gradings are determined by the diagonal of $-Q$, which means we should have
\begin{equation}
\label{deltaeq2}
S_i-Q_{ii}-2A_i=\sigma(K_{u/v})
\end{equation}
for all $1\leq i\leq u$. Without the $\mu_i(K_{u/v})$ terms in Theorem \ref{Knotsthm}, we clearly have $S_i-Q_{ii}-2A_i=0$ for all $i$ for any $K_{u/v}$.

The proof of Theorem \ref{Knotsthm} will consist of three main steps. The first two involve showing that the linear and quadratic forms, respectively, are correct aside from the shifts given by the $\mu_i$'s. After this, we will determine what the shifts must be in order to ensure that Equation (\ref{deltaeq2}) holds.  

\subsection{Symmetry of $Q$}

Before proceeding to the proof of Theorem \ref{Knotsthm}, it should also be noted that, just as we have for tangles in \cite{JHpI26}, we can write a formula for computing $Q$ that only involves loops in $\text{Conf}^2(M)$. The way it is written in Theorem \ref{Knotsthm} is to simplify calculations by considering loops in $M$ as much as possible (rather than $\text{Conf}^2(M)$). As was done for tangles, if we take $\Phi^2=\Phi$ and the same homomorphism $\Psi_{X^+}^2:\text{Conf}^2(M)\to\mathbb{Z}$  as before, given by the composition
\[
\Psi_{X^+}^2:\pi_1(\text{Conf}^2(M))\xrightarrow{\iota_*}\pi_1(\text{Conf}^2(\mathbb{C}\setminus X^+))\to \pi_1(\text{Conf}^3(\mathbb{C}))\cong \text{Br}_3\xrightarrow{\text{ab}}\mathbb{Z},
\]
then we can compute $Q$ by the formula
\begin{equation}
\label{Conf2Qeq}
Q_{ij}=(2\Phi^2-\Psi_{X^+}^2)([\gamma_{j,i}^{2,K}]),
\end{equation}
where $\gamma_{j,i}^{2,K}$ is defined in a way similar to the loops $\gamma_{j,i}^2$ in Section 3.2 of \cite{JHpI26}, but with the new Lagrangians. In particular, $\gamma_{j,i}^{2,K}$ can be defined in terms of the concatenation of three different types of paths. The first path involves sliding $\kappa_j$ (on $\beta'_j$) along $\overline{\alpha_{u/v}}$ to the point in $\text{Conf}^2(M)$ where $\kappa_i$ is on both copies of $\beta'$. The second path involves sliding the two $\kappa_i$ points together along $\overline{\alpha_{u/v}}$ until they reach the point in $\text{Conf}^2(M)$ with both points at $\kappa_\omega$ on the two copies of $\beta'$. The third path involves returning to the starting point along $\beta'_1$ and $\beta'_2$. This is the analogous construction to the one given in more detail for tangles in \cite{JHpI26}, but adapted to the new Lagrangians.

Then, following a similar argument to the one used to prove Proposition 3.17 from \cite{JHpI26} gives the next proposition for knots.

\begin{figure}
   \begin{tikzpicture}
       \node at (0,0) {\includegraphics[height=7cm]{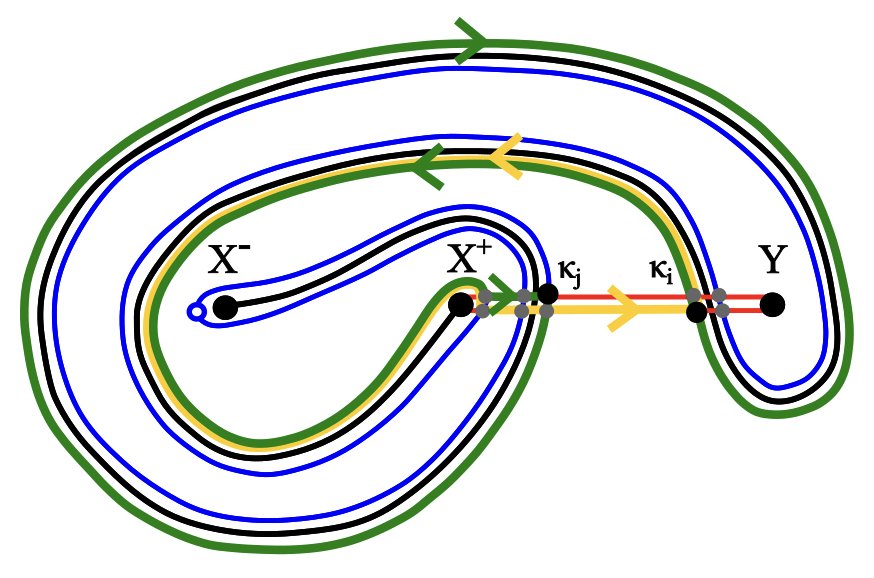}};
       \node at (8,0) {\includegraphics[height=4cm]{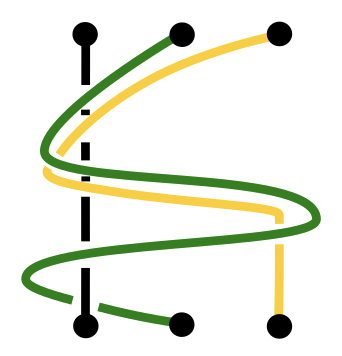}};
   \end{tikzpicture}
    \caption{Left: an example $\gamma_{j,i}^{2,K}$ loop for $K_{5/2}$. Right: the braid corresponding to $\gamma_{j,i}^{2,K}$, where the black strand comes from the point $X^+$. Note that computing $Q_{ij}$ from this loop gives $-2$, which agrees with Theorem \ref{Knotsthm}.}
    \label{gammaji2Kfig}
\end{figure}

\begin{propositionN}
\label{KnotQsymprop}
    The matrix $Q$ computed by Theorem \ref{Knotsthm} agrees with (\ref{Conf2Qeq}). Furthermore, $Q$ is indeed symmetric, i.e. $Q_{ij}=Q_{ji}$.
\end{propositionN}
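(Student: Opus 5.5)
Both claims follow from one homotopy decomposition of $\gamma_{j,i}^{2,K}$ in $\text{Conf}^2(M)$, modeled on the proof of Proposition 3.17 of \cite{JHpI26}. Write $\gamma_{j,i}^{2,K}=p_1\cdot p_2\cdot p_3$ as in its definition:
\begin{itemize}
\item $p_1$ slides $\kappa_j$ to $\kappa_i$;
\item $p_2$ slides the doubled point $(\kappa_i,\kappa_i)$ along $\overline{\alpha_{u/v}}$ to $(\kappa_\omega,\kappa_\omega)$;
\item $p_3$ returns along $\beta'_1,\beta'_2$.
\end{itemize}
Inserting a backtrack through the basepoint of $\tilde\gamma_{j,i}^K$ exhibits $\gamma_{j,i}^{2,K}$ as homotopic to $\tilde\gamma_{j,i}^K$ concatenated with a loop $\rho_i$, after conjugation. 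Here $\rho_i$ is the ``diagonal pushoff'' loop: both points travel in parallel along $\overline{\alpha_{u/v}}$ from $\kappa_i$ to $\kappa_\omega$, avoiding the hole at $X^-$, and come back along the two parallel copies of $\beta$. Since $\Phi$ and $\Psi_{X^+}^2$ land in $\mathbb{Z}$, conjugation does not matter, so
\[
(2\Phi^2-\Psi^2_{X^+})([\gamma_{j,i}^{2,K}])=(2\Phi^2-\Psi^2_{X^+})([\tilde\gamma_{j,i}^K])+(2\Phi^2-\Psi^2_{X^+})([\rho_i]).
\]

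Next we evaluate each term against the formula of Theorem \ref{Knotsthm}.
\begin{itemize}
\item For $\rho_i$: its two strands are parallel pushoffs of $\gamma_i^K$. Their mutual winding contributes a term to $\Phi$ coming only from the half-twists at the ends, which are fixed by the convention that the leftmost point lies on $\beta_1$. Each strand winds around $X^+$ exactly as $\gamma_i^K$ does, so $\Psi^2_{X^+}([\rho_i])$ equals $2\Psi^1_{X^+}([\gamma_i^K])=4\Psi_{X^+}([\gamma_i^K])$, up to the diagonal contribution. We then match this against $Q_{ii}=(\Psi_{\{X^-,Y\}}-3\Psi_{X^+})([\gamma_i^K])+\mu_3$.
\item For $\tilde\gamma_{j,i}^K$: abelianizing $\text{Br}_3$ with the $X^+$ strand splits $\Psi^2_{X^+}$ into $\Phi$ plus twice $\Psi_{X^+}$ of the projected loop $\gamma_{j,i}^K$. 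This reproduces the summand $(\Phi-2\Psi_{X^+})([\gamma_{j,i}^K])$.
\end{itemize}

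The step I expect to be hardest is this diagonal term. I would check, case by case over the local pictures, that the self-winding of $\rho_i$ together with its winding about $X^-$ and $Y$ accounts exactly for $\Psi_{\{X^-,Y\}}$ and the remaining coefficient of $\Psi_{X^+}$. One subtlety makes this delicate: the band $\overline{\alpha_{u/v}}$ turns around $Y$ and $X^+$, and the hole at $X^-$ is never crossed, so each turnaround of the band contributes one crossing between the two parallel strands. I would first compute these turnaround contributions, then compare with $\Psi_{\{X^-,Y\}}$, which also counts half-turns around $Y$ and $X^-$. The shift $\mu_3$ is a global constant, and I would absorb it by normalizing at $i=j=\omega$, where both loops are trivial.

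For symmetry, the local picture at the basepoint is symmetric under exchanging $\kappa_i$ and $\kappa_j$, because the left point always goes on $\beta_1$. So $\tilde\gamma_{i,j}^K$ and $\tilde\gamma_{j,i}^K$ are related by concatenation with $\rho_j\cdot\rho_i^{-1}$, up to reversal and conjugation. Using the decomposition above together with its $i\leftrightarrow j$ counterpart, $Q_{ij}-Q_{ji}$ reduces to
\[
Q_{ii}-Q_{jj}+(\Phi-2\Psi_{X^+})\bigl([\gamma_{j,i}^K]-[\gamma_{i,j}^K]\bigr).
\]
The second term cancels the first once $(\Phi-2\Psi_{X^+})([\gamma_{j,i}^K])$ is written as a difference of the diagonal-type terms coming from $\rho_j$ and $\rho_i$. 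Alternatively, I would argue directly that $\gamma_{i,j}^{2,K}$ and $\gamma_{j,i}^{2,K}$ have the same image in $\text{Br}_3^{ab}$, because both factor through the common diagonal path from $(\kappa_i,\kappa_i)$ or $(\kappa_j,\kappa_j)$ to $(\kappa_\omega,\kappa_\omega)$.
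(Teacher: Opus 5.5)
This is the paper's argument. Your $\rho_i$ is $\gamma_{i,i}^{2,K}$, and your backtracking gives the same splitting $[\gamma_{j,i}^{2,K}]=[\gamma_{i,i}^{2,K}]+[\tilde\gamma_{j,i}^K]$ in $H_1(\text{Conf}^2(M))$, with the same $\text{Br}_3$ bookkeeping for the $\tilde\gamma_{j,i}^K$ term. Symmetry likewise comes from $\gamma_{i,j}^{2,K}$ and $\gamma_{j,i}^{2,K}$ being homologous; the cleanest justification is that both consist of a path in the contractible space $\text{Sym}^2(\overline{\alpha_{u/v}}\setminus\text{hole})\setminus\Delta$ from the common basepoint to $(\kappa_\omega,\kappa_\omega)$, followed by a path back in the contractible $\beta_1\times\beta_2$, so they are in fact homotopic.

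The diagonal identity, which the paper also only asserts, is equivalent via $\Psi_{X^+}^2([\rho_i])=\Phi([\rho_i])+4\Psi_{X^+}([\gamma_i^K])$ to $\Phi([\rho_i])=\Psi_{\{X^\pm,Y\}}([\gamma_i^K])$. So, contrary to your first bullet, the half-twists of $\rho_i$ are not confined to the ends; they accrue at every turn of the band, as your later paragraph anticipates.
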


In particular, one first proves that $\gamma_{i,j}^{2,K}$ and $\gamma_{j,i}^{2,K}$ are homologous in $\text{Conf}^2(M)$, which gives $Q_{ij}=Q_{ji}$ when computed by Equation (\ref{Conf2Qeq}). This follows from cancellation arguments similar to the ones used in the proof of Lemma 3.18 in \cite{JHpI26}. Then, following \cite{JHpI26}, one shows 
\[
\gamma_{j,i}^{2,K}
=\gamma_{i,i}^{2,K}+\tilde{\gamma}_{j,i}
\]
in $H_1(\text{Conf}^2(M))$, after which one only needs to establish that
\[
(2\Phi^2-\Psi_{X^+}^2)([\gamma_{i,i}^{2,K}])=(\Psi_{\{X^-,Y\}}-3\Psi_{X^+})([\gamma_i])
\]
and 
\[
(2\Phi^2-\Psi_{X^+}^2)([\tilde\gamma_{j,i}])=(\Phi_{\{X^-,Y\}}-2\Psi_{X^+})([\gamma_{j,i}]).
\]
We do not include the full proofs of these statements here because they are similar to the ones used in Section 3.2 of \cite{JHpI26}, but adapted to the new Lagrangians.

Now we will prove Theorem \ref{Knotsthm}, beginning with the linear forms.

\subsection{Step 1: The Linear Forms}
\label{pfknotssec}

To prove Theorem \ref{Knotsthm}, we will compare what the formulas give with what we know to work from \cite{SW21}. This will require us to study how entries in $S$, $A$, and $Q$ relate to each other and to the entries we get from the algebraic methods of \cite{SW21}, up to some shift. 

As we stated in the second paragraph of Section \ref{knotssetup}, rather than thinking of the rational knot $K_{u/v}$ as $\text{Cl}(\tau_{u/v})$, it helps to think of it as $\text{Cl}(T\tau_{(u-v)/v})$, where $\tau_{(u-v)/v}$ has $UP$ or $RI$ orientation (in the former case, $u-v$ is even and $v$ is odd; these are reversed for $RI$). The following lemma shows how the $\text{Cl}(T-)$ formulas from \cite{SW21} give $S,A,$ and $Q$ for $K_{u/v}$ in terms of these linear and quadratic forms for $\tau_{(u-v)/v}$, computed by Proposition \ref{indpartprop}.
This subsection and the next will consist of comparing the $S,A,$ and $Q$ computed by Theorem \ref{Knotsthm} with the ones below.

\begin{lemmaN}
\label{WSclosureformula}
    If $K_{u/v}=\text{Cl}(T\tau_{(u-v)/v})$ where $\tau_{(u-v)/v}$ has $UP$ orientation, then the $\text{Cl}(T-)$ operation has the effect
   \begin{multline*}
      UP, \left[\begin{array}{ c| c | c  }
    1 & S_+ & A_+  \\
    \hline
    0 & S_- & A_- 
  \end{array}\right],  
  \left[\begin{array}{ c |c }
    Q_{++} & Q_{+-} \\
    \hline
    Q_{-+} & Q_{--}
  \end{array}\right]
  \\
    \xrightarrow{\text{Cl}(T-)}
    \left[\begin{array}{ c | c  }
    S_++1 & A_+  \\
    \hline
    S_++2 & A_+ +2 \\
    \hline
    S_- & A_- 
  \end{array}\right], 
  \left[\begin{array}{ c | c | c}
    Q_{++}+2 & Q_{++}+L & Q_{+-} \\
    \hline
    Q_{++}+U & Q_{++}-1 & Q_{+-}-1\\
    \hline
    Q_{-+} & Q_{-+}-1 & Q_{--}
  \end{array}\right],
   \end{multline*}
   on the generating function data, where $\mathfrak{Y}$ consists of $\frac{u-v}{2}$ active intersections and $\mathfrak{Z}$ consists of the $v$ inactive intersections.

  In the case that $\tau_{(u-v)/v}$ has $RI$ orientation, then $\text{Cl}(T-)$ has the effect
\begin{multline*}
      RI, \left[\begin{array}{ c| c | c  }
    0 & S_+ & A_+  \\
    \hline
    1 & S_- & A_- 
  \end{array}\right],  
  \left[\begin{array}{ c |c }
    Q_{++} & Q_{+-} \\
    \hline
    Q_{-+} & Q_{--}
  \end{array}\right]
  \\
    \xrightarrow{\text{Cl}(T-)}
    \left[\begin{array}{ c | c  }
    S_+  & A_+  \\
    \hline
    S_- -1 & A_- -2 \\
    \hline
    S_-  & A_- 
  \end{array}\right], 
  \left[\begin{array}{ c | c | c}
    Q_{++} & Q_{+-} & Q_{+-}-1 \\
    \hline
    Q_{-+} & Q_{--}+2 & Q_{--}+L\\
    \hline
    Q_{-+}-1 & Q_{--}+U & Q_{--}-1
  \end{array}\right],
   \end{multline*}
on the generating function data, where $\mathfrak{Y}$ consists of $\frac{v}{2}$ inactive intersections and $\mathfrak{Z}$ consists of the $u-v$ inactive intersections. 

In both cases, the active points always come before the inactive ones.
\end{lemmaN}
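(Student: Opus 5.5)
The plan is to start from Proposition \ref{indpartprop} applied to $\tau_{(u-v)/v}$, apply the closure formulas of Lemma \ref{WSclosureformulas} term by term, and then re-expand the resulting Pochhammer factor with Lemma \ref{algidentity}. Reading off the new exponents will give the displayed data.

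\textbf{UP case.} Proposition \ref{indpartprop} gives
\[
P(\tau_{(u-v)/v})=\sum_{\mathbf d}(-q)^{S\cdot\mathbf d}a^{A\cdot\mathbf d}q^{\mathbf d\cdot Q\cdot\mathbf d^t}(q^2;q^2)_{K\cdot\mathbf d}{d_1+\dots+d_m\brack d_1,\dots,d_m}{d_{m+1}+\dots+d_{m+n}\brack d_{m+1},\dots,d_{m+n}}\,UP[j,k].
\]
Here $j=\sum d_i$ and $k=\sum_{i\le m}d_i$. In this case the active points that survive are exactly $\mathfrak Y$, so $K$ is $1$ on the $+$ block and $0$ on the $-$ block; this is the first column of the data. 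In particular $K\cdot\mathbf d=k$.

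Applying $\text{Cl}(T-)$ and the first formula of Lemma \ref{WSclosureformulas}, three cancellations occur:
\begin{itemize}
\item $(q^2;q^2)_{K\cdot\mathbf d}=(q^2;q^2)_k$ cancels the denominator $(q^2;q^2)_k$ of the closure formula;
\item ${k\brack d_+}{j-k\brack d_-}{j\brack k}_+$ combines into ${j\brack d_+,d_-}$;
\item what remains is $(-q)^kq^{2k^2}(a^2q^{2-2j-2k};q^2)_k$.
\end{itemize}

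Next I write the multinomial as $(q^2;q^2)_j/\prod(q^2;q^2)_{d_i}$. I then apply Lemma \ref{algidentity} with $x^2=a^2q^{2-2j-2k}$ to
\[
(a^2q^{2-2j-2k};q^2)_k\Big/\prod_{i\le m}(q^2;q^2)_{d_i},
\]
splitting each $d_i=\alpha_i+\beta_i$ for $i\le m$. Multiplying back by $(q^2;q^2)_j/\prod_{i>m}(q^2;q^2)_{d_i}$ produces the multinomial ${j\brack \beta,\alpha,d_-}$ over the enlarged index set. This explains why the $+$ block is doubled: the $\beta$-copy gives the first row and the $\alpha$-copy the second.

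The linear forms are then read off directly.
\begin{itemize}
\item The factor $(-q)^k$, with $k=\sum\alpha+\sum\beta$, adds $1$ to $S_+$ on both copies.
\item The factor $(-x^2q^{-1})^{\sum\alpha}$ adds a further $(-q)$ shift on the $\alpha$-copy, hence $S_++2$ there.
\item The same factor contributes $a^{2\sum\alpha}$, hence $A_++2$ on the $\alpha$-copy.
\item The $-$ block keeps $S_-,A_-$ unchanged.
\end{itemize}
The $q^{2-2j-2k}$ inside $x^2$ is quadratic in the indices and is moved into $Q$.

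The quadratic form is the main obstacle. I would substitute $d_+=\alpha+\beta$ into $\mathbf d\cdot Q\cdot\mathbf d^t$ and collect, separately, the following pieces:
\begin{itemize}
\item the term $2k^2$;
\item the term $(-2j-2k)\sum\alpha$;
\item the squares $\sum\alpha_i^2$;
\item the non-symmetric cross terms $2\sum_i\alpha_{i+1}(d_1+\dots+d_i)$, with $d_i=\alpha_i+\beta_i$.
\end{itemize}
The expected outcome is the diagonal blocks $Q_{++}+2$ and $Q_{++}-1$, the $(\alpha,-)$ block $Q_{+-}-1$, and symmetric and triangular matrices $U,L$ (upper and lower) coming from the ordered cross terms. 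These should be symmetrized so that the $(\beta,\alpha)$ and $(\alpha,\beta)$ blocks become $Q_{++}+L$ and $Q_{++}+U$. This uses the ordering convention that active points come before inactive ones, which fixes the order in Lemma \ref{algidentity}. Finally, the framing shift $(-q)^{\mp j}a^{\mp j}q^{\pm j^2}$ is absorbed into uniform shifts of $S$, $A$ and $Q$; I would check that it is consistent with the displayed entries.

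\textbf{RI case.} Identically, the second closure formula is used with $j-k$ playing the role of $k$. The roles of the $+$ and $-$ blocks are interchanged, since now $\mathfrak Y$ consists of inactive points with $K=1$, and $(q^2;q^2)_{K\cdot\mathbf d}=(q^2;q^2)_{j-k}$ cancels instead. So the $-$ block is the one that is doubled. The extra factors $(-q)^{k-j}a^{2(k-j)}$ account for the shifts $S_--1$ and $A_--2$ on the split copy. The same quadratic bookkeeping then yields the displayed $Q$.
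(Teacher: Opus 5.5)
Your proposal is correct and takes the same route as the paper, which only says the lemma follows from the closure formulas of Lemma \ref{WSclosureformulas}, ``easy algebra,'' and one application of Lemma \ref{algidentity} to split indices (as in Theorem 4.1 of \cite{SW21}). Your explicit bookkeeping is accurate and reproduces exactly the displayed linear data and blocks: the cancellation of $(q^2;q^2)_{K\cdot\mathbf d}$, then the $2k^2$, $(-2j-2k)\sum\alpha$, $\sum\alpha_i^2$ and ordered cross terms. No framing correction needs to be absorbed, since the displayed data is exactly what the $\sim$-representatives give and the lemma is itself meant up to framing.
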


Note that we are taking advantage of the fact that either the numerator or denominator of the fraction $\frac{u-v}{v}$ representing the tangle is even; applying Proposition \ref{indpartprop} allows us to cut the number of indices from the active or inactive side in half at the expense of adding a Pochhammer symbol $(q^2;q^2)_{K\cdot \textbf{d}}$. The linear form $K$ is the first column of the first matrix on the tangle side of the transformations above. Furthermore, we are not concerned with orderings within the blocks $\mathfrak{Y}$ and $\mathfrak{Z}$ at this time, but we will need to fix an ordering of this basis later.

\begin{proof}[Proof of Lemma \ref{WSclosureformula}]
    In \cite{SW21}, Sto\v si\'c and Wedrich prove
    \begin{equation}
    \label{TUPclosure}
    \text{Cl}(TUP[j,k]) \sim (-q)^kq^{2k^2}{j \brack k}_+ \frac{(a^2q^{2-2j-2k};q^2)_k}{(q^2;q^2)_k}
    \end{equation}
    and 
    \begin{equation}
    \label{TRIclosure}
    \text{Cl}(TRI[j,k]) \sim (-q)^{k-j}a^{2(k-j)}q^{2(k-j)^2}{j \brack k}_+ \frac{(a^2q^{2-2j-2(j-k)};q^2)_{j-k}}{(q^2;q^2)_{j-k}},
    \end{equation}
    as we saw in Lemma \ref{WSclosureformulas}. Recall that we used $\sim$ to denote ``up to framing shift.''

    Given these formulas for $\text{Cl}(T-)$, this lemma follows from some easy algebra and an application of Lemma \ref{algidentity} for splitting indices. See the proof of Theorem 4.1 in \cite{SW21} for similar computations.
\end{proof}

The next natural step in proving Theorem \ref{Knotsthm} is to show that the formulas for $S$, $A$, and the diagonal of $Q$ are correct (up to some overall shift), as they only require computations involving loops in $M$. In order to do so, we need to establish a method for pairing the Lagrangian intersections in $\mathcal{D}(K_{u/v})$ with the ones in $\mathcal{D}(\tau_{(u-v)/v})$. Once we do so, we can show that they are related in the way presented in Lemma \ref{WSclosureformula}. 

First, observe that we can isotope the picture for $K_{u/v}$ in a way that has the effect of undoing the final top twist applied to $\tau_{(u-v)/v}$ so that the blue Lagrangian $\overline{\alpha_{u/v}}$ looks like $\overline{\alpha_{(u-v)/v}}$ and the red Lagrangian $\beta$ is bent into a ``U'' shape. We will refer to this as the \textit{untwisting isotopy}. See Figure \ref{fig8untwist}. In light of Lemma \ref{WSclosureformula}, this is the sort of picture that we will typically be working with to prove Theorem \ref{Knotsthm}.

\begin{figure}
\begin{tikzpicture}
\node at (-4.1,0) {\includegraphics[height=3.7cm, angle=0]{fig8geopic.png}};
\node at (0,0) {$\xrightarrow{\text{isotopy}}$};
\node at (4.1,0) {\includegraphics[height=3.7cm, angle=0]{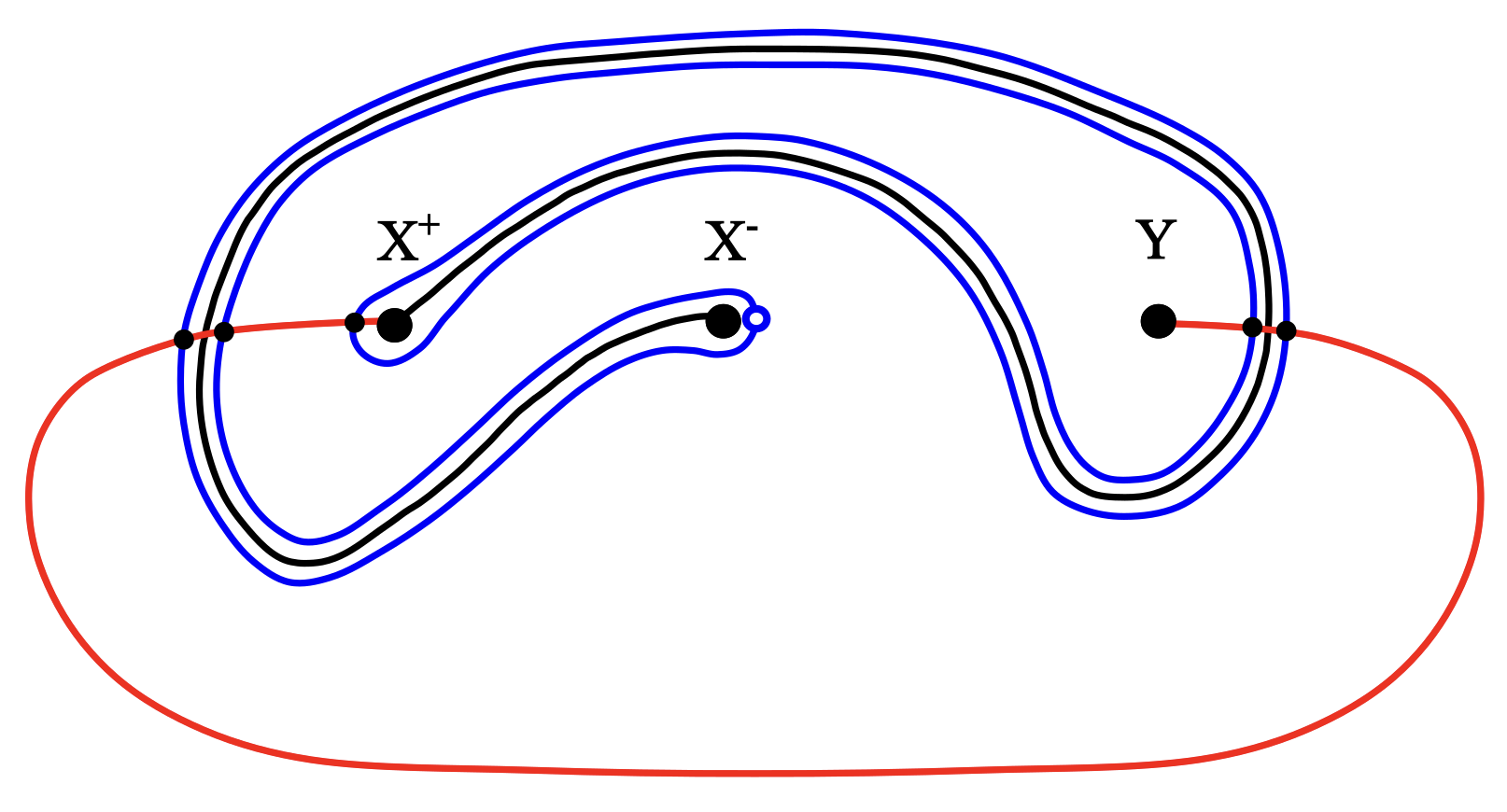}};
\end{tikzpicture}
\caption{The result of the isotopy that takes $\mathcal{D}(K_{5/2})$ to its ``bent'' version. Notice that the blue Lagrangian now loops around $\alpha_{3/2}$.}
\label{fig8untwist}  
\end{figure}

Now that we have bent $\mathcal{D}(K_{u/v})$ to resemble $\mathcal{D}(\tau_{(u-v)/v})$ in the sense that $\alpha_{u/v}$ is transformed to $\alpha_{(u-v)/v}$, we can more easily pair up intersection points of the Lagrangians. Although we do not have ``active'' and ``inactive'' intersection points for the knot, it will help to think of them in this way based on the picture after applying the untwisting isotopy. In particular, it is sometimes convenient to think of the $\kappa_i$ paired with active intersections for $\tau_{(u-v)/v}$ as active intersections for $K_{u/v}$, and similarly for the inactive case. 

Note that everything to the left of the active axis $l_A$ in $\mathcal{D}(\tau_{(u-v)/v})$ looks like concentric semi-circles, potentially with a single arc from $l_A$ to one of the endpoints of $\alpha_{(u-v)/v}$ in the middle. This is the same for everything to the right of $l_I$. For the two cases we need to consider, $X^-$ is always the middle of the three punctures (or $Z=X^-$, in the notation of the previous section), so we only need to concern ourselves with how to pair intersections related by semicircles with the active and inactive axes and by an arc from one of the axes to $X^+$ for the tangle $\tau_{(u-v)/v}$ with the Lagrangian intersections in $\mathcal{D}(K_{u/v})$. The latter case is obvious---there is a single intersection point in $\mathcal{D}(K_{u/v})$ coming from this arc, and this is the point we called $\kappa_\omega$. Some care must be taken for pairing the other types of intersections.

Observe that the pairs of intersections for $\tau_{(u-v)/v}$ coming from a semi-circle made by a portion of $\alpha_{(u-v)/v}$ with $l_A$ or $l_I$ are related to the bent picture for $K_{u/v}$ by the blue Lagrangian running parallel to $\alpha_{(u-v)/v}$ on either side, and the red Lagrangian $\beta$ cuts horizontally through the middle as in Figure \ref{tangleknotpairs}. Thus, we can match the two intersections of the ends of the semi-circle with the active or inactive axis with the two intersections between the blue and red Lagrangians. There are two ways to do so. To determine how, we will impose an orientation on $\alpha_{u/v}$; in particular, we will orient it from $X^-$ to $X^+$, as we did to define $\prec$ in Section \ref{TangleLsection}. Figure \ref{tangleknotpairs} shows the four different situations that can arise and how the points are matched in each.

\begin{figure}
 \raisebox{0pt}{\includegraphics[height=4cm, angle=0]{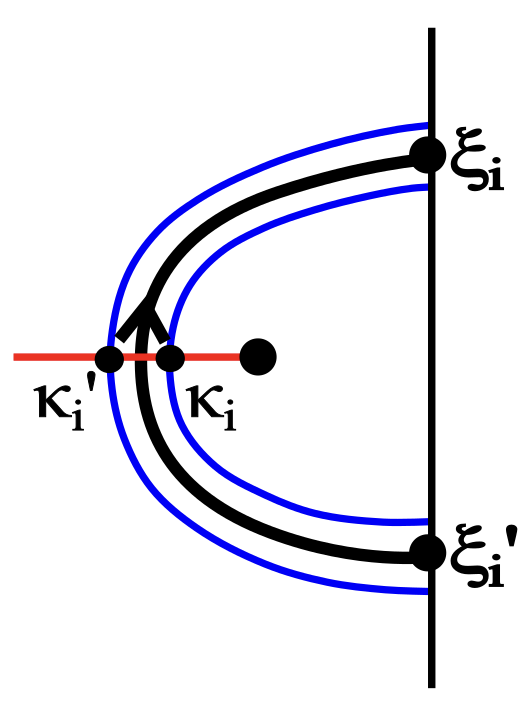}}\qquad \raisebox{0pt}{\includegraphics[height=4cm, angle=0]{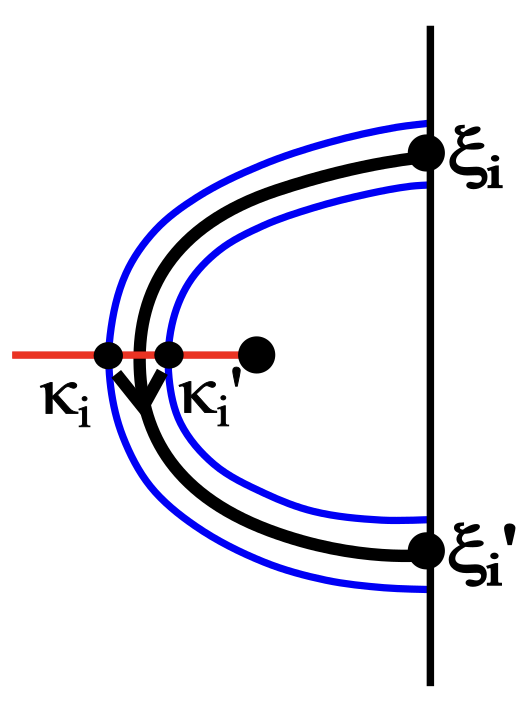}}\qquad \raisebox{0pt}{\includegraphics[height=4cm, angle=0]{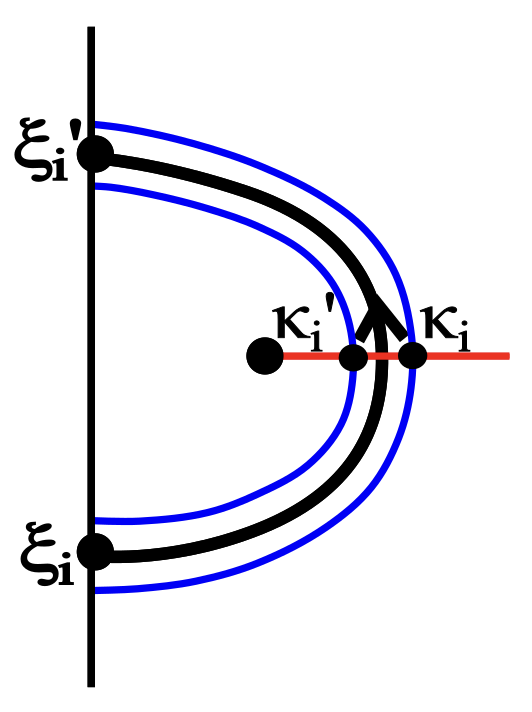}}\qquad \raisebox{0pt}{\includegraphics[height=4cm, angle=0]{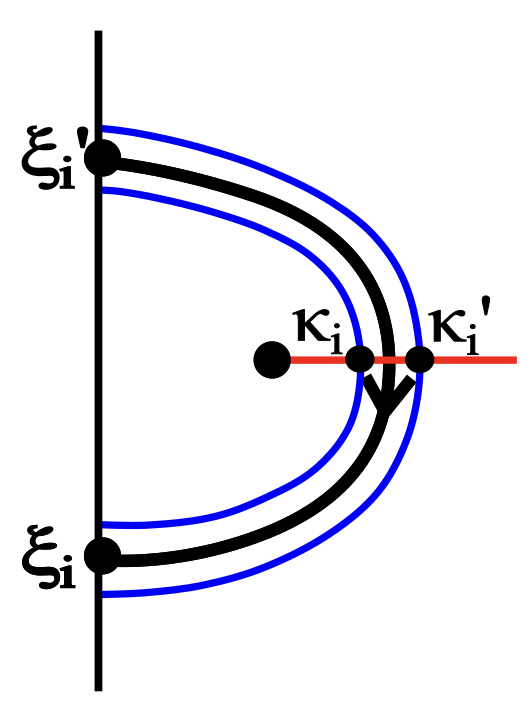}}
\caption{The four cases to consider for pairing intersections in the tangle picture with intersections in the knot picture when coming from a semi-circle. The vertical line is $l_A$ for the two left-most pictures and $l_I$ for the two on the right. Naturally, the intended pairings are $\xi_i \leftrightarrow \kappa_i$ and $\xi_i' \leftrightarrow \kappa_i'$.}
\label{tangleknotpairs}   
\end{figure}

Now that we have matched up intersection points, we have a direct way of relating individual computations for $\tau_{(u-v)/v}$ (as in Proposition \ref{tangleHOMFLYpoly} or \ref{indpartprop}) with corresponding ones for $K_{u/v}$ (using Theorem \ref{Knotsthm}).

For the following lemma, we will use $Q_+$ to denote the diagonal of $Q_{++}$ and $Q_-$ for the diagonal of $Q_{--}$. We can then use this notation to give the linear form defined by the diagonal of $Q$, which is given by the last column of matrices in the statement of Lemma \ref{knotslinforms}.

\begin{lemmaN}
\label{knotslinforms}
    According to the formulas in Theorem \ref{Knotsthm}, if $\tau_{(u-v)/v}$ has orientation $UP$, the $\text{Cl}(T-)$ operation has the effect
\[
    UP, \left[\begin{array}{ c| c | c | c }
    1 & S_+ & A_+ & Q_+ \\
    \hline
    0 & S_- & A_- & Q_-
  \end{array}\right]
    \xrightarrow{\text{Cl}(T-)} 
    \left[\begin{array}{ c | c | c }
    S_++1 & A_+ & Q_++2 \\
    \hline
    S_++2 & A_+ +2 & Q_+-1\\
    \hline
    S_- & A_- & Q_-
  \end{array}\right],
\]
    and if $\tau_{(u-v)/v}$ has orientation $RI$, then $\text{Cl}(T-)$ has the effect
\[
     RI, \left[\begin{array}{ c| c | c | c }
    0 & S_+ & A_+ & Q_+ \\
    \hline
    1 & S_- & A_- & Q_- 
  \end{array}\right]
    \xrightarrow{\text{Cl}(T-)}
    \left[\begin{array}{ c | c | c }
    S_+  & A_+ & Q_+ \\
    \hline
    S_- -1 & A_- -2 & Q_-+2\\
    \hline
    S_-  & A_- & Q_--1
  \end{array}\right].
  \]
\end{lemmaN}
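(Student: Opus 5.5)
We compare, point by point, the loops $\gamma_i^K$ in the bent picture of $\mathcal{D}(K_{u/v})$ (after the untwisting isotopy) with the loops $\gamma_i^T$ in $\mathcal{D}(\tau_{(u-v)/v})$. We use the pairing of Figure \ref{tangleknotpairs}: each tangle point $\xi$ in the relevant block ($\mathfrak{Y}$ or $\mathfrak{Z}$) corresponds to one or two knot points $\kappa,\kappa'$. The three linear forms $S_i$, $A_i$, $Q_{ii}$ of Theorem \ref{Knotsthm} are all of the form $\Psi_W([\gamma_i^K])$ plus a constant. It therefore suffices to compute, for each paired point, the difference
\[
[\gamma_i^K]-[\gamma_i^T]\in H_1(M),
\]
and to record it as winding numbers about $X^+$, $X^-$, $Y$. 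The constants $\mu_j$ and the $\delta$-terms of Proposition \ref{tangleHOMFLYpoly} are absorbed into the overall shift; at this stage we only match differences up to a global constant per form. In the bent picture $Z=X^-$, and $\omega$ is the arc from the axis to $X^+$, which is the same on both sides.

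\emph{Step 1: the untwisted block.} Consider the block that is not halved: $\mathfrak{Z}$, the inactive points in the $UP$ case and the $u-v$ active/inactive points in the $RI$ case. Each such tangle point pairs with a single knot point. The blue band $\overline{\alpha}$ runs parallel to $\alpha_{(u-v)/v}$, and the red $\beta$ crosses where $l_A$ or $l_I$ did. Hence $\gamma_i^K$ is homotopic to $\gamma_i^T$ modulo a fixed correction that is independent of $i$. This correction comes from the way $\beta$ bends around to $\kappa_\omega$ compared with the path along the axes used in $\bar\eta$. The hole near $X^-$ forces $\eta$ on the band to follow the same side as $\alpha$. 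This gives the rows $(S_-,A_-,Q_-)$ unchanged, up to the global shift.

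\emph{Step 2: the doubled block.} The block $\mathfrak{Y}$ comes from the $Y$-wrapping pairs. Each such point $\xi$ corresponds to two knot points $\kappa,\kappa'$, one on each side of a semicircle, as in the four cases of Figure \ref{tangleknotpairs}. We show that one of the two loops equals $\gamma^T$ plus a fixed loop, while the other differs from the first by a small loop encircling exactly one puncture. That puncture is $X^+$ in the $UP$ case; in the $RI$ case it is the loop around the semicircle enclosing $X^+$ on the inactive side. Evaluating $\Psi$ on this loop gives the stated increments:
\begin{itemize}
\item in the $UP$ case, $\Psi_{\{X^\pm,Y\}}$ shifts by $1$, $2\Psi_{X^+}$ by $2$, and $\Psi_{\{X^-,Y\}}-3\Psi_{X^+}$ by $-3$, which is $(+2)\to(-1)$;
\item in the $RI$ case the signs are reversed.
\end{itemize}
We must also check that the relative offset between the two copies and the untouched block, namely $+1$ on $S$ and $+2$ on $Q$ for the first copy in the $UP$ case, matches the offset forced by the $K$-column. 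Recall that $K_i=1$ on $\mathfrak{Y}$ and $0$ on $\mathfrak{Z}$, which accounts for the $\delta_{i,A}\delta_{\omega,I}$-type terms in the tangle formulas disappearing.

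\emph{Main obstacle.} The hard part is the case analysis of Figure \ref{tangleknotpairs}. We need the orientation of $\alpha$ from $X^-$ to $X^+$ to consistently determine which of $\kappa,\kappa'$ receives the extra winding, across all four semicircle configurations and both orientations of the tangle. The plan is to handle each case by drawing the concatenation $\eta\cdot\bar\eta$ and reading off winding numbers using the fixed viewing convention for $\Psi^1_V$. Since $\Psi_W$ only depends on homology, it suffices to exhibit each difference as a boundary plus an explicit small loop.
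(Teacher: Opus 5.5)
Your strategy is the same as the paper's: pair the $\kappa$'s with the $\xi$'s via Figure \ref{tangleknotpairs}, note that $\gamma^K$ and $\gamma^T$ run parallel except near their ends, and read off winding-number differences. But the proposal never does the one computation that actually constitutes the lemma, and the device you use to avoid it does not work.

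The statement is about block-dependent offsets. Your steps only address constancy within blocks and the relative shift $(+1,+2,-3)$ between the two copies. The offset of the doubled block relative to the tangle data $(S_+,A_+,Q_+)$ is left open. You say it ``matches the offset forced by the $K$-column'', but $K$ only records which tangle points lie in $\mathfrak{Y}$; it says nothing about loops in $\mathcal{D}(K_{u/v})$. Importing the offset from Lemma \ref{WSclosureformula} would be circular, since reproducing that lemma geometrically is the whole point. Relatedly, your claim that the $\delta$-terms of Proposition \ref{tangleHOMFLYpoly} are absorbed into a global shift is false. In the $UP$ case $\delta_{i,A}\delta_{\omega,I}$ is $1$ on the doubled (active) block and $0$ on the inactive block, so it is precisely a relative shift between blocks.

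What is needed is the explicit comparison the paper makes. In the $UP$ case, $[\gamma^K_i]-[\gamma^T_i]$ has $\Psi_Y=\Psi_{X^-}=1$ and $\Psi_{X^+}=0$ for the copy $\kappa_i$, and $\Psi_Y=\Psi_{X^-}=\Psi_{X^+}=1$ for $\kappa_{i'}$; it vanishes on the inactive block. You must combine these with the loss of the $+1$ from $\delta_{i,A}\delta_{\omega,I}$, since the knot formula has no such term. Only then do you get $(S_++1,A_+,Q_++2)$ and $(S_++2,A_++2,Q_+-1)$; for instance, $\Psi_{\{X^\pm,Y\}}$ rises by $2$ while $S$ rises only by $1$.

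Two smaller points. First, the ``fixed correction'' in your Step 1 is not produced by the bending of $\beta$; it is $0$. You can see this at once because $\kappa_\omega\leftrightarrow\xi_\omega$ lies in the undoubled block and $\gamma_\omega$ is constant in both pictures. Second, in the $RI$ case (state $(RI,X^+|X^-|Y)$) the inactive semicircles enclose $Y$, not $X^+$. The loop separating the two copies is again the one around the $X^+$ end of the band, forced by the hole at $X^-$. There too you must actually compute the base offset of the first doubled block, which is $(-1,-2,+2)$ relative to $(S_-,A_-,Q_-)$ and involves the $-\delta_{i,I}\delta_{\omega,A}$ term.
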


These are the same formulas as in Lemma \ref{WSclosureformula}, but one computation is coming from \cite{JHpI26} and the other from the formulas in Theorem \ref{Knotsthm}. In the statement of the lemma, we are implicitly using the matching of intersection points for $\tau_{(u-v)/v}$ with the ones for $K_{u/v}$. Thus, the lemma says that if we perform the computations in Proposition \ref{indpartprop} for $\tau_{(u-v)/v}$ to get the data on the left-hand side (for the $UP$ or $RI$ case), then Theorem \ref{Knotsthm} computes the data on the right for $K_{u/v}=\text{Cl}(T\tau_{(u-v)/v})$ with the blocks coming from the pairing of intersection points. Observe that the points in $\mathfrak{Y}$ for $\tau_{(u-v)/v}$ double to account for the points in $\mathfrak{X}$.

\begin{proof}[Proof of Lemma \ref{knotslinforms}]
    We will prove the $UP$ case and leave the $RI$ case to the reader. 

    If $\tau_{(u-v)/v}$ has $UP$ orientation with $u-v$ even and $v$ odd, then it has state $(UP, Y | X^-|X^+)$ and $\kappa_\omega$ as an inactive intersection point. The active intersection points for $\tau_{(u-v)/v}$ are partitioned as $\mathfrak{X}\sqcup\mathfrak{Y}$, and in the application of Proposition \ref{indpartprop}, we are only taking $\mathfrak{Y}$ when introducing the $K$ vector. Geometrically, $\mathfrak{Y}$ consists of the top $\frac{u-v}{2}$ intersection points between $\alpha_{(u-v)/v}$ and $l_A$ (the ones ``above'' $Y$, or the $\xi_i$ in the first two images of Figure \ref{tangleknotpairs}). The inactive intersections give the set $\mathfrak{Z}$.

    On the active side, if $\kappa_i$ and $\kappa_i'$ are related as in one of the first two images of Figure \ref{tangleknotpairs}, then we will need to compare them both with $\xi_i$, the corresponding point for the tangle in $\mathfrak{Y}$. Thus, the first block on the knot side in Lemma \ref{knotslinforms} comes from the points $\kappa_i$ paired with $\xi_i\in \mathfrak{Y}$ and the second block comes from the $\kappa_i'$ related to $\kappa_i$  and $\xi_i$ as in Figure \ref{tangleknotpairs}, or the points in $\mathfrak{X}$. There is no ambiguity in handling the inactive intersections, as they are not compressed for the $UP$ case, and the one-to-one (geometric) pairing $\xi_k \leftrightarrow \kappa_k$ for inactive indices translates to a natural one-to-one pairing of the corresponding entries in the linear forms for the tangles and knots. 

    If $\xi_k$ is an inactive index for the tangle $\tau_{(u-v)/v}$, it is easy to verify that $\Psi_W([\gamma^K_k])=\Psi_W([\gamma^T_k])$ for any $W\subseteq \{X^+,X^-,Y\}$, so we proceed to consider what happens on the active side.

     \begin{figure}
        \centering
        \includegraphics[height=5cm, angle=0]{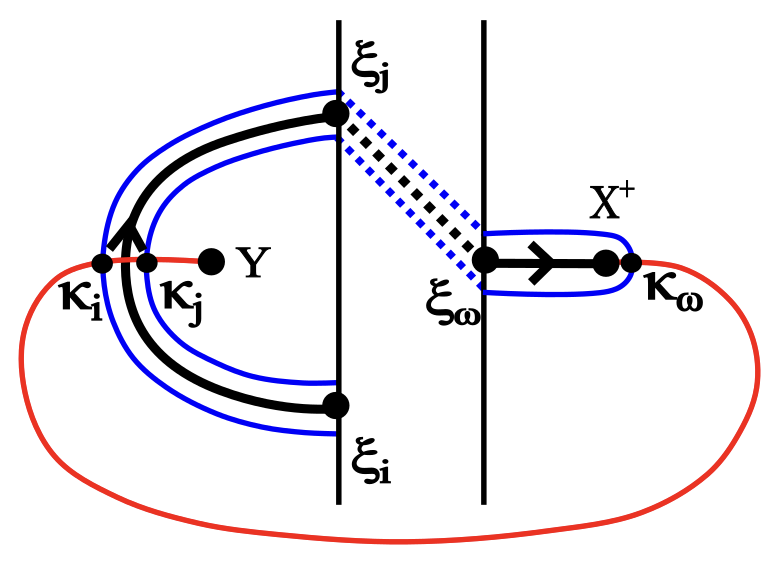}
        \qquad 
        \includegraphics[height=5cm, angle=0]{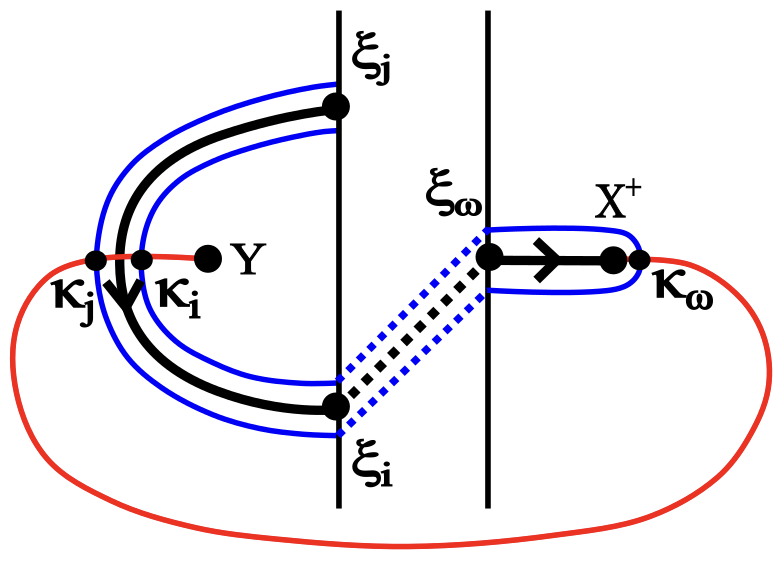}
        \caption{Images for proof of Lemma \ref{knotslinforms}. The dotted portions designate the parts of the Lagrangians $\alpha_{u/v}$ and $\overline{\alpha_{u/v}}$ that result in parallel segments of the loops $\gamma_i$ and $\gamma_j$ for the knot and tangle.}
        \label{linformknotfig}
    \end{figure}

    Observe that the difference between the $\gamma^K_i$ and $\gamma^K_{i'}$ loops for knots and the $\gamma^T_i$ loops for the tangles are the same (they run parallel to each other) everywhere except at the beginning and end, so these are the only parts we need to consider. See Figure \ref{linformknotfig}. We did not add all the distinct loops to the figure to avoid over-complicating it, but they can easily be added by following definitions. After drawing out all the loops, the result now follows from the observation:
\[
     \begin{cases}
        \Psi_Y([\gamma^K_i])-\Psi_Y([\gamma^T_i])=1\\
        \Psi_{X^-}([\gamma^K_i])-\Psi_{X^-}([\gamma^T_i])=1 \\
        \Psi_{X^+}([\gamma^K_i])-\Psi_{X^+}([\gamma^T_i])=0
    \end{cases}
\]
    and
\[
    \begin{cases}
        \Psi_Y([\gamma^K_{i'}])-\Psi_Y([\gamma^T_i])=1\\
        \Psi_{X^-}([\gamma^K_{i'}])-\Psi_{X^-}([\gamma^T_i])=1 \\
        \Psi_{X^+}([\gamma^K_{i'}])-\Psi_{X^+}([\gamma^T_i])=1,
    \end{cases}
\]
    noticing that we lose the $\delta_{i,A}\delta_{\omega,I}$ term in the tangle formula for $S$.
\end{proof}

Thus, we have proved that the formulas for the linear forms involved in Theorem \ref{Knotsthm} (including the diagonal of $Q$) work, up to some overall shift, because we can make them match exactly with what we expect from Lemma \ref{WSclosureformula}. Now, we need to prove that the formula for the off-diagonal entries of $Q$ in Theorem \ref{Knotsthm} also works; this will occupy the next subsection.

\subsection{Step 2: The Quadratic Form}
\label{Knotquadformsec}

Next, we need to show that the formula for the off-diagonal entries of $Q$, or 
\[
Q_{ij}=Q_{ii}+(\Phi-2\Psi_{X^+})([\gamma^K_{j,i}])
\]
allows one to successfully compute the $j$-colored HOMFLY-PT polynomials. Shortly, we will define a precise ordering of the $\kappa_i$ that gives an exact agreement between the upper left $2\times 2$ blocks of the $Q$ matrices for what our formula gives with the one from Lemma \ref{WSclosureformula} in the $UP$ case, and similarly for the lower right $2\times 2$ blocks in the $RI$ case. In particular, this is the ordering that ensures the $+L$ in the (1,2)-block for $UP$ and the (2,3)-block for $RI$. We will find, however, that fixing the ordering in this way is not sufficient for matching up the results of our formulas with Lemma \ref{Knotsthm} completely; more work will be needed for the remaining blocks. 

We now fix our ordering of the $\kappa_i$, which will be defined in terms of three blocks (corresponding to the three blocks seen in Lemma \ref{WSclosureformula}). In the $UP$ case, we have two blocks for intersection points coming from the active side of $\tau_{(u-v)/v}$ and a single block for the inactive side. Once again, orient $\alpha_{(u-v)/v}$ from $X^-$ to $X^+$, which comes with the $\prec$ ordering of the intersection points for the tangle. First, we label $\kappa_1,...,\kappa_{(u-v)/2}$, which constitute the first block for the $UP$ case, as follows. If the $\frac{u-v}{2}$ active intersections for $\tau_{(u-v)/v}$ in $\mathfrak{Y}$ (the ones ``above'' $Y$) are ordered $\xi_1,...,\xi_{(u-v)/2}$ with $\xi_i\prec \xi_j$ if and only if $i<j$, then the first $\frac{u-v}{2}$ $\kappa_i$'s are ordered according to the correspondence $\kappa_i\leftrightarrow \xi_i$. For $\kappa_{(u-v)/2+1},...,\kappa_{u-v}$, we follow a similar procedure but with the $\xi_i$'s in $\mathfrak{X}$ (the ones ``below'' $Y$). 

The third block of intersection points (coming from the inactive side), $\kappa_{u-v+1},...,\kappa_p$, are ordered according to the ordering $\prec$ of the inactive indices for $\tau_{(u-v)/v}$ and the pairing of the $\kappa_i$'s with $\xi_i$'s. An example for the $UP$ case is shown below in Figure \ref{7_3orderfig}.

\begin{figure}[H]
    \centering
    \includegraphics[height=3.5cm]{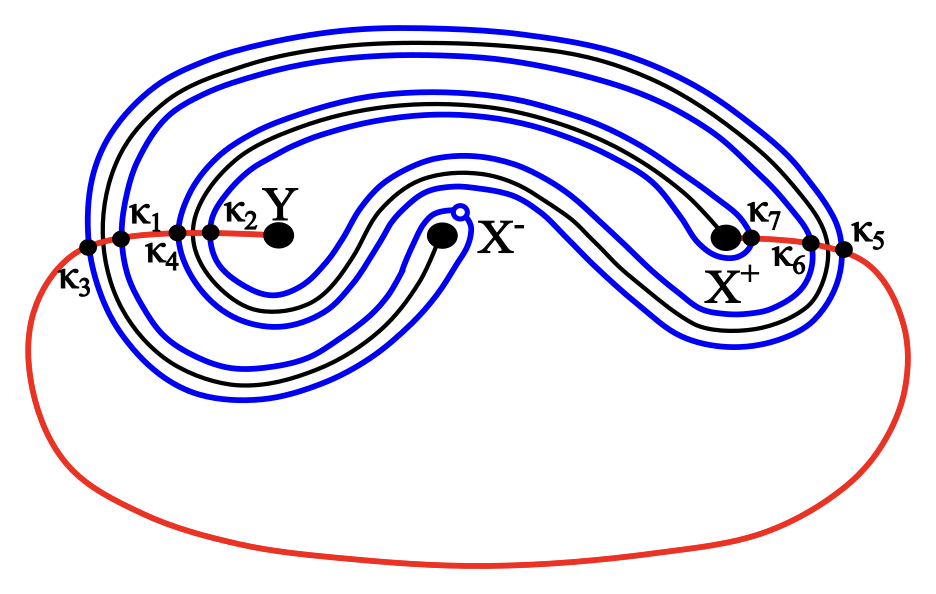}
    \caption{Ordering of the $\kappa_i$ in $\mathcal{D}(K_{7/3})$ (after untwisting isotopy)}
    \label{7_3orderfig}
\end{figure}

The ordering is similar for the $RI$ case. The rules for ordering the active and inactive intersections are switched, and the single block of active intersections still comes first. See Figure \ref{fig8orderfig} for an example.

\begin{note}
    If $\kappa_i$ and $\kappa_i'$ are related as in Figure \ref{tangleknotpairs}, then $i-i'=\frac{u-v}{2}$ in the $UP$ case and $i-i'=\frac{v}{2}$ for the $RI$ case.
\end{note}

\begin{figure}[H]
    \centering
    \includegraphics[height=3.5cm]{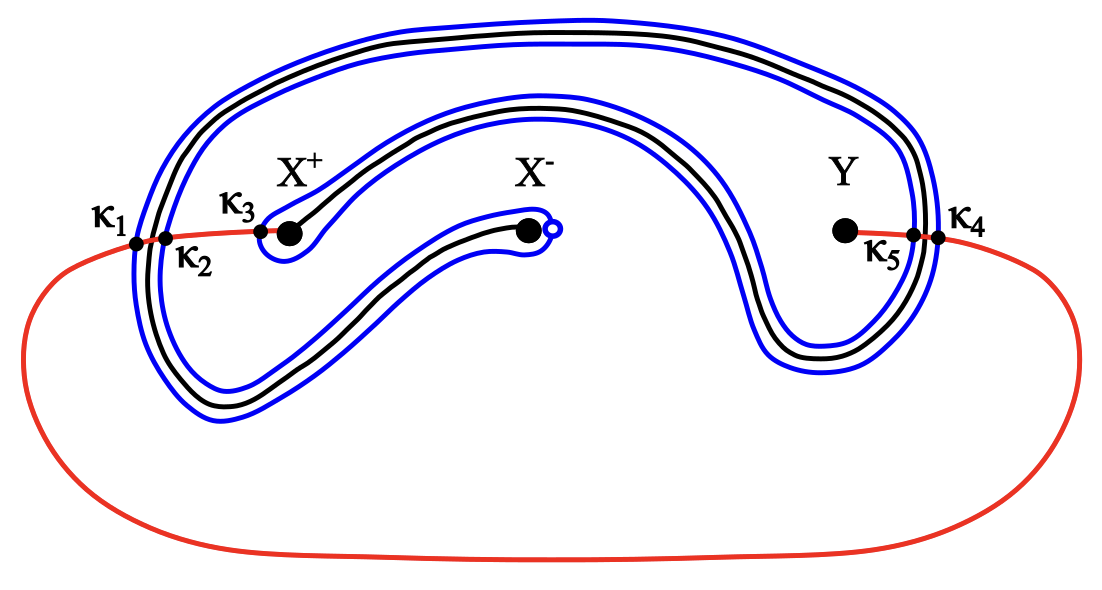}
    \caption{Ordering of the $\kappa_i$ in $\mathcal{D}(K_{5/2})$ (after untwisting isotopy)}
    \label{fig8orderfig}
\end{figure}

\begin{definitionN}
    Let the \textit{standard ordering of the $\kappa_i$} for $K_{u/v}$ be the ordering just described.
\end{definitionN}

For the remainder of the section, we will need to work with particular $Q$'s, which are determined by ordered bases. Thus, we introduce the following notation.

\begin{definitionN}
    Let $Q_{u/v}^K$ denote the particular $u\times u$ matrix for $K_{u/v}$ computed by Theorem \ref{Knotsthm} with ordered basis given by the standard ordering of the $\kappa_i$. 
\end{definitionN}

We can describe $Q_{u/v}^K$ as a $3\times 3$ block matrix (as written on the ``knot side'' in Lemma \ref{WSclosureformula}) determined by a partition $\mathfrak{A}\sqcup\mathfrak{B}\sqcup\mathfrak{C}$ of $\mathcal{G}_{u/v}^K$. More precisely, we let $\mathfrak{A}=\{\kappa_1,...,\kappa_{(u-v)/2}\}$, $\mathfrak{B}=\{\kappa_{(u-v)/2+1},...,\kappa_{u-v}\}$, and $\mathfrak{C}=\{\kappa_{u-v+1},...,\kappa_u\}$ if $\tau_{(u-v)/v}$ has $UP$ orientation. Recall that $\mathcal{G}_{(u-v)/v}^T=\{\xi_1,...\xi_u\}$ is partitioned by the sets $\mathfrak{X},\mathfrak{Y},\mathfrak{Z}$, where we apply Proposition \ref{indpartprop} to express the generating function data for the tangle in terms of restrictions of the linear and quadratic forms to the submodule of $\mathbb{Z}\mathcal{G}_{(u-v)/v}$ given by the basis $\mathfrak{Y}\sqcup\mathfrak{Z}$. In the $UP$ orientation case, $\mathfrak{A}$ was paired with $\mathfrak{Y}$, $\mathfrak{B}$ was paired with $\mathfrak{X}$, and $\mathfrak{C}$ was paired with $\mathfrak{Z}$. If we order $\mathfrak{Y}$ and $\mathfrak{Z}$ independently by $\prec$ with points in $\mathfrak{Y}$ coming before the points in $\mathfrak{Z}$, then we have fixed an ordered basis for this submodule, so Proposition \ref{indpartprop} gives a specific matrix $Q$ for $\tau_{(u-v)/v}$.

\begin{definitionN}
    Let $Q_{(u-v)/v}^T$ denote the $\frac{u+v}{2}\times \frac{u+v}{2}$ matrix $Q$ computed by Proposition \ref{indpartprop}, given the ordering of the basis $\mathfrak{Y}\sqcup\mathfrak{Z}$ just described.
\end{definitionN}

Thus, we see that $Q_{(u-v)/v}^T$ can be represented as a $2\times 2$ block matrix, as shown in Lemma \ref{WSclosureformula}, but now we have the blocks determined by the ordering of the basis. Applying Lemma \ref{WSclosureformula} to $Q_{(u-v)/v}^T$ results in a specific $u\times u$ matrix, which can be expressed as a $3\times 3$ block matrix that can be compared with $Q_{u/v}^K$.

\begin{definitionN}
    Let $Q_{u/v}$ be the $u\times u$ matrix obtained by applying Lemma \ref{WSclosureformula} to $Q_{(u-v)/v}^T$.
\end{definitionN}

We can think of $Q_{u/v}$ as an example matrix that is known to work for the quiver form for $P(K_{u/v})$. 

\begin{lemmaN}
\label{Qplusblockknots}
   If $K_{u/v}=\text{Cl}(T\tau_{(u-v)/v})$, where $\tau_{(u-v)/v}$ has $UP$ orientation, then the top left $2\times 2$ blocks of $Q_{u/v}^K$ agrees with these same blocks for $Q_{u/v}$. If $\tau_{(u-v)/v}$ has $RI$ orientation, then the same holds for the lower right $2\times 2$ blocks.
\end{lemmaN}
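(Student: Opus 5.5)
The plan is to treat only the $UP$ case in detail, since the $RI$ case follows by the same argument with the roles of the active and inactive sides exchanged and the blocks shifted to the lower right.

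\textbf{Step 1: reduce to the off-diagonal blocks.} Write $\mathfrak{A}\sqcup\mathfrak{B}$ for the first two blocks in the standard ordering. The diagonal entries are already handled by Lemma \ref{knotslinforms}, which gives $Q_++2$ on $\mathfrak{A}$ and $Q_+-1$ on $\mathfrak{B}$. By Lemma \ref{WSclosureformula}, the off-diagonal entries of $Q_{u/v}$ in these blocks are as follows. For $\kappa_i,\kappa_j\in\mathfrak{A}$ with $i\neq j$ the entry is $(Q_{++})_{ij}+2$. For $\kappa_i,\kappa_j\in\mathfrak{B}$ it is $(Q_{++})_{ij}-1$. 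For the mixed blocks we get $Q_{++}+L$ and $Q_{++}+U$, where the standard ordering is designed to make $L$ and $U$ the lower and upper triangular corrections. I expect the $\mathfrak{A}$--$\mathfrak{B}$ entries to be $(Q_{++})_{ij}+1$ when $j\le i'$ and $(Q_{++})_{ij}$ otherwise, but this needs checking against the index-splitting in Lemma \ref{algidentity}.

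\textbf{Step 2: compare loops.} Since $Q_{ij}=Q_{ii}+(\Phi-2\Psi_{X^+})([\gamma^K_{j,i}])$, and the corresponding tangle entry is $(Q_{++})_{ij}=(Q_{++})_{ii}+(\Phi-2\Psi_{X^+})([\gamma^T_{j,i}])$ with no $\delta$ correction (both points are active and $Z=X^-$), it suffices to compute the difference
\[
(\Phi-2\Psi_{X^+})([\gamma^K_{j,i}])-(\Phi-2\Psi_{X^+})([\gamma^T_{j,i}])
\]
for $\kappa_i,\kappa_j$ paired with $\xi_i,\xi_j\in\mathfrak{Y}$. We then add the diagonal shift from Lemma \ref{knotslinforms}.

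To compute this difference, apply the untwisting isotopy. The Conf$^2$ loop $\tilde\gamma^K_{j,i}$ and the tangle loop $\tilde\gamma^T_{j,i}$ then run parallel along $\overline{\alpha}$ and $\alpha$ except near the endpoints, as in Figure \ref{linformknotfig}. The differences are therefore localized to the semicircles around $Y$ depicted in Figure \ref{tangleknotpairs}, together with the pieces of $\beta$ bent into the ``U'' shape. For each of the four pairs $\{\kappa_i\text{ or }\kappa_{i'}\}\times\{\kappa_j\text{ or }\kappa_{j'}\}$ I will draw the local end pieces. I will then compute the contributions of the $\Phi$-crossings and the $X^+$-windings using the viewing convention of Section \ref{loophomsec}. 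The goal is a constant correction in each block: $+2$ on $\mathfrak{A}\mathfrak{A}$, $-1$ on $\mathfrak{B}\mathfrak{B}$, and the $L,U$ pattern on the mixed blocks.

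\textbf{Main obstacle.} The hardest part is the mixed block. There the two strands of $\tilde\gamma^K_{j,i}$ may pass each other near $Y$, and whether $\Phi$ picks up an extra half-twist depends on whether $\xi_j\prec\xi_i$. This is exactly where the standard ordering, which uses $\prec$ on $\mathfrak{Y}$ and $\mathfrak{X}$ consistently, has to produce the triangular $L$ versus $U$ pattern.

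My approach here is to split by the relative position of the points on $\beta_1$ versus $\beta_2$, using the basepoint rule (leftmost point on $\beta_1$). I will check the two cases in the definition of $\gamma^1_{i,j}$ separately, namely whether the path meets $\kappa_j$ on $\beta_j$ first. Symmetry of $Q$ (Proposition \ref{KnotQsymprop}) will reduce the computation to $i<j$. Finally, I will verify the count on a small example such as $K_{7/3}$ (Figure \ref{7_3orderfig}) to fix the signs.
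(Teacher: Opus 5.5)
You have the $\mathfrak{A}\mathfrak{A}$ and $\mathfrak{B}\mathfrak{B}$ blocks right, and your approach there is essentially the paper's. After the untwisting isotopy, slide $\beta$ to be vertical at $Y$, bending it one way for $\mathfrak{A}$ and the other way for $\mathfrak{B}$. Loops between two points in the same block then run parallel to the tangle loops, so the loop difference is $0$. The shifts $+2$ and $-1$ come entirely from the diagonal via Lemma \ref{knotslinforms}.

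The gap is in the mixed block, which is the real content of the lemma. Your central claim is that the knot and tangle loops run parallel except near the endpoints, with the differences localized at the semicircles around $Y$ and the bent $\beta$. This is false for mixed pairs. A point of $\mathfrak{A}$ and a point of $\mathfrak{B}$ lie on opposite strands of the band $\overline{\alpha_{u/v}}$. Since the band is cut at $X^-$, every path along it between them must go around the $X^+$ end. So $\gamma^K_{j',i}$ differs from $\gamma^T_{j,i}$ by a hairpin from the position of $\xi_j$ out around $X^+$ and back. That is a global winding about $X^+$, which no local analysis near $Y$ and no case split on the basepoint rule will detect. Likewise, the $L$ versus $U$ pattern does not come from a half-twist near $Y$. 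Once you reduce to $\xi_i\prec\xi_j$, the asymmetry has to appear between $Q_{ij'}$ and $Q_{i'j}$, and your plan gives no mechanism for it. Checking signs on $K_{7/3}$ does not supply one.

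The missing idea is to split off that winding. Write $\gamma^K_{j',i}\sim\sigma_{j',i}\cdot\gamma^K_{j,i}$, where $\sigma_{j',i}$ carries one point from $\kappa_{j'}$ along the band around $X^+$ to $\kappa_j$ and back along its copy of $\beta$, while the other point stays at $\kappa_i$. Because $\xi_i\prec\xi_j$, the hairpin avoids $\kappa_i$, so $\Phi([\sigma_{j',i}])=0$ and $\Psi_{X^+}([\sigma_{j',i}])=1$. Symmetrically, $\gamma^K_{j,i'}\sim\sigma_{j,i'}\cdot\gamma^K_{j',i'}$ with $\Phi=0$ and $\Psi_{X^+}=-1$. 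Hence
\[
Q_{ij'}=Q_{ij}-2,\qquad Q_{i'j}=Q_{i'j'}+2,
\]
which reduces the mixed block to the diagonal blocks already done. The gap $Q_{ij}-Q_{i'j'}=(+2)-(-1)=3$ is odd while the winding contributes $\mp 2$. So $Q_{ij'}=(Q_{++})_{ij}$ but $Q_{i'j}=(Q_{++})_{ij}+1$, and that is exactly the $L$/$U$ asymmetry.

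The entries $Q_{ii'}$ need a separate direct computation. There $\gamma^K_{i,i'}$ and $\gamma^K_{i',i}$ wind about $X^+$ in opposite directions, giving $Q_{ii'}=Q_{ii}-2=Q_{i'i'}+1$.

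This also shows that your guessed target in Step 1 is off. In the $(1,2)$-block the $+1$ sits exactly at $(\kappa_j,\kappa_{i'})$ with $\xi_i\prec\xi_j$, i.e.\ strictly below the block diagonal. The block diagonal itself gets no shift.
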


\begin{proof}
    Once again, we will prove the $UP$ case and leave the $RI$ case to the reader.

    If we cut $\overline{\alpha_{u/v}}$ by $X^+$ (say, at $\kappa_\omega$) as we have done for $X^-$, we have two arcs parallel to each other and to $\alpha_{(u-v)/v}$, with $\mathfrak{A}$ on one of them and $\mathfrak{B}$ on the other. By sliding the red Lagrangians around to be vertical at $Y$, as in Figure \ref{7_3bendfig}, it is easy to see that the $(1,1)$- and $(2,2)$-blocks of $Q_{u/v}^K$ match $Q_{u/v}$. In particular, this figure illustrates how $\gamma^K_{i,j}$ for $\kappa_i,\kappa_j\in \mathfrak{A}$ can be isotoped to run parallel to $\gamma^T_{i,j}$, so that $\Psi_W([\gamma_{i,j}^K])=\Psi_W([\gamma_{i,j}^T])$ for all $W\subseteq\{X^+,X^-,Y\}$ and $\Phi([\gamma_{i,j}^K])=\Phi([\gamma_{i,j}^T])$. The same procedure can be done for pairs of points in $\mathfrak{B}$ by bending the red Lagrangians the other way. Thus, we see that the $(1,1)$- and $(2,2)$-blocks of $Q_{u/v}^K$ are shifts of the $(1,1)$-block of $Q_{(u-v)/v}^T$, with the shift determined by the diagonal. We know that the same holds true for $Q_{u/v}$, and these shifts from the diagonal are known to be the same by Lemma \ref{knotslinforms}.

    \begin{figure}
\begin{tikzpicture}
\node at (-4.1,0) {\includegraphics[height=4cm, angle=0]{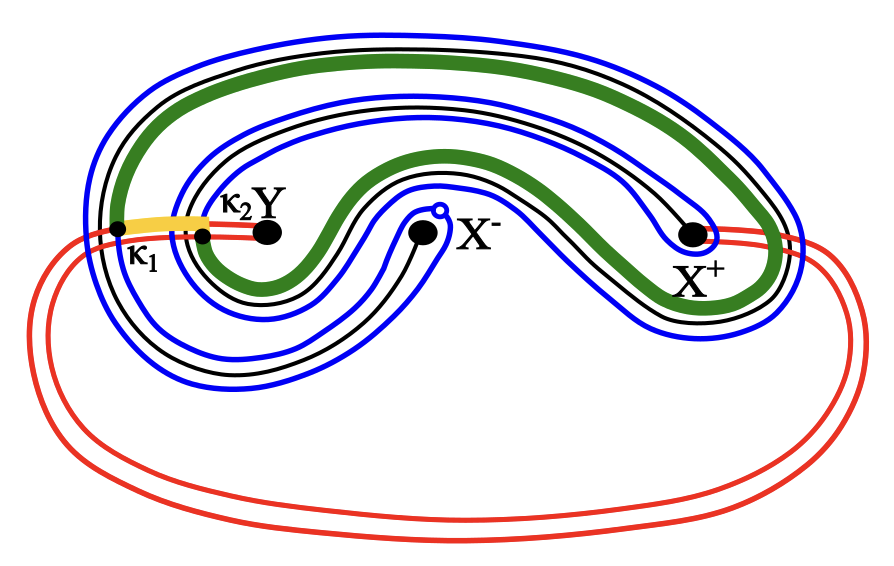}};
\node at (0,0) {$\xrightarrow{\text{isotopy}}$};
\node at (4.1,0) {\includegraphics[height=4cm, angle=0]{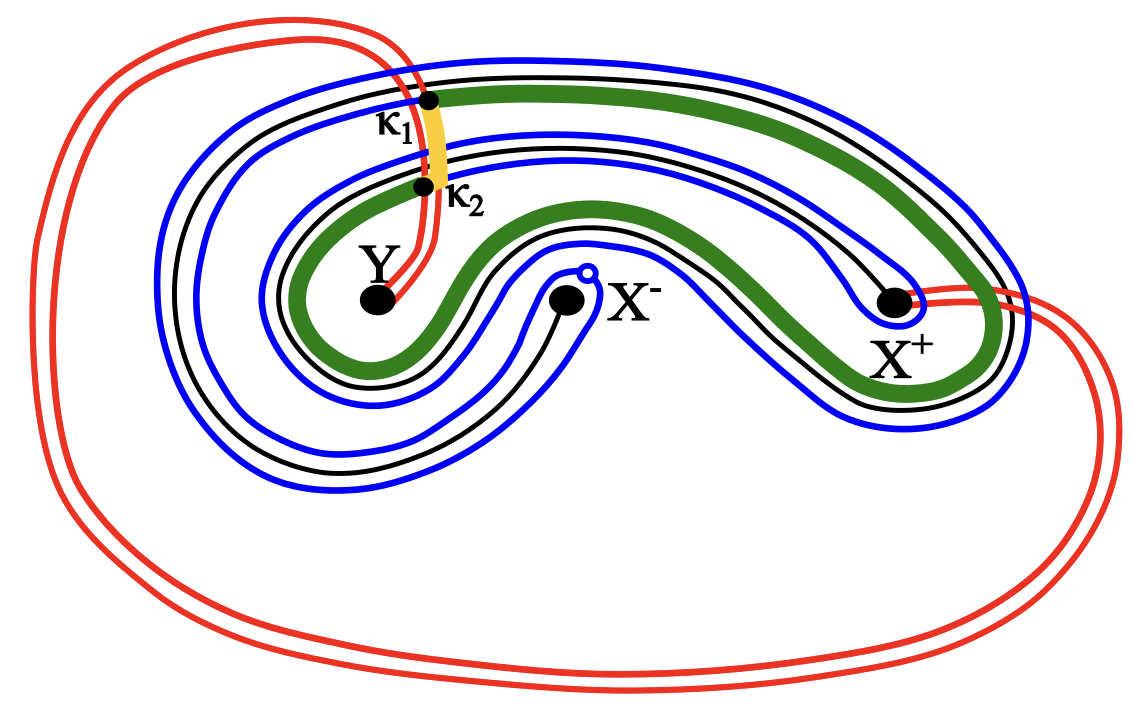}};
\end{tikzpicture}
\caption{An isotopy illustrating how $\gamma_{i,j}$ for $i,j\in X$ can be made parallel to $\lambda_{i,j}$. Shown is $\gamma_{1,2}$ for $K_{7/3}$. If $i,j\in Y$, bend the red Lagrangian the other way near $Y$.}
\label{7_3bendfig}  
\end{figure}

It remains to show that the (1,2)-block of $Q_{u/v}^K$ is $Q_{++}+L$, where $Q_{++}$ represents the $(1,1)$-block of $Q_{(u-v)/v}^T$. Suppose $\kappa_i,\kappa_j\in \mathfrak{A}$ and $\kappa_i',\kappa_j' \in \mathfrak{B}$ such that $i'-i=\frac{u-v}{2}$ and similarly for $j$ and $j'$, so that the pairs $(\kappa_i,\kappa_i')$ and $(\kappa_j,\kappa_j')$ are related as in Figure \ref{tangleknotpairs} or \ref{linformknotfig}. Assume $i<j$, which means $\xi_i\prec\xi_j$. It suffices to show that the $4\times 4$ submatrix of $Q$ with rows and columns indexed by $i,j,i'$, and $j'$ looks like

\[
\begin{blockarray}{ccccc}
    & i & j & i' & j' \\
    \begin{block}{c[cccc]}
        i & X+2 & A+2 & X & A \\
        j & A+2 & Y+2 & A+1 & Y\\
        i' & X & A+1 & X-1 & A-1\\
        j' & A & Y & A-1 & Y-1\\
    \end{block}
\end{blockarray}
\]

for some $X,Y,A\in \mathbb{Z}$. Equivalently, given what we have already have, it suffices to show 

\begin{enumerate}
   \item $Q_{i,i'}=Q_{i,i}-2=Q_{i',i'}+1$ 
   \item $Q_{j,j'}=Q_{j,j}-2=Q_{j',j'}+1$
   \item $Q_{i,j'}=Q_{i,j}-2=Q_{i',j'}+1$
   \item $Q_{j,i'}=Q_{i',j'}+2=Q_{i,j}-1$.
\end{enumerate}

The proof of (1) and (2) is the same, so we only show (1). To prove (1), we will just consider two cases, given by the two possible orientations of $\alpha_{(u-v)/v}$ along the arc between $\xi_i$ and $\xi_{i'}$. The important observation to make is that, in $M$, $\gamma_{i,i'}^K$ and $\gamma_{i',i}^K$ give loops around $X^+$ with opposite orientations, and in $\text{Conf}^2(M)$, we have $\Phi([\gamma_{i',i}^K])=0$ and $\Phi([\gamma_{i,i'}^K])=1$.

\begin{figure}[H]
    \centering
    \includegraphics[height=4.2cm]{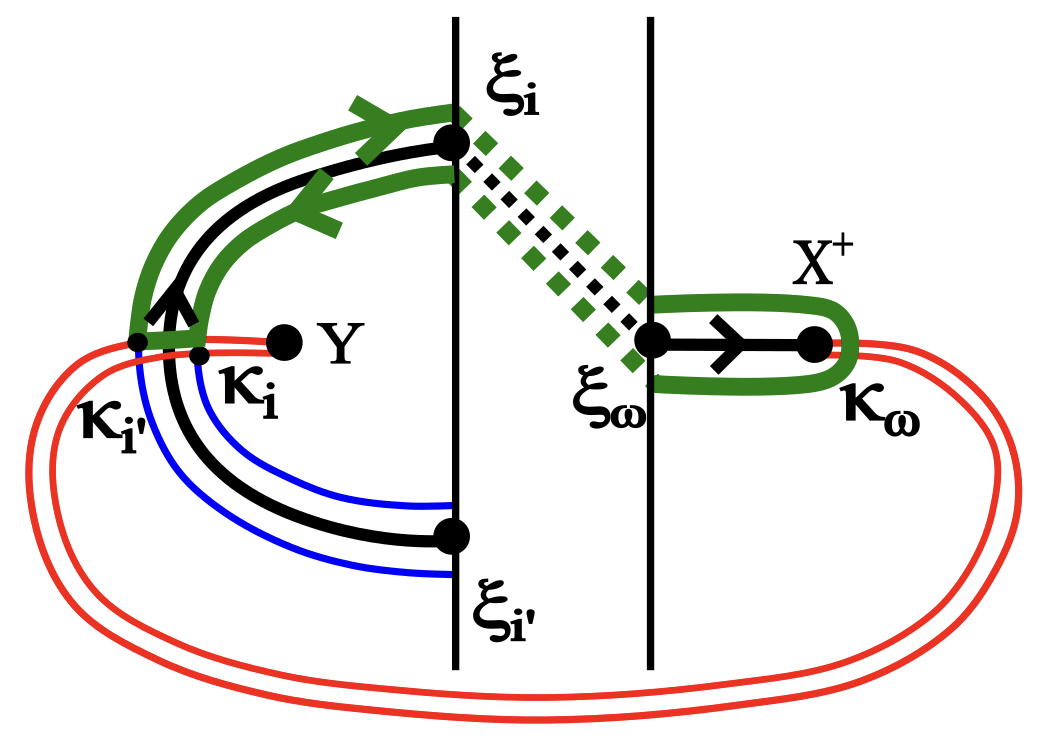} \qquad \qquad
    \includegraphics[height=4.2cm]{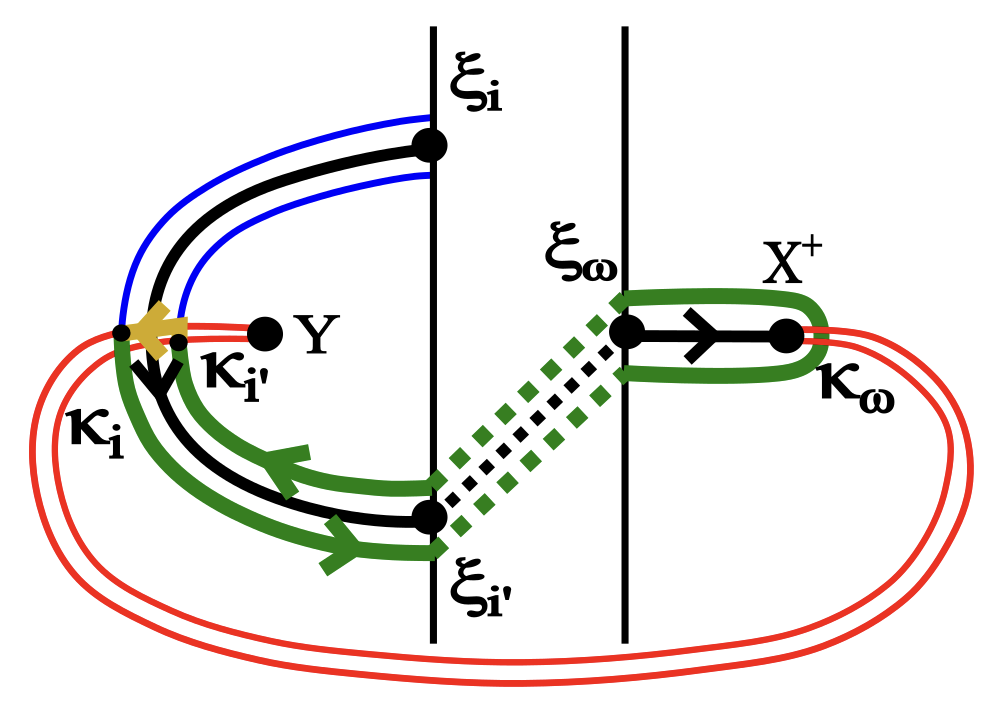}
    \caption{Some loops for proving (1) in Lemma \ref{Qplusblockknots}}
    \label{Qplusblocknotsfig1}
\end{figure}

In Figure \ref{Qplusblocknotsfig1}, we only drew $\gamma^K_{i',i}$ for the first orientation and $\gamma^K_{i,i'}$ for the second, but it is easily seen that the other cases are symmetric. It also follows that we have $\Psi_{X^+}([\gamma_{i',i}^K])=1$, $\Psi_{X^+}([\gamma_{i,i'}^K])=-1$, so (1) follows from
\begin{align*}
Q_{ii'} & =Q_{ii}+(\Phi-2\Psi_{X^+})([\gamma_{i',i}^K]) \\
& = Q_{i'i'}+(\Phi-2\Psi_{X^+})([\gamma_{i,i'}^K]).
\end{align*}

To prove (3) and (4), we just need to show $Q_{ij'}=Q_{ij}-2$ and $Q_{ji'}=Q_{i'j'}+2$ because we already know $Q_{ij}-Q_{i'j'}=3$. These follow easily if we consider the loops going against the orientation of $\alpha_{(u-v)/v}$. In particular, for $Q_{ij'}=Q_{ij}-2$, we use
\[
Q_{ij'}=Q_{ii}+(\Phi-2\Psi_{X^+})([\gamma_{j',i}^K]),
\]
and we observe that $\gamma_{j',i}^K\sim \sigma_{j',i}\cdot \gamma_{j,i}^K$, where $\sim$ denotes that the loops are homologous and $\sigma_{j',i}:[0,1]\to \text{Conf}^2(M)$ is a loop at the same basepoint as $\gamma_{j',i}$, which we now define. The loop may be written as $(\sigma_1,\sigma_2)$ with $\sigma_1$ being the loop that travels along the blue Lagrangian $\overline{\alpha_{(u-v)/v}}$ from $\kappa_{j'}$ to $\kappa_j$ on the same parallel copy of $\beta$ (the red Lagrangian) and then back to $\kappa_{j'}$ along $\beta$, so it wraps around $X^+$ once, and $\sigma_2$ is the constant loop at $\kappa_i$ on the other copy of $\beta$. We clearly have $\Phi([\sigma_{j',i}])=0$. Recall that $\Psi_{X^+}$ is a homomorphism defined on $\pi_1(M)$ rather than $\pi_1(\text{Conf}^2(M))$; consequently, we need a slightly modified $\sigma_{j',i}$ when working with this homomorphism. The natural choice is that we collapse $\beta_1$ and $\beta_2$ to $\beta$ (as in $\mathcal{D}(K_{u/v})$) and drop the constant loop $\sigma_2$ at $\kappa_i$. By how we have defined the intersection points with respect to the given orientation of $\alpha_{(u-v)/v}$, it is not difficult to see that $\Psi_{X^+}([\sigma_{j',i}])=1$. Thus, we get
\[
Q_{ij'}=Q_{ii}+(\Phi-2\Psi_{X^+})([\sigma_{j',i}\cdot \gamma_{j,i}^K])= Q_{ii}+(\Phi-2\Psi_{X^+})([ \gamma_{j,i}^K])-2=Q_{ij}-2.
\]
Similarly, we have 
\[
Q_{ji'}=Q_{i'i'}+(\Phi-2\Psi_{X^+})([\gamma_{j,i'}^K]),
\]
where $\gamma_{j,i'}^K \sim \sigma_{j,i'}\cdot \gamma_{j',i'}^K$, with $\sigma_{j,i'}$ being defined similarly to the previous case, but now we have $\Phi([\sigma_{j,i'}])=0$ and $\Psi_{X^+}([\sigma_{j,i'}])=-1$, so
\[
Q_{j,i'}=Q_{i',i'}+(\Phi-2\Psi_{X^+})([\sigma_{j,i'}\cdot \gamma_{j',i'}])= Q_{i',i'}+(\Phi-2\Psi_{X^+})([ \gamma_{j',i'}])+2=Q_{i',j'}+2,
\]
which concludes the proof.
\end{proof}

The following lemma extends the main step in the proof of Lemma \ref{Qplusblockknots} regarding the structure of the $4\times 4$ submatrix of $Q_{u/v}^K$ with rows and columns indexed by $\kappa_i, \kappa_{i'}, \kappa_j,$ and $\kappa_{j'}$.

\begin{lemmaN}
\label{4x4blocklem}
    For any $\kappa_i,\kappa_i',\kappa_j,$ and $\kappa_j'$ with the pairs $(\kappa_i,\kappa_i')$ and $(\kappa_j,\kappa_j')$ related as shown in Figure \ref{tangleknotpairs}, if $\xi_i\prec \xi_j$, then the $4\times 4$ submatrix of $Q_{u/v}^K$ with rows and columns indexed by these points is of the form
    \begin{equation}
    \label{4x4blockstructure}
\begin{blockarray}{ccccc}
    & i & i' & j & j' \\
    \begin{block}{c[cccc]}
        i & X+3 & X+1 & A+3 & A+1 \\
        j & X+1 & X & A+2 & A\\
        i' & A+3 & A+2 & Y+3 & Y+1\\
        j' & A+1 & A & Y+1 & Y\\
    \end{block}
\end{blockarray}
\end{equation}
for some $X,Y,A\in\mathbb{Z}$, up to conjugation by some permutation matrix.
\end{lemmaN}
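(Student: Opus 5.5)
The plan is to run the argument from the proof of Lemma \ref{Qplusblockknots} in full generality. There the relations were only checked when all four points lie in $\mathfrak{A}\sqcup\mathfrak{B}$ in the $UP$ case. Here they should hold for any two pairs related as in Figure \ref{tangleknotpairs}, on the active or inactive side, and in either the $UP$ or $RI$ case.

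Reading the displayed matrix in the order $i,i',j,j'$ (this is where the ``conjugation by a permutation'' freedom is used), the statement amounts to five relations. The first is the diagonal relation
\[
Q_{ii}-Q_{i'i'}=3 \quad\text{and}\quad Q_{jj}-Q_{j'j'}=3.
\]
The second is the within-pair relation
\[
Q_{ii'}=Q_{ii}-2 \quad\text{and}\quad Q_{jj'}=Q_{jj}-2.
\]
The remaining three are the cross relations
\[
Q_{ij'}=Q_{ij}-2,\qquad Q_{i'j}=Q_{ij}-1,\qquad Q_{i'j'}=Q_{ij}-3.
\]
Setting $X=Q_{i'i'}$, $Y=Q_{j'j'}$ and $A=Q_{i'j'}$ then recovers (\ref{4x4blockstructure}). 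Which member of each pair plays the unprimed role is decided by the local picture in Figure \ref{tangleknotpairs}. If the roles come out reversed, this is absorbed by the permutation.

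\emph{Diagonal relation.} I would compare $\gamma_i^K$ and $\gamma_{i'}^K$ directly, as in the proof of Lemma \ref{knotslinforms}. The two loops run parallel except near the semicircle, where they differ by a single loop about $X^+$. So $\Psi_{X^-}$ and $\Psi_Y$ agree on them, while $\Psi_{X^+}$ differs by $\pm1$. The formula $Q_{ii}=(\Psi_{\{X^-,Y\}}-3\Psi_{X^+})([\gamma^K_i])$ then gives the difference $\pm 3$, and the sign fixes which point is called primed.

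\emph{Within-pair relation.} This is exactly the computation of (1) in Lemma \ref{Qplusblockknots}. In $\mathrm{Conf}^2(M)$ one of $\tilde\gamma^K_{i,i'}$ and $\tilde\gamma^K_{i',i}$ has $\Phi=1$ and the other $\Phi=0$, and their $\Psi_{X^+}$ values are $\mp1$. Both orientations of $\alpha_{(u-v)/v}$ through the semicircle must be checked, and this has to be redone for semicircles on $l_I$ as well as on $l_A$. Proposition \ref{KnotQsymprop} lets me compute $Q_{ii'}$ from whichever of the two loops is simpler.

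\emph{Cross relations.} I would use the factorization $\gamma^K_{j',i}\sim\sigma_{j',i}\cdot\gamma^K_{j,i}$ from the proof of Lemma \ref{Qplusblockknots}. Here $\sigma_{j',i}$ moves one point once around $X^+$ along $\overline{\alpha_{u/v}}$ and keeps the other constant, so $\Phi(\sigma)=0$ and $\Psi_{X^+}(\sigma)=\pm1$. This gives $Q_{ij'}-Q_{ij}=\mp2$. The same trick with the roles of $i$ and $j$ swapped gives $Q_{i'j}$. Combining these two with the diagonal relation, via $Q_{i'j'}=Q_{i'i'}+(\Phi-2\Psi_{X^+})([\gamma^K_{j',i'}])$, gives $Q_{i'j'}$.

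\emph{Main obstacle.} I expect the hardest step to be the cross relations. In general the pair $j,j'$ sits on a different semicircle, possibly on the opposite axis, so the first component of $\tilde\gamma^K_{j,i}$ may pass through the $\kappa_{j'}$ point on the other parallel copy of $\beta$. That is the second case in the definition of $\tilde\gamma^K_{i,j}$, and it could change $\Phi$. I would handle this first by isotoping the red Lagrangians near $Y$, as in Figure \ref{7_3bendfig}, so that $\sigma$ avoids the other strand. Then I would check the four orientation configurations of Figure \ref{tangleknotpairs}, using $\xi_i\prec\xi_j$ to fix which loop is traversed first. This should show that the $\Phi$-contribution of $\sigma$ stays $0$ and the $X^+$-winding has a consistent sign.
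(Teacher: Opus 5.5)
Your overall strategy, rerunning the argument of Lemma \ref{Qplusblockknots} for arbitrary pairs, is what the paper's one-line proof does. However, the cross-relation step as you set it up cannot produce the claimed matrix. Each relation you derive there comes from a loop $\sigma$ in which one point goes once around $X^+$ while the other is held fixed. Such a loop never exchanges the two points, so $\Phi([\sigma])$ is even (you even want it to be $0$). Each such step therefore shifts an off-diagonal entry by $(\Phi-2\Psi_{X^+})([\sigma])\in 2\mathbb{Z}$. The target, though, has odd offsets: $Q_{i'j}=Q_{ij}-1$ and $Q_{i'j'}=Q_{ij}-3$. You can check this in $Q^K_{7/1}$, where $Q_{12}=4$, $Q_{42}=3$, $Q_{45}=1$. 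Concretely, ``the same trick with the roles of $i$ and $j$ swapped'' would give $Q_{i'j}=Q_{ij}\mp 2$, which is wrong.

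The geometric reason is the hypothesis $\xi_i\prec\xi_j$. The detour along $\overline{\alpha_{u/v}}$ from $\kappa_{i'}$ around $X^+$ back to $\kappa_i$ sweeps over the portion of $\alpha_{(u-v)/v}$ after $\xi_i$, and that portion contains $\xi_j$. So the moving point runs into the occupied point at $\kappa_j$. As the target forces, exactly one of $\tilde\gamma^K_{i',j}$ and $\tilde\gamma^K_{i,j}$ exchanges the two points, so their $\Phi$-values differ by an odd number. Your plan to show that the $\Phi$-contribution ``stays $0$'' is precisely what fails. The relevant collision is with the other occupied point, not with $\kappa_{j'}$, which is unoccupied in $\tilde\gamma^K_{j,i}$.

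The argument of Lemma \ref{Qplusblockknots} avoids this by moving only the later point. The detour from $\kappa_j$ to $\kappa_{j'}$ covers only the part of $\alpha_{(u-v)/v}$ after $\xi_j$ and misses $\kappa_i,\kappa_{i'}$. This gives the two even relations $Q_{ij'}=Q_{ij}-2$ and $Q_{i'j}=Q_{i'j'}+2$, and it is where $\xi_i\prec\xi_j$ is used.

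The odd link between the unprimed and primed rows comes from an ingredient you never state: parallelism. Cut $\overline{\alpha_{u/v}}$ at $X^+$ as well as at the hole near $X^-$. This leaves two parallel arcs, with the unprimed points on one and the primed points on the other. Then $\gamma^K_{j,i}$ and $\gamma^K_{j',i'}$, and their lifts to $\text{Conf}^2(M)$, can be isotoped to run parallel, as in Figure \ref{7_3bendfig}. Hence $(\Phi-2\Psi_{X^+})([\gamma^K_{j,i}])=(\Phi-2\Psi_{X^+})([\gamma^K_{j',i'}])$, and the diagonal offset transfers: $Q_{ij}-Q_{i'j'}=Q_{ii}-Q_{i'i'}=3$. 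Together with the two even relations, this yields $Q_{i'j}=Q_{ij}-1$ and $Q_{i'j'}=Q_{ij}-3$. To prove the lemma in general you must establish this parallelism for pairs on arbitrary sides: active/active, active/inactive and inactive/inactive. Without it, your ``combining with the diagonal relation'' step has nothing to draw on.

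Finally, the within-pair values you import are inconsistent as stated. Symmetry and $Q_{ii}-Q_{i'i'}=3$ force $(\Phi-2\Psi_{X^+})([\gamma^K_{i,i'}])=(\Phi-2\Psi_{X^+})([\gamma^K_{i',i}])+3$. So with $\Psi_{X^+}=\mp 1$ and $\Phi([\tilde\gamma^K_{i',i}])=0$, one needs $\Phi([\tilde\gamma^K_{i,i'}])=-1$, not $1$. Computing $Q_{ii'}$ from $\tilde\gamma^K_{i',i}$ alone, as you allow, sidesteps this.
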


Note that this statement allows the $i$- and $j$-labeled points to both come from the active/inactive side for $\tau_{(u-v)/v}$ or one can come from the active side and the other from the inactive side. We must include the ``up to conjugation'' statement because $Q_{u/v}^K$ is defined by the standard ordering of the $\kappa$ intersection points, which does not guarantee that the row and column indices be ordered in this particular way. In particular, the primed indices are do not necessarily come after the unprimed ones. 

\begin{proof}[Proof of Lemma \ref{4x4blocklem}]
    The lemma follows from the same argument used in the proof of Theorem \ref{Qplusblockknots} (there is no true distinction between active and inactive intersections for knots).
\end{proof}

Lemma \ref{Qplusblockknots} shows that the standard ordering of the $\kappa_i$ gives an exact agreement for the top left $2\times 2$ blocks of $Q_{u/v}^K$ and $Q_{u/v}$ for the $UP$ case (and the lower right $2\times 2$ block for the $RI$ case). Unfortunately, we do not have the same result for the other blocks. Next, we consider the $(3,3)$-block of $Q_{u/v}$ in the $UP$ case and the $(1,1)$-block for the $RI$ case. These blocks are equal to the $(2,2)$- and $(1,1)$-blocks of $Q_{(u-v)/v}^T$, respectively, without shifts. We will see that certain entries in these blocks for $Q_{(u-v)/v}^T$ can be permuted (in a way allowable by Theorem \ref{permthm}) to agree with the appropriate blocks in $Q_{u/v}^K$. 


\begin{lemmaN}
\label{tQmswap}
    Given $\tau_{(u-v)/v}$ with $UP$ orientation, then $Q_{(u-v)/v}^T\sim \widetilde{Q}_{(u-v)/v}^T$, where $\widetilde{Q}_{(u-v)/v}^T$ is obtained from $Q_{(u-v)/v}^T$ by applying all permutations $Q_{i,j'}\leftrightarrow Q_{i',j}$ and $Q_{j',i}\leftrightarrow Q_{j,i'}$ within $Q_{--}$, the $(2,2)$-block, where $\xi_i,\xi_{i'}, \xi_j,$ and $\xi_{j'}$ are related as in Figure \ref{tQmswapfig} with $\xi_i\prec \xi_j$. The result is analogous for the $RI$ orientation, but with the entries being swapped in $Q_{++}$, the $(1,1)$-block.
\end{lemmaN}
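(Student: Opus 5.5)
The plan is to apply Theorem \ref{permthm} once for each quadruple $\xi_i,\xi_{i'},\xi_j,\xi_{j'}$ of inactive points paired as in Figure \ref{tQmswapfig}, taking $(a,b,c,d)=(i,j',i',j)$. The transpositions for different quadruples involve disjoint sets of entries. They also form a disjoint sequence in the sense of the theorem, provided the hypotheses are checked against the original $Q_{(u-v)/v}^T$. This is legitimate because the theorem allows all the swaps to be performed simultaneously, so they commute. Everything therefore reduces to verifying three conditions for a single quadruple:
\begin{itemize}
\item[(a)] $\Lambda_i\Lambda_{j'}=\Lambda_{i'}\Lambda_j$;
\item[(b)] $Q_{ij'}=Q_{i'j}-1$, or the reverse;
\item[(c)] $Q_{ik}+Q_{j'k}=Q_{i'k}+Q_{jk}-\delta_{i'k}-\delta_{jk}$ for every index $k$, or the reverse.
\end{itemize}
One subtlety is that the $\mathfrak{Y}$ coordinates carry the extra factor $(q^2;q^2)_{K\cdot\textbf{d}}$. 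Since $K$ vanishes on $\mathfrak{Z}$, the substitution leaves this factor untouched. To handle it carefully, I would rewrite it with Lemma \ref{algidentity}, which turns it into additional quiver vertices whose rows are affine in those of $\mathfrak{Y}$. Condition (c) then has to be checked on those extra vertices as well. Because each such row is an affine combination of $\mathfrak{Y}$ rows and the relation in (c) is linear, this follows once (c) holds on $\mathfrak{Y}$.

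Condition (a) is a statement about the linear forms. Writing $\Lambda_k=(-1)^{Q_{kk}-S_k}q^{S_k-1}a^{A_k}$, it amounts to additivity identities $S_i+S_{j'}=S_{i'}+S_j$, $A_i+A_{j'}=A_{i'}+A_j$, and $Q_{ii}+Q_{j'j'}\equiv Q_{i'i'}+Q_{jj}\pmod 2$. I would obtain these from Proposition \ref{tangleHOMFLYpoly}: all four points are inactive, so the $\delta$ correction terms cancel in pairs. The homological part of the identity then reduces to $[\gamma_i]+[\gamma_{j'}]=[\gamma_{i'}]+[\gamma_j]$ in $H_1(M)$. This should hold because the pairs $(\xi_i,\xi_{i'})$ and $(\xi_j,\xi_{j'})$ are each joined by a semicircle of $\alpha$ around the same puncture. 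The differences $[\gamma_{i'}]-[\gamma_i]$ and $[\gamma_j]-[\gamma_{j'}]$ are therefore single loops around that puncture, with opposite orientations relative to the ordering $\prec$, and they cancel.

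For (b) and (c), I would mirror the computation in the proof of Lemma \ref{Qplusblockknots} and Lemma \ref{4x4blocklem}. Using $Q_{kl}=Q_{kk}+(\Phi-2\Psi_{X^+})([\gamma_{l,k}])$, I would write each loop such as $\gamma_{j',i}$ as a concatenation $\sigma\cdot\gamma_{j,i}$, where $\sigma$ is a small loop encircling one puncture. I would then evaluate $\Phi$ and $\Psi_{X^+}$ on $\sigma$. The goal is to show that the $4\times 4$ submatrix on $i,i',j,j'$ has a shape like (\ref{4x4blockstructure}), with the $(i,j')$ and $(i',j)$ entries differing by exactly one. For (c) with $k$ outside the quadruple, I would show $[\gamma_{k,i}]+[\gamma_{k,j'}]=[\gamma_{k,i'}]+[\gamma_{k,j}]$ in $H_1(\mathrm{Conf}^2(M))$. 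The argument is the same semicircle cancellation, now in the configuration space, and it is combined with the diagonal identity from (a). The cases $k\in\{i,i',j,j'\}$ are read directly off the $4\times4$ block.

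I expect the main obstacle to be the precise $\Phi$ computation for the configuration-space loops, and in particular getting the sign of the $-1$ and the $\delta$ corrections right. The value of $\Phi$ depends on the basepoint convention (which point sits on which parallel copy of the axis), and that convention changes when $i'$ versus $i$ is the lower point. I would handle this by splitting into the two orientation cases of the semicircles, as in Figure \ref{tangleknotpairs}, and tracking the crossing with the diagonal $\Delta$ explicitly in each case.
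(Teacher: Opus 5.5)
Your checks of $\Lambda_i\Lambda_{j'}=\Lambda_{i'}\Lambda_j$, of $Q_{i'j}=Q_{ij'}-1$, and of the transposition condition for $k\in\{i,i',j,j'\}$ are fine and match the paper. The gap is in the case $k$ outside the quadruple, and with it the claim that every swap can be justified against the original $Q^T_{(u-v)/v}$.

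Label the semicircles around $X^+$ by $1,\dots,n$ from the inside out, so that a required swap $(i,j)$ has $i<j$ and $\xi_i\prec\xi_j$. Your identity $[\gamma_{k,i}]+[\gamma_{k,j'}]=[\gamma_{k,i'}]+[\gamma_{k,j}]$ in $H_1(\mathrm{Conf}^2(M))$ is false whenever $\xi_k$ lies on a semicircle nested strictly between those of $i$ and $j$. The projections to $M$ do cancel, so the $\Psi_{X^+}$ terms agree, but $\Phi$ does not. Take a semicircle with endpoints $\xi_a$ (unprimed) and $\xi_b$ (primed) and a third inactive point $\xi_c$. Then $Q_{ac}-Q_{bc}$ is $1$ if $\xi_c$ lies between $\xi_a$ and $\xi_b$ on $l_I$ and $2$ otherwise. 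The reason is that the $\Psi_{X^+}$-difference is always $1$, while the $\Phi$-difference is $1$ or $0$ according to this nesting. Consequently
\[
Q_{i'k}+Q_{jk}=Q_{ik}+Q_{j'k}-\delta_{ik}-\delta_{j'k}-\delta_{i<|k|<j},
\]
so the hypothesis of Theorem \ref{permthm} fails by exactly one, in the unpermuted matrix, for every intermediate $k$. For example, with $n=3$ and $\xi_1\prec\xi_3$, the swap $(1,3)$ fails at $k=2$: one gets $Q_{1'2}+Q_{32}=Q_{12}+Q_{3'2}-1$, whereas equality is required.

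Distinct transpositions do touch disjoint entries, but that does not make their hypotheses independent. The condition for $(i,j)$ involves entries such as $Q_{i'k},Q_{ik'},Q_{j'k},Q_{jk'}$, and these are moved by the swaps $(i,k)$, $(k,j)$, $(k,i)$, $(j,k)$. The final matrix is independent of the order, but the validity of each step is not. The missing idea is an ordering in which earlier swaps cancel the $\delta_{i<|k|<j}$ defect:
\begin{itemize}
\item For $i<|k|<j$, exactly one of $(i,k)$, $(k,j)$ must already have been performed when $(i,j)$ is applied. Each shifts one side of the relation by one, so doing both overcorrects.
\item For $|k|<i$, the swaps $(k,i)$ and $(k,j)$ must both or neither have been performed.
\item For $|k|>j$, the same holds for $(i,k)$ and $(j,k)$.
\end{itemize}
The paper achieves this with a lexicographic order. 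It processes the inner index $i$ in increasing $\prec$-rank, and for fixed $i$ it processes the outer index $j$ in increasing nesting order. It then checks, over the possible $\prec$-orders of $\xi_i,\xi_j,\xi_k$ (reducing to $n=3$), that the defect is cancelled at every step. Without such an ordering argument, your proposal does not establish $Q^T_{(u-v)/v}\sim\widetilde{Q}^T_{(u-v)/v}$.
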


\begin{figure}
    \centering
    \includegraphics[height=5cm]{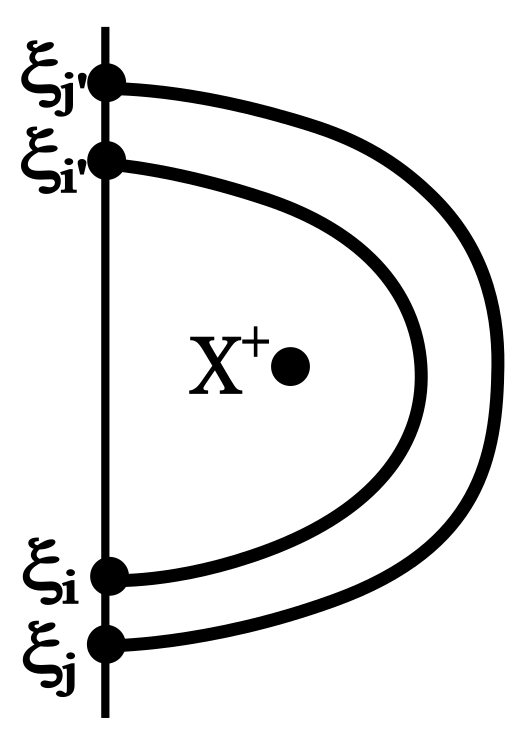} \qquad \qquad
    \includegraphics[height=5cm]{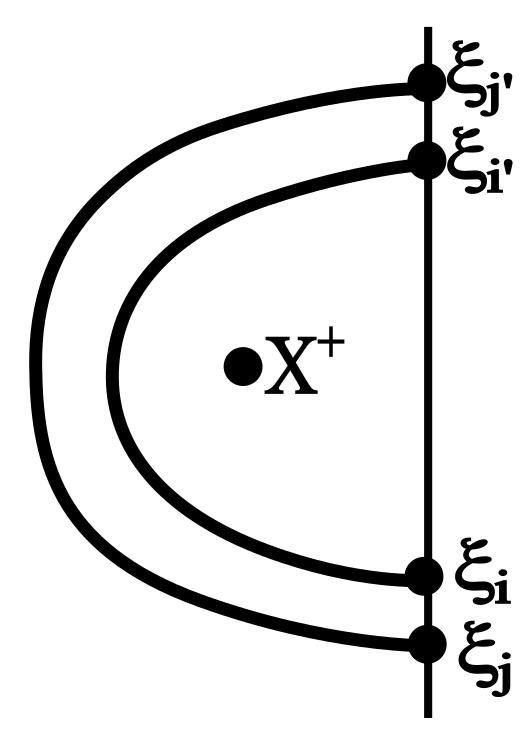}
    \caption{Labeling of intersection points for quiver permutation in Lemma \ref{tQmswap}. Left: $UP$ case; Right: $RI$ case}
    \label{tQmswapfig}
\end{figure}

Recall from Section \ref{bgrd} that $Q\sim Q'$ means  
\[
P_Q(x_1,...,x_u)\bigg|_{x_i\mapsto (-1)^{Q_{i,i}-S_i}q^{S_i-1}a^{A_i}x} = P_{Q'}(x_1,...,x_u)\bigg|_{x_i\mapsto (-1)^{Q'_{i,i}-S_i}q^{S_i-1}a^{A_i}x},
\]
i.e. that $Q$ and $Q'$ give the same generating functions for the colored HOMFLY-PT invariants after the appropriate change of variables. In order to prove Lemma \ref{tQmswap}, we will need to use Theorem \ref{permthm} from Section \ref{LQCsec}, which was proved by Jankowski, Kucharski, Larragu\'ivel, Noshchenko, and Su\l{}kowski in \cite{JKLN21}. Recall that, if $\Lambda_i=(-1)^{Q_{i,i}-S_i}q^{S_i-1}a^{A_i}$, Theorem \ref{permthm} says that $Q\sim Q'$ if $Q'$ is related to $Q$ by a sequence of transpositions of off-diagonal entries $Q_{ab}\leftrightarrow Q_{cd}$ and $Q_{ba}\leftrightarrow Q_{dc}$ such that $\Lambda_a\Lambda_b=\Lambda_c\Lambda_d$ and $|Q_{ab}-Q_{cd}|=1$, with 
\[
Q_{ai}+Q_{bi}=Q_{ci}+Q_{di}-\delta_{ci}-\delta_{di}
\]
for all $i$, if $Q_{ab}=Q_{cd}-1$.

Thus, Theorem \ref{permthm} states some sufficient conditions for permuting pairs of off-diagonal entries in $Q$ while still yielding the same generating function, so we just need to show that there is a sequence of these legal permutations.

\begin{proof}[Proof of Lemma \ref{tQmswap}]
To simplify notation, we will write $Q$ instead of $Q_{(u-v)/v}^T$ and $Q'$ instead of $\widetilde{Q}_{(u-v)/v}^T$ throughout this proof.

Because we are only permuting off-diagonal entries of $Q$ and nothing is done to $S$ and $A$, it is clear that the condition $\Lambda_i=\Lambda_i'$ is satisfied for all $i\in Q_0$. Furthermore, it is easy to see that $\Lambda_i\Lambda_{j'}=\Lambda_{i'}\Lambda_j$ for all pairs of entries we want to swap given how the four points are related geometrically.

Now, observe the general fact that, if $\xi_a,\xi_b,$ and $\xi_c$ are related as in the figure below on the left (so $\xi_c$ lies between $\xi_a$ and $\xi_b$ on $l_I$), then
\[
Q_{ac}-Q_{bc}=1
\]
and if they are related as in the figure on the right (which also holds if $\xi_c$ lies below $\xi_a$ and $\xi_b$ on $l_I$), then
\[
Q_{ac}-Q_{bc}=2.
\]
\[
\vcenter{\hbox{\includegraphics[height=4cm,angle=0]{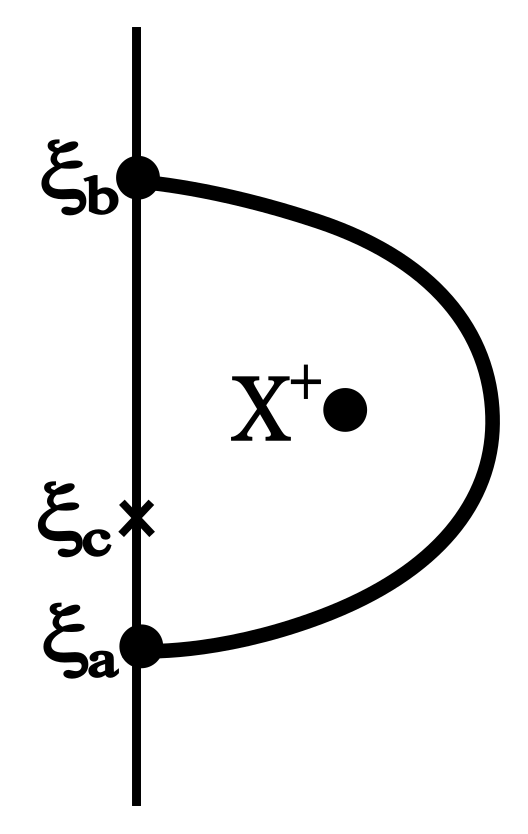}}} \qquad \qquad 
\vcenter{\hbox{\includegraphics[height=4cm,angle=0]{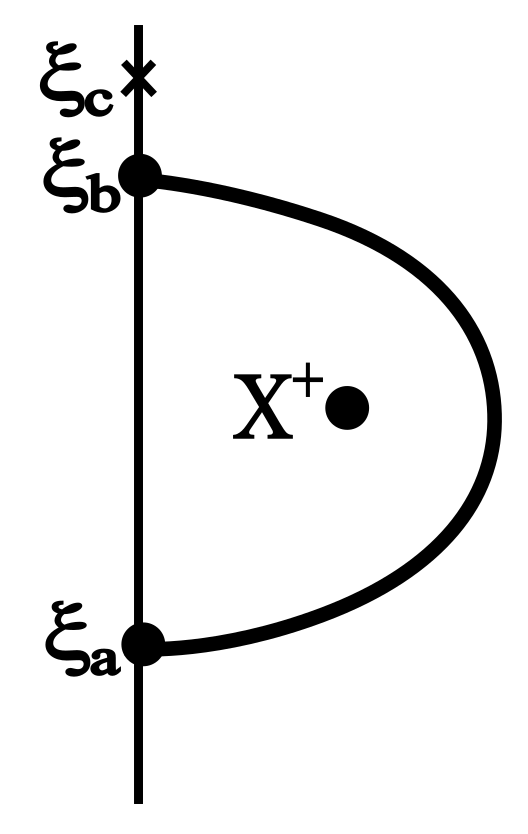}}}
\]
This follows from $\Psi_{X^+}([\gamma_{b,c}^T])-\Psi_{X^+}([\gamma_{a,c}^T])=1$ and $\Phi([\gamma_{b,c}^T])-\Phi([\gamma_{a,c}^T])$ is 1 in the first case and 0 in the second. 

From this observation, it follows that $Q_{i'j}=Q_{ij'}-1$ for all $\xi_i,\xi_{i'},\xi_j$, and $\xi_{j'}$ related as in Figure \ref{tQmswapfig}. Thus, to apply the swap $Q_{ij'}\leftrightarrow Q_{i'j}$ and $Q_{j'i}\leftrightarrow Q_{ji'}$, what we need to show is 
\begin{equation}
\label{tangpermeq}
Q_{i'k}+Q_{jk}=Q_{ik}+Q_{j'k}-\delta_{ik}-\delta_{j'k}
\end{equation}
for all $\xi_k$. This is trivial if $\xi_k$ is an active intersection point. Thus, we need to check that there is some sequence of the desired permutations such that Equation \ref{tangpermeq} holds at each step for all inactive $\xi_k$.

Following a quick computation involving our equations above for $Q_{ac}-Q_{bc}$, one finds that the $4\times 4$ submatrix of $Q_{--}$ with rows and columns indexed by $i,i',j,$ and $j'$ is of the form
\begin{equation}
\label{4x4submattangle}
\begin{blockarray}{ccccc}
    & i & i' & j & j' \\
    \begin{block}{c[cccc]}
        i & X+3 & X+1 & A+3 & A+2 \\
        i' & X+1 & X & A+1 & A\\
        j & A+3 & A+1 & Y+3 & Y+1\\
        j' & A+2 & A & Y+1 & Y\\
    \end{block}
\end{blockarray}
\end{equation}
for some $X,Y,A\in\mathbb{Z}$. From here, Equation \ref{tangpermeq} follows if $k\in\{i,i',j,j'\}$.

The figure below shows how $\tau_{(u-v)/v}$ looks to the right of $l_I$, where $n=\frac{v-1}{2}$ and we have only recorded the subscripts $i$ for the $\xi_i$, ordered from the inside-out.

\[
\vcenter{\hbox{\includegraphics[height=5cm,angle=0]{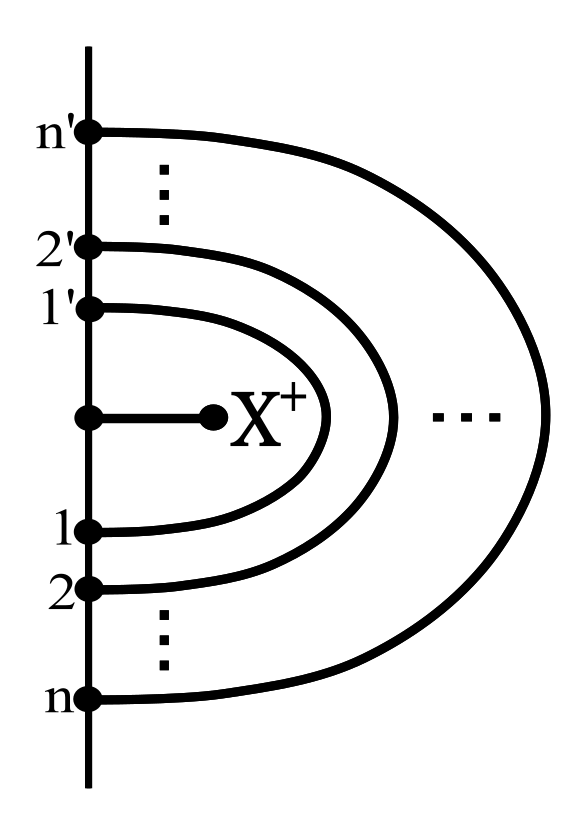}}}
\]

Now, it follows from the equations involving $Q_{ac}-Q_{bc}$ from the beginning of the lemma that
\begin{equation}
\label{QikmQi'k}
Q_{ik}-Q_{i'k}=2-\delta_{|k|<i}-\delta_{ki'},
\end{equation}
where $|k|=k$ if $k$ is unprimed and, otherwise, it just removes the prime (i.e. $|1|=|1'|=1$). Thus, assuming $j>i$, if we take the corresponding equation for $Q_{jk}-Q_{j'k}$ and subtract Equation \ref{QikmQi'k} from it, we get
\begin{equation}
\label{qpermeqndelta}
Q_{jk}+Q_{i'k}=Q_{j'k}+Q_{ik}-\delta_{i<|k|<j}-\delta_{kj'}-\delta_{ki}.
\end{equation}

To simplify notation further, denote the pair of permutations $Q_{j',i}\leftrightarrow Q_{j,i'}$ and $Q_{i,j'}\leftrightarrow Q_{i',j}$ by $(i,j)$, where $i<j$. Recall that we want to apply permutations $(i,j)$ to $Q_{--}$ such that $\xi_i\prec \xi_j$, so we will attach a superscript to each $\xi_i$ designating its ordering among the other (unprimed) inactive intersections with respect to $\prec$, and $\xi_{i'}$ will get the same superscript. In particular, $i$ and the superscript for $\xi_i$ are related by a bijective function $\upsilon$ on the set $[n]=\{1,...,n\}$ determined by $\prec$; for example, if $\xi_i\prec \xi_j$ for all $j\leq n$, $j\neq i$, then we write $\xi_i$ and $\xi_{i'}$ as $\xi_i^{\upsilon(i)}=\xi_i^1$ and $\xi_{i'}^{\upsilon(i)}=\xi_{i'}^1$. If we set $\mu=\upsilon^{-1}$, we can also write $\xi_i^{\upsilon(i)}=\xi_i^1=\xi_{\mu(1)}^1$. Thus, given $\xi_i$ and $\xi_j$, $\upsilon(i)<\upsilon(j)$ if and only if $\xi_i\prec\xi_j$; conversely, $\xi_{\mu(i)}\prec \xi_{\mu(j)}$ if and only if $i<j$.

Now, we describe the desired sequence of permutations $(i,j)$. Observe that the permutations we want to perform according to the lemma can be written in the form $(i,j)=(\mu(k),\mu(k)+l)$ where $\mu(k)<\mu(k)+l\leq n$ (this is the condition $i<j$) and $k<\upsilon(\mu(k)+l)$ (this is the condition $\xi_i\prec \xi_j$). The sequence of these $(\mu(k),\mu(k)+l)$ permutations is determined by a type of ``lexicographical order'' on the pairs $(k,l)$. We will describe this in further detail now.

First, start with $\xi_i^1=\xi_{\mu(1)}^1$. Since $i=\mu(1)$, we want all permutations $(i,j)$ for $j>i$, and the sequence we want is
\[
(\mu(1),\mu(1)+1)\rightarrow(\mu(1),\mu(1)+2)\rightarrow...\rightarrow(\mu(1),n).
\]
The first permutation $(\mu(1),\mu(1)+1)$, assuming $\mu(1)\neq n$, is allowed by Theorem \ref{permthm} and Equation \ref{qpermeqndelta} because $\delta_{i<|k|<j}=0$ for all $k$, and then this term is canceled at each step by the previous permutations. After all such $(i,j)$ permutations with $i=\mu(1)$ have been performed, one does a similar procedure with $\mu(2)$, but with permutations $(\mu(2),\mu(2)+l)$ omitted, where $\upsilon(\mu(2)+l)<2$. Note that this can only happen if $\mu(2)+k=\mu(1)$. Then, one proceeds in a similar manner for permutations of the form $(i,j)$
with $i=\mu(3)$, then $i=\mu(4)$, up through $i=\mu(n)$. At each step, the previous permutations perfectly cancel the $\delta_{i<|k|<j}$ term, so they are allowed in this order by Theorem \ref{permthm}. To see this, we need to check that this cancellation happens when the permutations are performed according to our rule, regardless of how $\xi_i,\xi_j,$ and $\xi_k$ are ordered with respect to $\prec$. To do so, it suffices to consider the case where $n=3$.

To illustrate, if $n=3$, we have
\[
\begin{blockarray}{ccccccc}
    & 1 & 1' & 2 & 2' & 3 & 3' \\
    \begin{block}{c[cccccc]}
        1 & X+3 & X+1 & A+3 & A+2 & B+3 & B+2\\
        1' & X+1 & X & A+1 & A & B+1 & B\\
        2 & A+3 & A+1 & Y+3 & Y+1 & C+3 & C+2\\
        2' & A+2 & A & Y+1 & Y & C+1 & C\\
        3 & B+3 & B+1 & C+3 & C+1 & Z+3 & Z+1\\
        3' & B+2 & B & C+2 & C & Z+1 & Z\\
    \end{block}
\end{blockarray}.
\]
before applying any permutations. If we assume $\xi_2\prec\xi_1\prec\xi_3$ (or $\xi_2^1\prec\xi_1^2\prec\xi_3^3$), then our desired sequence of permutations is $(2,3)\rightarrow (1,3)$; if $\xi_1^1\prec\xi_2^2\prec\xi_3^3$, then we would have $(1,2)\rightarrow(1,3)\rightarrow(2,3)$. Given any other possible ordering $\xi_{\mu(1)}^1\prec\xi_{\mu(2)}^2\prec\xi_{\mu(3)}^3$ of $\xi_1,\xi_2,$ and $\xi_3$ specified by a bijection $\mu:[3]\rightarrow[3]$, we can easily write out the sequence of permutations, and then it is not difficult to check that the $\delta_{i<|k|<j}$ terms vanish at each step. 

\end{proof}

Now we have two matrices $Q_{(u-v)/v}^T$ and $\widetilde{Q}_{(u-v)/v}^T$ representing the quadratic form in the quiver form for $P(\tau_{(u-v)/v})$, i.e. $Q_{(u-v)/v}^T\sim \widetilde{Q}_{(u-v)/v}^T$. Applying Lemma \ref{WSclosureformula} to $Q_{(u-v)/v}^T$ gives $Q_{u/v}$ and applying the lemma to $\widetilde{Q}_{(u-v)/v}^T$ gives another $u\times u$ matrix $\widetilde{Q}_{u/v}$. It is easy to see that $Q_{u/v}\sim \widetilde{Q}_{u/v}$ and that these matrices only differ in the $(3,3)$-block for the $UP$ case and the $(1,1)$-block for the $RI$ case. Consequently, Lemma \ref{Qplusblockknots} still holds if we replace $Q_{u/v}$ with $\widetilde{Q}_{u/v}$. 


This leads us to the next step in the proof of Theorem \ref{Knotsthm}, where we show that the $(3,3)$- or $(1,1)$-block of $\widetilde{Q}_{u/v}$ (depending on the orientation of $\tau_{(u-v)/v}$) agrees with the same block in $Q_{u/v}^K$.

\begin{lemmaN}
\label{Qmmagree}
   If $K_{u/v}=\text{Cl}(T\tau_{(u-v)/v})$ where $\tau_{(u-v)/v}$ has $UP$ orientation, then the $(3,3)$-block of $Q_{u/v}^K$ is equal to the $(3,3)$-block of $\widetilde{Q}_{u/v}$ after some re-ording of the points in $\mathfrak{C}$. If $\tau_{(u-v)/v}$ has $RI$ orientation, we have the analogous result for the $(1,1)$-block.
\end{lemmaN}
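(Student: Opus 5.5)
We treat the $UP$ case; the $RI$ case is symmetric, with the roles of the active and inactive sides interchanged.

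By Lemma \ref{WSclosureformula}, the $(3,3)$-block of $\widetilde{Q}_{u/v}$ is exactly the $(2,2)$-block $\widetilde{Q}_{--}$ of $\widetilde{Q}_{(u-v)/v}^T$, with no shift. The $(3,3)$-block of $Q_{u/v}^K$ is indexed by $\mathfrak{C}$, which is paired with the inactive points $\mathfrak{Z}$ of $\tau_{(u-v)/v}$. So it suffices to compare, for inactive $\kappa_k,\kappa_l\in\mathfrak{C}$, the value $(\Phi-2\Psi_{X^+})([\gamma_{l,k}^K])$ with the corresponding tangle quantity. The diagonals already agree: by Lemma \ref{knotslinforms}, $Q_{kk}$ is unchanged on the inactive side.

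\textbf{Step 1: inactive points as arcs of the band.} To the right of $l_I$, the inactive intersections of $\alpha_{(u-v)/v}$ come in concentric semicircle pairs $(\xi_i,\xi_{i'})$, $i=1,\dots,n=\frac{v-1}{2}$, together with the single arc to $X^+$ giving $\kappa_\omega$. After the untwisting isotopy, the band $\overline{\alpha_{u/v}}$ runs parallel to each such semicircle and $\beta$ cuts horizontally through them. Cutting $\overline{\alpha_{u/v}}$ at $\kappa_\omega$ gives two strands parallel to $\alpha_{(u-v)/v}$. I will bend $\beta$ vertical near $Y$, as in Figure \ref{7_3bendfig}. Then for two inactive points on the \emph{same} parallel strand, $\gamma^K_{l,k}$ runs parallel to $\gamma^T_{l,k}$ except near the endpoints. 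This gives $\Phi$ and $\Psi_{X^+}$ values equal to the tangle values up to local corrections, which I will compute exactly as in the proof of Lemma \ref{knotslinforms}.

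\textbf{Step 2: local corrections for the $4\times 4$ blocks.} For two semicircle pairs $(\xi_i,\xi_{i'})$ and $(\xi_j,\xi_{j'})$ with $\xi_i\prec\xi_j$, I will apply Lemma \ref{4x4blocklem} on the knot side. It gives the pattern (\ref{4x4blockstructure}), with $(i,j')$ entry $A+1$ and $(i',j)$ entry $A+2$. On the tangle side, (\ref{4x4submattangle}) has these two entries swapped: $(i,j')$ is $A+2$ and $(i',j)$ is $A+1$. The remaining entries of the two $4\times4$ blocks agree, and the diagonal entries fix the common $X,Y$. The swapped entries are precisely the transpositions $Q_{ij'}\leftrightarrow Q_{i'j}$ performed in Lemma \ref{tQmswap} for $\xi_i\prec\xi_j$. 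So after those swaps, each $4\times4$ block of $\widetilde{Q}_{--}$ has the same shape as the knot block.

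To conclude that the entries are actually equal, I still need to identify the single constant $A$ on both sides. I will do this by computing one off-diagonal entry, say $Q_{ij}$, directly. The loops $\gamma^K_{j,i}$ and $\gamma^T_{j,i}$ are parallel, as in Step 1, and their difference is a fixed local contribution near $Y$ and $X^+$. That contribution cancels against the shift $Q_{ii}$, which is common to both sides.

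\textbf{Step 3: the remaining entries and the ordering.} The entries pairing the $\kappa_\omega$ point with a semicircle point are handled directly by the parallel-loop comparison of Step 1. The main obstacle is the bookkeeping of orderings. The standard ordering of $\mathfrak{C}$ follows $\prec$ on the tangle, but the pairing of Figure \ref{tangleknotpairs} may swap which of $\kappa_i,\kappa_{i'}$ plays the ``unprimed'' role relative to $\xi_i,\xi_{i'}$, depending on the orientation of the semicircle. This is why the lemma only claims equality after reordering $\mathfrak{C}$. I would define the reordering explicitly as the permutation that matches each $\kappa$ to the $\xi$ for which the Step 2 identification holds. I would first check the case $n\le 3$ by drawing the loops, mirroring the $n=3$ check in Lemma \ref{tQmswap}, and then argue in general that every entry depends only on the relative position of the two semicircles involved.
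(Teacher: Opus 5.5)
Your proposal follows the paper's proof: for two semicircle pairs in $\mathfrak{C}$ you take the knot-side $4\times4$ block from Lemma \ref{4x4blocklem} and the tangle-side block from (\ref{4x4submattangle}), match the diagonals via Lemma \ref{knotslinforms}, and identify the remaining discrepancy as exactly the swaps $Q_{ij'}\leftrightarrow Q_{i'j}$ that define $\widetilde{Q}_{(u-v)/v}^T$ in Lemma \ref{tQmswap}. Your extra step pinning down the common constant $A$ is worthwhile, since the paper leaves it implicit. Note, though, that $Q_{ii}$ is unshifted on the inactive side, so nothing can cancel against it: you need the endpoint contributions to $(\Phi-2\Psi_{X^+})([\gamma^K_{j,i}])$ and $(\Phi-2\Psi_{X^+})([\gamma^T_{j,i}])$ to agree exactly.
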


\begin{proof}
    As usual, we consider the $UP$ case, as the argument for the $RI$ case is the same.

    Consider a $4\times 4$ submatrix of $Q_{u/v}^K$ with rows and columns indexed by $\kappa_i,\kappa_i',\kappa_j,\kappa_j'\in \mathfrak{C}$. There is a corresponding $4\times 4$ submatrix of $Q_{u/v}$ coming from the $Q_{--}$ block of $\widetilde{Q}_{(u-v)/v}^T$ indexed by the points $\xi_i,\xi_i',\xi_j,\xi_j'$ paired with the $\kappa$'s; suppose that these intersection points are related as shown in Figure \ref{tQmswapfig}. By Lemma \ref{knotslinforms}, we know that the diagonal of these two $4\times 4$ matrices are equal, so we can easily relate them to each other by the calculations we have already made. By Lemma \ref{4x4blocklem}, this submatrix of $Q_{u/v}^K$ looks like
    \begin{equation}
    \label{4b1}
    \begin{blockarray}{ccccc}
    & i & i' & j & j' \\
    \begin{block}{c[cccc]}
        i & X+3 & X+1 & A+3 & \underline{A+1} \\
        i' & X+1 & X & \underline{A+2} & A\\
        j & A+3 & \underline{A+2} & Y+3 & Y+1\\
        j' & \underline{A+1} & A & Y+1 & Y\\
    \end{block}
\end{blockarray}
    \end{equation}
    if $\xi_i\prec\xi_j$, up to some reordering of the indices. Also, we know from (\ref{4x4submattangle}) in the proof of Lemma \ref{tQmswap} that the corresponding submatrix of $Q_{(u-v)/v}^T$ should look like 
    \begin{equation}
    \label{4b2}
    \begin{blockarray}{ccccc}
    & i & i' & j & j' \\
    \begin{block}{c[cccc]}
        i & X+3 & X+1 & A+3 & \underline{A+2} \\
        i' & X+1 & X & \underline{A+1} & A\\
        j & A+3 & \underline{A+1} & Y+3 & Y+1\\
        j' & \underline{A+2} & A & Y+1 & Y\\
    \end{block}
\end{blockarray}
    \end{equation}
    with respect to the same (reordering) of the indices, using the matching between $\xi$'s and $\kappa$'s. However, $\widetilde{Q}_{(u-v)/v}^T$ was defined to be the matrix obtained from $Q_{(u-v)/v}^T$ by applying all permutations $(i,j)$ in this block when $\xi_i\prec\xi_j$, which takes (\ref{4b2}) to (\ref{4b1}). The $(3,3)$-block of $\widetilde{Q}_{u/v}$ is equal to this permuted $Q_{--}$ block in $\widetilde{Q}_{(u-v)/v}^T$, so it follows that the $(3,3)$-blocks of $Q_{u/v}^K$ and $\widetilde{Q}_{u/v}$ are equal after some permutation of the row and column indices.

\end{proof}

Now, we have shown that all entries $Q_{ij}$ of $Q_{u/v}^K$ match the corresponding entries of a quiver already known to work for $K_{u/v}$ via the links-quivers correspondence, as long as $\kappa_i$ and $\kappa_j$ are both paired with active or inactive intersections for $\tau_{(u-v)/v}$. Thus, the final step needed to prove Theorem \ref{Knotsthm} is to show that the remaining entries of $Q_{u/v}^K$, computed by our theorem, are related to the $Q_{+-}$ blocks from Lemma \ref{WSclosureformula} by some ``legal'' (in the sense of Theorem \ref{permthm}) sequence of permutations. 

\begin{lemmaN}
\label{Qpmperm}
The two matrices $Q_{u/v}^K$ and $Q_{u/v}$ satisfy $Q_{u/v}^K\sim Q_{u/v}.$
\end{lemmaN}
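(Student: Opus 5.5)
We treat the $UP$ case; the $RI$ case is the mirror image. By the discussion after Lemma \ref{tQmswap}, $Q_{u/v}\sim\widetilde{Q}_{u/v}$. Lemmas \ref{Qplusblockknots} and \ref{Qmmagree} show that, after reordering $\mathfrak{C}$, $\widetilde{Q}_{u/v}$ and $Q_{u/v}^K$ agree on the diagonal and on the $(1,1)$-, $(1,2)$-, $(2,2)$- and $(3,3)$-blocks. By Lemma \ref{knotslinforms} they also carry the same $S$ and $A$ up to the overall $\mu$-shifts. Hence it suffices to show $\widetilde{Q}_{u/v}\sim Q_{u/v}^K$, and the two matrices differ only in the off-diagonal blocks $\mathfrak{A}\times\mathfrak{C}$ and $\mathfrak{B}\times\mathfrak{C}$ (and their transposes). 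In $\widetilde{Q}_{u/v}$ these blocks are $Q_{+-}$ and $Q_{-+}-1$, read off from Lemma \ref{WSclosureformula}. The plan is to exhibit a sequence of transpositions allowed by Theorem \ref{permthm} that carries these blocks to the ones computed from the loops $\gamma^K_{j,i}$.

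First compute the mixed entries of $Q_{u/v}^K$ directly. Fix $\kappa_i\in\mathfrak{A}$ with partner $\kappa_{i'}\in\mathfrak{B}$, and fix a pair $\kappa_k,\kappa_{k'}\in\mathfrak{C}$ related as in Figure \ref{tangleknotpairs} (note that $\kappa_\omega$ is unpaired). Lemma \ref{4x4blocklem} applies to this mixed quadruple, so the $4\times4$ submatrix has the shape (\ref{4x4blockstructure}) with the ordering dictated by $\prec$. Next compare with the tangle. Using the untwisting isotopy and the sliding argument of Lemma \ref{Qplusblockknots}, one of the two mixed entries $Q^K_{ik}$, $Q^K_{ik'}$ is computed by a loop parallel to $\gamma^T$. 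That entry therefore equals the corresponding entry of $Q_{+-}$, up to the diagonal shift already fixed by Lemma \ref{knotslinforms}. The remaining three mixed entries are determined by the $4\times4$ shape.

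Comparing the two patterns shows that $\widetilde{Q}_{u/v}$ restricted to $\{i,i',k,k'\}$ has the tangle-type pattern (\ref{4x4submattangle}). In this pattern the two anti-diagonal mixed entries are interchanged relative to (\ref{4b1}), exactly as in Lemma \ref{Qmmagree}. So the required moves are the swaps $Q_{ik'}\leftrightarrow Q_{i'k}$ (with their transposes), performed only for those quadruples where $\xi_i$ and $\xi_k$ are in the wrong $\prec$-order. For each swap we check the hypotheses of Theorem \ref{permthm} as follows.
\begin{enumerate}
\item $\Lambda_i\Lambda_{k'}=\Lambda_{i'}\Lambda_k$ holds by the linear forms in Lemma \ref{knotslinforms}, since the $S$-, $A$- and $Q$-diagonal differences between primed and unprimed points are constant within each block.
\item The difference of the swapped entries is $\pm1$, read off from the $4\times 4$ pattern.
\item The row-sum identity $Q_{ai}+Q_{bi}=Q_{ci}+Q_{di}-\delta_{ci}-\delta_{di}$ must hold for every index. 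For this, derive analogues of (\ref{QikmQi'k}) for rows indexed by $\mathfrak{A}\cup\mathfrak{B}$ and by $\mathfrak{C}$, using the winding-number differences $\Psi_{X^+}$ and $\Phi$ of $\gamma^K$ loops through nested semicircles, as in the proof of Lemma \ref{tQmswap}.
\end{enumerate}

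The main obstacle is item 3. The analogue of (\ref{qpermeqndelta}) will carry a defect term of the form $\delta_{\xi_i\prec\xi_m\prec\xi_k}$ for intermediate points $m$. This defect has to be cancelled by swaps already performed, so the order of swaps matters. We would reuse the lexicographic ordering $(\mu(1),\cdot),(\mu(2),\cdot),\dots$ over $\mathfrak{A}\cup\mathfrak{C}$ ordered by $\prec$, from the proof of Lemma \ref{tQmswap}, and verify cancellation by reducing to the case of three pairs, as was done there.
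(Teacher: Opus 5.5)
Your overall route is the paper's. You replace $Q_{u/v}$ by $\widetilde{Q}_{u/v}$ and use Lemma \ref{4x4blocklem} to locate the discrepancies $Q_{ik'}\leftrightarrow Q_{i'k}$ in the mixed blocks. You then look for an order of these swaps in which Theorem \ref{permthm} applies at every step. That last step is the real content of the lemma, and the order you propose fails there.

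First, a correction to the setup. In $\widetilde{Q}_{u/v}$ every mixed quadruple carries the pattern (\ref{4x4blockstructure}), not the tangle-type one: $\text{Cl}(T-)$ sends the tangle block $\{i,k,k'\}$ to $Q_{ik}=B+2$, $Q_{ik'}=B$, $Q_{i'k}=B+1$, $Q_{i'k'}=B-1$. It is $Q^K_{u/v}$ that carries the swapped pattern, and it does so exactly when $\xi_k\prec\xi_i$. So the swaps needed are the pairs ($i$ active, $k$ inactive) with $\xi_k\prec\xi_i$. This is only a mislabelling, since the set of swaps is the same either way.

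The real problem is the ordering. Take active points $\xi_i\prec\xi_j$ and an inactive $\xi_k\prec\xi_i$, so that both $(i,k)$ and $(j,k)$ are needed. By Lemma \ref{Qplusblockknots}, $Q_{ij}=A+2$ and $Q_{i'j}=A+1$. Before any mixed swap, $Q_{kj}=D+2$ and $Q_{k'j}=D$. The identity Theorem \ref{permthm} requires for $(i,k)$, evaluated at the index $j$, is $Q_{ij}+Q_{k'j}=Q_{i'j}+Q_{kj}$, i.e. $A+D+2=A+D+3$, which is false. It becomes true only after $(j,k)$ has raised $Q_{k'j}$ to $D+1$.

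In general, for a swap $(a,b)$ with $\xi_b\prec\xi_a$, the defect sits at active $m$ with $\xi_a\prec\xi_m$ and at inactive $m$ with $\xi_m\prec\xi_b$. These are points \emph{outside} the interval from $\xi_b$ to $\xi_a$. At intermediate points the identity holds only if the corresponding swap $(m,b)$, resp. $(a,m)$, has \emph{not} yet been performed. So the defect is not of the form $\delta_{\xi_i\prec\xi_m\prec\xi_k}$. Moreover, the ascending lexicographic order you carry over from Lemma \ref{tQmswap} puts $(i,k)$ before $(j,k)$ whichever index you group by, so it violates Theorem \ref{permthm}.

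The order that works runs in opposite directions on the two sides:
\begin{itemize}
\item for a fixed inactive index, process the active indices in decreasing $\prec$-order;
\item for a fixed active index, process the inactive indices in increasing $\prec$-order.
\end{itemize}
The paper's rule is one such order: $(i,k)$ before $(j,l)$ whenever $\xi_j\prec\xi_i$, and $(i,k)$ before $(i,l)$ whenever $\xi_k\prec\xi_l$. The paper reads this off an $8\times 8$ submatrix indexed by two active and two inactive pairs. Every constraint links two swaps sharing an index, so your three-pair reduction would suffice once the order is corrected.

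A smaller point concerns the condition $\Lambda_i\Lambda_{k'}=\Lambda_{i'}\Lambda_k$. It needs the primed-minus-unprimed shifts of $(S,A,Q_{\mathrm{diag}})$ to be \emph{equal} for the active pair and the inactive pair, not merely constant within each block. This does hold, because in both pairs the two points differ by one loop around $X^+$, which gives the shift $(1,2,-3)$.
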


\begin{proof}
We have already shown that $Q_{u/v}\sim \widetilde{Q}_{u/v}$ where $\widetilde{Q}_{u/v}$ has the same upper-left $2\times 2$ blocks and $(3,3)$-block as $Q_{u/v}^K$, up to conjugation by some permutation matrix. Now, we need to show
that we can legally permute entries in the remaining blocks of $\widetilde{Q}_{u/v}$ to obtain $Q_{u/v}^K$. Assume now that $\widetilde{Q}_{u/v}$ has been conjugated so that the $(3,3)$-block is equal to the same block for $Q_{u/v}^K$. This means that the elements of $\mathfrak{Z}$ are, once again, ordered by $\prec$.

The $3\times 3$ block matrix $\widetilde{Q}_{u/v}$ can be thought of as a quadratic form on $\mathbb{Z}\mathcal{G}_{u/v}^K$ or on $\mathbb{Z}\mathcal{G}_{(u-v)/v}^T$ with basis $\mathcal{G}_{(u-v)/v}^T=\mathfrak{X}\sqcup\mathfrak{Y}\sqcup\mathfrak{Z}$ (because of the matching $\mathfrak{A}\sqcup\mathfrak{B}\sqcup\mathfrak{C}\leftrightarrow \mathfrak{X}\sqcup\mathfrak{Y}\sqcup\mathfrak{Z}$). The matrix $\widetilde{Q}_{u/v}$ was obtained by applying Lemma \ref{WSclosureformula} to $\widetilde{Q}_{(u-v)/v}^T$, which is defined on the submodule with basis $\mathfrak{Y}\sqcup\mathfrak{Z}$, and the block for $\mathfrak{Y}$ doubled to account for $\mathfrak{X}$. The remaining blocks we need to consider in this proof are the $(1,3)$- and $(2,3)$-blocks (the $(3,1)$ and $(3,2)$ ones will follow by symmetry), which are the blocks relating active and inactive intersections for $\tau_{(u-v)/v}$. 

The remaining permutations of $\widetilde{Q}_{u/v}$ we need to consider are of the form $(i,j)$ for $\xi_j$ inactive and $\xi_i$ active, with $\xi_j\prec \xi_i$. Below, we establish our notational convention for the relevant $\xi$'s from $\tau_{(u-v)/v}$, where we have only written the subscripts. Note that there are four different ways of pairing the intersection points for the knot with the ones from the tangle due to the two possible orientations on each of the black arcs; thus, we do not label the $\kappa$'s to avoid unnecessary clutter, but they can easily be determined using Figure \ref{tangleknotpairs} for each orientation.

\[
\vcenter{\hbox{\includegraphics[height=6cm,angle=0]{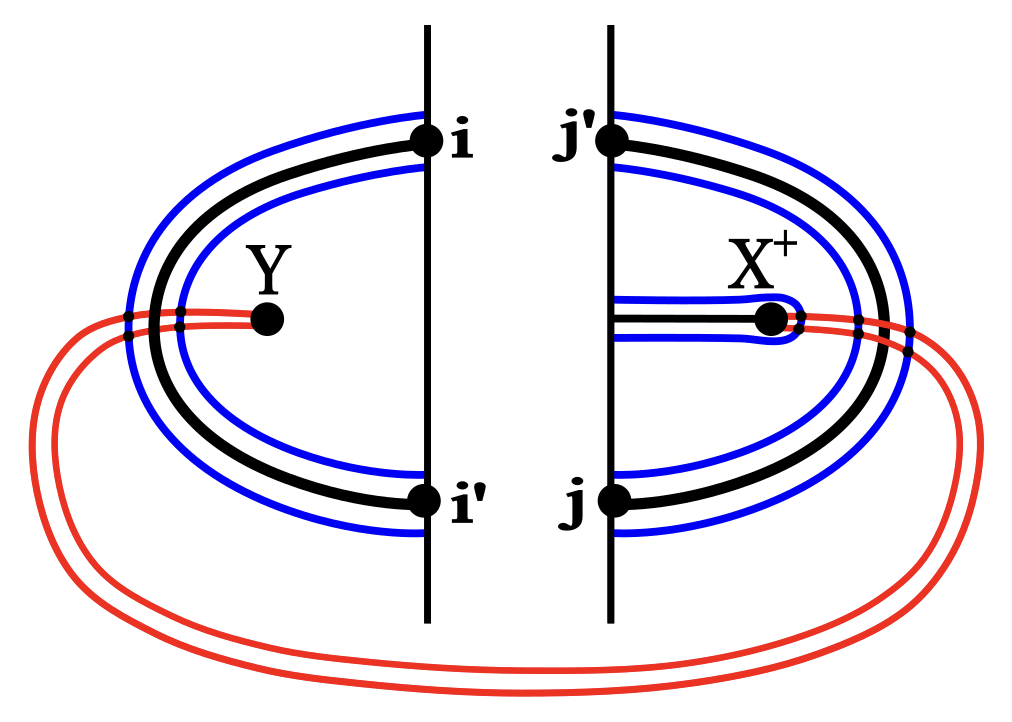}}}
\]

For $\tau_{(u-v)/v}$, we only have the intersections $\xi_i\in \mathfrak{Y}$ and  $\xi_j,\xi_{j'}\in \mathfrak{Z}$ contributing to $\widetilde{Q}_{(u-v)/v}^T$. The corresponding $3\times 3$ submatrix of $\widetilde{Q}_{(u-v)/v}^T$ and the $4\times 4$ submatrix of $\widetilde{Q}_{u/v}$ are related by applying $\text{Cl}(T-)$, which gives
\begin{equation}
\begin{blockarray}{cccc}
    & i & j & j' \\
    \begin{block}{c[ccc]}
        i & X & A+2 & A\\
        j & A+2 & Y+2 & Y\\
        j' & A & Y & Y-1\\
    \end{block}
\end{blockarray}
\xrightarrow{Cl(T-)}
\begin{blockarray}{ccccc}
    & i & i' & j & j' \\
    \begin{block}{c[cccc]}
        i & X+2 & X & A+2 & A \\
        i' & X & X-1 & A+1 & A-1\\
        j & A+2 & A+1 & Y+2 & Y\\
        j' & A & A-1 & Y & Y-1\\
    \end{block}
\end{blockarray}
\end{equation}
by Lemma \ref{WSclosureformula}. Once again, we can invoke Lemma \ref{4x4blocklem} to compare the matrix on the right with the corresponding $4\times 4$ submatrix of $Q_{u/v}^K$ with rows and columns indexed by $\kappa_i,\kappa_i',\kappa_j,$ and $\kappa_j'$. In particular, it should look the way it is shown if $\xi_i\prec \xi_j$, and the $A$ and $A+1$ terms should be swapped if $\xi_i\succ \xi_j$. This confirms that the permutations we need are indeed of the form $(i,j)$ for $\xi_i$ active, $\xi_j$ inactive and $\xi_j\prec \xi_i$.

Now that we know which permutations to apply, we need to show that there is a sequence of them permitted by Theorem \ref{permthm}. In other words, at each step of the sequence, in order to apply $(i,j)$, we need
\begin{equation}
\label{Qswapequation}
Q_{jk}+Q_{i'k}=Q_{j'k}+Q_{ik}-\delta_{kj'}-\delta_{ki}
\end{equation}
for each $k$, since $Q_{i'j}=Q_{ij'}-1$ when computed by our Theorem \ref{Knotsthm} and we are applying the permutation to make the $4\times 4$ block agree with the one on the right above. 

By a similar argument to the one in the proof of Lemma \ref{tQmswap}, one sees that the order of permutations can be determined by a rule similar to the lexicographical order: the permutation $(i,k)$ should be performed before $(j,l)$ if $i>j$, and if $i=j$, then $k<l$. This can be proved by considering the $8\times 8$ submatrix of $\widetilde{Q}_{u/v}$ with rows and columns indexed by $\xi_i,\xi_i',\xi_j,\xi_j',\xi_k,\xi_k',\xi_l,$ and $\xi_l'$ where the first four points are active intersections and the last four are inactive intersections (so $i,j,k,$ and $l$ are all distinct). We assume that we need to apply the permutations $(i,k)$ and $(j,l)$, with $\xi_i\prec\xi_j$ and $\xi_k\prec \xi_l$. In this case, we must have the permutation $(j,k)$ as well, and we may or may not need the $(i,l)$ permutation. By Lemma \ref{4x4blocklem}, this $8\times 8$ matrix must have the form 
\begin{equation}
    \begin{blockarray}{ccccccccc}
    & i & i' & j & j' & k & k' & l & l' \\
    \begin{block}{c[cccccccc]}
        i & W+2 & W & A+2 & \underline{A} & B+2 & \underline{B} & C+2 & \underline{C}\\
        i' & W & W-1 & \underline{A+1} & A-1 & \underline{B+1} & B-1 & \underline{C+1} & C-1\\
        j & A+2 & \underline{A+1} & X+2 & X & D+2 & \underline{D} & E+2 & \underline{E}\\
        j' & \underline{A} & A-1 & X & X-1 & \underline{D+1} & D-1 & \underline{E+1} & E-1\\
        k & B+2 & \underline{B+1} & D+2 & \underline{D+1} & Y+2 & Y & F+2 & \underline{F}\\
        k' & \underline{B} & B-1 & \underline{D} & D-1 & Y & Y-1 & \underline{F+1} & F-1\\
        l & C+2 & \underline{C+1} & E+2 & \underline{E+1} & F+2 & \underline{F+1} & Z+2 & Z\\
        l' & \underline{C} & C-1 & \underline{E} & E-1 & \underline{F} & F-1 & Z & Z-1\\
    \end{block}
\end{blockarray}
\end{equation}
with all values in $\mathbb{Z}$. To consider the most general case, we assume that all four permutations must be applied, and the entries being permuted are underlined, as usual. Using the same notation as in the proof of Lemma \ref{tQmswap} for indicating the order of permutations, one can easily compute that 
\[
\begin{tikzcd}[sep=small]
(j,k) \arrow[r]\arrow[d] & (i,k) \arrow[d]\\
(j,l) \arrow[r] & (i,l)\\
\end{tikzcd}
\]
is required in order to guarantee that (\ref{Qswapequation}), with the appropriate indices, holds at each step. We have the freedom to choose whether we do $(j,l)$ or $(i,k)$ first, and we can determine the ordering specified above by choosing to do $(j,l)$ first.

The lemma follows since we have shown that the entries of $\widetilde{Q}_{u/v}$ can be permuted to agree with $Q_{u/v}^K$, so $Q_{u/v}\sim\widetilde{Q}_{u/v}\sim Q_{u/v}^K$. This also concludes the proof of Theorem \ref{Knotsthm}.
\end{proof}

To finish the proof of Theorem \ref{Knotsthm}, we must now consider the correction terms that we need to guarantee
\[
S_i-Q_{ii}-2A_i=\sigma(K_{u/v})
\]
for all $1\leq i \leq u$.

\subsection{Step 3: The Correction Terms}
\label{cortermsec}


The correction terms in Theorem \ref{Knotsthm} are computed by considering the inductive procedure for constructing $K_{u/v}$, i.e. the twists in $K_{u/v}=\text{Cl}(T^{a_{2k+1}}R^{a_{2k}}...R^{a_2}T^{a_1}\tau_{0/1})$. In particular, we will need to count the number of twists of type $TX$ and $RX$ for $X\in\{UP,OP,RI\}$. 

In \cite{SW21}, Sto\v si\'c and Wedrich showed how to put $P(K_{u/v})$ in quiver form by first studying how the twist operations affect $S,A,$ and $Q$ for $P(\tau_{u'/v'})$ at each step of the inductive procedure, and then applying $\text{Cl}(T-)$ after reaching $P(\tau_{(u-v)/v})$. Importantly, they used the appropriately scaled twist rules to guarantee that one ends with 
\[
S_i-Q_{ii}-2A_i=\sigma(K_{u/v}).
\]
The first thing we need to do is compute the twist rules at the matrix level, using Theorem \ref{Knotsthm}. More precisely, if $P(\tau_{u'/v'})$ is represented by the data $X, [K|S|A], Q$, then we need to determine how this data changes when applying $T$ or $R$ to $\tau_{u'/v'}$. 

Before doing this, we must consider one additional detail. The formulas in Proposition \ref{tangleHOMFLYpoly} and \ref{indpartprop} rely upon whether $\xi_\omega$ is active or inactive. If a tangle has $UP$ orientation, we know $\xi_\omega$ is inactive, and if it has $RI$ orientation, we know $\xi_\omega$ must be active. However, if a tangle has $OP$ orientation, then $\xi_\omega$ could be active or inactive depending on whether $u> v$ or $u<v$. 

\begin{definitionN}
    We say $\tau_{u/v}$ has $OP^+$ orientation if it has $OP$ orientation and $\xi_\omega$ is active (i.e. $u>v$). Analogously, $\tau_{u/v}$ has $OP^-$ orientation if it has $OP$ orientation and $\xi_\omega$ is inactive (i.e. $u<v$).
\end{definitionN}

Thus, in the lemma below, we need consider the (matrix-level) twist rules for $TUP$, $TRI$, $TOP^+$, $TOP^-$, $RUP$, $RRI$, $ROP^+,$ and $ROP^-$. We already know how to compute everything correctly, up to some shift, and the shift in $Q$ is determined by the diagonal, so we only need to work with the linear forms.

\begin{lemmaN}
\label{linformtwists}
    If $P(\tau_{u'/v'})$ is put in (almost) quiver form and is given by the orientation and linear form data 
    \[
X, 
    \left[\begin{array}{ c| c | c | c  }
    K_+ & S_+ & A_+ & Q_+\\
    \hline
    K_- & S_- & A_- & Q_- 
  \end{array}\right],
\]
then the effects of applying $T$ and $R$, according to Propsition \ref{tangleHOMFLYpoly} or \ref{indpartprop}, depend on $X$ by the following rules.

\begin{align*}
&TUP, 
    \left[\begin{array}{ c| c | c | c}
    K_+ & S_++1 & A_+ & Q_++1\\
    \hline
    K_- & S_-+1 & A_- & Q_-\\
    \hline
    K_- & S_- & A_- & Q_-
  \end{array}\right]
  \qquad
  &TOP^+, 
    \left[\begin{array}{ c| c | c | c}
    K_+ & S_+ & A_+ & Q_+\\
    \hline
    K_- & S_- & A_--1 & Q_-+1\\
    \hline
    K_- & S_--1 & A_--1 & Q_-+1
  \end{array}\right] 
\\
 &TRI, 
    \left[\begin{array}{ c| c | c | c}
    K_+ & S_+ & A_+ & Q_+\\
    \hline
    K_- & S_- & A_- & Q_--1\\
    \hline
    K_- & S_--1 & A_--1 & Q_-+1
  \end{array}\right] 
  \qquad
 &TOP^-, 
    \left[\begin{array}{ c| c | c | c}
    K_+ & S_+ & A_++1 & Q_+-1\\
    \hline
    K_- & S_- & A_- & Q_-\\
    \hline
    K_- & S_--1 & A_- & Q_-
  \end{array}\right] 
  \end{align*}
  \begin{align*}
  &RUP, 
    \left[\begin{array}{ c| c | c | c}
    K_+ & S_++1 & A_++1 & Q_+-1\\
    \hline
    K_+ & S_+ & A_+ & Q_++1\\
    \hline
    K_- & S_- & A_- & Q_-
  \end{array}\right]
  \qquad
  &ROP^+, 
    \left[\begin{array}{ c| c | c | c}
    K_+ & S_++1 & A_+ & Q_+\\
    \hline
    K_+ & S_+ & A_+ & Q_+\\
    \hline
    K_- & S_- & A_--1 & Q_-+1
  \end{array}\right] 
\\
 &RRI, 
    \left[\begin{array}{ c| c | c | c}
    K_+ & S_++1 & A_+ & Q_++1\\
    \hline
    K_+ & S_+ & A_+ & Q_++1\\
    \hline
    K_- & S_- & A_- & Q_-
  \end{array}\right] 
  \qquad
 &ROP^-, 
    \left[\begin{array}{ c| c | c | c}
    K_+ & S_++1& A_++1 & Q_+-1\\
    \hline
    K_+ & S_+ & A_++1 & Q_+\\
    \hline
    K_- & S_- & A_- & Q_-
  \end{array}\right]  
\end{align*}
\end{lemmaN}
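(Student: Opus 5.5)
We would prove Lemma \ref{linformtwists} by computing, for each twist type, how the formulas of Proposition \ref{tangleHOMFLYpoly} (restricted as in Proposition \ref{indpartprop}) change when passing from $\mathcal{D}(\tau_{u'/v'})$ to the twisted diagram. The two tools are the inductive construction of Lemma \ref{twistfx} and the state diagram of Lemma \ref{twistdiagram}. Lemma \ref{twistfx} identifies new intersection points with old ones. Lemma \ref{twistdiagram} records the new middle puncture $Z$ and the new orientation, and hence whether $\xi_\omega$ is active or inactive.

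Under a top twist $l_A$ is bent towards $l_I$, so every inactive intersection survives unchanged on $l_I$. It also produces a copy on $l_A$, giving the second and third blocks labelled $K_-$. The active intersections persist as the first block. For right twists the roles are reversed: the active block is duplicated into two $K_+$ blocks. Thus in every case the $K$ column and the block structure are forced. The content of the lemma is the shifts in $S$, $A$ and $Q_{ii}$.

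\textbf{Step 1: winding numbers.} For each of the eight cases $TUP,TOP^\pm,TRI,RUP,ROP^\pm,RRI$, fix an intersection $\xi_i$ in each block and compare the new loop $\gamma_i^T$ with the old one. As in the proof of Lemma \ref{knotslinforms}, the two loops run parallel except near their endpoints and near the bent axis. Hence
\[
\Psi_V([\gamma_i^{\mathrm{new}}])-\Psi_V([\gamma_i^{\mathrm{old}}])\in\{-1,0,1\}, \qquad V\in\{X^+,X^-,Y\},
\]
and each difference is read off from a local picture. The picture is the one used in the proof of Lemma \ref{twistfx}, together with the $\xi_\omega$ position: the point just before $X^+$.

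\textbf{Step 2: correction terms.} Combine these winding-number differences with the change in the correction terms $\delta_{i,A}\delta_{\omega,I}-\delta_{i,I}\delta_{\omega,A}$ and $\delta_{Z,X^+}(\cdots)$. These change because $Z$ and the activity of $\xi_\omega$ change according to Lemma \ref{twistdiagram}. This is exactly why $OP$ must be split into $OP^+$ and $OP^-$.

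The diagonal $Q_{ii}$ follows from the same data via $(\Psi_{\{X^-,Y\}}-3\Psi_{X^+})$. The off-diagonal entries of $Q$ need no separate treatment: by the formula $Q_{ij}=Q_{ii}+(\cdots)$ their shift is governed by the diagonal, as noted before the statement. As a consistency check in each case, compare with the substitution $t=-1$ into the twist rules (1)--(6) of Section \ref{Cpolysec}; that is, compare with the block transformations of \cite{SW21} up to an overall framing shift.

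\textbf{Main obstacle.} We expect the main difficulty to be the bookkeeping for $\xi_\omega$ and $Z$ in the $OP^\pm$ cases. There the twist can move which intersection is maximal with respect to $\prec$, and can change whether it lies on $l_A$ or $l_I$. This alters the $\delta$ corrections differently on different blocks. We would handle it first by drawing the local model near $X^+$ for each of the four $OP$ cases, determining $\xi_\omega$ before and after the twist, and only then doing the winding counts.

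The remaining cases are symmetric under the reflection exchanging $T$ and $R$ (which swaps $l_A$ and $l_I$). So we would fully treat $TUP$, $TOP^+$ and $TOP^-$, and obtain $RRI$, $ROP^-$ and $ROP^+$ by that symmetry, checking signs against the listed rules.
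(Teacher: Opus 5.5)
Your Steps 1 and 2 are the paper's proof. The paper carries it out only for $TUP$: old active points gain $\Psi_Z$-shift $+1$, old inactive points and their new active copies keep every $\Psi_V$, and the $S_-+1$ comes from the new $\delta_{i,A}\delta_{\omega,I}$ term. The other seven cases are declared ``similar.''

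What you add is the reflection shortcut for the $R$-cases, which the paper does not use. It works, provided you take the reflection that keeps the labels $X^\pm,Y$, not the relabelling $\tau_{v/u}\leftrightarrow\tau_{u/v}$ of Lemma \ref{bridgerecipe}. This reflection reverses the order of the punctures, so it sends $UP\leftrightarrow RI$ and $OP^\pm\leftrightarrow OP^\mp$, and it is an automorphism of the state graph of Lemma \ref{twistdiagram} exchanging $T$ and $R$. Because it reverses orientation, every $\Psi_V$ changes sign, and $\delta_{i,A}\delta_{\omega,I}-\delta_{i,I}\delta_{\omega,A}$ changes sign under $A\leftrightarrow I$; meanwhile $\xi_\omega$ and $\delta_{Z,X^+}$ are preserved. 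So $S$, $A$ and $Q_{ii}$ all negate, and each $R$-rule is the \emph{negative} of the corresponding $T$-rule with active and inactive blocks interchanged. Your pairing $TUP\leftrightarrow RRI$, $TOP^\pm\leftrightarrow ROP^\mp$ is right, but you left out $TRI\leftrightarrow RUP$.

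Expect your planned check against the printed table to succeed only in part. Negating $TRI$ and $TOP^-$ reproduces $RUP$ and $ROP^+$ exactly. The other two pairs do not match:
\begin{itemize}
\item Negated $TUP$ differs from the printed $RRI$ by a uniform shift of $(S,A,Q)$ by $(1,0,1)$. The printed $RRI$ cannot hold literally. An $R$-twist leaves $l_A$ and the arc near $X^+$ (the left puncture in $RI$ states) untouched, so after $RRI$ the point $\xi_\omega$ is the old one with unchanged loop. Yet both $K_+$ blocks shift $Q$ by $+1$, contradicting $Q_{\omega\omega}=0$.
\item Negated $TOP^+$ gives $Q_+-1$ where the printed second block of $ROP^-$ has $Q_+$. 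The printed entry violates the identity $S_i-Q_{ii}-2A_i=\delta_{i,A}\delta_{\omega,I}-\delta_{i,I}\delta_{\omega,A}$, which follows at once from Proposition \ref{tangleHOMFLYpoly}.
\end{itemize}
In both cases it is the table, not your symmetry argument, that is off.

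Similarly, your sanity check against \cite{SW21} will not hold ``up to an overall framing shift.'' The two sets of rules differ by the twist-dependent uniform shifts of Lemma \ref{linformshifts}, for instance $(-1,0,0)$ in $(S,A,Q)$ for $TUP$. A framing shift has the form $(\mp1,\mp1,\pm1)$, so these are not framing shifts.
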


\begin{proof}
    We prove the lemma for the $TUP$ case. The proofs for the remaining cases are similar.
    
    Suppose $\xi_i$ is an intersection point for $\tau_{u'/v'}$ such that $S,A,$ and the diagonal of $Q$ are computed in terms of the loop $\gamma_i:[0,1]\to M$. If $\xi_i$ is active, then applying the top twist $T$ takes $\xi_i$ to a new active intersection point $\xi_i'$ for $\tau_{(u'+v')/v'}$, and we easily see that $\Psi_V([\gamma_{i'}])-\Psi_V([\gamma_i])=1$ if $V=Z$, the middle of the three points, which is either $X^-$ or $Y$, and it is $0$ otherwise. If $\xi_i$ was inactive, then applying $T$ results in $\xi_i$ doubling to $\xi_i'$ and $\xi_i''$ for $\tau_{(u'+v')/v'}$, where $\xi_i'$ is now active and $\xi_i''$ is still inactive (we can think of $\xi_i''$ as being the same point as $\xi_i$). In this case, we have $\Psi_V([\gamma_{i'}])-\Psi_V([\gamma_i])$ and $\Psi_V([\gamma_{i''}])-\Psi_V([\gamma_i])$ are $0$ for all $V\in\{X^+,X^-,Y\}$. The picture below helps illustrate what we have just discussed.
    \[
    \begin{tikzpicture}
        \node at (0,0) {\includegraphics[height=2cm]{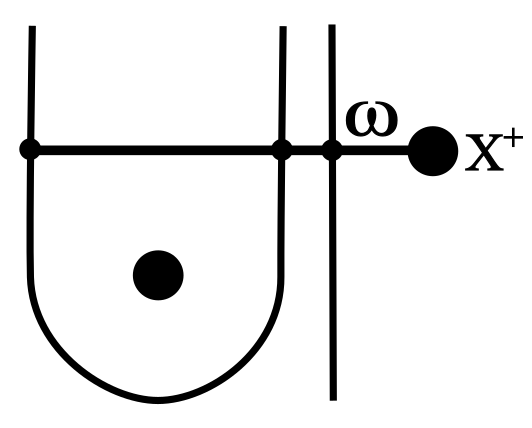}};
        \node at (2,0) {$\longrightarrow$};
        \node at (4,0) {\includegraphics[height=2cm]{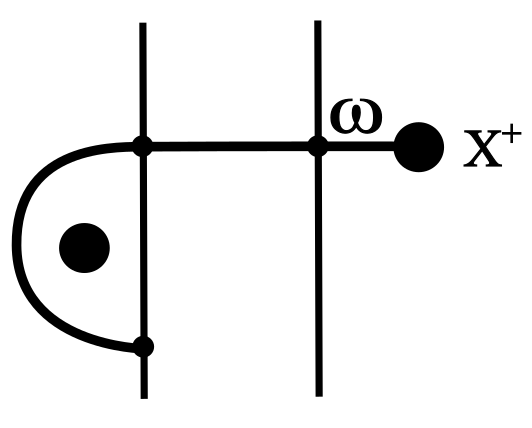}};
    \end{tikzpicture}
    \]
    Finally, due to the $\delta_{i,A}\delta_{\omega,I}$ term in $S_i$, we get the $+1$ in the $S_-+1$ block for the new active intersections obtained from the inactive points of $\tau_{u'/v'}$ after applying $T$. 
   
\end{proof}

These twist rule formulas can be compared directly with Proposition 3.5 from \cite{SW21}. This leads to the following lemma.

\begin{lemmaN}
\label{linformshifts}
    Suppose $P(\tau_{(u-v)/v})$ has been put in quiver form in two ways: the first, by following Proposition 3.5 of \cite{SW21} with linear form data $[K',S',A',Q']$, and the second following Proposition \ref{indpartprop} with linear form data $[K,S,A,Q]$. Then, up to some reordering of $\mathcal{G}_{u/v}^T$,
    \begin{align*}
        K'&=K\\
        S'&= S+\#\{TOP^\pm,TRI\}-\#\{TUP,RUP,ROP^\pm\}\\
        A' &= A+\#\{TRI,TOP^+\}-\#\{ROP^-,RUP\}\\
        Q' &= Q+\#\{RUP,ROP^-\}-\#\{TOP^+,TRI,RRI\}.
    \end{align*}
\end{lemmaN}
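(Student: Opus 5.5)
We argue by induction on the length of the twist sequence $T^{a_{2k+1}}R^{a_{2k}}\cdots T^{a_1}\tau_{0/1}$ that produces $\tau_{(u-v)/v}$. Both quiver forms are built step by step from the same starting data for $\tau_{0/1}$. For $\tau_{0/1}=UP[j,0]$ there is a single inactive point. Proposition~\ref{indpartprop} gives $S=A=Q=0$ there, and this should agree with the base case of Proposition 3.5 of \cite{SW21} after the standard normalization, so the claimed shifts vanish when no twists have been applied. The inductive step compares, for each of the eight twist types $TUP,TRI,TOP^\pm,RUP,RRI,ROP^\pm$, the matrix-level rule of Lemma~\ref{linformtwists} with the corresponding rule in Proposition 3.5 of \cite{SW21}. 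We would show that the two rules differ by a uniform shift of every entry of $S$, $A$ and the diagonal of $Q$, and that this shift is exactly the contribution of that twist type in the displayed formulas. For example, a $TUP$ twist should contribute $-1$ to $S$ and nothing to $A$ or $Q$. Summing these per-step differences over the twist sequence then gives the stated counts.

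The key points to check at each step are as follows. First, the block structures agree. Both rules double the inactive block, or the active block for $R$, in the same way, so the $K$ column is preserved and the blocks can be matched. This matching is what the phrase ``up to reordering'' accounts for. Second, the entrywise differences between the two rules are constant across blocks. If the rule of \cite{SW21} has, for instance, $TUP$ with $S_+$, $S_-$, $S_-$ blocks becoming $S_+$, $S_-$, $S_--1$, while ours becomes $S_++1$, $S_-+1$, $S_-$, then the difference is a uniform $-1$. A shift that is uniform across the whole vector stays uniform under later steps, because each rule acts by adding block-constants and duplicating blocks. So by induction the discrepancy remains an overall scalar shift, and these shifts accumulate additively.

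For the off-diagonal entries of $Q$ nothing extra is needed. In both constructions $Q_{ij}-Q_{ii}$ is determined by the same data: geometrically via $\gamma_{j,i}$, and in \cite{SW21} via the same twist rules applied off the diagonal. Hence a uniform shift of the diagonal forces the same uniform shift of the whole matrix. Only the linear data therefore needs tracking, as the paper already indicates before Lemma~\ref{linformtwists}.

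The main obstacle is the bookkeeping in the $OP$ cases. The rules in \cite{SW21} do not distinguish $OP^+$ from $OP^-$, whereas ours do, through the $\delta_{\omega,A}$ and $\delta_{\omega,I}$ correction terms and $\delta_{Z,X^+}$. We must therefore verify separately that $TOP^+$ and $TOP^-$ both give an $S$-shift of $+1$ but different $A$- and $Q$-shifts, and similarly for $ROP^\pm$. We would first write out the rules of \cite{SW21} in the same $[K|S|A|Q]$ block notation, including their framing normalization, and then subtract the two rules case by case. We would also confirm that the framing conventions (the $\sim$ of Lemma~\ref{WSclosureformulas}) do not introduce a $j$-dependent shift at the tangle level, since such a shift would break the constancy needed for the induction.
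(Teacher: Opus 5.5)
Your proposal is correct and is essentially the paper's argument. The paper proves the lemma simply by comparing the matrix-level twist rules of Lemma \ref{linformtwists} with those of Proposition 3.5 in \cite{SW21}, and your induction on the twist sequence, with per-twist uniform shifts accumulating additively, is that comparison made explicit.

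One small correction to your last worry. A discrepancy depending only on $j$, such as a framing factor $(-q)^{\mp j}a^{\mp j}q^{\pm j^2}$, is exactly a uniform shift of $(S,A,Q)$ and so is harmless. Indeed, every per-twist contribution in the statement is such a $j$-only factor; the $TRI$, $TOP^+$, $RUP$ and $ROP^-$ contributions are precisely framing factors of this form. What you actually need to rule out is a discrepancy depending on $k=K\cdot\mathbf{d}$, i.e. one that differs between blocks.
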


Naturally, the notation in the lemma indicates the number of twists of certain types used to build $\tau_{(u-v)/v}$. Also, we are using $Q$ and $Q'$ above to denote the diagonal of the matrices representing the quadratic forms, but we know that shifting the diagonal shifts the whole matrix.

\begin{proof}[Proof of Lemma \ref{linformshifts}]
    This can easily be shown by comparing Lemma \ref{linformtwists} with Proposition 3.5 in \cite{SW21}.
\end{proof}

Propositions \ref{tangleHOMFLYpoly} and \ref{indpartprop} guarantee that $S_\omega=A_\omega=Q_{\omega\omega}=0$, so we get
\begin{align*}
S_\omega'-Q'_{\omega\omega}-2A_\omega'=\#\{TOP^-,RRI\}-\#\{TUP,ROP^+\}
\end{align*}
for $\tau_{(u-v)/v}.$ If we call this value $\delta$, we have that $S_i'-Q_{ii}'-2A_i'=\delta$ for all $\xi_i$ on the same (in)active axis as $\xi_\omega$, and the intersection points on the opposite axis give $S_i'-Q_{ii}'-2A_i'=\delta\pm1$, depending on whether $\xi_\omega$ is active or inactive. However, it can easily be checked that applying $\text{Cl}(T-)$ by Lemma \ref{knotslinforms} results in $S_i'-Q_{ii}'-2A_i'=\delta$ for all $\kappa_i$ if we shift $S,A,$ and $Q$ for $K_{u/v}$ by the same amounts as $S,A,$ and $Q$ for $\tau_{(u-v)/v}$ in Lemma \ref{linformshifts}. This inspires the following.

\begin{definitionN}
\label{correctiontermsdef}
    We define the three correction terms in Theorem \ref{Knotsthm} as follows:
    \begin{align}
    &\mu_1(K_{u/v})=\#\{TOP^\pm,TRI\}-\#\{TUP,RUP,ROP^\pm\}\\
    &\mu_2(K_{u/v})=\#\{TRI,TOP^+\}-\#\{ROP^-,RUP\}\\
    &\mu_3(K_{u/v})=\#\{RUP,ROP^-\}-\#\{TOP^+,TRI,RRI\},
\end{align}
where these formulas are counting the twists used to build $\tau_{(u-v)/v}$, not $\tau_{u/v}$.
\end{definitionN}

Now, we drop the primed notation and assume we are working with the linear forms of Theorem \ref{Knotsthm} shifted by these amounts so that 
\[
S_i-Q_{ii}-2A_i=\#\{TOP^-,RRI\}-\#\{TUP,ROP^+\}.
\]
Thus, it remains to show the following lemma.

\begin{lemmaN}
    For a rational knot $K_{u/v}=\text{Cl}(T\tau_{(u-v)/v})$, 
    \[
    \sigma(K_{u/v})=\#\{TOP^-,RRI\}-\#\{TUP,ROP^+\},
    \]
    where we are counting the twists for $\tau_{(u-v)/v}.$
\end{lemmaN}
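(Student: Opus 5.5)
We would avoid computing the signature directly. Instead we use the result of Sto\v si\'c and Wedrich in \cite{SW21}, which the paper already relies on through Equation (\ref{deltaeq2}). Their twist rules in Proposition 3.5 of \cite{SW21} are normalized so that, after applying $\text{Cl}(T-)$ to $P(\tau_{(u-v)/v})$, the resulting quiver data satisfy $S_i-Q_{ii}-2A_i=\sigma(K_{u/v})$ for every vertex. This normalization is forced by Rasmussen's $\delta$-graded result (\ref{deltaeq1}) together with the identification of homological degrees with $-Q_{ii}$.

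The plan is to run their data through our comparison. Lemma \ref{linformshifts} identifies their tangle data $[K',S',A',Q']$ for $\tau_{(u-v)/v}$ with ours, shifted by the twist counts. Our data has $S_\omega=A_\omega=Q_{\omega\omega}=0$, so their $\delta$-value at $\xi_\omega$ is
\[
\delta=\#\{TOP^-,RRI\}-\#\{TUP,ROP^+\}.
\]
We then check that the closure step in \cite{SW21} is exactly the operation of Lemma \ref{WSclosureformula}, equivalently Lemma \ref{knotslinforms}. In the $UP$ case the new blocks have values $(S_++1)-(Q_++2)-2A_+$ and $(S_++2)-(Q_+-1)-2(A_++2)$. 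These are $\delta_+ -1$ and $\delta_+ -1$, while the $\mathfrak{C}$ block keeps $\delta_-=\delta$. Since $\xi_\omega$ is inactive here, the active points have $\delta_+=\delta+1$, so all three blocks equal $\delta$. The $RI$ case works the same way with the roles of the blocks reversed. Hence the uniform value of $S_i-Q_{ii}-2A_i$ after closure is $\delta$.

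The remaining step, and the main obstacle, is to confirm that this common value is $\sigma(K_{u/v})$ with the framing conventions used here. Lemma \ref{WSclosureformulas} holds only up to framing shifts. A framing shift multiplies by $(-q)^{\mp j}a^{\mp j}q^{\pm j^2}$, which changes $S_i$ by $\mp1$, $A_i$ by $\mp1$ and $Q_{ii}$ by $\pm1$. The net change in $S_i-Q_{ii}-2A_i$ is $\mp1\mp1\pm2=0$, so framing ambiguity does not affect $\delta$. The sign and orientation conventions also need to match those of \cite{SW21} and \cite{R15}.

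As a safeguard, we would check the formula independently. First, verify the base cases $K_{3/1}$ (trefoil, $\tau_{2/1}=T^2\tau_{0/1}$, two $TUP$ twists, giving $\delta=-2$, the trefoil signature in the positive convention) and $K_{5/2}$ (figure-eight, signature $0$). Then argue by induction on the continued fraction: each twist changes the signature of the closure by the amount prescribed by the Gordon--Litherland/Goeritz-style crossing count, and these increments match the $\pm1$ contributions listed above. If this check is needed, it is where most of the work lies.
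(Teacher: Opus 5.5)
Your main argument is sound, but it runs in the opposite direction from the paper's.

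You take as input the \emph{end result} of \cite{SW21}: that their normalized knot data satisfy $S_i-Q_{ii}-2A_i=\sigma(K_{u/v})$. You then transport this back through Lemma \ref{linformshifts}. The remaining ingredients are all correct:
the vanishing $S_\omega=A_\omega=Q_{\omega\omega}=0$;
your block check under $\text{Cl}(T-)$, which uses that for the tangle $S_i-Q_{ii}-2A_i=\delta_{i,A}\delta_{\omega,I}-\delta_{i,I}\delta_{\omega,A}$ when $Z=X^-$;
and your observation that a framing shift leaves $S_i-Q_{ii}-2A_i$ unchanged.
With these, the lemma follows formally.

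The paper instead takes as input only the topological formula $\sigma(K_{u/v})=1+\#RRI-\#TUP$, counted on $\tau_{u/v}$. It then proves the identity combinatorially on the state hexagon of Lemma \ref{twistdiagram}. There, $TOP^-$ corresponds to $(TR)UP$ and $ROP^+$ to $(RT)RI$, which are exactly the transitions between the $UP$ and $RI$ levels. Along the twist path of $\tau_{(u-v)/v}$ these transitions cancel in the $UP$ case and net to $+1$ in the $RI$ case. The final $T$ converts counts on $\tau_{u/v}$ into counts on $\tau_{(u-v)/v}$.

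Your route is shorter, but it is a reduction rather than an independent proof. Via Lemma \ref{linformshifts}, the statement you cite is equivalent to the lemma itself, and in \cite{SW21} it is checked against that same signature formula. Your argument also depends on Lemma \ref{linformshifts} matching the normalization of \cite{SW21} exactly. The paper's proof of this lemma uses neither, and it explains where the $TOP^-$ and $ROP^+$ terms and the leading $1$ come from.

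Your fallback induction would not work as described. $TOP^-$ and $ROP^+$ twists are not local signature increments. Moreover, intermediate numerator closures need not be knots, and for $RI$ tangles they are not even defined. The counts only match after the global cancellation along the state path, which is precisely the paper's argument.
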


\begin{proof}
As stated in \cite{SW21}, it can easily be shown using the methods of \cite{GL78} and \cite{QAQ14} that
\begin{equation}
\label{signatureeq}
\sigma(K_{u/v})=1+\#RRI-\#TUP,
\end{equation}
where the formula is counting the twists for the tangle $\tau_{u/v}$, in contrast to $\tau_{(u-v)/v}$.

It is not difficult to see that $TOP^-$ is equivalent to $(TR)UP$ and $ROP^+$ is equivalent to $(RT)RI$. Recall the hexagonal diagram of states from Lemma \ref{twistdiagram}. We can add the vertical edges for $TOP^-$ and $ROP^+$, and we will mark the edges according to whether they contribute a $+1$ or $-1$ according to the expression $\#\{TOP^-,RRI\}-\#\{TUP,ROP^+\}$, as shown below.

\[\begin{tikzcd}[sep=small]
   & \underline{(UP,\,\,\,\, Y\vert X^-\vert X^+)} \arrow[rr,red,"T"]\arrow[ld]\arrow[dd,blue,bend left, "+1"] && (UP,\,\,\,\, X^-\vert Y\vert X^+) \arrow[ll,red,"-1"]\arrow[rd,"R"]\arrow[dd,blue,bend left, "+1"] &\\
   (OP,\,\,\,\, Y\vert X^+\vert X^-) \arrow[ru,"R"]\arrow[rd]&&&& (OP,\,\,\,\,X^-\vert X^+\vert Y) \arrow[lu]\arrow[ld,"T"]\\
    & (RI,\,\,\,\, X^+\vert Y\vert X^-) \arrow[lu,"T"]\arrow[rr,blue,"+1"]\arrow[uu,red,bend left,"-1"] && \underline{(RI,\,\,\,\, X^+\vert X^-\vert Y)} \arrow[ll,blue,"R"]\arrow[ru]\arrow[uu,red,bend left,"-1"] &
\end{tikzcd}\]
We have also underlined the two possible states for $\tau_{(u-v)/v}$, given the conditions of Lemma \ref{ratlinkslem}, such that $\text{Cl}(T\tau_{(u-v)/v})$ gives us $K_{u/v}$. 

First, suppose $\tau_{(u-v)/v}$ has $(UP, Y|X^-|X^+)$ state, which is the same as $\tau_{0/1}$. We can think of the hexagonal graph above as having an ``upstairs'' portion given by the $UP$ states and  ``downstairs'' portion given by the $RI$ states. The sequence of top and right twists used to build $\tau_{(u-v)/v}$ determines a path in the graph, where the only edges contributing to $\#\{TOP^-,RRI\}-\#\{TUP,ROP^+\}$ are the ones within the upstairs and downstairs portions, or the ones passing between the two levels. Because the path associated with $\tau_{(u-v)/v}$ starts and ends at the same state in the upstairs portion, all vertical arrows must cancel. Thus, we have $\delta=\#RRI-\#TUP$, where we are counting the twists used to build $\tau_{(u-v)/v}$. Since $\tau_{u/v}$ is obtained by applying one more twist of type $TUP$ to $\tau_{(u-v)/v}$ and Equation (\ref{signatureeq}) is counting the twists used to build $\tau_{u/v}$, the lemma follows for this case.

The case where $\tau_{(u-v)/v}$ has $(RI, X^+|X^-|Y)$ state is very similar. The main difference is that the $+1$ in Equation (\ref{signatureeq}) now comes from the fact that the path corresponding to $\tau_{(u-v)/v}$ must go downstairs one more time than it goes upstairs.
\end{proof}

We have shown that, once we take the shifted $S,A,$ and $Q$ from Theorem \ref{Knotsthm}, we get that all terms are $\delta$-homogeneous with $\delta=\sigma(K_{u/v})$, as desired. Following Sto\v si\'c and Wedrich in \cite{SW21}, it follows that
\[
\mathcal{P}^{\bigwedge^1}_{K_{u/v}}(q,a,t)=\sum_{i=1}^uq^{S_i+Q_{ii}}a^{A_i}t^{-Q_{ii}}
\]
up to some framing shift, where $\mathcal{P}^{\bigwedge^1}_{K_{u/v}}$ is the Poincar\'e polynomial for the (uncolored) HOMFLY-PT homology of $K_{u/v}$ and $t$ denotes the homological grading. Now, we can rephrase Conjecture \ref{Poincarepolyconj} from the introduction for rational knots as follows.

\begin{conjectureN}
    For $K_{u/v}$ a rational knot with $S,A,$ and $Q$  computed by Theorem \ref{Knotsthm}, we have
    \begin{equation}
        \sum_{j\geq 0} \mathcal{P}^{\bigwedge^j}_{K_{u/v}}(q,a,t)x^j=\sum_{\textbf{d}=(d_1,...,d_u)\in\mathbb{N}^u}q^{S\cdot\textbf{d}+\textbf{d}\cdot Q\cdot\textbf{d}^T}a^{A\cdot\textbf{d}}t^{T\cdot \textbf{d}}{d_1+...+d_u\brack d_1,...,d_u}x^{d_1+...+d_u}
    \end{equation}
    where $\mathcal{P}^{\bigwedge^j}_{K_{u/v}}(q,a,t)$ is the Poincar\'e polynomial for the $\bigwedge^j$-colored HOMFLY-PT homology, and $T=(-Q_{11},...,-Q_{uu})\in \mathbb{Z}^u$.
\end{conjectureN}

\subsection{Example: Rational Torus Knots}
\label{rattorusknots}

The rational torus knots, $K_{(2n+1)/1}=T(2,2n+1)$, are a convenient family of knots to study, and we can provide clean formulas for $Q_{2n+1/1}^K$ with minimal work. In fact, the formula follows immediately from Lemma \ref{WSclosureformula} and Example 6.3 from \cite{JHpI26} because none of the complications with inactive indices occur for this particular family of knots.

In \cite{JHpI26}, it was shown that the generating function data for $P(\tau_{n/1})$ with $n$ even is computed by
\[
    \begin{cases}
        K_i = \delta_{i\leq n/2}\\
        S_i=n-2i+1\\
        A_i=0\\
        \begin{cases}
            Q_{ij}=n-2k, \qquad & \text{if} \,\, (i,j)\in\mathcal{I}_k, \,1 \leq k \leq n/2\\
            Q_{ij}=0,\qquad & \text{if} \,\, (i,j)\in\mathcal{I}_k,\, k=n/2+1,
        \end{cases}
    \end{cases}
    \]
where
\[
\mathcal{I}_k=\{(i,j)\in\mathbb{N}^2 : i=k \, \text{and} \, j\leq k, \text{or} \, j=k \, \text{and} \, i\leq k\}.
\]

\begin{figure}
    \centering
    \includegraphics[height=5cm]{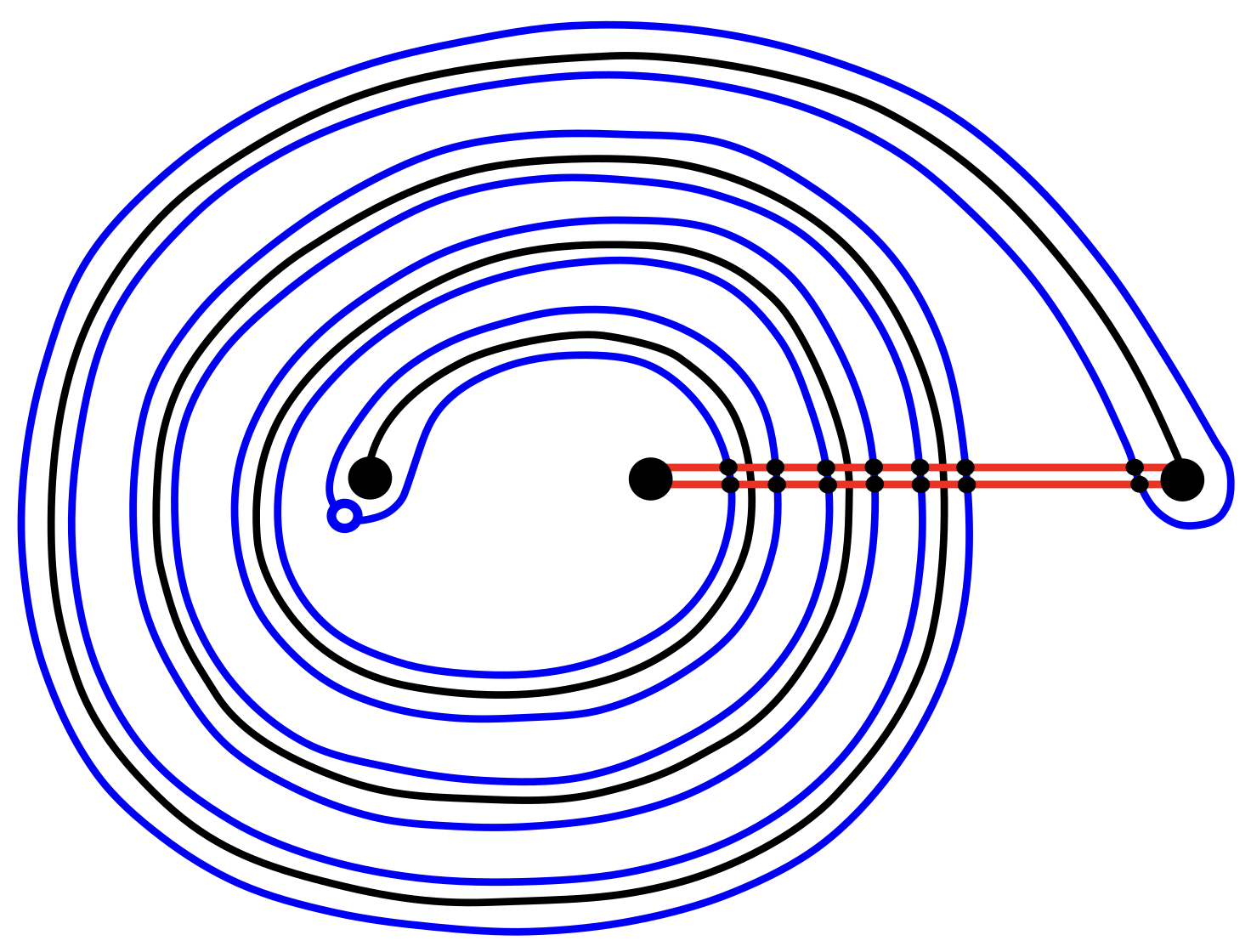}
    \caption{Geometric picture (in $\text{Conf}^2(M)$) for $K_{7/1}=T(2,7)$. Note that the three distinguished points, ordered from left to right, are $X^-,Y,$ and $X^+$.}
    \label{K71fig}
\end{figure}

For example, the data for $\tau_{6/1}$ is
 \begin{figure}[H]
    \centering
\begin{minipage}[b]{.45\textwidth}

$\left[\begin{array}{ c| c | c }
   K&  S & A \end{array}\right] =
  \left[\begin{array}{ c | c | c }
    1 & 5 & 0 \\
    1 & 3 & 0 \\
    1 & 1 & 0 \\
    0 & 0 & 0 \\
   
    \end{array}\right]$
    \end{minipage}
    \begin{minipage}[b]{.45\textwidth}
    $ Q_{6/1}^T=\left[\begin{array}{ c c c | c }
   
    4 & 2 & 0 & 0  \\
    2 & 2 & 0 & 0 \\
    0 & 0 & 0 & 0 \\
    \hline
    0 & 0 & 0 & 0
    \end{array}\right]$.
    \end{minipage}
\end{figure} 
Now, $K_{7/1}=\text{Cl}(T\tau_{6/1})$, and $\mathcal{D}(K_{7/1})$ is shown in Figure \ref{K71fig}. If we take the standard ordering of the $\kappa_i$, we can compute $S,A,$ and $Q_{7/1}^K$ by Theorem \ref{Knotsthm}, which gives the following.

 \begin{figure}[H]
    \centering
\begin{minipage}[b]{.45\textwidth}

$\left[\begin{array}{ c| c }
   S & A \end{array}\right] =
  \left[\begin{array}{ c | c }
    0 & 0 \\
    -2 & 0\\
    -4 & 0\\
    1 & 2\\
    -1 & 2\\
    -3 & 2\\
    \hline
    -6 & 0\\ 
   
    \end{array}\right]$
    \end{minipage}
    \begin{minipage}[b]{.45\textwidth}
    $ Q_{7/1}^K=\left[\begin{array}{ c c c | c c c | c}
   
    6 & 4 & 2 & 4 & 2 & 0 & 0 \\
    4 & 4 & 2 & 3 & 2 & 0 & 0 \\
    2 & 2 & 2 & 1 & 1 & 0 & 0 \\
    \hline
    4 & 3 & 1 & 3 & 1 & -1 & -1\\
    2 & 2 & 1 & 1 & 1 & -1 & -1\\
    0 & 0 & 0 & -1 & -1 & -1 & -1\\
    \hline
    0 & 0 & 0 & -1 & -1 & -1 & 0\\ 
    \end{array}\right]$.
    \end{minipage}
\end{figure} 

In this case, we have $\mu_1(K_{7/1})=-6$ and $\mu_2(K_{7/1})=\mu_3(K_{7/1})=0$. In general, we get $\mu_1(K_{(2n+1)/1})=-2n$ and $\mu_2(K_{(2n+1)/1})=\mu_3(K_{(2n+1)/1})=0$, which gives $\delta=-2n$, the signature of the knot. It should also be observed that $Q_{7/1}^K$ agrees exactly with $Q_{7/1}$, the matrix we get by applying Lemma \ref{WSclosureformula} to $Q_{6/1}^T$. This generalizes for arbitrary rational torus knots.


\begin{propositionN}
\label{ratknotforms}
    For a rational torus knot $K_{(2n+1)/1}$, we have $Q_{(2n+1)/1}^K=Q_{(2n+1)/1}$. 
\end{propositionN}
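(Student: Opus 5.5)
We reduce the claim to the block-by-block comparison already set up in the proof of Theorem \ref{Knotsthm}, using the fact that for $\tau_{2n/1}$ the inactive side is trivial. Here $K_{(2n+1)/1}=\text{Cl}(T\tau_{2n/1})$, and $\tau_{2n/1}=T^{2n}\tau_{0/1}$ has $UP$ orientation. Since $v=1$, there is exactly one inactive intersection point in $\mathcal{D}(\tau_{2n/1})$, and it is necessarily $\xi_\omega$, the arc running from $l_I$ to $X^+$. Under the pairing of intersection points, the block $\mathfrak{Z}$ is therefore the single point $\xi_\omega$, and $\mathfrak{C}=\{\kappa_\omega\}$ is the last basis element in the standard ordering. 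So $Q_{(2n+1)/1}^K$ and $Q_{(2n+1)/1}$ are both $3\times 3$ block matrices, and their $(3,3)$-blocks are $1\times 1$.

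We then treat the blocks in turn. By Lemma \ref{Qplusblockknots}, the upper-left $2\times 2$ blocks of $Q_{(2n+1)/1}^K$ and $Q_{(2n+1)/1}$ agree exactly; no conjugation is needed, since the standard ordering is exactly the ordering used there. By Lemma \ref{knotslinforms}, the diagonals agree, so the $1\times 1$ $(3,3)$-blocks coincide. In particular, no permutation of the kind in Lemma \ref{tQmswap} is required, because there are no pairs $\xi_j,\xi_{j'}$ on the inactive side. Thus $\widetilde{Q}_{(2n+1)/1}=Q_{(2n+1)/1}$, and Lemma \ref{Qmmagree} holds with the identity reordering.

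The only step with real content is the $(1,3)$- and $(2,3)$-blocks and their transposes. Lemma \ref{Qpmperm} shows that, in general, these blocks of $Q_{(2n+1)/1}$ and $Q^K_{(2n+1)/1}$ differ by the transpositions $(i,j)$ with $\xi_i$ active, $\xi_j$ inactive and $\xi_j\prec\xi_i$. Here the only inactive point is $j=\omega$, the $\prec$-maximal element of $\mathcal{G}^T_{2n/1}$. So there is no active $\xi_i$ with $\xi_\omega\prec\xi_i$, the set of required permutations is empty, and the blocks agree on the nose.

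To make this step independent of the sign conventions in the $4\times4$ pattern of Lemma \ref{4x4blocklem}, we would also compute these entries directly. For $\kappa_i\in\mathfrak{A}$, we would check that $(\Phi-2\Psi_{X^+})([\gamma^K_{\omega,i}])=-Q_{ii}+Q_{+-}$. For $\kappa_{i'}\in\mathfrak{B}$, we would check that it equals $Q_{+-}-1-Q_{i'i'}$. Here $Q_{+-}$ is the entry of $Q^T_{2n/1}$ in row $i$ and the $\omega$ column, which is $0$ by the formula recalled above from \cite{JHpI26}. This is the step we expect to be the main obstacle: one has to track $\gamma^K_{\omega,i}$ around $X^+$, with $\Phi$ counting the single crossing of the two strands near $\kappa_\omega$. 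We would first try the untwisting isotopy of Figure \ref{7_3bendfig}, which makes the loop parallel to $\gamma^T_{\omega,i}$ except near $X^+$. As a consistency check, the local contributions should be $\Psi_{X^+}=0,1$ and $\Phi=0,1$ for the unprimed and primed points respectively. Finally, since the entries of $Q^T_{2n/1}$ are explicit, we would compare against the $K_{7/1}$ computation above to confirm that the pattern holds for all $n$.
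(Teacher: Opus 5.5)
Your argument is correct and is the paper's argument. The paper proves this in one line by noting that none of the permutations from the proof of Theorem \ref{Knotsthm} are needed, since $\tau_{2n/1}$ has a single inactive intersection; you spell out why, namely that this point is $\xi_\omega$, which is $\prec$-maximal, so Lemmas \ref{tQmswap} and \ref{Qpmperm} contribute nothing.

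The optional direct check is not needed, and its expected local values are off as written. $\Psi_{X^+}$- and $\Phi$-differences of $0$ and $1$ (unprimed, primed) are what you get for $\gamma^K_{i,\omega}$, i.e.\ when computing $Q_{\omega i}$ from $Q_{\omega\omega}=0$ and using symmetry. For $\gamma^K_{\omega,i}$, the required differences of $(\Phi-2\Psi_{X^+})$ from the tangle loop are instead $-2$ and $0$.
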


Naturally, the linear forms also agree when computed by the two different methods, after shifting $S$ appropriately. Proposition \ref{ratknotforms} follows easily by considering the proof of Theorem \ref{Knotsthm} and observing that we do not need to apply any of the permutations (because $\tau_{2n/1}$ has a single inactive intersection). Furthermore, we will compare our matrix computations with the ones from \cite{K19} in Section \ref{sympolysec}, where we will discuss the symmetric-colored HOMFLY-PT polynomials.



\subsection{Example: The Figure-8 knot, $K_{5/2}$}
\label{K52exsec}

Not only is the figure-8 knot important and interesting to study, but we will consider it here since it provides an example of an $RI$ tangle closure, which mirrors, but is distinct from, the $UP$ tangle closure example discussed explicitly in the proof of Theorem \ref{Knotsthm}.

\begin{figure}[H]
    \centering
    \includegraphics[height=5cm]{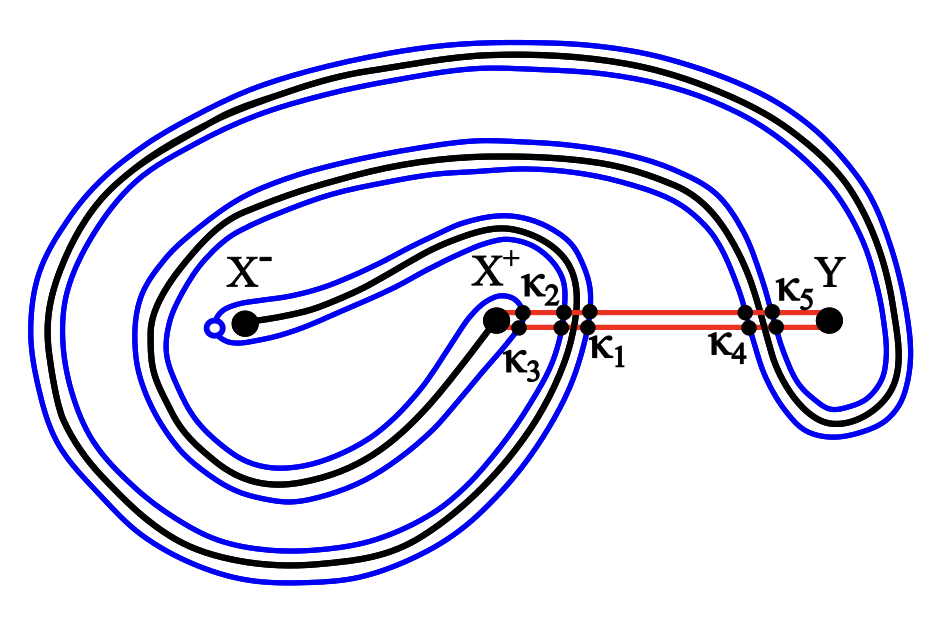}
    \caption{$\mathcal{D}^2(K_{5/2})$}
    \label{K52labelfig}
\end{figure}

Recall that $K_{5/2}=\text{Cl}(T\tau_{3/2})$, so we can compute $S, A,$ and $Q$ for the knot by finding them for $\tau_{3/2}$ by Theorem \ref{tangleHOMFLYpoly} and then applying Lemma \ref{WSclosureformula}. Doing so, one computes

\begin{figure}[h]
    \centering
\begin{minipage}[b]{.45\textwidth}

$\left[\begin{array}{ c| c }
   S & A \end{array}\right] =
  \left[\begin{array}{ c | c }
    2 & 2 \\
    1 & 0\\
    0 & 0\\
    -2 & -2\\
    -1 & 0\\

    \end{array}\right]$
    \end{minipage}
    \begin{minipage}[b]{.45\textwidth}
    $ Q_{5/2}=\left[\begin{array}{ c c c c c}
   
    -2 & -1 & -1 & -2 & -1 \\
    -1 & 1 & 0 & 0 & 1 \\
    -1 & 0 & 0 & -1 & 0\\
    -2 & 0 & -1 & -1 & 0\\
    -1 & 1 & 0 & 0 & 2\\
    
    \end{array}\right]$.
    \end{minipage}
\end{figure}
\noindent
before applying the correction terms. For $K_{5/2}$, we compute $\mu_1(K_{5/2})=\mu_2(K_{5/2})=-1$ and $\mu_3(K_{5/2})=1$, which corresponds to a framing shift. Since we are only concerned with the HOMFLY-PT polynomials up to some framing shift, we will ignore the correction terms for this example.

By Lemma \ref{knotslinforms}, Theorem \ref{Knotsthm} computes the same $S, A,$ and diagonal of $Q_{5/2}^K$, but some of the off-diagonal entries of $Q_{5/2}^K$ are permuted. In particular, Theorem \ref{Knotsthm} gives

 \begin{figure}[h]
    \centering

    \begin{minipage}[b]{.45\textwidth}
    $ Q_{5/2}^K=\left[\begin{array}{ c c c c c}
   
    -2 & -1 & -1 & -2 & \underline{0} \\
    -1 & 1 & 0 & \underline{-1} & 1 \\
    -1 & 0 & 0 & -1 & 0\\
    -2 & \underline{-1} & -1 & -1 & 0\\
    \underline{0} & 0 & 1 & 0 & 2\\
    
    \end{array}\right]$,
    \end{minipage}
\end{figure}
\noindent
where the different permuted entries are underlined. Note that these permutations match with the ones predicted in the proof of Theorem \ref{Knotsthm}. 

Given $Q_{55}=2$, we will show that $Q_{51}=0$ by our formulas. We are using
\begin{equation}
\label{Q15calc}
Q_{51}=Q_{55}+(\Phi-2\Psi_{X^+})([\gamma_{1,5}]);
\end{equation}
the loop $\gamma_{1,5}$ is shown below, along with the corresponding braid diagram with the relevant strands for computing $(\Phi-2\Psi_{X^+})([\gamma_{1,5}])$.

\[
\vcenter{\hbox{\includegraphics[height=4cm,angle=0]{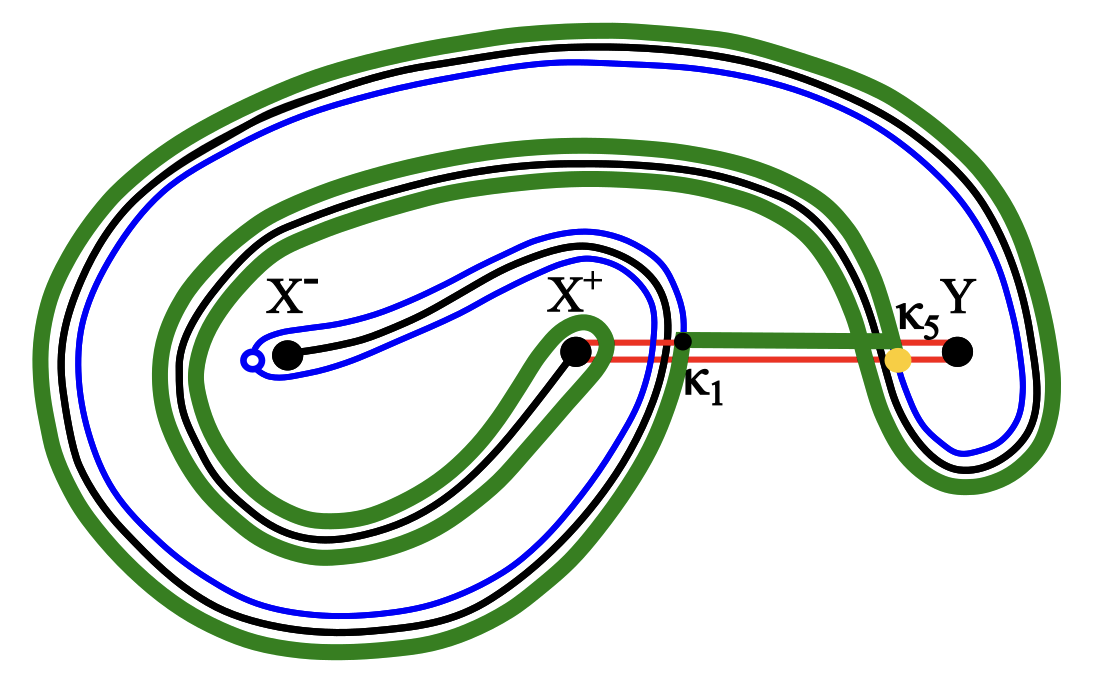}}} \qquad \qquad 
\vcenter{\hbox{\includegraphics[height=4cm,angle=0]{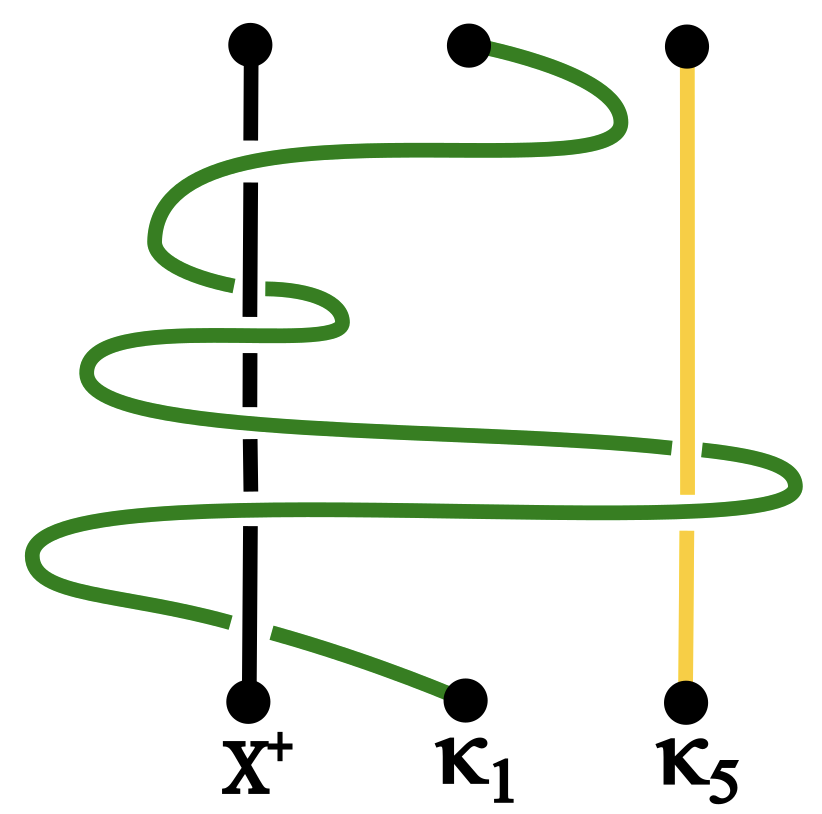}}}
\]

One can easily see now that $\Phi([\gamma_{1,5}])=2$ by looking at the strands corresponding to $\kappa_1$ and $\kappa_5$, and similarly $\Psi_{X^+}([\gamma_{1,5}])=2$ by looking at the strands for $X^+$ and $\kappa_1$. Thus, we get $Q_{51}=0$ from Equation \ref{Q15calc}.

\section{2-component Rational Links}
\label{linkssec}

In this section, we will consider the case where $\text{Cl}(\tau_{u/v})$ is a 2-component link, which occurs when $u$ is even. We will write this as $L_{u/v}=\text{Cl}(\tau_{u/v})$. Although the ``geometric picture'' for the 2-component links is different from the knots, the general structure of this section will be similar to that of the previous one, section \ref{knotssec}, so we will leave out details to avoid too much redundancy, while also emphasizing where things differ.

For the remainder of the paper, we will use ``links'' to designate the 2-component rational links and ``knots'' for the single component rational links.

\subsection{Setting Up the Theorem for Links}

First, we need to carefully describe the geometric picture for rational links. The blue Lagrangian will be defined the same as for knots, and we will continue to denote it by $\overline{\alpha_{u/v}}$. The red Lagrangian, however, differs from the knot case: just as the blue Lagrangian may be thought of as a band tied tightly around $\alpha_{u/v}$, the red Lagrangian for links can be thought of as a band wrapped between $X^-$ and $X^+$. We denote this new red Lagrangian by $\beta'$. As we should expect, there are $2u$ intersection points between $\overline{\alpha_{u/v}}$ and $\beta'$ for $L_{u/v}.$ See Figure \ref{L83}. We will label these intersection points as $\lambda_i$ for $1\leq i \leq 2u$. We will use $\mathcal{G}_{u/v}^L$ to denote this set of intersection points.

\begin{definitionN}
    Given a 2-component rational link $L_{u/v}$, let $\mathcal{D}(L_{u/v})$ be the picture in the 3-punctured plane $M$ consisting of $\alpha_{u/v}, \overline{\alpha_{u/v}},$ and $\beta'$.
\end{definitionN}

One way to see that we get $2u$ intersections is that we can view our link $L_{u/v}$ as $\text{Cl}(T\tau_{(u-v)/v})$ with $\tau_{(u-v)/v}$ having $UP$ or $RI$ orientation, just as we saw for knots; geometrically, we can apply an untwisting isotopy to compare $\mathcal{D}(L_{u/v})$ with $\mathcal{D}(\tau_{(u-v)/v})$, where we can make sense of active and inactive intersection points. Then, we see that we have $2(u-v)$ intersection points coming from the active side and $2v$ from the inactive, giving a total of $2u$. Figure \ref{L83untwistfig} helps illustrate this.

\begin{figure}
    \centering
    \includegraphics[height=7cm]{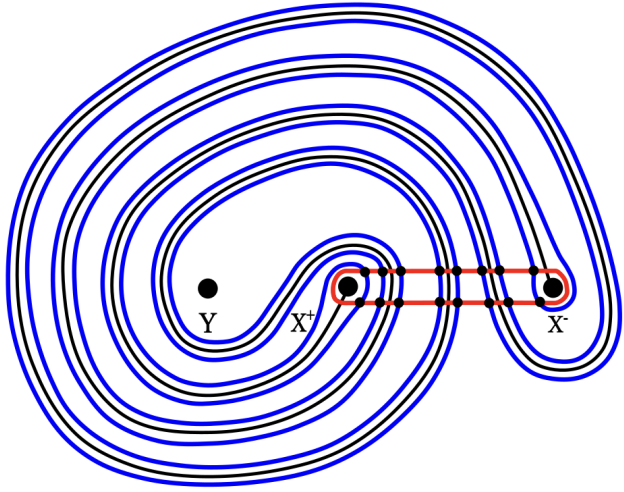}
    \caption{$\mathcal{D}(L_{8/3})$}
    \label{L83}
\end{figure}

The quiver form, according to \cite{SW21}, for the colored HOMFLY-PT generating function for $L_{u/v}$ is
\begin{equation}
   P(L_{u/v})=\sum_{\textbf{d}=(d_1,...,d_{2u})\in \mathbb{N}^{2u}} \frac{(-q)^{S\cdot \textbf{d}}a^{A\cdot \textbf{d}}q^{\textbf{d}\cdot Q \cdot\textbf{d}^T}}{\prod_{i=1}^{2u}(q^2;q^2)_{d_i}}x^{d_1+...+d_{2u}},
\end{equation}
which differs from the case for knots in that it is no longer a Laurent polynomial. However, the computations of $S,A,$ and $Q$ will prove to be very similar to how they were done in Section \ref{knotssec}. Just as we saw for knots, in order to compute the linear and quadratic forms of the quiver form, we will only need to consider loops in $M$ and $\text{Conf}^2(M)$. We were able to define loops from $\kappa_i$ (for any $i$) to $\kappa_\omega$, the intersection point immediately preceding $X^+$, but there is not an obvious choice of this point for links--- there are two of them. Thus, we need to specify which one is our $\lambda_\omega \in \mathbb{L}^1_{u/v}\cap \mathbb{L}^1$ before proceeding.

This will depend on whether $\tau_{(u-v)/v}$ has $UP$ or $RI$ orientation. If $\mathcal{D}(L_{u/v})$ is drawn without the untwisting isotopy, as in Figure \ref{L83}, then of the two intersection points next to $X^+$, take the bottom one to be $\lambda_\omega$ in the $UP$ case and the top one for the $RI$ case. These are flipped in the picture once the untwisting isotopy is applied. For $L_{8/3}$, since $\tau_{5/3}$ has $RI$ orientation, $\lambda_\omega$ is the top-left intersection point on the left picture in Figure \ref{L83untwistfig}; after undoing the final top twist $T$, $\lambda_\omega$ is the lower of the two intersections next to $X^+$. 

\begin{figure}
   \begin{tikzpicture}
\node at (-4.1,0) {\includegraphics[height=5cm, angle=0]{L83.png}};
\node at (0,0) {$\xrightarrow{\text{isotopy}}$};
\node at (4.1,0) {\includegraphics[height=5.5cm, angle=0]{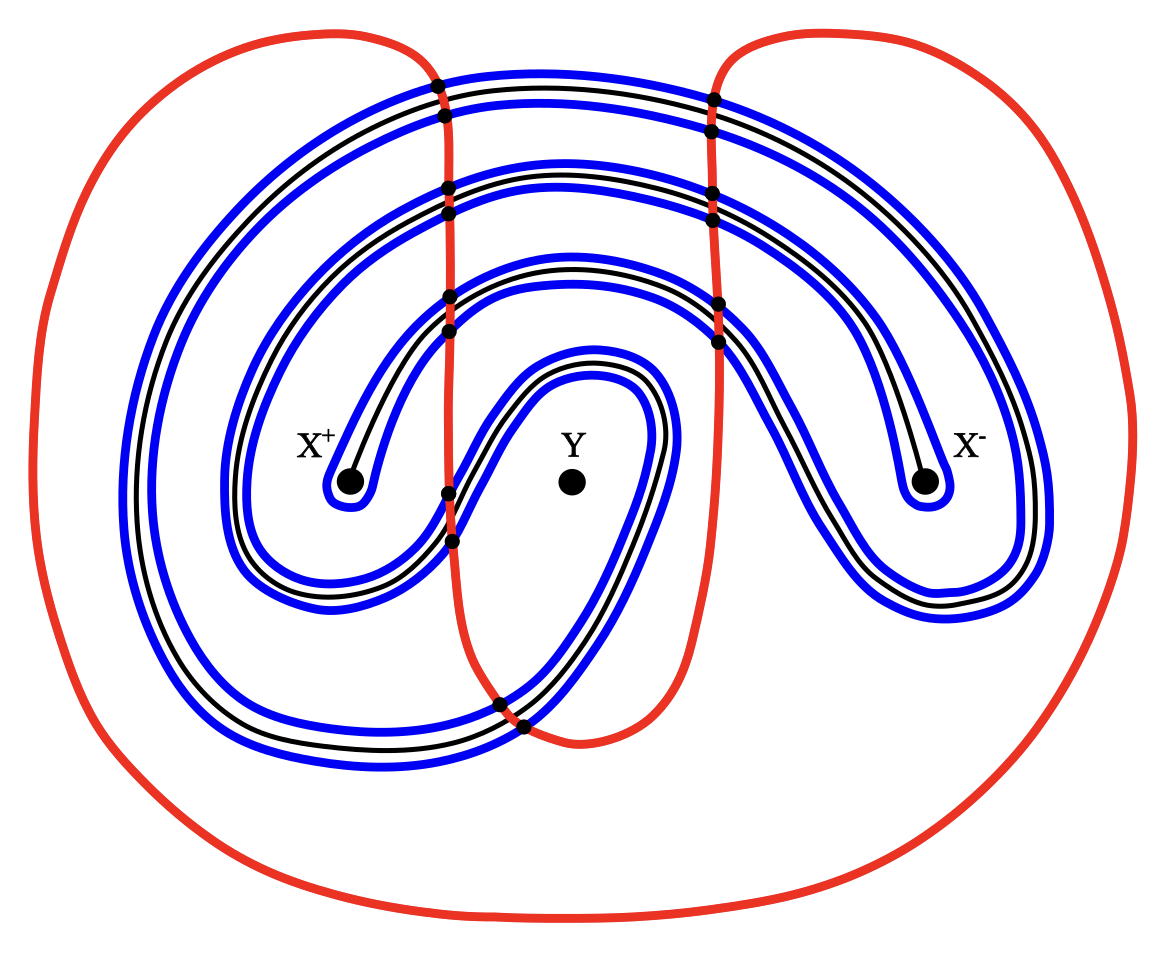}};
\end{tikzpicture}
\caption{The untwisting isotopy for $L_{8/3}$}
\label{L83untwistfig}
\end{figure}

The loops in $M$ used for computing $S,A,$ and the diagonal of $Q$ are given by concatenating paths between the specified intersection points along the blue Lagrangian $\overline{\alpha_{u/v}}$ with a path back to the basepoint along the red Lagrangian $\beta'$. As both of the Lagrangians are now simple closed curves, there are always two possible directions to go along each of them. Just as we did for knots by removing a point from the blue Lagrangian to uniquely specify the loops up to homotopy, we can now remove a point from both the blue and red Lagrangians to uniquely determine the desired loops up to homotopy. In particular, we will remove the same point from the blue Lagrangians as for knots (the one next to $X^-$); for the red Lagrangian, remove the point where it intersects the underlying arc $\alpha_{u/v}$ next to $X^+$. Thus, we can define $\eta_{i,j}$ to be the (unique up to homotopy) path from $\lambda_i$ to $\lambda_j$ along $\overline{\alpha_{u/v}}$ that misses hole, and $\overline\eta_{j,i}$ to be the (unique up to homotopy) path from $\lambda_j$ back to $\lambda_i$ along $\beta'$ that misses the hole in $\beta'$. Now, we can define our loops in $M$.

\begin{definitionN}
    Given $\lambda_i$ and $\lambda_j$ define $\gamma^L_{i,j}:[0,1]\to M$ to be $\gamma^L_{i,j}:=\eta_{i,j}\cdot \overline\eta_{j,i}$. Furthermore, we define $\gamma^L_i:=\gamma^L_{i,\omega}$.
\end{definitionN}

See Figure \ref{L83lambdaifig} for an example loop $\gamma^L_i$ for $L_{8/3}$, where open circles denote the ``holes'' in the Lagrangians (the no-pass zones) and $i$ and $\omega$ denote the intersection points $\lambda_i$ and $\lambda_\omega$.

\begin{figure}
    \centering
    \includegraphics[height=7cm]{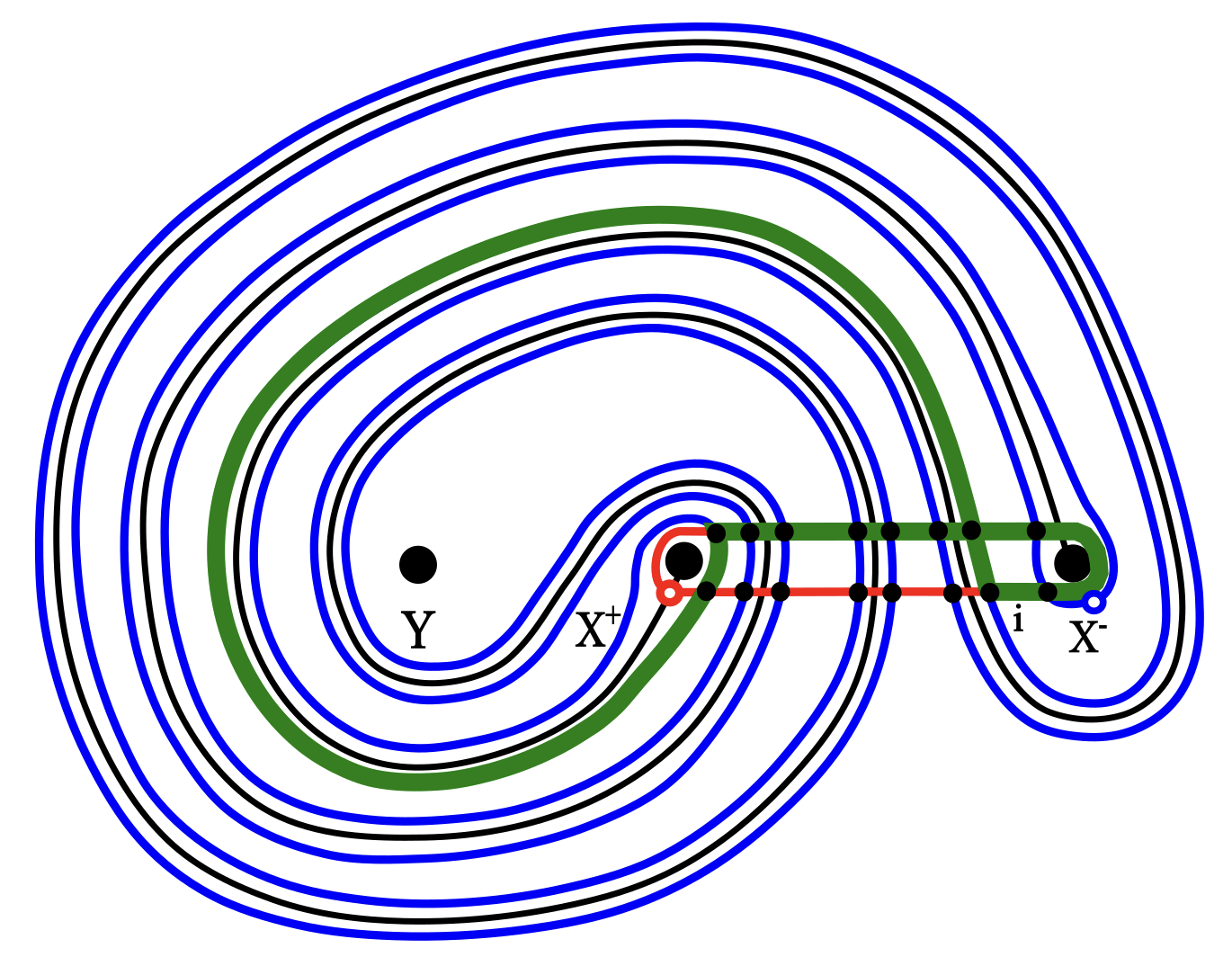}
    \caption{An example loop $\gamma_i=\gamma_{i,\omega}$ in $M$, drawn in green}
    \label{L83lambdaifig}
\end{figure}

For loops in $\text{Conf}^2(M)$, we will draw everything in the plane by taking two parallel copies of the red Lagrangian $\beta'$ and thinking of the loop as a pair of paths as we did for knots. Just as we had $\mathcal{D}^2(K_{u/v})$ as our diagrammatic model for defining our loops in $\text{Conf}^2(M)$, we will also define $\mathcal{D}^2(L_{u/v})$ for 2-component links. Let $\beta_1'$ and $\beta_2'$ be two parallel copies of $\beta'$, with $\beta_1'$ being the inner one. We can once again define
\[
\mathbb{L}_{u/v}^j:=\text{Sym}^j(\overline{\alpha_{u/v}})\setminus \Delta \subset \text{Conf}^j(M)
\]
and 
\[
\mathbb{L}^j:=\pi(\beta'_1\times ...\times \beta'_j) \subset \text{Conf}^j(M),
\]
as we did for knots.

\begin{definitionN}
  Let $\mathcal{D}^2(L_{u/v})$ be the configuration of Lagrangians in $\text{Conf}^2(M)$ consisting of  $\mathbb{L}_{u/v}^2$ and $\mathbb{L}^2$. This is modeled in $M$ by taking $\overline{\alpha_{u/v}}, \beta_1'$, and $\beta_2'$, where points in $\text{Conf}^2(M)$ are represented by pairs of distinct points on the Lagrangians and the loops are given by pairs of disjoint paths with basepoints $(x_1,x_2)$ such that $x_1\in \overline{\alpha_{u/v}}\cap \beta'_1$ and $x_2\in \overline{\alpha_{u/v}}\cap \beta'_2$.
\end{definitionN}

Now, given  $\lambda_i$ and $\lambda_j$, to define the loop $\tilde\gamma^L_{i,j}\subset\text{Conf}^2(M)$, the same holes in the blue and red Lagrangians discussed in the previous paragraph determine homotopy classes of loops once we have determined which copy of $\beta'$ to place $\lambda_i$ and $\lambda_j$ on. This is determined by the following rule: in a neighborhood of $\beta'$ in $M$, $\mathcal{D}(L_{u/v})$ (without untwisting isotopy) looks like the image below on the left for the $UP$ case and the image on the right for the $RI$ case, up to isotopy; $\lambda_\omega$ is labeled, and the black arrow on the outside (not part of $\mathcal{D}(L_{u/v})$) gives an ordering on the $\lambda_i$ such that if $\lambda_i$ comes before $\lambda_j$ with respect to this ordering, then $\lambda_i$ is placed on $\beta_2'$, the outside copy of $\beta'$, for defining loops $\tilde\gamma^L_{i,j}$ and $\tilde\gamma^L_{j,i}$ in $\text{Conf}^2(M)$. Thus, we see that $\lambda_\omega$ is always placed on $\beta_2'$.

\[
\vcenter{\hbox{\includegraphics[height=2.5cm,angle=0]{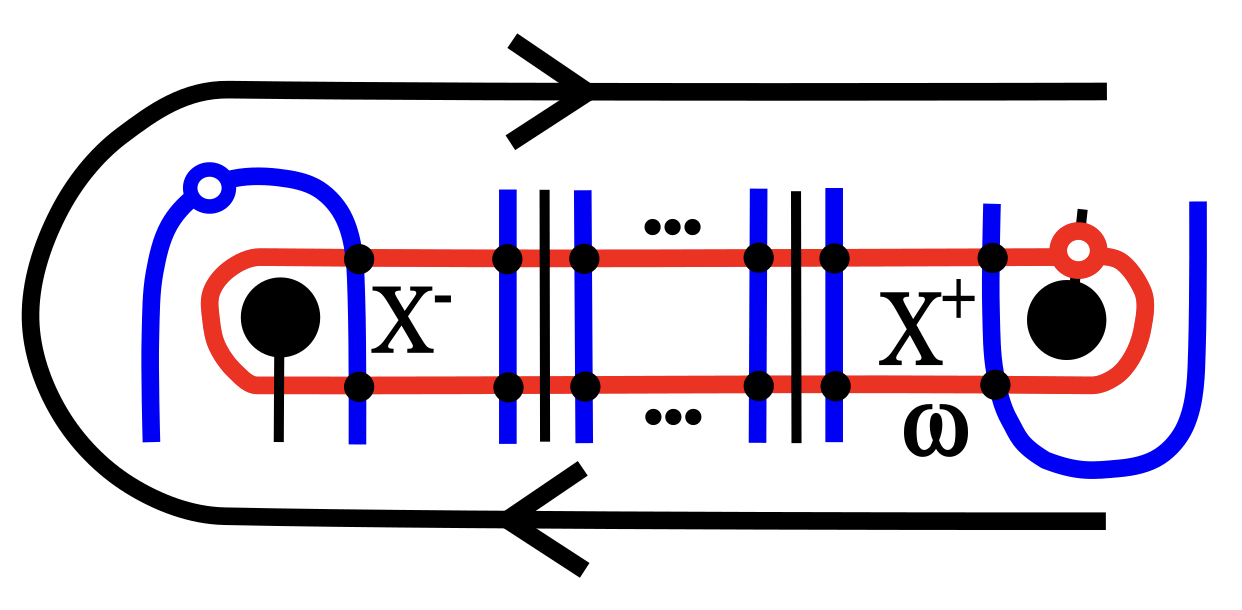}}} \qquad \qquad 
\vcenter{\hbox{\includegraphics[height=2.5cm,angle=0]{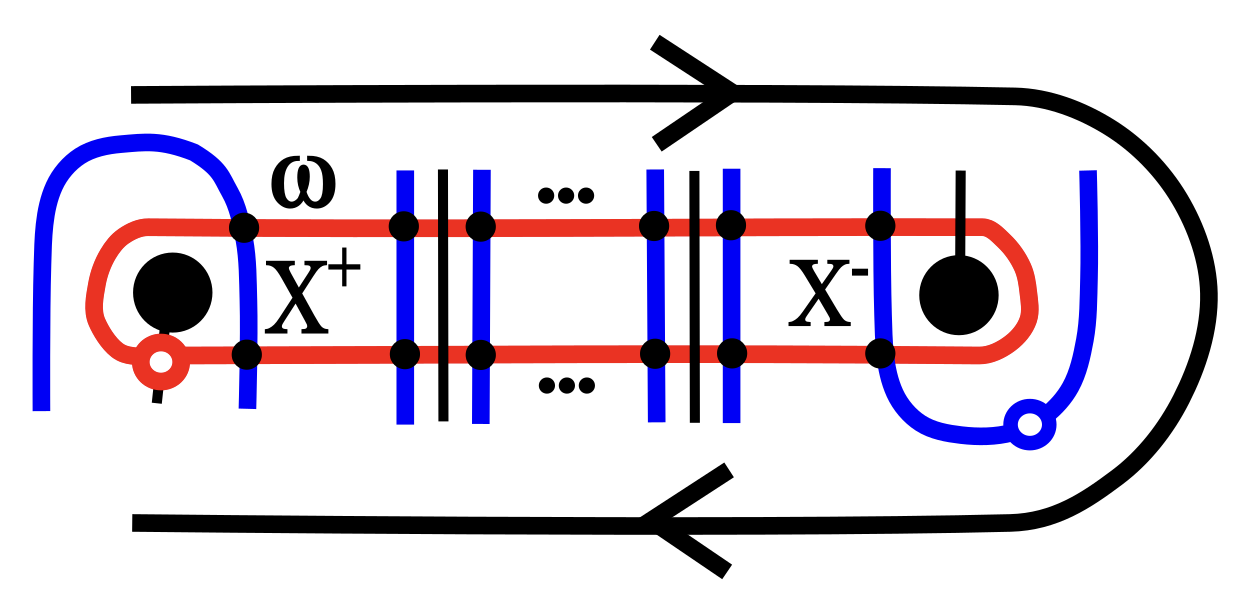}}}
\]

Once we have determined which copy of $\beta'$ to place $\lambda_i$ and $\lambda_j$ on, then we can define $\tilde\gamma^L_{i,j}$ by the same rule that we did for knots, but using our new Lagrangians. An example $\tilde\gamma^L_{i,j}\subset\text{Conf}^2(M)$ is shown in Figure \ref{L83lambdaijfig} (with the green and yellow denoting its two paths components, as we did in the last section). Notice that the basepoint has $\lambda_i$ on the outer copy of the red Lagrangian and $\lambda_j$ on the inner one, in agreement with what was just established.

\begin{figure}
    \centering
    \includegraphics[height=9cm]{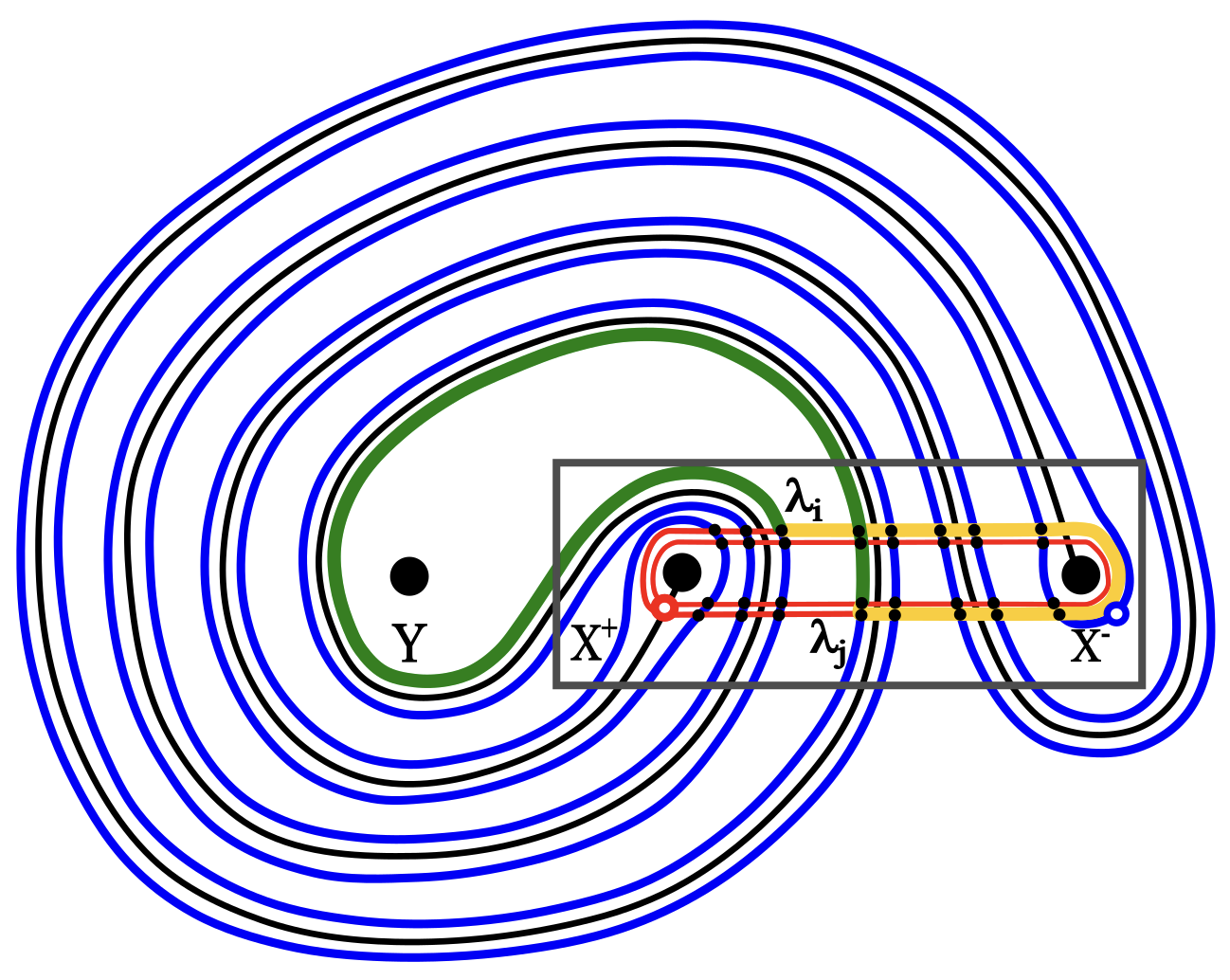}
    \qquad \qquad
    \includegraphics[height=5cm]{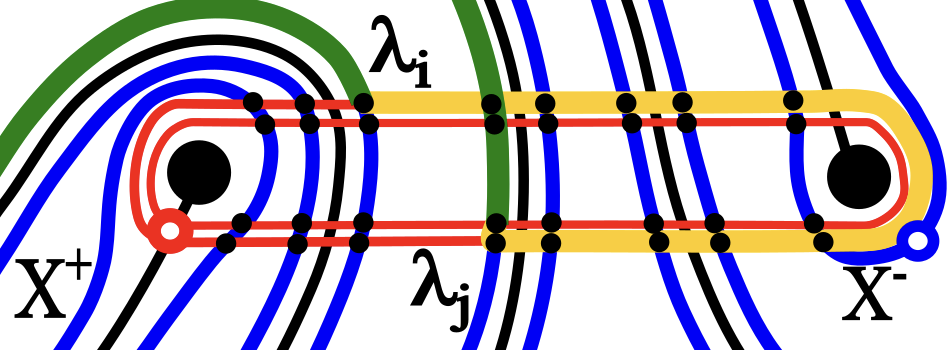}
    \caption{Above: An example loop $\gamma_{i,j}$ in $\text{Conf}^2(M)$, Below: A zoomed-in look at the loop in the boxed region}
    \label{L83lambdaijfig}
\end{figure}

Now, we can state the main theorem for links. Notice that formulas for $S,A,$ and $Q$ are the same as in the case for knots. Of course, the $2u$ indices are coming, geometrically, from the $2u$ intersection points of the Lagrangians in $M$; at this time, we do not need to specify how these intersection points are ordered because the generating function is invariant under the $S_{2u}$ action that permutes index labels. However, it will be helpful to fix the ordering when proving the theorem, as we did in Section \ref{knotssec}.

\begin{theoremN}
\label{Linkthm}
    The generating function for the colored HOMFLY-PT polynomials for the rational link $L_{u/v}=\text{Cl}(\tau_{u/v})$ can be written as
    \begin{equation}
        P(L_{u/v})=\sum_{\textbf{d}=(d_1,...,d_{2u})\in \mathbb{N}^{2u}} \frac{(-q)^{S\cdot \textbf{d}}a^{A\cdot \textbf{d}}q^{\textbf{d}\cdot Q \cdot\textbf{d}^T}}{\prod_{i=1}^{2u}(q^2;q^2)_{d_i}}x^{d_1+...+d_{2u}}
    \end{equation}
    where $S$, $A$, and $Q$ are defined on the free $\mathbb{Z}$-module $\mathbb{Z}\mathcal{G}_{u/v}^L$ (with basis $\mathcal{G}_{u/v}^L$) and are computed by the following formulas:
    \[
    \begin{cases}
        S_i=\Psi_{\{X^\pm,Y\}}([\gamma_i])+\mu_1(L_{u/v})\\
        A_i=2\Psi_{X^+}([\gamma_i])+\mu_2(L_{u/v})\\
        \begin{cases}
            Q_{ii}=(\Psi_{\{X^-,Y\}}-3\Psi_{X^+})([\gamma_i])+\mu_3(L_{u/v})\\
            Q_{ij}=Q_{ii}+(\Phi-2\Psi_{X^+})([\gamma_{j,i}]).
        \end{cases}
    \end{cases}
    \]
    The correction terms are computed by the same procedure used for knots.
\end{theoremN}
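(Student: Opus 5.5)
I would prove Theorem \ref{Linkthm} along the same three steps used for Theorem \ref{Knotsthm}. Write $L_{u/v}=\text{Cl}(T\tau_{(u-v)/v})$, where $\tau_{(u-v)/v}$ has $UP$ or $RI$ orientation. The first task is an algebraic analogue of Lemma \ref{WSclosureformula}.

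\textbf{Step 0: algebraic closure formula.} Since $u$ is even, both $u-v$ and $v$ are odd. So Proposition \ref{indpartprop} gives no compression: $\mathfrak{X}=\mathfrak{Y}=\emptyset$ and $K=0$. I would start from Proposition \ref{tangleHOMFLYpoly} for $\tau_{(u-v)/v}$ and apply the closure formula (\ref{TUPclosure}) or (\ref{TRIclosure}). In the $UP$ case the factor ${j\brack k}_+\frac{(a^2q^{2-2j-2k};q^2)_k}{(q^2;q^2)_k}$ multiplies the two multinomials. Using ${j\brack k}_+$, the product collapses to $(q^2;q^2)_j$ divided by $\prod_i(q^2;q^2)_{d_i}$. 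Lemma \ref{algidentity} then splits $(a^2q^{2-2j-2k};q^2)_k/\prod_{i\in A}(q^2;q^2)_{d_i}$ so that each active index $d_i$ becomes a pair $\alpha_i+\beta_i$.

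A second splitting is needed to remove the leftover $(q^2;q^2)_j$ and reach the denominator $\prod(q^2;q^2)_{d_i}$ in (\ref{quivformlink}). I would get it by rewriting the closure with the $j$-dependence of the framing absorbed, or by applying Lemma \ref{algidentity} to the inactive block as well; this is the doubling that produces $2(u-v)+2v=2u$ indices. The output is an explicit "$\text{Cl}(T-)$" rule: a $4$-block matrix transformation in which each of the active and inactive blocks of $[S|A|Q]$ doubles with specified shifts.

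\textbf{Step 1: linear forms and diagonal of $Q$.} I would apply the untwisting isotopy as in Figure \ref{L83untwistfig}. Each tangle intersection $\xi_i$ is then flanked by two parallels of $\overline{\alpha}$ cut by the band $\beta'$. This gives four points on the active side, and four on the inactive side, for each semicircle pair; equivalently, each $\xi_i$ is paired with exactly two $\lambda$'s, $\lambda_i$ and $\lambda_i'$ (inner and outer). I would then compare the loops $\gamma^L_i$ and $\gamma^L_{i'}$ with $\gamma^T_i$ as in the proof of Lemma \ref{knotslinforms}. They run parallel except near $\lambda_\omega$ and near the two holes. Reading off the changes in $\Psi_Y$, $\Psi_{X^-}$ and $\Psi_{X^+}$ should reproduce the shifts in the Step 0 rule.

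\textbf{Step 2: off-diagonal entries of $Q$.} First I would fix a standard ordering of the $\lambda_i$, blockwise by $\prec$, following the knot case. I would then prove a $4\times4$ lemma analogous to Lemma \ref{4x4blocklem}. For pairs $(\lambda_i,\lambda_i')$ and $(\lambda_j,\lambda_j')$, the entries are governed by the values of $\Phi$ and $\Psi_{X^+}$ on the small loops $\sigma$ that wrap once around $X^+$ (through the hole in $\beta'$) or around $X^-$. Within a single block, the isotopy of Figure \ref{7_3bendfig} shows exact agreement with the algebraic matrix. For the remaining entries I expect agreement only up to transpositions $Q_{ij'}\leftrightarrow Q_{i'j}$. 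I would justify these with Theorem \ref{permthm}, performed in the lexicographic order used in Lemmas \ref{tQmswap} and \ref{Qpmperm}, checking the $\delta$-cancellation on small cases as there.

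\textbf{Step 3: correction terms.} I would define $\mu_i(L_{u/v})$ by Lemma \ref{linformshifts}, applied to the twists building $\tau_{(u-v)/v}$. Since the statement only requires equality up to framing, I would check that these shifts make the data agree with the \cite{SW21} normalization.

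\textbf{Main obstacle.} I expect Step 2 to be the hardest. Because $\beta'$ is a closed band with a hole near $X^+$, the sign and winding bookkeeping for $\tilde\gamma^L_{i,j}$ differs from the knot case, and it is not clear in advance that every needed transposition satisfies the hypotheses of Theorem \ref{permthm}. My first attempt would be to verify the sequence of transpositions on $L_{8/3}$ and on the torus links $L_{2n/1}$, then extract the general pattern.
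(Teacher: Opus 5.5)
Your plan follows the paper's proof essentially step for step: the algebraic $\text{Cl}(T-)$ rule of Lemma \ref{WSclosureformulaL} obtained from Proposition \ref{tangleHOMFLYpoly} (no compression) by two applications of Lemma \ref{algidentity}, the untwisting-isotopy comparison of $S$, $A$ and the diagonal of $Q$ (Lemma \ref{linkslinform}), exact agreement of the top-left $2\times 2$ blocks in the $UP$ case but only agreement with the Lemma \ref{tQmswap}-permuted matrix $\widetilde{Q}_{u/v}$, after a reordering of the $\lambda_i$, on the lower-right blocks (Lemma \ref{Lbottomblockperm}, with roles swapped for $RI$), then Theorem \ref{permthm} for the mixed blocks (Lemma \ref{LQpmperm}), with the $\mu_i$ handled as for knots. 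The one point to repair is Step 0: after the active splitting the leftover factor is not $(q^2;q^2)_j$ but $(q^2;q^2)_j/(q^2;q^2)_k=(q^{2+2k};q^2)_{j-k}$ (you dropped the $1/(q^2;q^2)_k$ in (\ref{TUPclosure})), and it is exactly this form that lets Lemma \ref{algidentity} split the inactive block with $x^2=q^{2+2k}$ --- a framing shift, being a monomial, cannot do this, and your parity claim $\mathfrak{X}=\mathfrak{Y}=\emptyset$ is not needed, since the proof simply uses the uncompressed form.
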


We include the correction terms to agree with the computations of Sto\v si\'c and Wedrich in \cite{SW21} up to a $\pm 1$ framing shift and permutations of matrix entries, but we will make no further comments on them. The proof of Theorem \ref{Linkthm} will use the formulas ignoring the $\mu_i(L_{u/v})$ terms, as the primary concern is that everything works up to some shift.

Before proving the theorem, it should be noted again, as we did for Theorem \ref{Knotsthm}, that the formulas here are written in the way that makes them easiest to compute, but we can still define a single formula for entries of $Q$ by
\[
Q_{ij}=(2\Phi^2-\Psi_{X^+}^2)([\gamma_{j,i}^{2,L}])+\mu_3(L_{u/v}).
\]
The $\gamma_{j,i}^{2,L}$ loops are defined in a similar way for links as they were done for tangles in \cite{JHpI26} and knots in Section \ref{knotssec}, but adapted to the new Lagrangians. We can also get a ``links analogue'' of Proposition \ref{KnotQsymprop}, which demonstrates that the formula for $Q$ in Theorem \ref{Linkthm} gives $Q_{ij}=Q_{ji}$.

\subsection{Proof of Theorem \ref{Linkthm}}

Just as we did for knots, we will prove the main theorem for links, Theorem \ref{Linkthm}, by proving a sequence of lemmas. These lemmas will closely mirror the ones for knots, but adapted to our new geometric picture for links. As before, the first step is to see how the closure formulas from \cite{SW21} predict that $S,A,$ and $Q$ for $L_{u/v}=\text{Cl}(T\tau_{(u-v)/v})$ are related to the ones for $\tau_{(u-v)/v}$. 

\begin{lemmaN}
   \label{WSclosureformulaL}
    If $L_{u/v}=\text{Cl}(T\tau_{(u-v)/v})$ for $\tau_{(u-v)/v}$ having $UP$ orientation, then the $\text{Cl}(T-)$ operation has the effect
   \begin{multline*}
      UP, \left[\begin{array}{ c | c  }
    S_+ & A_+  \\
    \hline
    S_- & A_- 
  \end{array}\right],  
  \left[\begin{array}{ c |c }
    Q_{++} & Q_{+-} \\
    \hline
    Q_{-+} & Q_{--}
  \end{array}\right]
  \\
    \xrightarrow{\text{Cl}(T-)}
    \left[\begin{array}{ c | c  }
    S_++2 & A_++2 \\
    \hline
    S_++1 & A_+  \\
    \hline
    S_-+1 & A_- \\
    \hline
    S_- & A_-
  \end{array}\right], 
  \left[\begin{array}{ c | c | c | c}
    Q_{++}-1 & Q_{++}+U & Q_{+-} & Q_{+-}-1 \\
    \hline
    Q_{++}+L & Q_{++}+2 & Q_{+-}+1 & Q_{+-}\\
    \hline
    Q_{-+} & Q_{-+}+1 & Q_{--}+1 & Q_{--}+L \\
    \hline
    Q_{-+}-1 & Q_{-+} & Q_{--}+U & Q_{--}
  \end{array}\right];
   \end{multline*}

  In the case that $\tau_{(u-v)/v}$ has $RI$ orientation, then $\text{Cl}(T-)$ has the effect
\begin{multline*}
      RI, \left[\begin{array}{ c | c  }
    S_+ & A_+  \\
    \hline
    S_- & A_- 
  \end{array}\right],  
  \left[\begin{array}{ c |c }
    Q_{++} & Q_{+-} \\
    \hline
    Q_{-+} & Q_{--}
  \end{array}\right]
  \\
    \xrightarrow{\text{Cl}(T-)}
    \left[\begin{array}{ c | c  }
    S_+ +1 & A_+  \\
    \hline
    S_+ & A_-  \\
    \hline
    S_-  & A_- \\
    \hline
    S_--1 & A_--2
  \end{array}\right], 
   \left[\begin{array}{ c | c | c | c}
    Q_{++}+1 & Q_{++}+L & Q_{+-} & Q_{+-}+1 \\
    \hline
    Q_{++}-1+U & Q_{++} & Q_{+-}-1 & Q_{+-}\\
    \hline
    Q_{-+} & Q_{-+}-1 & Q_{--}-1 & Q_{--}+U \\
    \hline
    Q_{-+}+1 & Q_{-+} & Q_{--}+L & Q_{--}+2
  \end{array}\right].
   \end{multline*}
 
\end{lemmaN}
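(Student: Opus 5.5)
This lemma is the link analogue of Lemma \ref{WSclosureformula}, and I would prove it the same way: substitute the closure formulas of Lemma \ref{WSclosureformulas} into the quiver form of $P(\tau_{(u-v)/v})$, then split indices using Lemma \ref{algidentity}. For links, $u-v$ and $v$ are both odd. So I would use the full (uncompressed) form of Proposition \ref{tangleHOMFLYpoly} rather than Proposition \ref{indpartprop}. There is then no $K$ column and no Pochhammer factor $(q^2;q^2)_{K\cdot\mathbf d}$, which matches the statement having no $K$ data on the tangle side.

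In the $UP$ case, write $j=d_++d_-$, where $d_+=d_1+\dots+d_{u-v}$ (active) and $d_-$ is the inactive sum. The $j$-th term of $P(\tau_{(u-v)/v})$ is a multinomial in the active indices times a multinomial in the inactive indices, multiplied by $UP[j,d_+]$. Applying $\text{Cl}(T-)$ with $k=d_+$ produces the factor ${j\brack d_+}_+\,(a^2q^{2-2j-2d_+};q^2)_{d_+}/(q^2;q^2)_{d_+}$. The binomial ${j\brack d_+}_+$ combines with the two multinomials to give $(q^2;q^2)_j/\prod_i(q^2;q^2)_{d_i}$.

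The key step is to apply Lemma \ref{algidentity} with $x^2=a^2q^{2-2j-2d_+}$ to the product $(x^2;q^2)_{d_+}/\prod_{\text{active}}(q^2;q^2)_{d_i}$. This splits each active $d_i=\alpha_i+\beta_i$ and doubles the active block into two blocks. For the inactive block I also need to split, since the target has four blocks. For this I would rewrite the leftover $(q^2;q^2)_j$ as the factor $(q^2;q^2)_j^{|L|-2}=1$ that appears for two-component links; equivalently, I would use the $q$-binomial/Cauchy identity to write $(q^2;q^2)_j/\prod_{\text{inactive}}(q^2;q^2)_{d_i}$ as a sum over splittings $d_i=\alpha_i+\beta_i$. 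The goal is a pure product $1/\prod(q^2;q^2)$ over $2u$ indices, as required by the quiver form (\ref{quivformlink}). In practice I would follow the computation in the proof of Theorem 4.1 of \cite{SW21} for links, which performs exactly this splitting.

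After the splitting, I would collect exponents. The linear terms come from $(-x^2q^{-1})^{\alpha}$, which contributes the $+2$ to $A$ on one active copy, together with the $(-q)^k$ prefactor. The quadratic terms come from $q^{2k^2}$, the $-2j$ and $-2d_+$ pieces of $x^2$ multiplied by $\sum\alpha$, the $\alpha_i^2$ terms, and the cross terms $2\alpha_{i+1}(d_1+\dots+d_i)$. The cross terms produce the asymmetric $U$/$L$ parts (upper and lower triangular shifts), and I would symmetrize them in the standard way. The $RI$ case is the mirror argument, using formula (\ref{TRIclosure}) with $k$ replaced by $j-k$, so that the splitting now happens on the inactive side.

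The main obstacle is the bookkeeping of the quadratic form: verifying that every block shift ($\pm1$, $+2$, $U$, $L$) comes out exactly as stated, including the ordering-dependent triangular pieces and the framing shift absorbed by $\sim$. I would first check small cases such as $L_{2/1}$ and $L_{4/1}$ against the formula before trusting the general exponent collection.
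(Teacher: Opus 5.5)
Your overall route is the paper's: substitute (\ref{TUPclosure}) into the uncompressed quiver form of Proposition~\ref{tangleHOMFLYpoly}, then split indices with Lemma~\ref{algidentity}. Your treatment of the active side, with $x^2=a^2q^{2-2j-2k}$ and $k=d_+$, is correct. The gap is on the inactive side, which is exactly what distinguishes links from knots.

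After you use up $(a^2q^{2-2j-2k};q^2)_k/\prod_{\text{act}}(q^2;q^2)_{d_i}$, the leftover factor is not $(q^2;q^2)_j/\prod_{\text{inact}}(q^2;q^2)_{d_i}$. You have lost the denominator $(q^2;q^2)_{d_+}$ of the closure formula. Neither of your proposed fixes then works:
\begin{itemize}
\item The factor $(q^2;q^2)_j^{|L|-2}$ is only the rescaling relating $\widehat P(L)$ to $P(L)$, and it is trivial for $|L|=2$. It gives no license to delete the $(q^2;q^2)_j$ that actually occurs in each term of $P(L_{u/v})$; deleting it changes the generating function.
\item A Cauchy-type splitting of $(q^2;q^2)_j/\prod_{\text{inact}}(q^2;q^2)_{d_i}$ cannot give a pure product $1/\prod(q^2;q^2)$, since $j=d_++d_-\neq d_-$. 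You would be left with a stray $(q^2;q^2)_{d_+}$ in the numerator.
\end{itemize}

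The missing step is the cancellation
\[
\frac{(q^2;q^2)_j}{(q^2;q^2)_k}=(q^{2+2k};q^2)_{j-k}.
\]
This puts the leftover in the form $(q^{2+2k};q^2)_{d_-}/\prod_{\text{inact}}(q^2;q^2)_{d_i}$. Lemma~\ref{algidentity} then applies a second time with $x^2=q^{2+2k}$, splitting each inactive $d_i=\alpha_i'+\beta_i'$.

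This second application is what creates the third and fourth blocks:
\begin{itemize}
\item The factor $(-x^2q^{-1})^{\sum\alpha'}=(-q)^{\sum\alpha'}q^{2k\sum\alpha'}$ gives $S_-+1$ on the $\alpha'$-copy. It also gives $+1$ between both active copies and the $\alpha'$-copy.
\item Combined with the $-1$ from the $-2d_-\sum\alpha$ part of your first splitting, this yields exactly $Q_{+-}$, $Q_{+-}-1$, $Q_{+-}+1$, $Q_{+-}$ in the $(1,3)$, $(1,4)$, $(2,3)$, $(2,4)$ blocks.
\item The $q^{\alpha_i'^2}$ terms and the cross terms give $Q_{--}+1$ and the $L/U$ pieces.
\end{itemize}
Your exponent bookkeeping only tracks the first splitting, so it cannot reproduce these shifts.

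Likewise, in the $RI$ case both sides split, not only the inactive one. The factor $(a^2q^{2-2j-2(j-k)};q^2)_{j-k}$ is split over the inactive indices. The factor $(q^2;q^2)_j/(q^2;q^2)_{j-k}=(q^{2+2(j-k)};q^2)_k$ is split over the active ones.
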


Note that, in contrast to Lemma \ref{WSclosureformula}, we are no longer using the linear form $K$ for $\tau_{(u-v)/v}$. In other words, we are using Proposition \ref{tangleHOMFLYpoly} instead of Proposition \ref{indpartprop} to compute the generating function data for $\tau_{(u-v)/v}$. On the tangle side in the lemma below, we are thinking of $\mathcal{G}_{(u-v)/v}^T$ as $\mathfrak{X}\sqcup \mathfrak{Y}$, where $\mathfrak{X}$ consists of all active intersections and $\mathfrak{Y}$ consists of all inactive intersections for $\tau_{(u-v)/v}$. In the absence of $K$, both $\mathfrak{X}$ and $\mathfrak{Y}$ double to account for the four blocks we get after applying $\text{Cl}(T-)$, which is consistent with the number of Lagrangian intersections in $\mathcal{D}(L_{u/v})$ being twice the number of intersections in $\mathcal{D}(\tau_{(u-v)/v})$. In particular, as we already discussed, if we apply the untwisting isotopy to $\mathcal{D}(L_{u/v})$, we expect that the Lagrangian intersections should relate to the intersection points in $\mathcal{D}(\tau_{(u-v)/v})$ such that each point in $\mathcal{D}(\tau_{(u-v)/v})$ corresponds to two points in $\mathcal{D}(L_{u/v})$.

\begin{proof}[Proof of Lemma \ref{WSclosureformulaL}]
    The proof is similar to the proof for Lemma \ref{WSclosureformula}. In particular, it follows from Equations (\ref{TUPclosure}) and (\ref{TRIclosure}) again, but Lemma \ref{algidentity} must be applied twice since there is no cancellation of $(q^2;q^2)_{K\cdot \textbf{d}}$ terms.

    Before proceeding, it is worth noting how we get the Pochhammer symbols in the denominator of the quiver form for links, whereas this did not happen for knots. Consider the $UP$ case. Ignoring the parts of the terms involving $S,A,$ and $Q$, one has expressions like
    \[
    {d_1+...+d_{u-v}\brack d_1,...,d_{u-v}}{d_{u-v+1}+...+d_u\brack d_{u-v+1},...,d_u}UP[j,k]
    \]
    in the terms of the quiver form for $\tau_{(u-v)/v}$. Then one applies Equation (\ref{TUPclosure}) which gives (ignoring again the parts that contribute to $S$ and $Q$)
    \[
    \frac{(q^2;q^2)_j}{\prod_{i=1}^{u-v}(q^2;q^2)_{d_i} \prod_{i=1}^v(q^2;q^2)_{d_{u-v+i}}} \frac{(a^2q^{2-2j-2k};q^2)_k}{(q^2;q^2)_k}= \frac{(a^2q^{2-2j-2k};q^2)_k}{\prod_{i=1}^{u-v}(q^2;q^2)_{d_i}} \frac{(q^{2+2k};q^2)_{j-k}}{\prod_{i=1}^v(q^2;q^2)_{d_{u-v+i}}},
    \]
    where the two fractions on the right are the ones to which we apply Lemma \ref{algidentity}, and this leaves us with the Pochhammer symbols in the denominator.
\end{proof}

The next lemma is the links analogue for Lemma \ref{knotslinforms}.

\begin{lemmaN}
\label{linkslinform}
    When computed by Theorem \ref{Linkthm}, $S,A,$ and the diagonal of $Q$ for $L_{u/v}=\text{Cl}(T\tau_{(u-v)/v})$ agree with Lemma \ref{WSclosureformulaL}.
\end{lemmaN}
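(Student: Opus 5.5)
The approach mirrors the proof of Lemma \ref{knotslinforms}. Loops $\gamma^L_i$ for the link are compared with loops $\gamma^T_i$ for the tangle $\tau_{(u-v)/v}$ after applying the untwisting isotopy to $\mathcal{D}(L_{u/v})$. Since $S_i$, $A_i$ and $Q_{ii}$ in Theorem \ref{Linkthm} depend only on $\Psi_W([\gamma^L_i])$, it suffices to compute the differences $\Psi_V([\gamma^L_i])-\Psi_V([\gamma^T_i])$ for $V\in\{X^+,X^-,Y\}$. We then check that, after dropping the $\delta$-terms of Proposition \ref{tangleHOMFLYpoly}, these differences reproduce the shifts listed in Lemma \ref{WSclosureformulaL}. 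As before, I treat the $UP$ case in detail; the $RI$ case is the mirror argument.

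The first step is the pairing of intersection points. After the untwisting isotopy, $\overline{\alpha_{u/v}}$ runs parallel to $\alpha_{(u-v)/v}$ on both sides. The band $\beta'$ now crosses each (in)active axis region twice: once along each of its two parallel strands. Hence every $\xi_i\in\mathcal{G}^T_{(u-v)/v}$ corresponds to exactly two points $\lambda_i,\lambda_i'$, with four local configurations analogous to Figure \ref{tangleknotpairs}, indexed by the orientation of $\alpha_{(u-v)/v}$ and the side of $Y$. We split the $2u$ points into four blocks: active/inner, active/outer, inactive/inner and inactive/outer. These blocks must be ordered so that they match the four blocks of Lemma \ref{WSclosureformulaL}. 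We also use the convention fixing $\lambda_\omega$, which places it in the fourth block in the $UP$ case (shift $0$, matching $S_-,A_-,Q_{--}$ unshifted) and in the first block in the $RI$ case.

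Next, using the holes in $\overline{\alpha_{u/v}}$ (near $X^-$) and in $\beta'$ (near $X^+$), the loops $\gamma^L_i$ and $\gamma^T_i$ can be isotoped to run parallel except near their starting and ending points, as in Figure \ref{linformknotfig}. The computation is therefore local.

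Near the basepoint, the extra winding comes from how the two strands of $\beta'$ wrap around $X^+$ and $Y$. For the outer and inner copies we expect the winding differences $(\Psi_Y,\Psi_{X^-},\Psi_{X^+})$ to be roughly $(1,1,1)$ and $(1,1,0)$ on the active side, where the loop must pass around $Y$ and $X^-$. On the inactive side, which contains $\lambda_\omega$, we expect $(0,0,1)$ and $(0,0,0)$. Substituting these into $S=\Psi_{\{X^\pm,Y\}}$, $A=2\Psi_{X^+}$ and $Q_{ii}=(\Psi_{\{X^-,Y\}}-3\Psi_{X^+})$, and accounting for the lost $\delta_{i,A}\delta_{\omega,I}$ term, should give exactly the shifts $(+2,+2,-1)$, $(+1,0,+2)$, $(+1,0,+1)$ and $(0,0,0)$ of Lemma \ref{WSclosureformulaL}. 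Any uniform discrepancy between blocks is harmless, since it is absorbed by the $\mu_i$.

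The main obstacle is the correct bookkeeping of the four local pictures, in particular the loops that must traverse the end of $\beta'$ near $X^+$. Unlike the knot case, $\beta'$ is closed, so a path can go either way around. The hole in $\beta'$ forces some loops to wind once more around $X^+$, and this produces the $\Psi_{X^+}$ difference distinguishing the inner and outer blocks. I would settle this first in the example $L_{8/3}$ (an $RI$ case), verifying each of the four pictures against the data for $\tau_{5/3}$ computed by Proposition \ref{tangleHOMFLYpoly}. Only then would I assert the general local computation.
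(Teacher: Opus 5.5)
Your route is the paper's: untwisting isotopy, parallel loops, local winding differences, then dropping the lost $\delta_{i,A}\delta_{\omega,I}$ term. Your active-side values $(\Delta\Psi_Y,\Delta\Psi_{X^-},\Delta\Psi_{X^+})=(1,1,1),(1,1,0)$ are correct and give $(+2,+2,-1)$ and $(+1,0,+2)$. The inactive side, however, does not work as you state it.

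For inactive points in the $UP$ case, every $\delta$-term of Proposition \ref{tangleHOMFLYpoly} vanishes, because $\xi_\omega$ is inactive and the middle puncture is not $X^+$. So your prediction $(0,0,1)$, i.e.\ $\Delta\Psi_{X^+}=1$ and $\Delta\Psi_{X^-}=\Delta\Psi_Y=0$, yields $\Delta S=1$, $\Delta A=2\Delta\Psi_{X^+}=2$ and $\Delta Q_{ii}=\Delta\Psi_{X^-}+\Delta\Psi_Y-3\Delta\Psi_{X^+}=-3$. Lemma \ref{WSclosureformulaL} instead requires $(S_-+1,A_-,Q_-+1)$ on $\mathfrak{C}$. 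This discrepancy sits on a single block, so the $\mu_i$ cannot absorb it: they are constants independent of the index. Note also that $S_i-Q_{ii}-2A_i=0$ holds identically for the geometric formulas, whatever the winding numbers. A $\delta$-grading check would therefore not catch the error; the $a$-degree does.

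The target forces $\Delta\Psi_{X^+}=0$ on both inactive blocks, with $\Delta(\Psi_{X^-}+\Psi_Y)$ equal to $1$ on $\mathfrak{C}$ and $0$ on $\mathfrak{D}$. The paper obtains exactly this from a local computation. It takes a rectangle of four link intersections on the inactive side; after the untwisting isotopy, the right-middle part of $\beta'$ plays the role of $l_I$. The result is $(\Delta\Psi_{X^+},\Delta\Psi_{X^-},\Delta\Psi_Y)=(0,1,0)$ on $\mathfrak{C}$ and $(0,0,0)$ on $\mathfrak{D}$.

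So your proposed mechanism, in which the hole in $\beta'$ near $X^+$ supplies an extra turn around $X^+$ separating the two copies, is right only on the active side. On the inactive side the two copies differ by a turn around $X^-$, and neither picks up winding around $X^+$ relative to $\gamma^T$.

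This per-block local computation is the whole content of the lemma. Your proposal leaves it as an expectation to be tested on $L_{8/3}$, and one of the four predictions is wrong, so the argument as written does not establish the statement. There is also a smaller slip in the same bookkeeping. In the $RI$ case $\lambda_\omega$ has constant loop, hence zero data, so it must lie in the block with zero $S$-shift. In Lemma \ref{WSclosureformulaL} that is the second block, not the first.
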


In order to make sense of Lemma \ref{linkslinform}, we need to see how to partition the Lagrangian intersections of $\mathcal{D}(L_{u/v})$ into four disjoint sets. We also need to determine how these four partitioning sets relate to the two used for $\tau_{(u-v)/v}$ ($\mathfrak{X}$ and $\mathfrak{Y}$, consisting of the active and inactive intersections, respectively). In addition to determining these partitions, we will also provide our intersection points for $L_{u/v}$ with a new ``standard ordering,'' thus providing an ordered basis for the free $\mathbb{Z}$-module that $S,A,$ and $Q$ are defined on in Theorem \ref{Linkthm}.

It is clear that both the active and inactive intersection points for $\tau_{(u-v)/v}$ are doubling, resulting in two new blocks, so if $\xi_i$ is an active intersection point ($1\leq i \leq u-v$), then it is paired with $\lambda_i$ and $\lambda_{u-v+i}$ for the link; if $\xi_i$ is inactive, then it is paired with $\lambda_{2(u-v)+i}$ and $\lambda_{2u-v+i}$. Assume the active and inactive $\xi_i$ are ordered separately by $\prec$ (so $i<j$ if $\xi_i$ is active and $\xi_j$ is inactive; otherwise, if both are active or inactive, then $i<j$ if $\xi_i\prec \xi_j$). 

For the $UP$ case, a neighborhood of $\beta'$ looks like:

\[
\vcenter{\hbox{\includegraphics[height=3cm,angle=0]{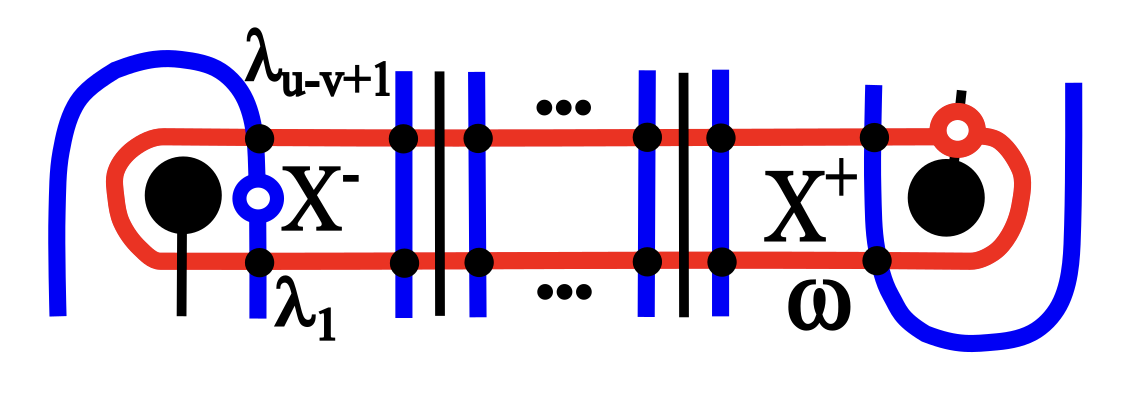}}} 
\]
where $\lambda_1$ and $\lambda_{u-v+1}$ are labeled as the two points closest to $X^-$, so they are naturally paired with $\xi_1$, with $\lambda_1$ being the bottom of the two. The hole in the blue Lagrangian determines distinct paths along it from $\lambda_1$ and $\lambda_{u-v+1}$ to $\lambda_\omega$; the blocks $\mathfrak{A}=\{\lambda_1,...,\lambda_{u-v}\}$ and $\mathfrak{B}=\{\lambda_{u-v+1},...,\lambda_{2(u-v)}\}$ are the intersection points paired with active points for $\tau_{(u-v)/v}$, ordered according to this path. 

For the blocks $\mathfrak{C}=\{\lambda_{2(u-v)+1},...,\lambda_{2u-v}\}$ and $\mathfrak{D}=\{\lambda_{2u-v+1},...,\lambda_{2u}\}$ coming from the inactive side of the tangle, we already have $\lambda_{2u}=\lambda_\omega$ determined, and $\lambda_{2u-v}$ is the point directly above it. The remaining intersection points in $\mathfrak{C}$ and $\mathfrak{D}$ come in groups of four given by rectangles of blue and red sides. For $UP$, the labels for the points on the left are one less than the ones on the right, the top two points are in $\mathfrak{C}$, and the bottom two are in $\mathfrak{D}$. Thus, the four intersections coming from one of these rectangles are related by 

\[
\vcenter{\hbox{\includegraphics[height=3cm,angle=0]{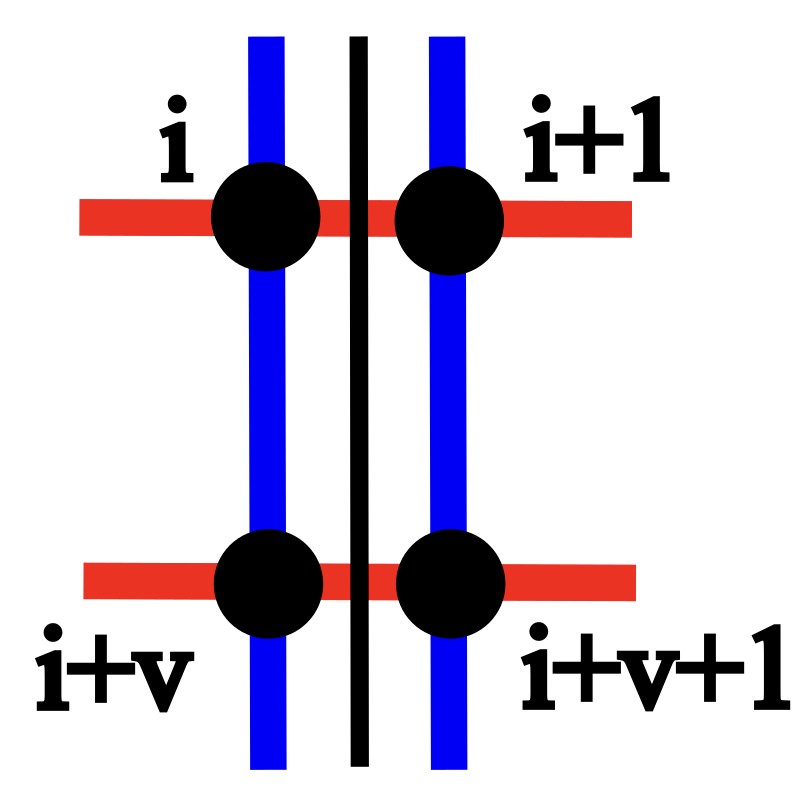}}}, 
\]
where only the subscripts of the $\lambda$'s are written. These squares can then be ordered with respect to each other by the same parametrization of the underling arc $\alpha_{u/v}$ from $X^-$ to $X^+$; in particular, if one of these squares comes before another with respect to this parametrization, then the labels for the ``earlier'' square are lower than the ones for the ``later'' one.

For $RI$, the procedure is similar, but with the roles of the active and inactive indices swapped. Furthermore, the labeling scheme for rectangles on the active side (notice this was on the inactive side for the $UP$ case) is

\[
\vcenter{\hbox{\includegraphics[height=3cm,angle=0]{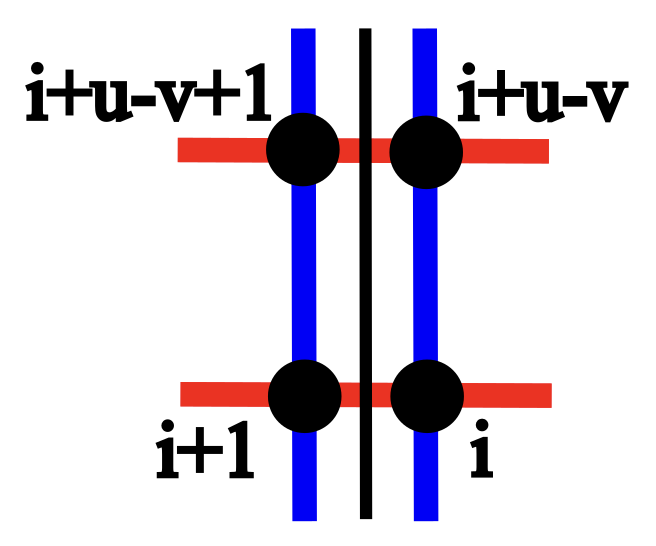}}}.
\]

As an example, if we look in a neighborhood of the intersection points for $L_{8/3}=\text{Cl}(T\tau_{5/3})$ (notice $\tau_{5/3}$ has $RI$ orientation), then our procedure for ordering the $\lambda_i$ gives

\[
\vcenter{\hbox{\includegraphics[height=4cm,angle=0]{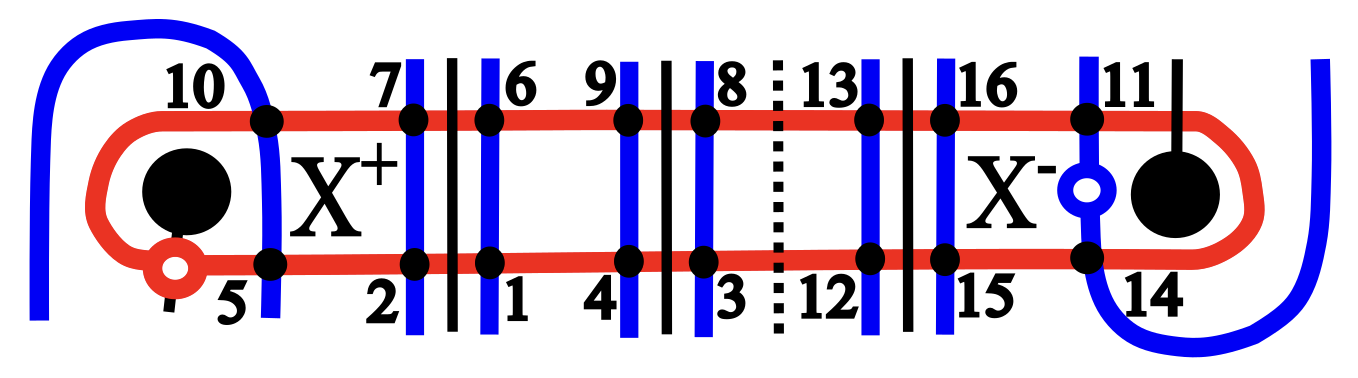}}},
\]
where the dotted line separates the intersections coming from the active side (blocks $\mathfrak{A}$ and $\mathfrak{B}$) from the ones coming from the inactive side (blocks $\mathfrak{C}$ and $\mathfrak{D}$). Compare with the whole picture for $L_{8/3}$ in Figure \ref{L83} to see how the rules described above result in these labels.

\begin{definitionN}
    Let the ordering of the $\lambda_i$ just described be called the \textit{standard ordering of the $\lambda_i$} for $L_{u/v}$. Furthermore, let $Q_{u/v}^L$ be the $2u\times 2u $ matrix computed by Theorem \ref{Linkthm} with respect to this ordered basis. It can be thought of as a $4\times 4$ block matrix indexed by $\mathfrak{A},\mathfrak{B},\mathfrak{C},$ and $\mathfrak{D}$.
\end{definitionN}

Now, we can prove the lemma.

\begin{proof}[Proof of Lemma \ref{linkslinform}]
As in most of the proofs in Section \ref{knotssec}, we will provide details for the $UP$ case.

The statement of the lemma is that we should have
 \[
      UP, \left[\begin{array}{ c | c | c  }
    S_+ & A_+ & Q_+ \\
    \hline
    S_- & A_- & Q_-
  \end{array}\right]
    \xrightarrow{\text{Cl}(T-)}
    \left[\begin{array}{ c | c | c }
    S_++2 & A_++2 & Q_+-1\\
    \hline
    S_++1 & A_+ & Q_++2 \\
    \hline
    S_-+1 & A_- & Q_-+1\\
    \hline
    S_- & A_- & Q_-
  \end{array}\right],
   \]
   where this should be read the same way as in Lemma \ref{knotslinforms}. First we show that this is true for the $\mathfrak{A}$ and $\mathfrak{B}$ blocks (the ones paired with active intersections), and then we will consider the $\mathfrak{C}$ and $\mathfrak{D}$ blocks (the ones paired with inactive intersections).

It will be advantageous to consider how $\mathcal{D}(L_{u/v})$ looks after applying the untwisting isotopy, as this will make it easier to see how the labeled intersections for $L_{u/v}$ relate to the ones for $\tau_{(u-v)/v}$. If $\alpha_{u/v}$ is given the orientation from $X^-$ to $X^+$, then the figure below shows what happens after applying the untwisting isotopy to one of the rectangles of intersection points from $\mathfrak{A}$ and $\mathfrak{B}$ formed by $\alpha_{u/v}$ passing down through $\beta'$. We have also included the part of the blue Lagrangian around $X^+$ and the intersection point $\lambda_\omega$.

\[
\vcenter{\hbox{\includegraphics[height=4cm,angle=0]{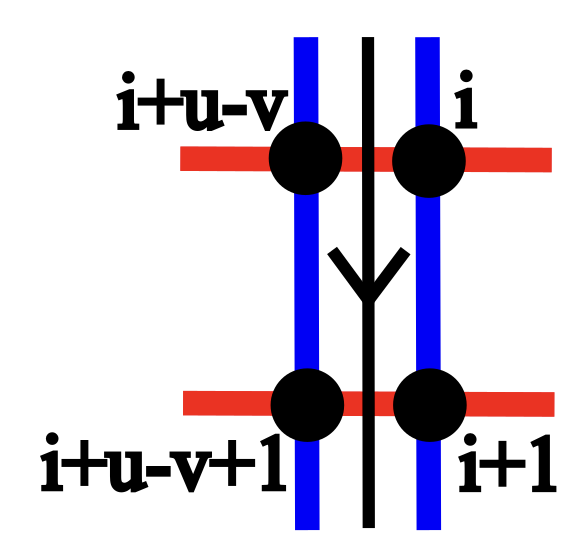}}}
\xrightarrow{\text{isotopy}}
\vcenter{\hbox{\includegraphics[height=7cm,angle=0]{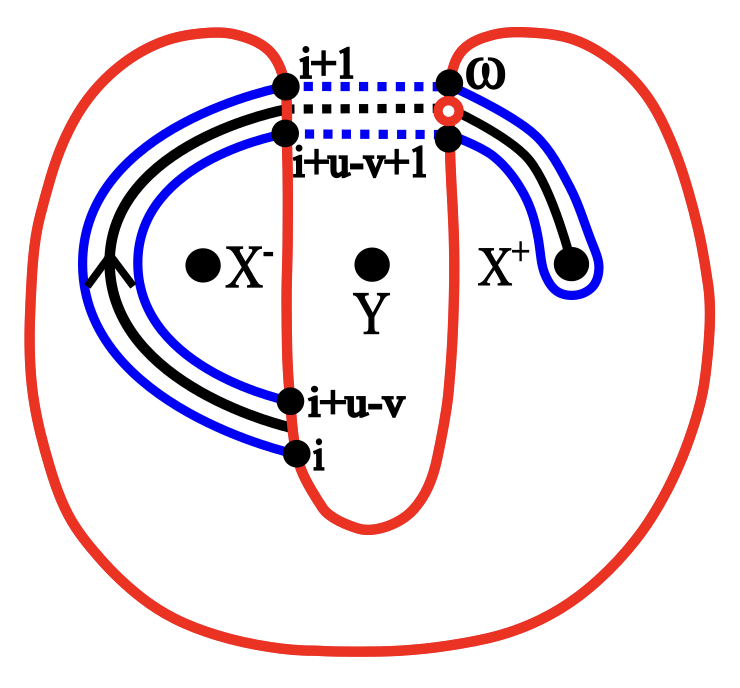}}}
\]

It is not too difficult to see that the rule for labeling intersection points given previously results in the labeling as shown, where the left-middle vertical part of $\beta'$ imitates $l_A$ in that its intersection with $\alpha_{(u-v)/v}$ on the bottom matches with $\xi_i$ and the intersection on the top matches with $\xi_{i+1}$. Thus, for $k\in\{i,i+1\}$, it is easy to see that
\[
\begin{cases}
    \Psi_{X^+}([\gamma^L_k])-\Psi_{X^+}([\gamma^T_k])=1 \\
    \Psi_{X^-}([\gamma^L_k])-\Psi_{X^-}([\gamma^T_k])=1 \\
    \Psi_{Y}([\gamma^L_k])-\Psi_{Y}([\gamma^T_k])=1
\end{cases}
\qquad
\text{and} 
\qquad
\begin{cases}
    \Psi_{X^+}([\gamma^L_{k+u-v}])-\Psi_{X^+}([\gamma^T_k])=0 \\
    \Psi_{X^-}([\gamma^L_{k+u-v}])-\Psi_{X^-}([\gamma^T_k])=1 \\
    \Psi_{Y}([\gamma^L_{k+u-v}])-\Psi_{Y}([\gamma^T_k])=1.
\end{cases}
\]
Following the same procedure in the case where $\alpha_{u/v}$ passes through one of the rectangles going the opposite direction yields the same result. It is also not difficult to see that we get the same equations when $k=1$. This proves the lemma for the $\mathfrak{A}$ and $\mathfrak{B}$ blocks (recall the $\delta_{i,A}\delta_{\omega,I}$ term in the formula of $S$ for tangles).

Now, we consider the blocks $\mathfrak{C}$ and $\mathfrak{D}$---the ones coming from the inactive side for $\tau_{(u-v)/v}$. We will follow the same procedure. In this case, if $\alpha_{u/v}$ passes up through one of the rectangles coming from the inactive side, then the picture we get from the untwisting isotopy looks like
\[
\vcenter{\hbox{\includegraphics[height=4cm,angle=0]{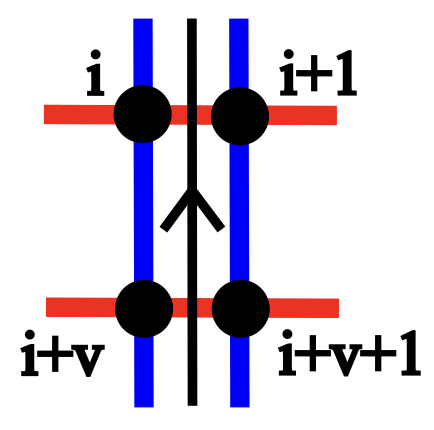}}}
\xrightarrow{\text{isotopy}}
\vcenter{\hbox{\includegraphics[height=7cm,angle=0]{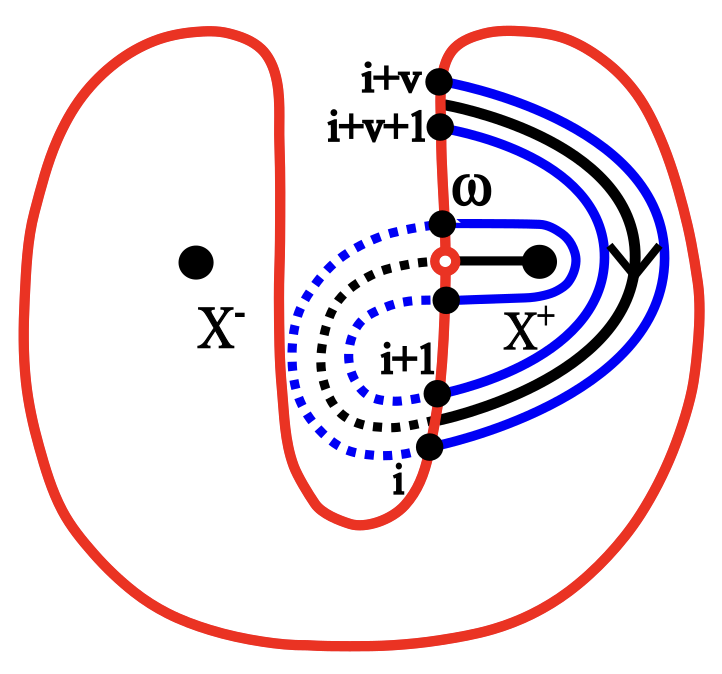}}}.
\]
Here, notice that the right-middle part of the red Lagrangian imitates $l_I$ so that its intersection on the top with $\alpha_{(u-v)/v}$ corresponds with some $\xi_j$ and the bottom intersection with $\xi_{j+1}$ (using the orientation induced by $\prec$). Abusing notation slightly, we will call $j$ ``i'' so that we can write the same sorts of equations we did for the active side. Then, for $k\in\{i,i+1\}$, we see
\[
\begin{cases}
    \Psi_{X^+}([\gamma^L_k])-\Psi_{X^+}([\gamma^T_k])=0 \\
    \Psi_{X^-}([\gamma^L_k])-\Psi_{X^-}([\gamma^T_k])=1 \\
    \Psi_{Y}([\gamma^L_k])-\Psi_{Y}([\gamma^T_k])=0
\end{cases}
\qquad
\text{and} 
\qquad
\begin{cases}
    \Psi_{X^+}([\gamma^L_{k+v}])-\Psi_{X^+}([\gamma^T_k])=0 \\
    \Psi_{X^-}([\gamma^L_{k+v}])-\Psi_{X^-}([\gamma^T_k])=0 \\
    \Psi_{Y}([\gamma^L_{k+v}])-\Psi_{Y}([\gamma^T_k])=0
\end{cases}.
\]
Once again, this can also be easily determined if $\alpha_{(u-v)/v}$ is pointing down through the rectangle and when $k=2u-v$, which proves the lemma for the $\mathfrak{C}$ and $\mathfrak{D}$ blocks.
\end{proof} 

All that remains for us to show now is that the off-diagonal entries of $Q$ agree with the one from Lemma \ref{WSclosureformulaL} or can be permuted to agree by Theorem \ref{permthm}. The sequence of steps will be similar to what we did for knots in Section \ref{pfknotssec}.

Similar to what we did in Section \ref{knotssec}, we will need to compare matrices determined by fixed orderings of the $\lambda_i$ and $\xi_i$. We will reuse some notation from the last section to emphasize the parallels between the knots and links, but it should be noted how our new matrices differ from the ones in Section \ref{knotssec}. 

\begin{definitionN}
    For a rational link $L_{u/v}=\text{Cl}(T\tau_{(u-v)/v})$, let $Q_{(u-v)/v}^T$ denote the $u\times u$ matrix for $\tau_{(u-v)/v}$ computed by Proposition \ref{tangleHOMFLYpoly} with $\mathfrak{X}$ and $\mathfrak{Y}$ independently ordered by $\prec$ with elements of $\mathfrak{X}$ coming before elements of $\mathfrak{Y}$.
\end{definitionN}

Thus, $Q_{(u-v)/v}^T$ is the matrix in Lemma \ref{WSclosureformulaL} coming from the tangle side, where we have now determined the blocks for $\mathfrak{X}$ and $\mathfrak{Y}$. We can then apply Lemma \ref{WSclosureformula} to get the following $2u\times 2u$ matrix.

\begin{definitionN}
    Let $Q_{u/v}$ be the $2u \times 2u$ matrix obtained by applying Lemma \ref{WSclosureformulaL} to $Q_{(u-v)/v}^T$.
\end{definitionN}

Observe that $Q_{u/v}$ has the same $4\times 4$ block structure as $Q_{u/v}^L$, so we can compare blocks the same way we did with $Q_{u/v}$ and $Q_{u/v}^K$ in Section \ref{knotssec}.

\begin{lemmaN}
  If $L_{u/v}=\text{Cl}(T\tau_{(u-v)/v})$, where $\tau_{(u-v)/v}$ has $UP$ orientation, then the top left $2\times 2$ blocks of $Q_{u/v}^L$ and $Q_{u/v}$ agree. If $\tau_{(u-v)/v}$ has $RI$ orientation, then the same holds for the lower right $2\times 2$ blocks.  
\end{lemmaN}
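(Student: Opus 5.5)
The plan mirrors the proof of Lemma \ref{Qplusblockknots} for knots. I treat the $UP$ case in detail; the $RI$ case is symmetric, with active and inactive roles exchanged and the lower right $2\times 2$ blocks in place of the upper left ones. By Lemma \ref{linkslinform}, the diagonals of $Q_{u/v}^L$ and $Q_{u/v}$ already agree on $\mathfrak{A}\sqcup\mathfrak{B}$. Both matrices are computed by the rule ``diagonal entry plus a correction,'' namely $Q_{ij}=Q_{ii}+(\Phi-2\Psi_{X^+})([\gamma_{j,i}])$ for the link and the analogous formula of Proposition \ref{tangleHOMFLYpoly} for the tangle. So it suffices to compare the corrections $(\Phi-2\Psi_{X^+})$ evaluated on the link loops $\gamma^L_{j,i}$ with those on the tangle loops $\gamma^T_{j,i}$ for $\lambda_i,\lambda_j\in\mathfrak{A}\sqcup\mathfrak{B}$. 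Since $\xi_i,\xi_j$ are both active, the $\delta$ term in the tangle formula vanishes.

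First I handle the diagonal blocks $(1,1)$ and $(2,2)$. After the untwisting isotopy, cut $\overline{\alpha_{u/v}}$ at $X^+$ as well as at the hole near $X^-$. This leaves two arcs parallel to $\alpha_{(u-v)/v}$, with the points of $\mathfrak{A}$ on one and those of $\mathfrak{B}$ on the other. As in Figure \ref{7_3bendfig}, I isotope the band $\beta'$ near the middle puncture so that the relevant portion of $\beta'$ becomes a vertical segment imitating $l_A$. For $\lambda_i,\lambda_j\in\mathfrak{A}$, the loop $\gamma^L_{j,i}$ in $\text{Conf}^2(M)$ then runs parallel to $\gamma^T_{j,i}$, so $\Phi$ and every $\Psi_W$ agree on them. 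I would use the inner/outer placement rule on $\beta'_1,\beta'_2$, together with the ordering arrow, to check that the basepoint placement matches the tangle convention (the lower point goes on the leftmost vertical). The same argument works for $\mathfrak{B}$ after bending $\beta'$ the other way. Hence the $(1,1)$- and $(2,2)$-blocks are $Q_{++}$ shifted by the diagonal shifts, namely $-1$ and $+2$, exactly as in Lemma \ref{WSclosureformulaL}.

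Next come the off-diagonal blocks $(1,2)$ and $(2,1)$. I would prove a local statement analogous to the $4\times 4$ computation in Lemma \ref{Qplusblockknots}. For $\xi_i\prec\xi_j$ active, with $\lambda_i,\lambda_j\in\mathfrak{A}$ and partners $\lambda_{i+u-v},\lambda_{j+u-v}\in\mathfrak{B}$, I need the entries $Q_{i,i+u-v}$, $Q_{i,j+u-v}$ and $Q_{j,i+u-v}$ to reproduce $Q_{++}+U$ and $Q_{++}+L$. The tools are the following.
\begin{enumerate}
\item The loops $\gamma^L_{i,i+u-v}$ and $\gamma^L_{i+u-v,i}$ encircle $X^+$ once with opposite orientations. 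I expect $\Psi_{X^+}=\pm1$ on them, and I expect $\Phi$ to take the values $0$ and $1$, determined by which strand passes which.
\item There is a homology relation $\gamma^L_{j+u-v,i}\sim\sigma\cdot\gamma^L_{j,i}$, where $\sigma$ slides one point around $X^+$ along the blue band while the other point stays constant. This gives $\Phi([\sigma])=0$ and $\Psi_{X^+}([\sigma])=\pm1$, and therefore shifts the entry by $\mp2$.
\end{enumerate}
Combining these with the known diagonal shifts yields the $U$/$L$ pattern. By the symmetry of $Q$ (the links analogue of Proposition \ref{KnotQsymprop}), only one triangle needs to be checked.

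The main obstacle is the bookkeeping in this last step. Because $\beta'$ is now a closed band with a hole at the $\alpha_{u/v}$-crossing next to $X^+$, and $\lambda_\omega$ lies in $\mathfrak{D}$, I have to verify two things. First, I must confirm that the paths chosen by the holes really make $\sigma$ wind exactly once around $X^+$ and not around $Y$ or $X^-$. Second, I must make sure that $U$ and $L$ land in the correct triangle, which depends on the inner/outer placement convention for $\beta'_1,\beta'_2$. I would settle the signs by first checking an explicit example, say $L_{4/1}=\text{Cl}(T\tau_{3/1})$, and then arguing that the local picture around each rectangle is independent of the global shape of $\alpha_{(u-v)/v}$.
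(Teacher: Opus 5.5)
Your proposal follows the paper's proof: the $(1,1)$- and $(2,2)$-blocks come from the $\text{Conf}^2(M)$ loops running parallel to the tangle loops after the untwisting isotopy, combined with the diagonal shifts $-1$ and $+2$ from Lemma \ref{linkslinform}, and the $(1,2)$- and $(2,1)$-blocks are handled as in Lemma \ref{Qplusblockknots} via the loops around $X^+$ and the splitting $\gamma^L_{j+u-v,i}\sim\sigma\cdot\gamma^L_{j,i}$. One sign needs correcting when you do your check: since $[\gamma^L_{i,i+u-v}]=-[\gamma^L_{i+u-v,i}]$ in $H_1(M)$, the $\Psi_{X^+}$ terms cancel when you add the two required offsets $Q_{i,i+u-v}-Q_{ii}=1$ and $Q_{i,i+u-v}-Q_{i+u-v,i+u-v}=-2$, so the two $\Phi$-values must sum to $-1$ (e.g.\ $-1$ and $0$) rather than being $0$ and $1$ as you anticipate (the values quoted in the proof of Lemma \ref{Qplusblockknots} contain the same slip).
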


\begin{proof}
    Consider the $UP$ case---the $RI$ case is essentially the same, as usual. This is trivial to see for the $(1,1)$ and the $(2,2)$ blocks. In particular, once the untwisting isotopy is applied, the loops in $\text{Conf}^2(M)$ used to compute the off-diagonal entries are perfectly parallel to the corresponding ones used to compute the same entries in the $(1,1)$-block, or $Q_{++}$, for $\tau_{(u-v)/v}$. Then, we are done for these blocks since we already know that the diagonal entries have the appropriate shift.

    The argument that the $(1,2)$- and $(2,1)$- blocks of $Q_{u/v}^L$ and $Q_{u/v}$ also agree is similar to the one used in the proof of Lemma \ref{Qplusblockknots}. 
\end{proof}

Now, it remains to show that the remaining block matrices can be permuted according to Theorem \ref{permthm} to agree with Theorem \ref{WSclosureformulaL}. Recall Lemma \ref{tQmswap}, where we applied permutations to the $Q_{--}$ block in $Q_{(u-v)/v}^T$ if $\tau_{(u-v)/v}$ has $UP$ orientation. We will need to use the same result in this section. The $Q_{(u-v)/v}^T$ used in the last section is different from the one used here, but the blocks being permuted in $Q_{(u-v)/v}^T$ are the same, and the same argument used in the proof of Lemma \ref{tQmswap} applies to our new matrices.

\begin{definitionN}
    Let $\widetilde{Q}_{(u-v)/v}^T$ be the result of applying the permutations of Lemma \ref{tQmswap} to $Q_{(u-v)/v}^T$. Furthermore, let $\widetilde{Q}_{u/v}$ be the $2u \times 2u$ matrix obtained by applying the $\text{Cl}(T-)$ operation of Lemma \ref{WSclosureformulaL} to $\widetilde{Q}_{(u-v)/v}$.
\end{definitionN}

It is important to note that, in this new setting, the $Q_{--}$ block for $Q_{(u-v)/v}^T$ splits into the lower right $2\times 2$ blocks of $Q_{u/v}$, so $\widetilde{Q}_{u/v}$ has these same permutations within each of those blocks.

\begin{lemmaN}
\label{Lbottomblockperm}
    If $L_{u/v}=\text{Cl}(T\tau_{(u-v)/v})$ where $\tau_{(u-v)/v}$ has $UP$ orientation, then the lower right $2\times 2$ block of matrices in $Q_{u/v}^L$ and $\widetilde{Q}_{u/v}$ agree, up to an overall reordering of the indices. The result is similar if $\tau_{(u-v)/v}$ has $RI$ orientation, but with the top left $2\times 2$ block of matrices.
\end{lemmaN}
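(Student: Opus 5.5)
I would mirror the proof of Lemma \ref{Qmmagree}, working in the $UP$ case; the $RI$ case is symmetric, with the roles of active and inactive sides swapped. The lower right $2\times 2$ block of $Q_{u/v}^L$ is indexed by $\mathfrak{C}\sqcup\mathfrak{D}$, which are exactly the intersections paired with the inactive points $\mathfrak{Y}$ of $\tau_{(u-v)/v}$. Each inactive $\xi_k$ gives the two points $\lambda_{2(u-v)+k}$ and $\lambda_{2u-v+k}$. By Lemma \ref{linkslinform}, the diagonals of the two lower right $2\times 2$ block matrices already agree. So it suffices to compare off-diagonal entries, and I would do this on small submatrices indexed by the points coming from a pair of inactive semicircles.

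First, I would prove a links analogue of Lemma \ref{4x4blocklem}. Take inactive points $\xi_i,\xi_{i'},\xi_j,\xi_{j'}$ related as in Figure \ref{tQmswapfig}, with $\xi_i\prec\xi_j$. The corresponding points of $\mathfrak{C}\sqcup\mathfrak{D}$ come from two of the blue/red rectangles of four points. I would compute the $8\times 8$ submatrix of $Q_{u/v}^L$ indexed by these points, using the formula $Q_{ab}=Q_{aa}+(\Phi-2\Psi_{X^+})([\gamma_{b,a}])$.

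The key tool is the same comparison device used in the proof of Lemma \ref{Qplusblockknots}. After the untwisting isotopy, a loop $\gamma^L_{b,a}$ between points of the same rectangle differs from the corresponding tangle loop only near $X^+$. Between different rectangles, the loop factors, up to homology, as a short loop $\sigma$ around $X^+$ times a loop parallel to $\gamma^T$. For each $\sigma$ one reads off $\Phi([\sigma])\in\{0,1\}$ and $\Psi_{X^+}([\sigma])=\pm1$. This should show that, up to conjugation by a permutation matrix, the submatrix has the shape of (\ref{4x4blockstructure}) in each $\mathfrak{C}/\mathfrak{D}$ block. The cross entries between the two rectangles should then equal those of the $\mathrm{Cl}(T-)$ image of (\ref{4x4submattangle}), except that the underlined pairs $A+1\leftrightarrow A+2$ are swapped.

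Second, I would compute the same submatrix of $\widetilde{Q}_{u/v}$. By Lemma \ref{WSclosureformulaL}, each of the $(3,3)$, $(3,4)$, $(4,3)$ and $(4,4)$ blocks is a uniform shift of $Q_{--}$ of $\widetilde{Q}_{(u-v)/v}^T$, with the shifts $+1$, $+L$, $+U$, $0$. Since $\widetilde{Q}_{(u-v)/v}^T$ already carries the swaps $(i,j)$ for $\xi_i\prec\xi_j$ from Lemma \ref{tQmswap}, the permuted form (\ref{4b1}) propagates into all four blocks.

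The third step is to match the two submatrices entrywise, after reordering the points inside each rectangle. The diagonal shifts are fixed by Lemma \ref{linkslinform}, so this reduces to checking the differences from the diagonal. It also requires the links analogue of the relations used in (1)--(4) of Lemma \ref{Qplusblockknots}, now for the two points of a rectangle lying in the same block of $\mathfrak{C}$ or $\mathfrak{D}$. The $L$ and $U$ entries are then handled exactly as in that lemma.

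The main obstacle is this bookkeeping. There are four orientation cases for the pair of arcs, as in Figure \ref{tangleknotpairs}, and the labeling convention for the rectangles differs between $\mathfrak{C}$ and $\mathfrak{D}$. Moreover, the base-point placement on $\beta_1'$ versus $\beta_2'$ is dictated by the cyclic ordering around $\beta'$ rather than by left–right position. I would first fix one orientation case and draw the loops explicitly, checking it against the $L_{8/3}$ labeling. I would then argue that reversing the orientation of either arc changes both computations by the same transposition, which is absorbed into the ``up to reordering'' clause.
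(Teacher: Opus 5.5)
Your overall architecture matches the paper's: diagonals from Lemma \ref{linkslinform}, a $4\times 4$ check inside a single rectangle, and an $8\times 8$ comparison for pairs of rectangles, with the Lemma \ref{tQmswap} swaps handling the case $\xi_i\prec\xi_j$. However, there is a genuine gap in your third step. You describe the $(3,4)$- and $(4,3)$-blocks $Q_{--}+L$ and $Q_{--}+U$ as uniform shifts, and claim the $L$ and $U$ entries are handled exactly as in Lemma \ref{Qplusblockknots}. But $L$ and $U$ are defined relative to the order of the basis. On the inactive side, the standard ordering of the $\lambda_i$ (rectangles ordered by $\prec$) does not produce them. Take two rectangles coming from nested semicircles, with the inner one first. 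Then the entries of $Q^L_{u/v}$ linking the $\mathfrak{C}$-points of one rectangle to the $\mathfrak{D}$-points of the other carry the transposed pattern, that of $U$ rather than $L$. You can see this in the ($RI$) example $L_{8/3}$. There the red entries of the $(1,2)$-block of $Q^L_{8/3}$ exceed $Q_{++}+L$ by $1$ in positions $(1,3),(1,4),(2,3),(2,4)$ and fall short by $1$ in positions $(3,1),(3,2),(4,1),(4,2)$. Reordering the points inside a rectangle cannot fix this, and neither can the transpositions you attribute to reversing arc orientations. The discrepancy sits entirely in entries between two different rectangles.

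The missing idea is the paper's global reordering by nesting. The semicircles to the right of $l_I$ are concentric, so nesting totally orders them. One reorders the inactive $\xi$'s so that the outer semicircle's pair precedes the inner one's, and transports the same reordering to $\mathfrak{C}$ and to $\mathfrak{D}$. This must be done on the tangle basis before applying $\mathrm{Cl}(T-)$, with $Q^L_{u/v}$ conjugated correspondingly. The reason is that for a permutation matrix $P$ one has $P(Q_{--}+L)P^T\neq PQ_{--}P^T+L$ in general. So the ``reordering'' in the statement is not a conjugation of one fixed $\widetilde{Q}_{u/v}$. It is a comparison with the $\widetilde{Q}_{u/v}$ built from the nesting-ordered tangle basis. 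That matrix still gives a valid quiver form, since the left side of Lemma \ref{algidentity} is symmetric in the $d_i$, so the identity holds for any ordering. Only with this ordering does your $8\times 8$ computation give agreement when $\xi_j\prec\xi_i$, and agreement after the Lemma \ref{tQmswap} swaps when $\xi_i\prec\xi_j$. Finally, your plan only treats points lying in rectangles. The rows and columns of $\lambda_\omega$ and its partner $\lambda_{2u-v}$ come from the arc to $X^+$, so they need a separate, though easy, check.
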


\begin{proof}

\begin{figure}
    \centering
    \includegraphics[height=10cm]{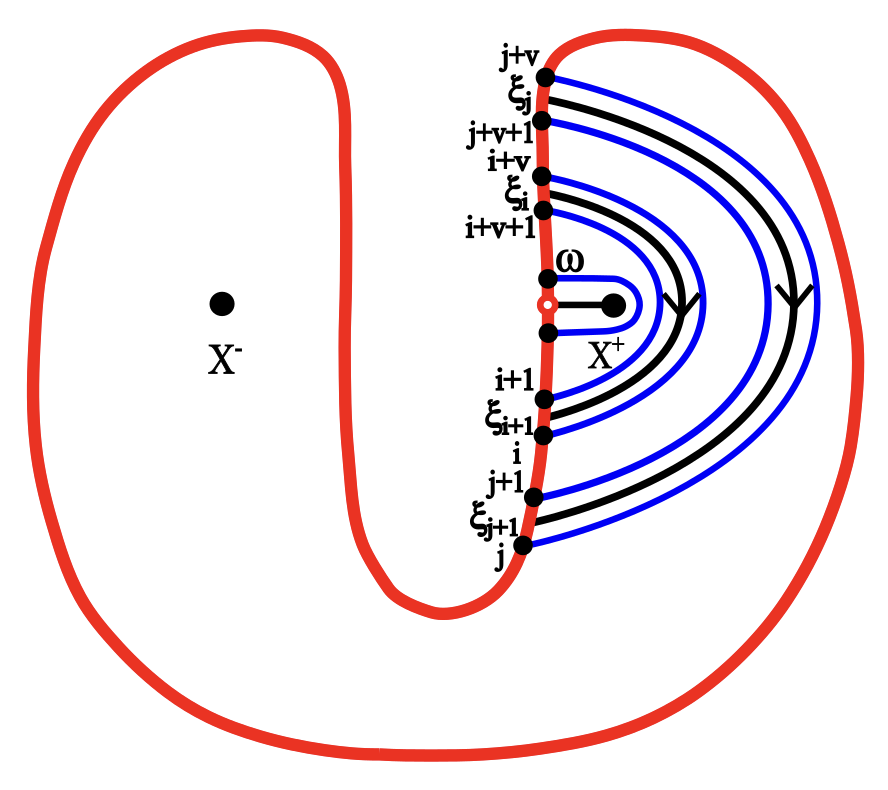}
    \caption{The geometric picture for considering off-diagonal entries of $Q$ in the proof of Lemma \ref{Lbottomblockperm} (with particular choice of orientation)}
    \label{LQmmperm}
\end{figure}

    Considering the $UP$ case, if $\lambda_i,\lambda_{i+1},\lambda_{i+v},$ and $\lambda_{i+v+1}$ are related as in the second figure from the proof of Lemma \ref{linkslinform}, a quick computation shows that the corresponding $4\times 4$ submatrix of $Q_{u/v}^L$ agrees with the same submatrix of $\widetilde{Q}_{u/v}$, regardless of the orientation on $\alpha_{(u-v)/v}$. In particular, we have

    \begin{equation}
\begin{blockarray}{ccc}
    & i & i+1  \\
    \begin{block}{c[cc]}
        i & A+2 & A\\
        i+1 & A & A-1\\
    \end{block}
\end{blockarray}
\xrightarrow{Cl(T-)}
\begin{blockarray}{ccccc}
    & i & i+1 & i+v & i+v+1 \\
    \begin{block}{c[cccc]}
        i & A+3 & A+1 & A+2 & A \\
        i+1 & A+1 & A & A+1 & A-1\\
        i+v & A+2 & A+1 & A+2 & A\\
        i+v+1 & A & A-1 & A & A-1\\
    \end{block}
\end{blockarray}
\end{equation}
or
    \begin{equation}
\begin{blockarray}{ccc}
    & i & i+1  \\
    \begin{block}{c[cc]}
        i & A-1 & A\\
        i+1 & A & A+2\\
    \end{block}
\end{blockarray}
\xrightarrow{Cl(T-)}
\begin{blockarray}{ccccc}
    & i & i+1 & i+v & i+v+1 \\
    \begin{block}{c[cccc]}
        i & A & A+1 & A-1 & A \\
        i+1 & A+1 & A+3 & A+1 & A+2\\
        i+v & A-1 & A+1 & A-1 & A\\
        i+v+1 & A & A+2 & A & A+2\\
    \end{block}
\end{blockarray}
\end{equation}
depending on the orientation of $\alpha_{(u-v)/v}$ along the arc between the tangle intersection points $\xi_i$ and $\xi_{i+1}$, where the computations from Theorem \ref{Linkthm} agrees with the matrices on the right.

We must still consider entries in this submatrix of $Q_{u/v}^L$ that aren't related geometrically in this simple way. It is easy to check that any entry of $Q_{u/v}^L$ with row or column indexed by $\omega$ or its paired point next to $X^+$ agrees with the corresponding entries of $\widetilde{Q}_{u/v}$. Any other sort of entry fits into an $8\times 8$ submatrix given geometrically by a picture as shown in Figure \ref{LQmmperm}. There are four possible ways the two parallel arcs can be oriented, but only one will be considered explicitly.

In order to guarantee the ``$+L$'' in the $(3,4)$-block of $Q$ (or the ``$+U$'' in the $(4,3)$-block), we will need to reorder the $\lambda_i$ from the standard ordering. In fact, the reordering of the $\lambda_i$ can be defined in terms of a reordering of the 
inactive $\xi_i$ because we want the same reordering within $\mathfrak{C}$ and $\mathfrak{D}$. In particular, if the pairs $(\xi_i,\xi_{i+1})$ and $(\xi_j,\xi_{j+1})$ are such that the arc between the second pair wraps around the arc connecting the first pair, then we reorder the indices so that $j$ and $j+1$ come before $i$ and $i+1$. By the doubling of $\mathfrak{Y}$ to $\mathfrak{C}$ and $\mathfrak{D}$, this reordering extends to a reordering within $\mathfrak{C}$ and $\mathfrak{D}$.

Assuming we have this ordering fixed, we can compute what we expect for the $8\times 8$ submatrix of $Q$ with rows and columns indexed by $\lambda_j,\lambda_{j+1},\lambda_i,\lambda_{i+1},$ and the shifts of these by $v$. The precise way that this matrix looks depends on the orientations of the two relevant arcs, but in the case that both are pointing down, as in Figure \ref{LQmmperm}, Lemma \ref{WSclosureformulaL} gives

\begin{multline}
\begin{blockarray}{ccccc}
    & j & j+1 & i & i+1 \\
    \begin{block}{c[cccc]}
        j & B-1 & B & \textcolor{red}{C-1} & \textcolor{red}{C+1}\\
        j+1 & B & B+2 & \textcolor{red}{C} & \textcolor{red}{C+2}\\
        i & \textcolor{red}{C-1} & \textcolor{red}{C} & A-1 & A\\
        i+1 & \textcolor{red}{C+1} & \textcolor{red}{C+2} & A & A+2\\
    \end{block}
\end{blockarray}\\
\xrightarrow{Cl(T-)}
\begin{blockarray}{ccccccccc}
    & j & j+1 & i & i+1 & j+v & j+v+1 & i+v & i+v+1\\
    \begin{block}{c[cccccccc]}
        j & B & B+1 & \textcolor{red}{C} & \textcolor{red}{\underline{C+2}} & B-1 & B & \textcolor{red}{C-1} & \textcolor{red}{\underline{C+1}}\\
        j+1 & B+1 & B+3 & \textcolor{red}{\underline{C+1}} & \textcolor{red}{C+3} & B+1 & B+2 & \textcolor{red}{\underline{C}} & \textcolor{red}{C+2} \\
        i & \textcolor{red}{C} & \textcolor{red}{\underline{C+1}} & A & A+1 & \textcolor{red}{C} & \textcolor{red}{\underline{C+1}} & A-1 & A \\
        i+1 & \textcolor{red}{\underline{C+2}} & \textcolor{red}{C+3} & A+1 & A+3 & \textcolor{red}{\underline{C+2}} & \textcolor{red}{C+3} & A+1 & A+2\\
        j+v & B-1 & B+1 & \textcolor{red}{C} & \textcolor{red}{\underline{C+2}} & B-1 & B & \textcolor{red}{C-1} & \textcolor{red}{\underline{C+1}}\\
        j+v+1 & B & B+2 & \textcolor{red}{\underline{C+1}} & \textcolor{red}{C+3} & B & B+2 & \textcolor{red}{\underline{C}} & \textcolor{red}{C+2}\\
        i+v & \textcolor{red}{C-1} & \textcolor{red}{\underline{C}} & A-1 & A+1 & \textcolor{red}{C-1} & \textcolor{red}{\underline{C}} & A+1 & A\\
        i+v+1 & \textcolor{red}{\underline{C+1}} & \textcolor{red}{C+2} & A & A+2 & \textcolor{red}{\underline{C+1}} & \textcolor{red}{C+2} & A & A+2\\
    \end{block}
\end{blockarray}
\end{multline}
so that the $8\times 8$ matrix is a submatrix of the lower right $2\times 2$ blocks of $\widetilde{Q}_{u/v}$. Now, a tedious (but easy) calculation using Theorem \ref{Linkthm} shows that one gets the same submatrix of $Q_{u/v}^L$ if $\xi_j\prec\xi_i$; if $\xi_i\prec\xi_j$, then the underlined entries in each of the red $2\times 2$ blocks must be permuted. However, permuting these entries in the corresponding $4\times 4$ submatrix of $Q_{(u-v)/v}^T$ to get $\widetilde{Q}_{(u-v)/v}^T$ remedies this issue. The cases involving different orientations of the two arcs are similar.
\end{proof}

Finally, we need to consider how the remaining blocks of $Q_{u/v}^L$ and $\widetilde{Q}_{u/v}$ relate to each other. In particular, we still need to handle the top-right and bottom-left $2\times 2$ blocks, which, in the case of $\widetilde{Q}_{u/v}$ are shifts of $Q_{+-}$ or $Q_{-+}$. We want to show that we can permute entries from these blocks in $\widetilde{Q}_{u/v}$ to get $Q_{u/v}^L$, so that $Q_{u/v}^L\sim \widetilde{Q}_{u/v}$. This leads us to the final step in the proof of Theorem \ref{Linkthm}.


\begin{lemmaN}
\label{LQpmperm}
    The matrices $Q_{u/v}^L$ and $Q_{u/v}$ satisfy $Q_{u/v}^L\sim Q_{u/v}$
\end{lemmaN}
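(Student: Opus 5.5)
By the preceding lemmas, $Q_{u/v}\sim\widetilde{Q}_{u/v}$, and $\widetilde{Q}_{u/v}$ already agrees with $Q_{u/v}^L$ on the diagonal (Lemma \ref{linkslinform}), on the upper-left $2\times 2$ blocks of blocks, and on the lower-right $2\times 2$ blocks of blocks (Lemma \ref{Lbottomblockperm}, after reordering within $\mathfrak{C}$ and $\mathfrak{D}$). We treat the $UP$ case; the $RI$ case is symmetric. So it remains to show that the off-diagonal $2\times 2$ blocks of blocks can be transformed into those of $Q_{u/v}^L$ by a sequence of transpositions allowed by Theorem \ref{permthm}. These are the blocks indexed by $(\mathfrak{A}\sqcup\mathfrak{B})\times(\mathfrak{C}\sqcup\mathfrak{D})$ and their transposes, and in $\widetilde{Q}_{u/v}$ they are shifts of $Q_{+-}$ and $Q_{-+}$. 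The proof is modeled on that of Lemma \ref{Qpmperm}. We first identify which entries differ, and then order the transpositions so that the hypotheses of Theorem \ref{permthm} hold at every step.

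\emph{Identifying the discrepancies.} Fix an active tangle point $\xi_i$ and an inactive tangle point $\xi_j$. Their link partners are $\lambda_i,\lambda_{i+u-v}$ and $\lambda_{2(u-v)+j},\lambda_{2u-v+j}$. For the corresponding $4\times 4$ submatrix, the analogue of Lemma \ref{4x4blocklem} for links computes $Q_{u/v}^L$ directly from the loops $\tilde\gamma^L$. Concretely, one compares $\gamma^L_{b,a}$ and $\gamma^L_{b',a}$ through a factorization $\gamma^L_{b',a}\sim\sigma\cdot\gamma^L_{b,a}$, where $\sigma$ winds once around $X^+$ and has $\Phi([\sigma])=0$; this is exactly as in the proof of Lemma \ref{Qplusblockknots}. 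Comparing the result with the $4\times 4$ image of $\bigl[\begin{smallmatrix} Q_{ii} & Q_{ij}\\ Q_{ji} & Q_{jj}\end{smallmatrix}\bigr]$ under Lemma \ref{WSclosureformulaL}, I expect the following. The two matrices agree when $\xi_i\prec\xi_j$ (or in the opposite case, depending on the relative position). Otherwise they differ only by swapping the two cross entries that differ by $1$, namely $Q_{ab'}\leftrightarrow Q_{a'b}$ together with the symmetric pair, where $a,a'$ and $b,b'$ are the doubled partners. One has to check this for each of the orientations of the arcs through the relevant rectangles, and for the special points adjacent to $X^+$ including $\lambda_\omega$, which should again require no swap. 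The $\Lambda$ condition $\Lambda_a\Lambda_{b'}=\Lambda_{a'}\Lambda_b$ then follows from the linear forms in Lemma \ref{linkslinform}: the $S$ and $A$ shifts in $\mathfrak{A}$ versus $\mathfrak{B}$ and in $\mathfrak{C}$ versus $\mathfrak{D}$ combine to the same total.

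\emph{Ordering the transpositions.} This is the main obstacle. For each required transposition $(i,j)$ we must verify at the moment it is applied that
\[
Q_{bk}+Q_{a'k}=Q_{b'k}+Q_{ak}-\delta_{kb'}-\delta_{ka}
\]
holds for every index $k$. Since the required swap is determined geometrically, this should hold up to correction terms $\delta$ that count intermediate points between $\xi_i$ and $\xi_j$ with respect to $\prec$, as in \eqref{qpermeqndelta}. Those corrections must be cancelled exactly by the swaps performed earlier. I would use the lexicographic rule from Lemma \ref{Qpmperm}: perform $(i,k)$ before $(j,l)$ if $i>j$, and for equal first index take the smaller second index first. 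To justify this it suffices to analyze a generic configuration of two active and two inactive tangle points, which now gives a $16\times 16$ submatrix because of the doubling. By Lemma \ref{WSclosureformulaL}, however, that submatrix is determined by an $8\times 8$ pattern like the one in the proof of Lemma \ref{Qpmperm}, with each entry expanded into a $2\times 2$ block with fixed shifts. The check should therefore reduce to the commuting square $(j,k)\to(i,k)$, $(j,k)\to(j,l)\to(i,l)$ established there, applied blockwise to the $\mathfrak{A}/\mathfrak{B}$ and $\mathfrak{C}/\mathfrak{D}$ copies.

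One point needs particular care: the reordering of $\mathfrak{C}$ and $\mathfrak{D}$ introduced in Lemma \ref{Lbottomblockperm} must not interfere with this ordering. Since that reordering only conjugates by a permutation matrix, I would fix it first and then run the lexicographic sequence. Once this is done, $Q_{u/v}\sim\widetilde{Q}_{u/v}\sim Q_{u/v}^L$, which proves the lemma and with it Theorem \ref{Linkthm}.
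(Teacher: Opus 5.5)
Your reduction to $Q_{u/v}^L\sim\widetilde{Q}_{u/v}$ is the same as the paper's, but both remaining steps have a genuine gap. \emph{First, the transpositions you identify are not legal moves.} You pair $\xi_j$ with $\lambda_{2(u-v)+j}\in\mathfrak{C}$ and $\lambda_{2u-v+j}\in\mathfrak{D}$, and you look for the discrepancy inside the $4\times 4$ image of $\bigl[\begin{smallmatrix} Q_{ii} & Q_{ij}\\ Q_{ji} & Q_{jj}\end{smallmatrix}\bigr]$ under Lemma~\ref{WSclosureformulaL}. In the $UP$ case that lemma gives the cross block (rows in $\mathfrak{A},\mathfrak{B}$, columns in $\mathfrak{C},\mathfrak{D}$) as $\bigl[\begin{smallmatrix} c & c-1\\ c+1 & c\end{smallmatrix}\bigr]$ with $c=Q_{ij}$. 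The anti-diagonal entries differ by $2$ and the diagonal ones by $0$, so there is no pair with four distinct indices differing by $1$, which Theorem~\ref{permthm} requires. The $\Lambda$-condition also fails, contrary to your claim: by Lemma~\ref{linkslinform} the $A$-shift is $+2$ on $\mathfrak{A}$ and $0$ on $\mathfrak{B},\mathfrak{C},\mathfrak{D}$, so the two products of $\Lambda$'s have $a$-degrees differing by $2$. The needed swaps must therefore involve images of two \emph{different} tangle entries, as in the $3\times 3\to 4\times 4$ comparison of Lemma~\ref{Qpmperm}. The paper gets them from the geometric doubling. Cut $\overline{\alpha_{(u-v)/v}}$ near $X^+$ into two parallel copies of $\alpha_{(u-v)/v}$. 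The two copies $\lambda_{j'},\lambda_{j''}$ of an inactive $\xi_j$ then lie in the \emph{same} block ($\mathfrak{C}$ or $\mathfrak{D}$) with $|j''-j'|=1$, so under the algebraic pairing they correspond to two distinct inactive tangle points. The required moves are $Q_{i'j''}\leftrightarrow Q_{i''j'}$, with $\lambda_{i'},\lambda_{j'}$ on the same copy, exactly when $\xi_j\prec\xi_i$.

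\emph{Second, the ordering step is asserted rather than proved, and the blockwise reduction is not valid.} Two active and two inactive tangle points give an $8\times 8$ link-level submatrix, not $16\times 16$. The $8\times 8$ matrix in Lemma~\ref{Qpmperm} is already a post-closure matrix with the active points doubled, so it is not a tangle-level pattern you can expand into $2\times 2$ blocks. More importantly, the condition $Q_{ai}+Q_{bi}=Q_{ci}+Q_{di}-\delta_{ci}-\delta_{di}$ in Theorem~\ref{permthm} must hold for every index $i$. Swaps whose inactive copies lie in $\mathfrak{C}$ therefore interact with swaps whose copies lie in $\mathfrak{D}$, and nothing decouples. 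Handling this interaction is the content of the paper's ordering rule, which is not the knot rule you import. Within one block, the paper orders first by the inactive point and then by the active one. When $\lambda_{b'}\in\mathfrak{C}$ and $\lambda_{d'}\in\mathfrak{D}$ come from the same rectangle ($d'-b'=v$), the order of $(a,b)$ and $(c,d)$ is set by whether $a''=a'+(u-v)$ or $a''=a'-(u-v)$, that is, by which parallel copy carries the $\mathfrak{A}$-double. A lexicographic rule in the tangle points never uses this datum. You give no check that your order satisfies the hypotheses of Theorem~\ref{permthm} in that configuration.
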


\begin{proof}
As was indicated above, we need to show $Q_{u/v}^L\sim \widetilde{Q}_{u/v}$. Then the lemma (and Theorem \ref{Linkthm}) will follow since $\widetilde{Q}_{u/v}\sim Q_{u/v}$.

    First, we describe what permutations are required. If $\tau_{(u-v)/v}$ has $UP$ orientation, then each active intersection point $\xi_i$ doubles to give two intersection points $\lambda_{i'}$ and $\lambda_{i''}$ for $L_{u/v}$ such that $\lambda_{i'}\in \mathfrak{A}$ and $\lambda_{i''}\in\mathfrak{B}$ (so $i''-i'=u-v$), and each inactive intersection point $\xi_j$ is paired with two intersection points $\lambda_{j'}$ and $\lambda_{j''}$ for $L_{u/v}$ such that both are in $\mathfrak{C}$ or $\mathfrak{D}$ and $|j''-j'|=1$. We cannot say much more than this without considering several cases, due to the complexity of the system for labeling the $\lambda$'s. However, assuming we have applied the untwisting isotopy, if we cut $\overline{\alpha_{(u-v)/v}}$ near $X^+$ to get two parallel copies of $\alpha_{(u-v)/v}$ as we did in Section \ref{Knotquadformsec}, then we can say the following: if we assume $\lambda_{i'}$ and $\lambda_{j'}$ are on one of these parallel copies, and $\lambda_{i''}$ and $\lambda_{j''}$ are on the other, then the full set of required permutations are of the form $Q_{i'j''}\leftrightarrow Q_{i''j'}$ (and $Q_{j''i'}\leftrightarrow Q_{j'i''}$) where $\xi_j\prec\xi_i$. This is easy to verify by drawing pictures and applying the same arguments we have used numerous times already.

    To complete the proof, we need to determine an ordering of these permutations that is allowed by Theorem \ref{permthm}. For the sake of brevity, we will only provide the rule for determining the order. Suppose one needs to apply the two pairs of permutations $Q_{a'b''}\leftrightarrow Q_{a''b'}$/$Q_{b''a'}\leftrightarrow Q_{b'a''}$ and $Q_{c'd''}\leftrightarrow Q_{c''d'}$/$Q_{d''c'}\leftrightarrow Q_{d'c''}$ (so $\xi_b\prec \xi_a$ and $\xi_d\prec\xi_c$, with $\xi_a$ and $\xi_c$ active, and the other two inactive). Call these the $(a,b)$ and $(c,d)$ permutations, respectively. Then, if $\lambda_{b'}$ and $\lambda_{d'}$ are in the same block ($\mathfrak{C}$ or $\mathfrak{D}$), and if $b=d$, then $(a,b)$ should be performed before $(c,d)$ if $\xi_c\prec\xi_a$; if $b \neq d$, then $(a,b)$ should be performed before $(c,d)$ if $\xi_b\prec\xi_d$. 
    
    Otherwise, $\lambda_{b'}$ and $\lambda_{d'}$ are not in the same block. Without loss of generality, assume $\lambda_{b'},\lambda_{b''}\in \mathfrak{C}$ and $\lambda_{d'},\lambda_{d''}\in \mathfrak{D}$. If $d'-b'=d''-b''=v$ and we fix $b',b'',d',$ and $d''$ so that $b'-b''=d'-d''=1$, then the $(a,b)$ permutation is of the form $Q_{a'b''}\leftrightarrow Q_{((a'\pm (u-v))(b''+1))}$/$Q_{((b''+1)(a'\pm (u-v)))}\leftrightarrow Q_{b'a''}$, and similarly for $(c,d)$; the $(a,b)$ permutation should be performed first if $a''=a'+u-v$, and $(c,d)$ should be performed first if $a''=a-u+v$ (the same holds if we use $c''$ instead of $a''$). Otherwise, if we do not have this condition on $b',b'',d',$ and $d''$, then $(a,b)$ should be performed before $(c,d)$ if $\xi_b\prec\xi_d$. These rules are sufficient for determining the order of permutations giving $Q_{u/v}^L\sim \widetilde{Q}_{u/v}$.

\end{proof}

This concludes the proof of Theorem \ref{Linkthm}. As in Section \ref{knotssec}, we will now provide some examples.

\subsection{Example: Rational Torus Links}

By rational torus links, we mean the two-component links $L_{2u/1}=T(2,2u)$. Just as we saw for rational torus knots, Theorem \ref{Linkthm} predicts an easy formula for $S,A,$ and $Q$ in terms of the matrices we get for $\tau_{(2u-1)/1}$, which we briefly recall now.

\begin{figure}
    \centering
    \includegraphics[height=5cm]{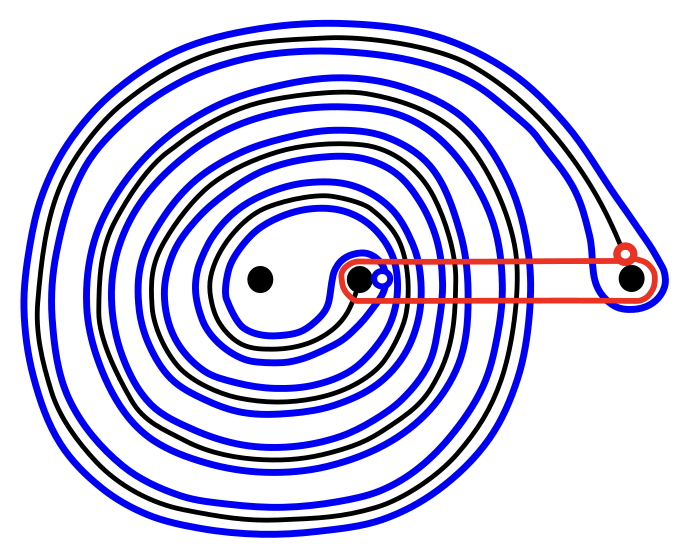}
    \caption{$\mathcal{D}(L_{8/1})$}
    \label{L81fig}
\end{figure}

In Example 6.2 of \cite{JHpI26}, it was shown that, given the same ordering of $\mathcal{G}_{n/1}^1=\mathfrak{X}\sqcup\mathfrak{Y}$ that we described earlier in Section \ref{linkssec} (order the active intersections, $\mathfrak{X}$, and inactive intersections, $\mathfrak{Y}$, independently with respect to $\prec$ with $\mathfrak{X}$ coming before $\mathfrak{Y}$), Proposition \ref{tangleHOMFLYpoly} gives 
\[
    \begin{cases}
        S_i=n-i+1\\
        A_i=0\\
        \begin{cases}
            Q_{ij}=n-k, \qquad & \text{if} \, (i,j)\in\mathcal{I}_k, 1 \leq k \leq n\\
            Q_{ij}=0,\qquad & \text{if} \, (i,j)\in\mathcal{I}_k, k=n+1,
        \end{cases}
    \end{cases}
    \]
where $\mathcal{I}_k$ is the same set defined in Section \ref{rattorusknots}. Observe that $|\mathfrak{X}|=n$ and $|\mathfrak{Y}|=1$.

Consider $L_{8/1}=\text{Cl}(T\tau_{7/1})$ as a concrete example. For the tangle $\tau_{7/1}$, we get

  \begin{figure}[H]
    \centering
\begin{minipage}[b]{.45\textwidth}

$\left[\begin{array}{ c | c }
   S & A \end{array}\right] =
  \left[\begin{array}{ c | c }
    7 & 0 \\
    6 & 0 \\
    5 & 0 \\
    4 & 0 \\
    3 & 0 \\
    2 & 0 \\
    1 & 0 \\
    \hline
    0 & 0 \\
   \end{array}\right]$
    \end{minipage}
    \begin{minipage}[b]{.45\textwidth}
    $ Q=\left[\begin{array}{ c c c c c c c | c }
    6 & 5 & 4 & 3 & 2 & 1 & 0 & 0\\
    5 & 5 & 4 & 3 & 2 & 1 & 0 & 0\\
    4 & 4 & 4 & 3 & 2 & 1 & 0 & 0\\
    3 & 3 & 3 & 3 & 2 & 1 & 0 & 0\\
    2 & 2 & 2 & 2 & 2 & 1 & 0 & 0\\
    1 & 1 & 1 & 1 & 1 & 1 & 0 & 0\\
    0 & 0 & 0 & 0 & 0 & 0 & 0 & 0\\
    \hline
    0 & 0 & 0 & 0 & 0 & 0 & 0 & 0
    \end{array}\right].$
    \end{minipage}
\end{figure}

by Proposition \ref{tangleHOMFLYpoly}. Then, if we apply Lemma \ref{WSclosureformulaL}, we get

\[
\left[\begin{array}{ c | c }
   S & A \end{array}\right]=\left[\begin{array}{ c c c c c c c | c c c c c c c | c | c}
    9 & 8 & 7 & 6 & 5 & 4 & 3 & 8 & 7 & 6 & 5 & 4 & 3 & 2 & 1 & 0\\
    \hline
    2 & 2 & 2 & 2 & 2 & 2 & 2 & 0 & 0 & 0 & 0 & 0 & 0 & 0 & 0 & 0
    \end{array}\right]^T
\]
and 
\[
Q_{8/1}= 
\left[\begin{array}{ c c c c c c c | c c c c c c c | c | c}
    5 & 4 & 3 & 2 & 1 & 0 & -1 & 6 & 6 & 5 & 4 & 3 & 2 & 1 & 0 & -1\\
    4 & 4 & 3 & 2 & 1 & 0 & -1 & 5 & 5 & 5 & 4 & 3 & 2 & 1 & 0 & -1\\
    3 & 3 & 3 & 2 & 1 & 0 & -1 & 4 & 4 & 4 & 4 & 3 & 2 & 1 & 0 & -1\\
    2 & 2 & 2 & 2 & 1 & 0 & -1 & 3 & 3 & 3 & 3 & 3 & 2 & 1 & 0 & -1\\
    1 & 1 & 1 & 1 & 1 & 0 & -1 & 2 & 2 & 2 & 2 & 2 & 2 & 1 & 0 & -1\\
    0 & 0 & 0 & 0 & 0 & 0 & -1 & 1 & 1 & 1 & 1 & 1 & 1 & 1 & 0 & -1\\
    -1 & -1 & -1 & -1 & -1 & -1 & -1 & 0 & 0 & 0 & 0 & 0 & 0 & 0 & 0 & -1\\
    \hline
    6 & 5 & 4 & 3 & 2 & 1 & 0 & 8 & 7 & 6 & 5 & 4 & 3 & 2 & 1 & 0\\
    6 & 5 & 4 & 3 & 2 & 1 & 0 & 7 & 7 & 6 & 5 & 4 & 3 & 2 & 1 & 0\\
    5 & 5 & 4 & 3 & 2 & 1 & 0 & 6 & 6 & 6 & 5 & 4 & 3 & 2 & 1 & 0\\
    4 & 4 & 4 & 3 & 2 & 1 & 0 & 5 & 5 & 5 & 5 & 4 & 3 & 2 & 1 & 0\\
    3 & 3 & 3 & 3 & 2 & 1 & 0 & 4 & 4 & 4 & 4 & 4 & 3 & 2 & 1 & 0\\
    2 & 2 & 2 & 2 & 2 & 1 & 0 & 3 & 3 & 3 & 3 & 3 & 3 & 2 & 1 & 0\\
    1 & 1 & 1 & 1 & 1 & 1 & 0 & 2 & 2 & 2 & 2 & 2 & 2 & 2 & 1 & 0\\
    \hline
    0 & 0 & 0 & 0 & 0 & 0 & 0 & 1 & 1 &1 & 1 & 1 & 1 & 1 & 1 & 0\\
    \hline
    -1 & -1 & -1 & -1 & -1 & -1 & -1 & 0 & 0 & 0 & 0 & 0 & 0 & 0 & 0 & 0
    \end{array}\right].
\]
We can also use Theorem \ref{Linkthm} to compute $S,A,$ and $Q$, and it turns out that these agree exactly with the data given above. Importantly, $Q_{8/1}^L=Q_{8/1}$, like we had for knots. The analogy with rational torus knots extends further to the following proposition.

\begin{propositionN}
\label{toruslinkprop}
For a rational torus link $L_{2n/1}$, we have $Q_{2n/1}^L=Q_{2n/1}$.
\end{propositionN}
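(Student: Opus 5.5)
The proof will follow the proof of Theorem \ref{Linkthm} and check that, for $\tau_{(2n-1)/1}$, every correction step (both the permutations and the reorderings) is vacuous. Write $L_{2n/1}=\text{Cl}(T\tau_{(2n-1)/1})$. The tangle $\tau_{(2n-1)/1}=T^{2n-1}\tau_{0/1}$ has $UP$ orientation, $2n-1$ active intersection points, and a single inactive intersection point $\xi_\omega$. So in the notation of Section \ref{linkssec}, $|\mathfrak{X}|=2n-1$ and $|\mathfrak{Y}|=1$. After $\text{Cl}(T-)$ the blocks have sizes $|\mathfrak{A}|=|\mathfrak{B}|=2n-1$ and $|\mathfrak{C}|=|\mathfrak{D}|=1$, with $\mathfrak{D}=\{\lambda_\omega\}$.

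The first step is to dispose of the blocks already handled exactly. Lemma \ref{linkslinform} gives agreement of $S$, $A$ and the diagonal of $Q$, with no reordering. The lemma on the top left $2\times 2$ blocks gives exact agreement of the $\mathfrak{A},\mathfrak{B}$ blocks of $Q^L_{2n/1}$ and $Q_{2n/1}$ in the $UP$ case; that lemma uses the standard ordering and no permutations.

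The second step treats the lower right $2\times 2$ blocks, indexed by $\mathfrak{C}\sqcup\mathfrak{D}$. These form a $2\times 2$ matrix. Lemma \ref{tQmswap} needs four distinct inactive points $\xi_i,\xi_{i'},\xi_j,\xi_{j'}$, so with one inactive point it produces no swaps, and $\widetilde{Q}^T_{(2n-1)/1}=Q^T_{(2n-1)/1}$. Likewise, the reordering in the proof of Lemma \ref{Lbottomblockperm} needs two nested inactive arcs, so it is empty. The only entries to check are those involving $\lambda_\omega$ and its partner next to $X^+$. The proof of Lemma \ref{Lbottomblockperm} already notes that these agree with $\widetilde{Q}_{u/v}=Q_{u/v}$. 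I would confirm directly that $(\Phi-2\Psi_{X^+})$ of the loop $\gamma_{\omega,2n-1}$ gives the off-diagonal value $1$ that Lemma \ref{WSclosureformulaL} predicts (the $+L$/$+U$ entry, which in the $L_{8/1}$ example reads $Q_{15,16}=1$ relative to $Q_{16,16}=0$).

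The third step, which I expect to be the main obstacle, is the off-diagonal blocks pairing $\mathfrak{A}\sqcup\mathfrak{B}$ with $\mathfrak{C}\sqcup\mathfrak{D}$. In general, the proof of Lemma \ref{LQpmperm} performs swaps $Q_{i'j''}\leftrightarrow Q_{i''j'}$ whenever an inactive $\xi_j$ precedes an active $\xi_i$ with respect to $\prec$. Here the only inactive point is $\xi_\omega$, the $\prec$-maximal element, so $\xi_\omega\prec\xi_i$ never holds and the set of required swaps is empty. It then remains to verify directly, from the formula $Q_{ij}=Q_{ii}+(\Phi-2\Psi_{X^+})([\gamma_{j,i}])$, that no reordering is needed and that the entries match the pattern $Q_{+-},Q_{+-}-1,Q_{+-}+1,Q_{+-}$ of Lemma \ref{WSclosureformulaL}. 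With all tangle entries $Q_{k\omega}=0$, this means the $\mathfrak{A}\times\mathfrak{C}$ row is $0$, the $\mathfrak{A}\times\mathfrak{D}$ row is $-1$, and so on. For this I would draw $\mathcal{D}(L_{2n/1})$ after the untwisting isotopy, where $\overline{\alpha}$ is a thin band around the straight arc from $X^-$ to $X^+$ crossing $\beta'$ in a nested sequence. I would then compute $\Phi$ and $\Psi_{X^+}$ for $\gamma_{\omega,k}$ and for the loop based at the partner of $\lambda_\omega$, for one representative $k$ in each of $\mathfrak{A}$ and $\mathfrak{B}$, and propagate to the other $k$ using the constant differences from the semicircle nesting (as in Lemma \ref{4x4blocklem}). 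The delicate point is keeping track of which copy $\beta_1'$ or $\beta_2'$ each basepoint sits on. With that settled, all blocks agree entrywise and $Q^L_{2n/1}=Q_{2n/1}$ exactly.
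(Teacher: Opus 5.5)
Your proposal is essentially the paper's own argument. The paper proves this in one line: because $\tau_{(2n-1)/1}$ has a single inactive intersection (namely $\xi_\omega$, which is $\prec$-maximal), none of the swaps or reorderings in the proof of Theorem~\ref{Linkthm} are triggered, so $Q^L_{2n/1}=\widetilde{Q}_{2n/1}=Q_{2n/1}$ exactly; your Steps 1--3 unpack exactly this.

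One slip in your optional sanity check. The $\mathfrak{C}$-point is $\lambda_{4n-1}$, not $\lambda_{2n-1}$. Also, in $Q_{8/1}$ one has $Q_{15,15}=1$ but $Q_{15,16}=0$, because for $1\times 1$ blocks the $+L$ contributes nothing. So you should expect $(\Phi-2\Psi_{X^+})([\gamma_{\omega,4n-1}])=-1$, not an off-diagonal value of $1$.
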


Just as we said for Proposition \ref{ratknotforms}, this proposition follows immediately by observing that the simplicity of this family of links means that we do not need to apply any of the permutations to $Q_{2n/1}$ discussed in the proof of Theorem \ref{Linkthm}. 

\subsection{Example: $L_{8/3}$}

To conclude the section, we will consider a link obtained by applying $\text{Cl}(T-)$ to a rational tangle with $RI$ orientation. In particular, we will look at $L_{8/3}=\text{Cl}(T\tau_{5/3}$). Many of the figures early in this section used this link as an example, so it is recommended to refer back to Figures \ref{L83} and \ref{L83untwistfig} to see $\mathcal{D}(L_{8/3})$ and its untwisted version, respectively. Previously, we also saw how to label the $\lambda_i$'s via the standard ordering.

Here, we will only consider the matrix $Q_{8/3}^L$ because there are no complications with $S$ and $A$. We will ignore the correction terms, but observe that $\mu_1(L_{8/3})=-2, \mu_2(L_{8/3})=-1,$ and $\mu_3(L_{8/3})=1$. Computing $Q_{8/3}^L$  gives

\[
Q_{8/3}^L= 
\left[\begin{array}{ c c c c c | c c c c c | c c c | c c c}
    3 & 1 & 0 & 2 & 1 & 2 & 0 & \textcolor{red}{0} & \textcolor{red}{2} & 0 & 2 & \underline{0} & \underline{0} & 3 & 2 & 2\\
    1 & 0 & -1 & 1 & 0 & 1 & -1 & \textcolor{red}{-1} & \textcolor{red}{1} & -1 & 0 & -1 & -1 & 1 & \underline{1} & \underline{1}\\
    0 & -1 & -1 & 0 & 0 & \textcolor{red}{-1} & \textcolor{red}{-2} & -2 & -1 & -1 & 0 & -1 & -1 & 1 & \underline{1} & \underline{1}\\
    2 & 1 & 0 & 2 & 1 & \textcolor{red}{1} & \textcolor{red}{0} & 0 & 1 & 0 & 2 & \underline{0} & \underline{0} & 3 & 2 & 2\\
    1 & 0 & 0 & 1 & 1 & 1 & 0 & 0 & 1 & 0 & 0 & 0 & 0 & 1 & 1 & 1\\
    \hline
    2 & 1 & \textcolor{red}{-1} & \textcolor{red}{1} & 1 & 2 & 0 & -1 & 1 & 0 & 1 & \underline{-1} & \underline{-1} & 2 & 1 & 1\\
    0 & -1 & \textcolor{red}{-2} & \textcolor{red}{0} & 0 & 0 & -1 & -2 & 0 & -1 & -1 & -2 & -2 & 0 & \underline{0} & \underline{0}\\
    \textcolor{red}{0} & \textcolor{red}{-1} & -2 & 0 & 0 & -1 & -2 & -2 & -1 & -1 & -1 & -2 & -2 & 0 & \underline{0} & \underline{0}\\
    \textcolor{red}{2} & \textcolor{red}{1} & -1 & 1 & 1 & 1 & 0 & -1 & 1 & 0 & 1 & \underline{-1} & \underline{-1} & 2 & 1 & 1\\
    0 & -1 & -1 & 0 & 0 & 0 & -1 & -1 & 0 & 0 & -1 & -1 & -1 & 0 & 0 & 0\\
    \hline
    2 & 0 & 0 & 2 & 0 & 1 & -1 & -1 & 1 & -1 & 1 & 0 & -1 & 2 & 2 & 1\\
    \underline{0} & -1 & -1 & \underline{0} & 0 & \underline{-1} & -2 & -2 & \underline{-1} & -1 & 0 & 0 & -1 & 1 & 1 & 1\\
    \underline{0} & -1 & -1 & \underline{0} & 0 & \underline{-1} & -2 & -2 & \underline{-1} & -1 & -1 & -1 & -1 & 0 & 0 & 0\\
    \hline
    3 & 1 & 1 & 3 & 1 & 2 & 0 & 0 & 2 & 0 & 2 & 1 & 0 & 4 & 3 & 2\\
    2 & \underline{1} & \underline{1} & 2 & 1 & 1 & \underline{0} & \underline{0} & 1 & 0 & 2 & 1 & 0 & 3 & 3 & 2\\
    2 & \underline{1} & \underline{1} & 2 & 1 & 1 & \underline{0} & \underline{0} & 1 & 0 & 1 & 1 & 0 & 2 & 2 & 2
    \end{array}\right].
\]
without the shift up by $1$. The red parts denote the $2\times 2$ submatrices that get in the way of having the ``$+L$'' in the $(1,2)$-block and the ``$+U$'' in the $(2,1)$-block. This is resolved in the proof of Lemma \ref{Lbottomblockperm} by reordering the $\lambda_i$'s (the same principles used in the $UP$ case apply in the $RI$ case). In particular, we want to reorder by $\lambda_1\leftrightarrow \lambda_3$, $\lambda_2\leftrightarrow \lambda_4$, $\lambda_6\leftrightarrow \lambda_8$, and $\lambda_7\leftrightarrow \lambda_9$. This reordering has the effect of conjugating $Q_{8/3}^L$ by a permutation matrix, which clearly does not change the generating function, and it yields the desired structure for the $(1,2)$- and $(2,1)$-blocks.

Additionally, the underlined entries are the ones that still need to be permuted to agree with Lemma \ref{WSclosureformulaL}. Adapting the proof of Lemma \ref{LQpmperm} to the $RI$ case shows that the necessary permutations are the ones we should expect and they can be done in an order that satisfies the conditions of Theorem \ref{permthm}.

\section{Symmetric-Colored Polynomial Invariants}
\label{sympolysec}

\subsection{Symmetric-Colored HOMFLY-PT Polynomials}

In this paper, we have been focusing on the antisymmetric-colored HOMFLY-PT polynomials or, equivalently, the ones colored by single-column Young diagrams. However, we could have just as easily thought about the symmetric-colored polynomials, or the ones colored by single-row Young diagrams. 

\begin{definitionN}
    Given a link $L$ and a vector $\vec{\lambda}$ denoting the colorings of its components, let $\overline{P}^{\vec{\lambda}}_L(q,a)$ be the unreduced colored HOMFLY-PT polynomial for $L$ specified by $\vec{\lambda}$.
\end{definitionN}

This is a strong generalization of what we have been working with in this paper---one thing we have been assuming is that, when working with links, the components have the same coloring. In \cite{LZ10}, the authors discuss these colored HOMFLY-PT polynomials in this greater generality.

Let $\vec{\lambda}^T$ be the vector of Young diagrams obtained by taking the transpose of each Young diagram in $\vec{\lambda}$. Then, the relationship between $\overline{P}^{\vec{\lambda}}_L$ and $\overline{P}^{\vec{\lambda}^T}_L$, is given by the following Theorem.

\begin{theoremN}[Theorem 4.4, \cite{TVW19}]
    Given a link $L$ with $n$ components colored according to a vector $\vec{\lambda}=(\lambda_1,...,\lambda_n)$ of Young diagrams, then the following equation holds:
    \begin{equation}
        \overline{P}^{\vec{\lambda}}_L(q,a)=(-1)^c \overline{P}^{\vec{\lambda}^T}L(q,a)\bigg|_{q\mapsto q^{-1}},
    \end{equation}
    where $c=|\lambda_1|+...+|\lambda_i|$, the sum of the number of nodes in the Young diagrams.
\end{theoremN}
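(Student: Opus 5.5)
This result is quoted from \cite{TVW19}, and the paper uses it without proof. If I were to prove it from scratch, I would work in the framework of Section \ref{Cpolysec}: realize $\overline{P}^{\vec{\lambda}}_L$ as the Ocneanu-type trace of a cabled braid closure in the Hecke algebra $H_n(q)$, with $n=c$, into which Young idempotents $e_{\lambda_1},\dots,e_{\lambda_k}$ are inserted. The idea is to find a ring map that sends $q\mapsto q^{-1}$, fixes $a$, and is compatible with the HOMFLY-PT skein theory while swapping $e_\lambda$ with $e_{\lambda^T}$. The identity then follows by applying this map to the whole cabled diagram and tracking a global sign.

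\textbf{Step 1.} Normalize the skein relation as $X_+-X_-=(q-q^{-1})X_0$. Then the substitution $q\mapsto q^{-1}$, with crossings left unchanged, sends $z=q-q^{-1}$ to $-z$. I would define a semilinear map $\theta$ on the Hecke algebra by $\theta(\sigma_i)=\sigma_i$ and $\theta(q)=q^{-1}$. This is a ring isomorphism $H_n(q)\to H_n(q^{-1})$ because the quadratic relation $(\sigma_i-q)(\sigma_i+q^{-1})=0$ is carried to $(\sigma_i-q^{-1})(\sigma_i+q)=0$, which is the same relation. The braid relations are clearly preserved.

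\textbf{Step 2.} Show that $\theta$ sends the row quasi-idempotent $\sum_w q^{\ell(w)}T_w$ to the column quasi-idempotent $\sum_w(-q)^{-\ell(w)}T_w$, up to a sign and unit normalization. This works because the one-dimensional ``trivial'' character $\sigma_i\mapsto q$ of $H_n(q)$ becomes the ``sign'' character $\sigma_i\mapsto -q^{-1}$ viewed in $H_n(q^{-1})$. I would then use the Aiston--Morton/Gyoja construction of $e_\lambda$ as a normalized product of row symmetrizers and column antisymmetrizers. This gives $\theta(e_\lambda)=e_{\lambda^T}$, and equivalently shows that $\theta$ induces the transpose on irreducibles.

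\textbf{Step 3, the expected main obstacle.} Compare the traces: $\operatorname{tr}_{q^{-1}}\circ\theta$ versus $\operatorname{tr}_q$ on closures. It suffices to check this on the Markov/unknot normalization, where the unknot value $(a-a^{-1})/(q-q^{-1})$ changes sign under $q\mapsto q^{-1}$. A closed diagram built from $c$ strands then picks up a factor $(-1)^{c}$ relative to the original. Concretely, I would verify this by induction on the Markov moves, in the same spirit as the classical identity $P(a,-z)=(-1)^{\#\text{comp}-1}P(a,z)$. The delicate part is to make sure that the writhe/framing normalization, which involves $a$ and powers of $q$ as in the framing shifts of Lemma \ref{WSclosureformulas}, is invariant under $\theta$. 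The unreduced normalization must also produce exactly $(-1)^{|\lambda_1|+\dots+|\lambda_k|}$, with no extra component-count sign. The way I would try to secure this is to check the formula first on colored unknots, where it is the symmetry of the quantum dimension $\dim_q V_\lambda$ under transpose together with $q\mapsto q^{-1}$, and then extend to all links using the module structure over the skein of the annulus.
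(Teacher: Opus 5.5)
The paper gives no proof of this statement; it is simply quoted from \cite{TVW19}. So your argument has to stand on its own, and as written Steps 1 and 2 do not work. With $\theta(\sigma_i)=\sigma_i$ and $\theta(q)=q^{-1}$, the relation $(\sigma_i-q)(\sigma_i+q^{-1})=0$, i.e.\ $\sigma_i^2=(q-q^{-1})\sigma_i+1$, is sent to $\sigma_i^2=(q^{-1}-q)\sigma_i+1$. This is \emph{not} the same relation: the eigenvalues are $\{q,-q^{-1}\}$ versus $\{q^{-1},-q\}$. So $\theta$ is at best a renaming of the parameter $H_n(q)\to H_n(q^{-1})$. As such it carries $\sum_w q^{\ell(w)}T_w$ to $\sum_w q^{-\ell(w)}T_w$, which is the \emph{symmetrizer} of $H_n(q^{-1})$ (eigenvalue $q^{-1}$). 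The assignment $\sigma_i\mapsto -q^{-1}$ is not a character of $H_n(q^{-1})$ at all. Moreover, $\sum_w q^{-\ell(w)}T_w$ differs from $\sum_w(-q)^{-\ell(w)}T_w$ by termwise signs $(-1)^{\ell(w)}$, not by a global unit. Thus $\theta$ never produces $\lambda^T$.

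The missing ingredient is a sign twist: take $\psi(\sigma_i)=-\sigma_i$, $\psi(q)=q^{-1}$, $\psi(a)=a$. This is a genuine semilinear automorphism of $H_n(q)$, since it sends $(\sigma_i-q)(\sigma_i+q^{-1})$ to $(\sigma_i+q^{-1})(\sigma_i-q)$. It satisfies $\psi\bigl(\sum_w q^{\ell(w)}T_w\bigr)=\sum_w(-q)^{-\ell(w)}T_w$ exactly. Hence, via the Gyoja/Aiston--Morton description you mention, it carries the $\lambda$-block to the $\lambda^T$-block. So $\psi(e_\lambda)$ is conjugate to $e_{\lambda^T}$, and the conjugation disappears after closure.

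Step 3 contains the matching error. Under $z\mapsto -z$, a closed framed diagram $D$ (kink factor a power of $a$) picks up $(-1)^{\#\mathrm{comp}(D)}$, not $(-1)^{\#\mathrm{strands}}$. That is what the proof of $P(a,-z)=\pm P(a,z)$ actually shows: $D\mapsto(-1)^{\#\mathrm{comp}(D)}$ times the $z\mapsto -z$ value satisfies the same skein, kink and unknot relations. For example, the closure of $\sigma_1\in H_2$ has one component. For a braid $\beta$ on $n$ strands, $(-1)^{\#\mathrm{comp}(\hat\beta)}=(-1)^{n+\mathrm{wr}(\beta)}$, where $\mathrm{wr}$ is the exponent sum, while $\psi(\beta)=(-1)^{\mathrm{wr}(\beta)}\beta$. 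Together these give $\operatorname{tr}(\psi(x))=(-1)^n\operatorname{tr}(x)|_{q\mapsto q^{-1}}$ for all $x\in H_n$. So the discrepancy $(-1)^{\mathrm{wr}}$ (which is $(-1)^{\ell(w)}$ on $T_w$) between the true sign and your uniform one is exactly what turns $\theta$ into $\psi$.

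Now apply this to $x=\beta^{\mathrm{cab}}e_{\vec\lambda^T}$, with $\beta^{\mathrm{cab}}$ the cabled braid. Note that its strand number $n$ is in general larger than $c$, because a component of $L$ may run through several strands of $\beta$. You obtain $\overline{P}^{\vec\lambda^T}_L(q,a)|_{q\mapsto q^{-1}}=(-1)^{n+\mathrm{wr}(\beta^{\mathrm{cab}})}\overline{P}^{\vec\lambda}_L(q,a)$. Here $(-1)^{n+\mathrm{wr}(\beta^{\mathrm{cab}})}=(-1)^c$, because the closure of $\beta^{\mathrm{cab}}$ is the parallel cable of $L$, which has exactly $c$ components. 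The framing correction causes no trouble, since the content sum of $\lambda^T$ is minus that of $\lambda$. With $\psi$ in place of $\theta$ and this sign bookkeeping, your outline becomes a complete and fairly elementary Hecke-algebra proof, independent of the approach in \cite{TVW19}.
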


It should be noted that the theorem is stated in terms of unreduced colored HOMFLY-PT polynomials, but we have been working with the reduced ones. In the reduced context, the theorem implies that the antisymmetric- and symmetric-colored HOMFLY-PT polynomials are related by inverting $q$.

Given our specialized context in this paper, we have been writing $P_{u/v}^{\bigwedge^j}(q,a)$ to represent $P_{L_{u/v}}^{\vec{\lambda}}(q,a)$, where now we allow $L_{u/v}$ to be knot, and $\vec{\lambda}=((1)^j,(1)^j)$ if $L_{u/v}$ has two components; if it is a knot, we only need $\lambda=(1)^j$, the single-column Young diagram. Similarly, we can use $P_{u/v}^{S^j}(q,a)$ to denote $P_{L_{u/v}}^{\vec{\lambda}^T}(q,a)$, where $\vec{\lambda}$ is the partition for the symmetric-colored polynomials. Then, we have 
\[
P_{u/v}^{S^j}(q,a)=P_{u/v}^{\bigwedge^j}(q^{-1},a).
\]
Notice that in the $j=1$ case, this reflects how the uncolored HOMFLY-PT polynomial is symmetric with respect to inverting $q$, which is accounted for by the correction terms. Furthermore, because of this simple relationship between the antisymmetric- and symmetric-colored polynomials, we can use the same diagrams $\mathcal{D}^j(K_{u/v})$ and $\mathcal{D}^j(L_{u/v})$, for $j\in\{1,2\}$, to compute the quiver forms for the generating functions of the symmetric-colored HOMFLY-PT polynomials, in addition to the antisymmetric ones. The key difference is that the signs of all terms in $S$ and $Q$ must be changed, and all Pochhammer symbols are defined in terms of $q^{-1}$ rather than $q$. However, we can rewrite the Pochhammer symbols in terms of $q$ if we subtract one from all non-diagonal entries of the new $Q$. For example, in the case of a rational knot $K_{u/v}$, we have
\[
\sum_{j\geq 0} P_{u/v}^{S^j}(q,a)x^j=\sum_{\textbf{d}=(d_1,...,d_u)\in \mathbb{N}^u}(-q)^{S'\cdot \textbf{d}}a^{A'\cdot \textbf{d}}q^{\textbf{d} \cdot Q'\cdot \textbf{d}^t} {d_1+...+d_u\brack d_1,...,d_u}x^{\sum_{i=1}^u d_i},
\]
where $S'=-S$, $A'=A$, and $Q'=-Q-1+I$, for $S,A,$ and $Q$ the matrices computed by Theorem \ref{Knotsthm}, where $-1+I$ is the matrix with $0$'s along the diagonal and $-1$'s everywhere else.

\begin{example}
    We can compare our computations for the $T(2,2n+1)$ torus knots with the ones from \cite{K19}. In particular, one can easily determine that $-Q_{(2n+1)/1}^K-1+I$ is equal to the matrix computed for $T(2,2n+1)$ in \cite{K19} up to a reordering of indices and an overall shift up by $2n$.
\end{example}

\subsection{Colored Jones Polynomials}

It is worth saying a quick word about colored Jones polynomials and how they fit into this discussion. These single-variable polynomial invariants arise in many different conjectures from knot theory, including the Volume Conjecture \cite{M10,MM01,Mu25} and the AJ Conjecture \cite{LT11,L06}. They generalize the ordinary Jones polynomial by coloring strands in a link diagram with arbitrary irreducible representations of $U_q(\mathfrak{sl}_N)$. For a knot $K$ with diagram $D$, given one of these irreducible representations $S^j\mathbb{C}^2$, the $j$-fold symmetric product of the vector representation, the corresponding $j$-colored Jones polynomial of $K$ is computed by taking a $j$-cabling of $D$, and inserting the $j$th Jones-Wenzl projector. More details on the invariant can be found in the citations listed above for the two conjectures.

Importantly, the $j$-colored (reduced) Jones polynomial of a knot $K$, which we write as $V_K^j(q)$ is related to $P^{S^j}_K(q,a)$ by
\[
V_K^j(q)=P^{S^j}_K(q,a)\bigg|_{a\mapsto q^2}.
\]
In the context of rational knots, this leads to the following Corollary of Theorem \ref{Knotsthm}, where we are using the same geometric picture in $M$ and $\text{Conf}^2(M)$.

\begin{corollaryN}
\label{coloredJones}
 The generating function for the colored Jones polynomials for the rational knot $K_{u/v}=\text{Cl}(\tau_{u/v})$ can be written as
    \begin{equation}
        \sum_{j\geq 0} V_{K_{u/v}}^j(q)x^j=\sum_{\textbf{d}=(d_1,...,d_u)\in \mathbb{N}^u}(-q)^{H\cdot \textbf{d}}q^{\textbf{d} \cdot Q\cdot \textbf{d}^t} \frac{(q^{-2};q^{-2})j}{\prod_{i=1}^u (q^{-2};q^{-2})_{d_i}}x^{d_1+...+d_u}
    \end{equation}
    where $S$, $H$, and $Q$ may computed by the following formulas:
    \[
    \begin{cases}
        H_i=(3\Psi_{X^+}-\Psi_{\{X^-,Y\}})([\gamma_i])+2\mu_2(K_{u/v})-\mu_1(K_{u/v})\\
        \begin{cases}
            Q_{ii}=(3\Psi_{X^+}-\Psi_{\{X^-,Y\}})([\gamma_i])-\mu_3(K_{u/v})\\
            Q_{ij}=Q_{ii}+(2\Psi_{X^+}-\Phi)([\gamma_{j,i}]).
        \end{cases}
    \end{cases}
    \]   
\end{corollaryN}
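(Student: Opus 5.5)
The plan is to derive Corollary \ref{coloredJones} formally from Theorem \ref{Knotsthm}, passing through the symmetric-colored quiver form of Section \ref{sympolysec} and then specializing $a\mapsto q^2$. No new geometry is needed. The loops $\gamma_i$, $\gamma_{j,i}$ and the homomorphisms $\Psi_W$, $\Phi$ are exactly those of Theorem \ref{Knotsthm}, and only the exponents are recombined.

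First, take the antisymmetric quiver form of Theorem \ref{Knotsthm}, with data $S,A,Q$ (including the corrections $\mu_i(K_{u/v})$). Apply $P^{S^j}_{u/v}(q,a)=P^{\bigwedge^j}_{u/v}(q^{-1},a)$, which follows from the theorem of \cite{TVW19} in the reduced setting. Under $q\mapsto q^{-1}$:
\begin{itemize}
\item $(-q)^{S\cdot\textbf{d}}$ becomes $(-q)^{-S\cdot\textbf{d}}$, up to the sign $(-1)^{S\cdot \textbf{d}}$, which is unchanged since $(-q^{-1})^n=(-q)^{-n}$;
\item $q^{\textbf{d}\cdot Q\cdot\textbf{d}^t}$ becomes $q^{-\textbf{d}\cdot Q\cdot \textbf{d}^t}$;
\item the multinomial becomes $\frac{(q^{-2};q^{-2})_j}{\prod_i(q^{-2};q^{-2})_{d_i}}$, with $j=d_1+\dots+d_u$.
\end{itemize}
I keep the Pochhammer symbols in $q^{-1}$, as in the statement, rather than converting them via $Q'=-Q-1+I$. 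This avoids the off-diagonal shift.

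Second, specialize $a\mapsto q^2$. Then $a^{A\cdot\textbf{d}}=q^{2A\cdot \textbf{d}}$, and this must be absorbed into the linear form multiplying $(-q)$. Since $q^{2A_id_i}=(-q)^{2A_id_i}$ because $2A_id_i$ is even, the combined exponent is $(-q)^{(2A-S)\cdot \textbf{d}}$. This gives
\[
H_i=2A_i-S_i=4\Psi_{X^+}([\gamma_i])-\Psi_{\{X^\pm,Y\}}([\gamma_i])+2\mu_2-\mu_1 .
\]
Using the additivity $\Psi_{\{X^\pm,Y\}}=\Psi_{X^+}+\Psi_{\{X^-,Y\}}$, which is immediate from the definition $\Psi_W=\frac12\sum_{V\in W}\Psi^1_V$, this becomes $(3\Psi_{X^+}-\Psi_{\{X^-,Y\}})([\gamma_i])+2\mu_2-\mu_1$, as claimed.

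The quadratic form is just $-Q$:
\begin{itemize}
\item on the diagonal, $-Q_{ii}=(3\Psi_{X^+}-\Psi_{\{X^-,Y\}})([\gamma_i])-\mu_3$;
\item off the diagonal, $-Q_{ij}=-Q_{ii}+(2\Psi_{X^+}-\Phi)([\gamma_{j,i}])$.
\end{itemize}
Both match the stated formulas. Finally, multiply by $x^j$ and sum to obtain the generating function.

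The main obstacle is bookkeeping rather than mathematics. One point is that the identity $V^j_K=P^{S^j}_K|_{a=q^2}$ and the $q\mapsto q^{-1}$ symmetry hold only up to framing, so one should remark that the corollary is, like Theorem \ref{Knotsthm}, meant up to a framing shift. The other point is checking that the signs from $(-q)$ versus $q$ work out under inversion. I would handle the signs by writing each factor as $(-1)^{\bullet}q^{\bullet}$ and tracking parities explicitly. The statement's stray mention of "$S$" among the forms should be read as referring to $H$ and $Q$ only.
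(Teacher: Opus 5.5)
Your proposal is correct and is essentially the paper's argument. The paper likewise derives the corollary directly from Theorem \ref{Knotsthm} via $V^j_{K_{u/v}}(q)=P^{\bigwedge^j}_{u/v}(q,a)\big|_{q\mapsto q^{-1},\,a\mapsto q^2}$, taking $H=2A-S$ and negating $Q$, with the parity observation $H_i\equiv -S_i \pmod 2$ justifying the base $(-q)$; your explicit use of $\Psi_{\{X^\pm,Y\}}=\Psi_{X^+}+\Psi_{\{X^-,Y\}}$ to reach the stated form of $H_i$ fills in bookkeeping the paper leaves implicit.
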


\begin{proof}
    This follows immediately from Theorem \ref{Knotsthm} and
    \[
    V_{K_{u/v}}^j(q)=P^{\bigwedge^j}_{u/v}(q,a)\bigg|_{q\mapsto q^{-1},a\mapsto q^2},
    \] 
    realizing that $H_i\equiv -\Psi_{\{X^\pm,Y\}}([\gamma_x])-\mu_1(K_{u/v})$ (mod 2), where the latter is the negated version of $S_i$ coming from the (antisymmetric) colored HOMFLY-PT case. Note that $H=2A-S$.
\end{proof}

It is unclear at this time whether thinking of colored Jones polynomials in this way is particularly helpful for either of the named conjectures above, but this might be worth investigating.

\subsection{Example: Colored Jones Polynomials of the Figure-8 Knot}

We conclude by showing how the colored Jones polynomials can be computed for a concrete example, the Figure-8 knot, via the methods of this paper.

Recall Figure \ref{K52labelfig} of the geometric picture for $K_{5/2}$ with the $\kappa_i$ labeled. The figure is reproduced below.

\[
\vcenter{\hbox{\includegraphics[height=5cm,angle=0]{K52labeled.png}}}
\]

As in Section \ref{K52exsec}, we ignore the correction terms, as they correspond to a framing shift. Then, using Corollary \ref{coloredJones}, one computes

 \begin{figure}[h]
    \centering
\begin{minipage}[b]{.45\textwidth}

$
   H =
  \left[\begin{array}{ c }
    2 \\
    -1 \\
    0 \\
    1 \\
    -2 \\

    \end{array}\right]$
    \end{minipage}
    \begin{minipage}[b]{.45\textwidth}
    $ Q=\left[\begin{array}{ c c c c c}
   
    2 & 1 & 1 & 2 & 1 \\
    1 & -1 & 0 & 0 & -1 \\
    1 & 0 & 0 & 1 & 0\\
    2 & 0 & 1 & 1 & 0\\
    1 & -1 & 0 & 0 & -2\\
    
    \end{array}\right]$.
    \end{minipage}
\end{figure}

This can either be computed directly from the formulas or by using the fact that $Q$ is computed by taking the negative of the $Q$ computed by Theorem \ref{Knotsthm} and $H=2A-S$. From this, it follows that

\[
V_{4_1}^j(q)=\sum_{|\textbf{d|=j}}(-q)^{2d_1-d_2-2d_4+d_5}q^{2d_1^2-d_2^2-2d_4^2+d_5^2+2d_1d_2+2d_1d_3+4d_1d_4+2d_1d_5-2d_2d_5+2d_3d_4}\frac{(q^{-2};q^{-2})_j}{\prod_{i=1}^5 (q^{-2};q^{-2})_{d_i}},
\]

where $\textbf{d}\in \mathbb{N}^5$ and $|\textbf{d}|=d_1+...+d_5$. More concisely, we could write
\[
V_{4_1}^j(q)=\sum_{|\textbf{d|=j}}(-q)^{H\cdot\textbf{d}}q^{\textbf{d}\cdot Q\cdot \textbf{d}^T}{j \brack d_1,...,d_5}^-,
\]
where 
\[
{j \brack d_1,...,d_5}^-={j \brack d_1,...,d_5}\bigg|_{q\mapsto q^{-1}}.
\]
We can use our ``standard'' quantum multinomial if we replace $Q$ with $Q-1+I$.

\printbibliography
\end{document}